\newtheorem{theorem}{Theorem}[section]
\newtheorem{corollary}{Corollary}[section]
\newtheorem{lemma}{Lemma}[section]
\newtheorem{proposition}{Proposition}[section]
\newtheorem{definition}{Definition}[section]
\newtheorem{example}{Example}[section]
\newtheorem{remark}{Remark}[section]
\newtheorem{conjecture}{Conjecture}[section]
\newcommand{\Y}{\mathcal{Y}}
\newcommand{\Z}{\mathbb{Z}}      % integers
\newcommand{\C}{\mathbb{C}} 
\newcommand{\T}{\mathcal{T}} 
\newcommand{\J}{\mathcal{J}}
\newcommand{\al}{\alpha}
\newcommand{\Res}{\operatorname{Res}}
\begin{document}

\title[]
{Quasi-lisse vertex (super)algebras}
\author{Hao Li}
\address{Research Institute for Mathematical Sciences, Kyoto University}
\email{hao.lilwh@gmail.com}
\begin{abstract}
 We study quasi-lisse vertex (super)algebras and establish new finiteness conditions for the convergence of genus-zero and genus-one $n$-point correlation functions.

\end{abstract}

\maketitle
%We also propose a natural extension of the convergence and analytic continuation properties of intertwining operators to settings involving twisted modules; see Conjecture~\ref{duality}.

\section{Introduction}

Vertex algebras and their representations provide a rigorous mathematical formulation of two-dimensional conformal field theory. In his seminal work \cite{zhu1996modular}, Zhu introduced the notion of the $C_2$-algebra of a vertex operator algebra $V$, which carries a Poisson algebra structure. Using the finite-dimensionality of this algebra, Zhu derived modular linear differential equations (MLDEs) satisfied by genus-one correlation functions associated to V. This idea was later generalized by Dong-Li-Mason \cite{dong2000modular} for twisted representations, by Miyamoto \cite{miyamoto2000intertwining}, and by Huang \cite{huang2005differential} for $q$-traces of products of intertwining operators. All these works assume that the vertex operator algebras and their modules are $C_2$-cofinite.

In order to formulate an analogue of the finiteness condition defined by Beilinson, Feigin, and Mazur~\cite{beilinson1991introduction}, Arakawa~\cite{arakawa2012remark} studied the $C_2$-algebra of $V$ from a geometric perspective, interpreting it as the coordinate ring of the \emph{associated variety} of $V$, denoted by $X_V$. He showed that $\dim(X_V) = 0$ is equivalent to $C_2$-cofiniteness, and that $C_2$-cofiniteness is also equivalent to the triviality of the \emph{singular support} of $V$. 

Later, Arakawa and Kawasetsu~\cite{arakawa2018quasi} discovered that Zhu's argument for deriving MLDEs still applies when the associated variety $X_V$ is allowed to have only finitely many symplectic leaves. Vertex algebras with this property are called \emph{quasi-lisse}. A crucial input is the following property of $\mathrm{Specm}\,R$, established in~\cite{etingof2010poisson}:
\begin{align}
    \label{introd1}
    \dim R/\{R, R\} < \infty.
\end{align}
In addition, Arakawa~\cite{arakawa2015associated} proved that the associated variety of an affine vertex operator algebra at an admissible level is the closure of a nilpotent orbit. Thus, quasi-lisse vertex algebras can be viewed as vertex algebraic generalizations of affine VOAs at admissible levels.

Meanwhile, Beem-Lemos-Liendo-Peelaers-Rastelli-van Rees \cite{beem2015infinite} constructed a remarkable map from four-dimensional superconformal field theory (4d SCFT) to two-dimensional conformal field theory (2d CFT). It was conjectured by Beem and Rastelli that the Higgs branch of 4d SCFT should coincide with the associated variety of the corresponding 2d CFT. One can identify the 4d Higgs branch with the 3d Coulomb branch via 3D mirror symmetry. When the mirror theory is quiver gauge theory, Braverman-Finkelberg-Nakajima rigorously defined the Coulomb branch  (\cite{nakajima2015towards,braverman2018towards,braverman1604coulomb}), which is expected to have finitely many symplectic leaves. Thus, there is a demand to systematically study the quasi-lisse vertex operator algebras. In the present paper, we make some modest efforts in this direction.

Recently, McRae \cite{mcrae2021rationality} proved that for certain $C_2$-cofinite vertex operator algebras, the tensor category of grading-restricted generalized $V$-modules is rigid. He used the method of Huang-Moore-Seiberg by expanding genus-one $2$-point correlation functions. Here, the $2$-point function refers to the $q$-trace of the product of two intertwining operators. Thus, to prove rigidity, the convergence of genus-one $2$-point functions is crucial. The convergence result for genus-one correlation functions was obtained by Huang \cite{huang2005differential} for $C_2$-cofinite algebras. Relaxing this convergence condition under the setting of quasi-lisse vertex operator (super)algebra is one of the main motivations of this paper.

Now, let us describe the main results in this paper. We extend the notion of quasi-lisse vertex algebras to the supercase, and show that such algebras have only finitely many simple ordinary $g$-twisted modules for a finite order automorphism $g$.
Huang \cite{huang2005differential1} showed that for $C_1$-cofinite vertex operator algebras and their $C_1$-cofinite modules, the genus-zero $n$-point functions satisfy certain differential equations with regular singularities. For any $V$-module $W$, one can define its $C_2$-module, $R_W$, which is an $R_V$-module. We prove that the $C_1$-cofiniteness condition can be replaced by $\text{dim} R_{W_0}\otimes_{R_V}\cdots \otimes_{R_V} R_{W_3}/\{R_{V},R_{W_0}\otimes_{R_V}\cdots \otimes_{R_V} R_{W_3}\}<\infty$ for certain vertex operator algebras.
\begin{theorem}[see Theorem~\ref{differentialintertwining}]\label{1.1}
Suppose $V$ is of CFT type, self-dual, and finitely generated by vectors of weight $1$.  Let $W_{i}$ for $i=0,1,2,3$ be weak $\mathbb{Q}$-graded $V$-modules. If the above finiteness condition holds,  then for any $w_{i}\in W_{i}$ $(i=0,1,2,3), $ there exist rational functions \[a_{k}(z_{1},z_{2}), b_{l}(z_{1},z_{2})\in \mathbb{C}[z_{1}^{\pm},z_{2}^{\pm},(z_{1}-z_{2})^{-1}]\]
for $k=1,...,m $ and $l=1,...,n$, such that for any discretely weak $\mathbb{Q}$-graded $V$-modules $W_{4},W_{5}$ and $W_{6}$, any intertwining operators $\mathcal{Y}_{1},\mathcal{Y}_{2}, \mathcal{Y}_{3},\mathcal{Y}_{4},\mathcal{Y}_{5}$ and $\mathcal{Y}_{6}$ of types \[\binom{W_{0}'}{W_{1}W_{4}},\binom{W_{4}}{W_{2}W_{3}},\binom{W_{5}}{W_{1}W_{2}},\binom{W_{0}'}{W_{5}W_{3}},\binom{W_{0}'}{W_{2}W_{6}}\,\,\,\, \text{and}\,\,\,\,  \binom{W_{6}}{W_{1}W_{3}},\] respectively, the series
\begin{align}
    &\langle w_{0}, \mathcal{Y}_{1}(w_{1},z_{1})\mathcal{Y}_{2}(w_{2},z_{2})w_{3}\rangle,\\
    &\langle w_{0}, \mathcal{Y}_{4}(\mathcal{Y}_{3}(w_{1},z_{1}-z_{2})w_{2},z_{2})w_{3}\rangle,
\end{align}
and
\begin{align}
    \langle w_{0}, \mathcal{Y}_{5}(w_{2},z_{2})\mathcal{Y}_{6}(w_{1},z_{1})w_{3}\rangle,
\end{align}
are convergent in the regions $|z_{1}|>|z_{2}|>0,$ $|z_{2}|>|z_{1}-z_{2}|>0$ and $|z_{2}|>|z_{1}|>0,$ respectively, and can be analytically extended to multivalued functions in the region
\[\{(z_1,z_2)\in \C^2|z_1,z_2\neq 0,z_1\neq z_2\}.\]
\end{theorem}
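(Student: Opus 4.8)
The plan is to show that the $n$-point correlation functions satisfy a system of differential equations with regular singular points, following the general strategy pioneered by Huang for the $C_1$-cofinite case, but with the $C_1$-cofiniteness hypothesis replaced by the stated finiteness of the iterated tensor product of $C_2$-modules modulo the action of $\{R_V,-\}$. The key mechanism is that such differential equations, once established, automatically guarantee both convergence in the indicated domains (via the theory of ordinary differential equations with regular singular points, each series being a formal solution) and analytic continuation to multivalued functions on the complement of the diagonal and the coordinate axes.

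First I would set up the relevant module of formal series. Fix $w_0,w_1,w_2,w_3$ and consider the space spanned by expressions of the form $\langle w_0,\mathcal{Y}_1(u_1,z_1)\mathcal{Y}_2(u_2,z_2)w_3\rangle$, where $u_1,u_2$ range over $W_1,W_2$. The goal is to produce a finite-dimensional space (over the ring of appropriate rational functions in $z_1,z_2$) that is closed under the operators $\partial_{z_1}$ and $\partial_{z_2}$. To do this I would use the commutator formula and the iterate formula for intertwining operators to rewrite the action of modes $(u_i)_n$ in terms of the translation operator $L(-1)$ and lower-weight vectors, producing recursion relations. The assumption that $V$ is of CFT type, self-dual, and finitely generated in weight $1$ lets me use the generators to express $L(-1)$-descendants and to control the reduction; self-duality supplies the invariant pairing $\langle\,,\,\rangle$ and the contragredient structure needed to make sense of all six types of intertwining operators simultaneously.

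The heart of the argument, and what I expect to be the main obstacle, is establishing finite-dimensionality of the relevant solution space, which is exactly where the new finiteness hypothesis enters. The idea is that the associated graded object of the filtration on the space of such matrix coefficients, with respect to the weight filtration, is governed by the $C_2$-modules $R_{W_i}$; reducing a correlation function modulo lower-order terms corresponds to passing to the tensor product $R_{W_0}\otimes_{R_V}\cdots\otimes_{R_V}R_{W_3}$, and the obstruction to the reduction terminating lies in $\{R_V,R_{W_0}\otimes_{R_V}\cdots\otimes_{R_V}R_{W_3}\}$. Thus the finiteness of the quotient by this bracket space bounds the dimension of the space of leading symbols, and a standard filtered-to-graded argument lifts this to finite-dimensionality upstairs. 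The technical difficulty will be making the dictionary between the analytic recursion relations and the algebraic bracket $\{R_V,-\}$ precise, and checking that the filtration is compatible and exhaustive so that finiteness of the graded piece really does yield finitely many generators.

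Once the finite-dimensional $\partial_{z_1},\partial_{z_2}$-closed space is in hand, I would extract from it a system of first-order ODEs (equivalently, a holonomic system or a pair of Fuchsian-type equations) whose singularities are supported on $z_1=0$, $z_2=0$, and $z_1=z_2$, with coefficients in $\mathbb{C}[z_1^{\pm},z_2^{\pm},(z_1-z_2)^{-1}]$. The three given series are, by construction, formal-power-series solutions of this system expanded in the three respective regions $|z_1|>|z_2|>0$, $|z_2|>|z_1-z_2|>0$, and $|z_2|>|z_1|>0$. Since the singular points are regular, the standard theory of such systems guarantees that each formal solution converges in its region and extends to a multivalued analytic function on $\{(z_1,z_2):z_1,z_2\neq 0,\ z_1\neq z_2\}$, which is precisely the desired conclusion; I would finish by invoking the regular-singular-point theorem in this multivariable Fuchsian setting to complete the argument.
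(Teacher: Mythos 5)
Your proposal is correct and follows essentially the same route as the paper: Huang's differential-equation method, with the finiteness hypothesis entering through a surjection from $R_{W_0}\otimes_{R_V}\cdots\otimes_{R_V}R_{W_3}/\{R_V,-\}$ onto the associated graded of a module $\T/\J$ of formal correlation functions, Noetherianity of $\mathbb{C}[z_1^{\pm},z_2^{\pm},(z_1-z_2)^{-1}]$ to produce the ODEs, and regular-singular-point theory for convergence and continuation. The one point to be careful about is to derive the equations in the universal quotient $\T/\J$ (with $\J$ generated by Jacobi-identity relations lying in the kernel of every $\Upsilon$ map) rather than in the span of matrix coefficients for a fixed choice of $\mathcal{Y}_i$, so that the coefficients $a_k,b_l$ come out independent of $W_4,W_5,W_6$ and of the intertwining operators, as the statement requires.
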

We also propose a natural extension of the convergence and analytic continuation properties of intertwining operators to settings involving twisted modules; see Conjecture~\ref{duality}.

For the genus-one case, we consider the weak $g$-twisted $V$-module $\tilde{W}_n$. Let $h$ be a current satisfying
\[L(n)h=\delta_{n,0}h,\quad h_{(n)}h=k\delta_{n,1}\mathbf{1},\]
for all $n\geq 0$ and some $k\in \mathbb{R}$, such that $h_{(0)}$ acts semisimply on $\tilde{W}_n$ with eigenvalues in $\mathbb{R}$.
We assume that
 \[\operatorname{tr}_{\tilde{W}_n}q^{h_{(0)}}_{s}q^{L_{(0)}}, \quad \text{where}\;\;q_s=e^{2\pi i s},\] is a well-defined $(q_s,q)$-series; that is, the simultaneous eigenspaces of $L_{(0)}$ and $h_{(0)}$ are finite-dimensional. The parameter $s$ is a complex number in the upper half plane. In the following, we call such an $h$ a Cartan element and say that the weak $g$-twisted module is $h$-stable. Under the assumption that {\em duality property} holds, we also get a similar weaker finiteness condition.  Let 
 \begin{align*}
R' =\;
& \mathbb{C}\left[\tilde{G}_{2k}(q) \;\middle|\; k \geq 2 \right] 
  \otimes \mathbb{C}\left[\tilde{G}_{k} \genfrac[]{0pt}{0}{\theta}{\phi}(q) \;\middle|\; k \geq 0 \right] \\
& \otimes \mathbb{C}\big[
  \tilde{P}_{1} \genfrac[]{0pt}{0}{\theta}{\phi}(z_{i}-z_{j};q),\,
  \tilde{P}_{2} \genfrac[]{0pt}{0}{\theta}{\phi}(z_{i}-z_{j};q), \\
& \hspace{5em}
  \tilde{\wp}_{2}(z_i - z_j;q),\,
  \tilde{\wp}_3(z_i - z_j;q)
\big]_{1 \leq i, j \leq n,\; i \neq j}.
\end{align*}
 We denote by \( R'_p \) the subspace of elements in \( R' \) of modular weight \( p \); further details are provided in Section~\ref{r'introduction} and Appendix~C. Then we can state our next main result.

\begin{theorem}[see Theorem~\ref{maint1} and Theorem~\ref{1pointfun}]\label{1.2}
Let $g$ be an automorphism of $V$ of finite order. Let $h$ be a Virasoro primary vector. Assume \[g(V\setminus C_2(V))\subset V\setminus C_2(V).\] Let $W_i$, $i=1,2,...,n$ be weak $V$-modules, and let $\tilde{W}_i$, $i=1,2,...,n$ be $h$-stable weak $g$-twisted $V$-modules with $\tilde{W}_0=\tilde{W}_n$. Suppose $\text{dim}(R_{W_1}\otimes_{R_V} \cdots \otimes_{R_V} R_{W_n}/\{(R_V)^g,R_{W_1}\otimes_{R_V} \cdots \otimes_{R_V} R_{W_n}\})<\infty$ and let $\mathcal{Y}_{i},$ $i=1,\cdots,n,$ be intertwining operators of type $\displaystyle\binom{\tilde{W}_{i-1}}{W_{i}\;\; \tilde{W}_{i}}$, then
\begin{itemize}
    \item For any homogeneous elements $w_{i}\in W_{i}$, $i=1,...,n,$ there exist \begin{align*}
& a_{p,i}(z_{1}, \dots, z_{n}; q; h) \in R'_{p}, \\
& b_{p,i}(z_{1}, \dots, z_{n}; q; h) \in R'_{2p}, \\
& c_{p,i}(z_{1}, \dots, z_{n}; q; h) \in R'_{l} \otimes \left( \mathbb{C}[z_1, \dots, z_n] \right)_{m}
\quad \text{with } l + m = p.
\end{align*} for $p=1,...,m$ and $i=1,...,n$, such that genus-one $n$-point function 
\begin{align*}
F_{\mathcal{Y}_{1}, \dots, \mathcal{Y}_{n}}&(w_{1}, \dots, w_{n}; z_{1}, \dots, z_{n}; q; h) \\
&= \operatorname{tr}_{\tilde{W}_n} \Big[
\mathcal{Y}_1\big(\mathcal{U}(q_{z_{1}})v_{1}, q_{z_{1}}\big)
\cdots
\mathcal{Y}_n\big(\mathcal{U}(q_{z_{n}})v_{n}, q_{z_{n}}\big)
e^{2\pi i s h_{(0)}} q^{L_{(0)}-\frac{c}{24}}
\Big]
\end{align*}
satisfies the following system of differential equations:

\begin{align}\label{equatio1}
    \frac{\partial^m f}{\partial z_i^m}
    + \sum_{p=1}^m a_{p,i}(z_1, \dots, z_n; q; h) \frac{\partial^{m-p} f}{\partial z_i^{m-p}} = 0,
\end{align}

\begin{align}\label{diffequ1}
\begin{split}
    & \prod_{k=1}^m 
    \mathcal{O}_i^h\left( \sum_{j=1}^n \operatorname{wt} w_j + 2(m - k) \right) f \\
    & \quad + \sum_{p=1}^m 
    b_{p,i}(z_1, \dots, z_n; q; h)\,
    \prod_{k=1}^{m-p} 
    \mathcal{O}_i^h\left( \sum_{j=1}^n \operatorname{wt} w_j + 2(m - p - k) \right) f \\
    & = 0.
\end{split}
\end{align}
\begin{align}\label{98}
\begin{split}
    \left(q_s \frac{\partial}{\partial q_s} \right)^m f
    + \sum_{p=1}^m c_{p,i}(z_1, \dots, z_n; q; h)
    \left( q_s \frac{\partial}{\partial q_s} \right)^{m-p} f = 0,
\end{split}
\end{align}
$i=1,...,n,$ in the regions $1>|q_{z_{1}}|>\cdots>|q_{z_{n}}|>|q|>0,0<|q_s|<1$
where for any $\al\in \mathbb{C}$
\begin{align*}
& \mathcal{O}_{j}^{h}(\alpha) = (2\pi i)^{2} q \frac{\partial}{\partial q} 
+ \tilde{G}_{2}(q) \alpha 
+ \tilde{G}_{2}(q) \sum_{i=1}^{n} z_{i} \frac{\partial}{\partial z_{i}} \\
&\quad - \sum_{i \neq j} \sum_{k=0}^{N-1} 
\tilde{\wp}_{1}(z_{i} - z_{j}; q) \frac{\partial}{\partial z_{i}}, 
\quad \text{for } j = 1, \dots, n, \\
& \prod_{j=1}^{m} \mathcal{O}^{h}(\alpha_{j}) 
= \mathcal{O}^{h}(\alpha_{1}) \cdots \mathcal{O}^{h}(\alpha_{m}).
\end{align*}

\item In the region 
\[\left\{(z_{1},...,z_{n},\tau,s)\,\middle|\, 1>|q_{z_{1}}|>\cdots>|q_{z_{n}}|>|q_{\tau}|>0, 0<|q_s|<1\right\},\] the function \[F_{\mathcal{Y}_{1},...,\mathcal{Y}_{n}}(w_{1},...,w_{n};z_{1},...,z_{n};q_\tau;h)\]
is absolutely convergent, and an analytic continuation as a multivalued analytic function in the region 
\[\tau\in \mathbb{H},\quad 0<|q_s|<1,\quad  z_{i}\neq z_{j}+k\tau+l\quad \text{for}\;\; i,j=1,...,n, \;\; i\neq j\;\; k,l\in\mathbb{Z}.\]
\end{itemize}
\end{theorem}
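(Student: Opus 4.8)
The plan is to follow Zhu's strategy for deriving modular differential equations \cite{zhu1996modular}, in the form adapted to $q$-traces of products of intertwining operators by Huang \cite{huang2005differential} and to the quasi-lisse setting by Arakawa--Kawasetsu \cite{arakawa2018quasi}, but now carried out in the presence of the twist $g$, the Cartan insertion $e^{2\pi i s h_{(0)}}$, and $n$ intertwining operators. The three systems of differential equations (\ref{equatio1}), (\ref{diffequ1}), (\ref{98}) correspond to three independent directions of differentiation --- the insertion variables $z_i$, the modular (Serre) direction encoded by the operator $\mathcal{O}_i^h$, and the chemical-potential direction $q_s\partial/\partial q_s$ --- and each is produced by the same mechanism: a linear dependence among successive derivatives of $F$ that is forced by a single finite-dimensionality statement.

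First I would establish the recursion (reduction) formulas for the twisted trace function $F$. For an inserted vector of the form $a_{(-k)}w_i$ with $a\in V$, one rewrites the associated trace as an $R'$-linear combination of traces whose insertions have strictly lower conformal weight, the coefficients being the twisted Eisenstein series $\tilde G_{2k}(q)$, their theta-characteristic-twisted analogues, and the theta-twisted Weierstrass-type functions $\tilde P_1,\tilde P_2,\tilde\wp_2,\tilde\wp_3$ that generate $R'$. These are the genus-one analogues of the operator product expansion; the twist by $g$ is responsible for the theta-characteristic decorations, while the insertion $e^{2\pi i s h_{(0)}}$ shifts the elliptic arguments, which is precisely why the coefficient ring must be $R'$ rather than the ring of ordinary quasimodular forms. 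The commutator $[h_{(0)},\cdot]$ and the hypothesis that $h$ is Virasoro primary are used here to control the $s$-dependence and to place the $q_s$-recursion in the claimed bigraded ring.

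Next I would translate the finiteness hypothesis into the finite-dimensionality needed to terminate the recursion. Since $R_{W_i}$ is the $C_2$-module of $W_i$, an $R_V$-module, the reduction formulas show that every trace function is congruent, modulo $R'$-combinations of traces with lower insertions, to a trace whose insertions represent classes in $R_{W_1}\otimes_{R_V}\cdots\otimes_{R_V}R_{W_n}$. The Poisson bracket $\{\,(R_V)^g,\,-\,\}$ enters because only the $g$-invariant part of $R_V$ produces reductions compatible with the twisted trace; this is the twisted analogue of the Etingof--Schedler finiteness (\ref{introd1}) applied to $(R_V)^g$. Hence the hypothesis $\dim\bigl(R_{W_1}\otimes_{R_V}\cdots\otimes_{R_V}R_{W_n}/\{\,(R_V)^g,\,R_{W_1}\otimes_{R_V}\cdots\otimes_{R_V}R_{W_n}\,\}\bigr)<\infty$ forces the $R'$-span of $\{\,D^k F\,\}_{k\geq 0}$ to be finite-dimensional, where $D$ is any of $\partial/\partial z_i$, $\mathcal{O}_i^h$, or $q_s\partial/\partial q_s$. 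Taking $m$ to be this dimension yields the monic relations (\ref{equatio1}), (\ref{diffequ1}), (\ref{98}); the modular-weight bookkeeping --- that $a_{p,i}\in R'_p$, $b_{p,i}\in R'_{2p}$, and $c_{p,i}\in R'_l\otimes(\C[z_1,\dots,z_n])_m$ with $l+m=p$ --- follows by tracking the weight grading through the recursion, since each $\mathcal{O}_i^h$ raises modular weight by $2$ and each $z_i$-derivative raises it by $1$.

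Finally, for convergence and analytic continuation I would invoke the theory of ordinary differential equations with regular singular points. The generators of $R'$ are holomorphic on $1>|q_{z_1}|>\cdots>|q_{z_n}|>|q|>0$ and have at worst poles along $z_i=z_j+k\tau+l$, so the coefficients $a_{p,i},b_{p,i},c_{p,i}$ are meromorphic with singularities confined to the excluded locus. Equation (\ref{equatio1}) then presents $F$, as a formal series, as a solution of a monic linear ODE in each $z_i$ whose only singularities in the region are regular; standard growth estimates give absolute convergence in the indicated region, and the existence theorem for regular-singular ODEs extends $F$ to a multivalued analytic function on $\{\,z_i\neq z_j+k\tau+l\,\}$ with $\tau\in\mathbb{H}$, $0<|q_s|<1$, the behavior near the cusp $q\to 0$ being governed by (\ref{98}). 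The main obstacle is the first step: deriving the twisted recursion in a form clean enough that the coefficients are manifestly in $R'$ with the stated weights while simultaneously keeping track of the Cartan insertion $e^{2\pi i s h_{(0)}}$. The theta-characteristic decorations and the $s$-shifts interact, and one must verify that the reduction genuinely lands in the claimed tensor-product module and that the obstruction is measured by $\{\,(R_V)^g,\,-\,\}$ rather than the full $\{\,R_V,\,-\,\}$ --- this $g$-invariance is exactly what makes the weaker finiteness hypothesis sufficient.
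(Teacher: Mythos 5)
Your proposal follows essentially the same route as the paper: twisted recursion formulas whose coefficients are the twisted Eisenstein and Weierstrass series generating $R'$, a surjection from $R_{W_1}\otimes_{R_V}\cdots\otimes_{R_V}R_{W_n}/\{(R_V)^g,\,\cdot\,\}$ onto the quotient $\T'/\J'$ making the latter a finitely generated module over the Noetherian ring $R'$, monic linear relations among iterated insertions of $L_{(-1)}$, $\mathcal{Q}_i$, and $h_{(-1)}$ obtained from the ascending chain condition, and regular-singular-point theory for convergence and continuation. Two minor imprecisions worth fixing: the termination mechanism is finite \emph{generation} of the relevant $R'$-submodules of $\T'/\J'$ (Noetherianity of $R'$), not finite-dimensionality of an $R'$-span over $\mathbb{C}$; and the behavior as $q_\tau\to 0$ is governed by the $\mathcal{O}_i^h$-equation, whereas the third equation (in $q_s$) controls the limit $q_s\to 0$.
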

In particular, for a quasi-lisse vertex (super)algebra, Theorem~\ref{leaves}, originally due to Etingof and Schedler, and its reformulation in the vertex algebra setting given in Corollary~\ref{leaves'}, together imply that any finitely strongly generated module satisfies all the above finiteness conditions in the case $g = \mathrm{id}$.

As a special case of Theorem~\ref{1.2}, we obtain the following.
\begin{corollary}[see Corollary \ref{twistedmlde}]
If \( V \) is a quasi-lisse vertex superalgebra, then the supercharacter of any simple \( g \)-twisted \( V \)-module satisfies a twisted modular linear differential equation.
\end{corollary}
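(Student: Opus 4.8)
The plan is to obtain the corollary as the $n=1$, vacuum specialization of Theorem~\ref{1.2}. First I would take $n=1$, set $W_{1}=V$, let $\tilde{W}_{1}=M$ be a simple $g$-twisted $V$-module (so that $\tilde{W}_{0}=\tilde{W}_{n}=M$), and let $\mathcal{Y}_{1}=Y_{M}$ be the $g$-twisted module vertex operator, which is the unique intertwining operator of type $\binom{M}{V\,M}$. Inserting the vacuum $w_{1}=\mathbf{1}$ and using $Y_{M}(\mathbf{1},z)=\mathrm{id}$ together with $\mathcal{U}(q_{z_{1}})\mathbf{1}=\mathbf{1}$, the genus-one $1$-point function collapses to the graded trace $\operatorname{tr}_{M}\bigl[e^{2\pi i s h_{(0)}}q^{L_{(0)}-\frac{c}{24}}\bigr]$, which carries no $z_{1}$-dependence. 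To produce the \emph{super}character I would incorporate the parity operator $(-1)^{F}$: since parity is an order-two automorphism of $V$ that commutes with $L_{(0)}$ and with all the relevant modes, it can either be absorbed into $g$ (replacing $g$ by its composite with parity, still of finite order) or be realized through the Cartan insertion $e^{2\pi i s h_{(0)}}$ at a suitable value of $s$, so that the trace becomes $\operatorname{str}_{M}q^{L_{(0)}-\frac{c}{24}}$ while staying within the hypotheses of Theorem~\ref{1.2}. Here one also uses that a simple $g$-twisted module over a quasi-lisse superalgebra is ordinary, so that the simultaneous eigenspaces are finite-dimensional and the supercharacter is a well-defined $q$-series.

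Second, I would check the finiteness hypothesis of Theorem~\ref{1.2}. For $n=1$ and $W_{1}=V$ the module $C_{2}$-module is $R_{W_{1}}=R_{V}$, so the condition reduces to $\dim\bigl(R_{V}/\{(R_{V})^{g},R_{V}\}\bigr)<\infty$. Since any automorphism of a vertex (super)algebra preserves $C_{2}(V)=\mathrm{span}\{a_{(-2)}b\}$, the element $g$ descends to a finite-order Poisson automorphism of the coordinate ring $R_{V}$ of the associated variety $X_{V}$ (in particular the standing assumption $g(V\setminus C_{2}(V))\subset V\setminus C_{2}(V)$ of Theorem~\ref{1.2} is automatic); because $V$ is quasi-lisse, $X_{V}$ has only finitely many symplectic leaves, which are permuted by $g$. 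Invoking the Etingof–Schedler finiteness of Theorem~\ref{leaves} in its $g$-equivariant form, together with its vertex-algebra reformulation Corollary~\ref{leaves'}, then yields the required bound.

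Third, I would extract the differential equation from \eqref{diffequ1}. For $n=1$ the sum $\sum_{i\neq j}$ defining $\mathcal{O}_{1}^{h}(\alpha)$ is empty, so $\mathcal{O}_{1}^{h}(\alpha)=(2\pi i)^{2}q\frac{\partial}{\partial q}+\tilde{G}_{2}(q)\alpha+\tilde{G}_{2}(q)z_{1}\frac{\partial}{\partial z_{1}}$; applied to the $z_{1}$-independent supercharacter the term $z_{1}\frac{\partial}{\partial z_{1}}$ vanishes, leaving an operator purely in $q\frac{\partial}{\partial q}$. Because no difference $z_{i}-z_{j}$ can occur when $n=1$, the coefficients $b_{p,1}\in R'_{2p}$ lie in the subring of $R'$ generated only by the twisted Eisenstein series in $q$, so \eqref{diffequ1} becomes a linear ordinary differential equation in $q$ with modular-form coefficients, i.e.\ a twisted modular linear differential equation, of which the supercharacter of $M$ is a solution.

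Finally, the main obstacle is the equivariant finiteness $\dim\bigl(R_{V}/\{(R_{V})^{g},R_{V}\}\bigr)<\infty$. For $g=\mathrm{id}$ this is precisely the Etingof–Schedler statement \eqref{introd1}, but for nontrivial $g$ one only has the invariant subalgebra $(R_{V})^{g}$ acting by Poisson bracket, and the inclusion $\{(R_{V})^{g},R_{V}\}\subseteq\{R_{V},R_{V}\}$ points the wrong way, so finiteness of $R_{V}/\{R_{V},R_{V}\}$ does not by itself suffice. The argument must exploit the finite order of $g$, decomposing $R_{V}$ into $\langle g\rangle$-isotypic components and controlling the Poisson brackets between them, thereby reducing the claim to an equivariant (invariant Poisson trace) refinement of Etingof–Schedler on a variety with finitely many symplectic leaves; this is where the real work of the corollary lies.
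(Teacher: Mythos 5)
Your proposal is correct and follows essentially the same route as the paper: specialize the genus-one machinery to the $1$-point function of the vacuum with the insertion $e^{2\pi i s h_{(0)}}=e^{\pi i\sigma}$ realizing the supercharacter (the paper does this via the dedicated Theorem~\ref{1pointfun}, where for $n=1$ the operator $\mathcal{O}^{h}(\alpha)$ collapses to the Serre derivation and $R'$ to $N_s$), and reduce everything to the finiteness $\dim\bigl(R_{V}/\{(R_{V})^{g},R_{V}\}\bigr)<\infty$. The concern in your final paragraph is already settled in the paper: the $g$-equivariant finiteness is exactly part (2) of the quoted Etingof--Schedler result (Theorem~\ref{leaves}), reformulated as the second bullet of Corollary~\ref{leaves'}, so no additional isotypic-decomposition argument is required.
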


We now outline the main ideas behind the proofs of Theorems~\ref{1.1} and~\ref{1.2}, building on methods developed in~\cite{zhu1996modular, huang2005differential1,huang2005differential,arakawa2018quasi}. In general, one defines a linear map
$\Upsilon : W_1 \otimes \cdots \otimes W_n \longrightarrow \text{$n$-point correlation functions}.$
The Jacobi identity gives rise to relations in the kernel of $\Upsilon$, and the more such relations one finds, the weaker the convergence conditions that result. By identifying additional kernel elements and applying a suitable filtration, one obtains a surjective map $\Theta$:
\[
\frac{R_{W_1} \otimes_{R_V} \cdots \otimes_{R_V} R_{W_n}}{\{(R_V)^g,\ R_{W_1} \otimes_{R_V} \cdots \otimes_{R_V} R_{W_n}\}} \twoheadrightarrow \T / \operatorname{gr}(\ker({\Upsilon})).
\]
The finite-dimensionality of the source implies that of the target. In particular, it implies that $\T/\ker(\Upsilon)$ is finitely generated over a Noetherian ring. Using the $L(-1)$-derivative property and constructing an appropriate submodule of $\T/\ker(\Upsilon)$, we obtain the desired system of differential equations satisfied by the correlation functions.

The organization of this paper is as follows. Section 2 recalls the definitions and basic notations related to vertex operator (super)algebras. Section 3 introduces the notion of quasi-lisse vertex (super)algebras and proves that such algebras have only finitely many simple ordinary $g$-twisted modules. In Section 4, we study the convergence of genus-zero $n$-point correlation functions and derive a necessary condition. Section 5 forms the main part of the paper: we derive necessary conditions for the convergence of genus-one correlation functions, and verify that highest weight modules and their contragredient modules for $L_k(\mathfrak{sl}_2)$ at admissible levels satisfy these conditions. In Appendix A, we review geometrically modified vertex operators and explain their geometric interpretation. Appendices B and~C collect standard facts about modular forms.

\subsection{Notations and conventions}

$\quad$ $\delta(x)$: $\sum_{n\in \Z} x^n$,

$\mathbb{N}:$ all natural number,

$q_{z}:e^{2\pi i z}$,

$\mathbb{Z}_-:$ all negative integers,

$\mathbb{Z}_{+}:$ all positive integers,

$\mathbb{H}$: upper half plane,

$\mathbb{C}((x))$: ring of all Laurent series in $x$,

$\mathbb{C}\{x\}$: the space of all series of the form $\sum_{n\in \mathbb{R}}a_{n}x^{n}$, $n,a_n\in \mathbb{Q}$,

$\mathbb{G}_{|z_1|>\cdots>|z_n|>0}:$ the space of all multivalued analytic functions in $z_1,,...,z_n$ defined on the region $|z_1|>\cdots>|z_n|>0$ with preferred branches in the simply-connected region $|z_1|>\cdots >|z_n|>0,$ $0\leq \operatorname{arg}z_i<2\pi$, $i=1,...,n$.

\section*{Acknowledgments}

I would like to thank Antun Milas for suggesting the topic of quasi-lisse vertex algebras, and Yi-Zhi Huang for motivating questions raised during a conference organized by Corina Calinescu. I am also grateful to Tomoyuki Arakawa for discussions on finiteness conditions.  

This work was largely carried out during my postdoctoral appointment at the Yau Mathematical Sciences Center, Tsinghua University, where I benefited from discussions on convergence problems with Robert McRae and Jinwei Yang. I also thank the anonymous referee for valuable suggestions and for pointing out several issues in an earlier version of this paper. The major revision of this paper was completed during my stay at the Research Institute for Mathematical Sciences, Kyoto University, where I am supported by JSPS KAKENHI Grant Number 21H04993.

\section{Vertex superalgebras}\label{section1}

\begin{definition}
Let \( V \) be a superspace, i.e., a \( \mathbb{Z}_2 \)-graded vector space:
\[
V = V_{\overline{0}} \oplus V_{\overline{1}}, \quad \text{where } \mathbb{Z}_2 = \{ \overline{0}, \overline{1} \}.
\]
If \( a \in V_{p(a)} \), we say that \( a \) has parity \( p(a) \in \mathbb{Z}_2 \).

A \emph{field} is a formal series of the form
\[
a(z) = \sum_{n \in \mathbb{Z}} a_{(n)} z^{-n-1}, \quad a_{(n)} \in \operatorname{End}(V),
\]
such that for each \( v \in V \), we have \( a_{(n)} v = 0 \) for \( n \gg 0 \).

We say that a field \( a(z) \) has parity \( p(a) \in \mathbb{Z}_2 \) if
\[
a_{(n)} V_\alpha \subset V_{\alpha + p(a)}
\]
for all \( \alpha \in \mathbb{Z}_2 \), \( n \in \mathbb{Z} \).

A \emph{vertex superalgebra} consists of the following data:
\begin{itemize}
    \item a superspace \( V \),
    \item a vacuum vector \( \mathbf{1} \in V_{\overline{0}} \),
    \item a translation operator \( T \in \operatorname{End}(V) \),
    \item a state-field correspondence map \( Y(-,z): V \to (\operatorname{End}V)[[z, z^{-1}]] \), defined by
    \[
    Y(a,z) = \sum_{n \in \mathbb{Z}} a_{(n)} z^{-n-1},
    \]
    satisfying the following axioms:
    \begin{itemize}
        \item \textbf{(Translation covariance)}: \( [T, Y(a,z)] = \partial_z Y(a,z) \),
        \item \textbf{(Vacuum)}: \( Y(\mathbf{1}, z) = \mathrm{Id}_V \), and \( Y(a,z)\mathbf{1} \big|_{z=0} = a \),
        \item \textbf{(Locality)}: for \( N \gg 0 \),
        \[
        (z - w)^N Y(a,z) Y(b,w) = (-1)^{p(a)p(b)} (z - w)^N Y(b,w) Y(a,z).
        \]
    \end{itemize}
\end{itemize}
\end{definition}

A vertex superalgebra \( V \) is called \emph{supercommutative} if \( a_{(n)} = 0 \) for all \( n \in \mathbb{N} \). It is well known that the category of commutative vertex superalgebras is equivalent to the category of unital, commutative, associative superalgebras equipped with an even derivation.

We say that a vertex superalgebra \( V \) is \emph{generated} by a subset \( \mathcal{U} \subset V \) (or by the fields corresponding to elements in \( \mathcal{U} \)) if any element of \( V \) can be written as a finite linear combination of terms of the form
\[
b^{1}_{(i_1)} b^{2}_{(i_2)} \cdots b^{n}_{(i_n)} \mathbf{1},
\]
for \( b^k \in \mathcal{U} \), \( i_k \in \mathbb{Z} \), and \( n \in \mathbb{N} \).  
If every such term can be written with \( i_k \in \mathbb{Z}_{<0} \), we write \( V = \langle \mathcal{U} \rangle_S \) and say that \( V \) is \emph{strongly generated} by \( \mathcal{U} \) (or by the fields corresponding to \( \mathcal{U} \)).

\begin{example}[{\cite{kac1998vertex, zheng2017vertex}}] \label{aflie}
Let \( \mathfrak{g} \) be a finite-dimensional Lie superalgebra equipped with a nondegenerate even supersymmetric invariant bilinear form \( \langle \cdot, \cdot \rangle \). One can associate to the pair \( (\mathfrak{g}, \langle \cdot, \cdot \rangle) \) the affine Lie superalgebra \( \widehat{\mathfrak{g}} \).

Its universal vacuum representation of level \( k \in \mathbb{C} \), denoted \( V^k(\mathfrak{g}) \), is a vertex superalgebra. In particular, when \( \mathfrak{g} \) is a simple Lie algebra or Lie superalgebra of type \( A(m,n), C(n), B(m,n), D(m,n), D(2,1,a), F(4), G(3) \)~\cite{kac2001integrable}, the vacuum module \( V^k(\mathfrak{g}) \) has a unique maximal ideal \( I_{V^k(\mathfrak{g})} \), and the simple quotient
\[
L_k(\mathfrak{g}) := V^k(\mathfrak{g}) / I_{V^k(\mathfrak{g})}
\]
is also a vertex superalgebra.
\end{example}

\begin{definition}
A vertex superalgebra \( V \) is called a \( \frac{1}{2} \mathbb{Z} \)-graded (resp. \( \mathbb{Z} \)-graded) vertex operator superalgebra (VOSA) if it admits a grading
\[
V = \bigoplus_{n \in \frac{1}{2} \mathbb{Z}} V_{(n)} \quad \text{(resp. } V = \bigoplus_{n \in \mathbb{Z}} V_{(n)}),
\]
together with a conformal vector \( \omega \in V \) such that the operators
\[
L_{(n)} := \omega_{(n+1)}, \quad n \in \mathbb{Z},
\]
define a representation of the Virasoro algebra on \( V \), i.e.
\begin{align}\label{viro1}
[L_{(n)}, L_{(m)}] = (m - n) L_{(m + n)} + \frac{m^3 - m}{12} \delta_{m+n,0} \, c_V.
\end{align}
We call \( c_V \) the central charge of \( V \). We require that \( L_{(0)} \) is diagonalizable and determines the \( \frac{1}{2} \mathbb{Z} \)-grading (resp. \( \mathbb{Z} \)); its eigenvalues are called the (conformal) weights.

A field \( Y(v,z) \) is called \emph{primary of conformal weight \( \Delta \)} if
\[
Y(\omega,z)Y(v,w)\sim \frac{\Delta Y(v,w)}{(z-w)^2}+\frac{\partial_w Y(v,w)}{z-w}.\]

The vertex operator superalgebra is said to be of \emph{CFT type} if \( V_{(n)} = 0 \) for all \( n < 0 \), and \( V_{(0)} = \mathbb{C} \mathbf{1} \).
\end{definition}

\begin{example}[{\cite{kac1998vertex}}] \label{N=2}
The universal vertex operator (super)algebras associated with the Virasoro algebra and the \( N=4 \) superconformal algebra are, respectively, \( \mathbb{Z} \)-graded and \( \frac{1}{2} \mathbb{Z} \)-graded. Their simple quotients are denoted by
$L_{\mathrm{Vir}}(c,0)$ and $L^{N=4}_c.$
\end{example}

By~\cite{zhu1996modular}, define
\[
C_2(V) := \mathrm{span}_{\mathbb{C}} \left\{ u_{(-2)} v \mid u,v \in V \right\}.
\]
The quotient space \( R_V := V / C_2(V) \), called the \( C_2 \)-algebra of \( V \), carries the structure of a \emph{Poisson superalgebra}, with multiplication and Poisson bracket given by
\[
\overline{u} \cdot \overline{v} := \overline{u_{(-1)} v}, \qquad \{ \overline{u}, \overline{v} \} := \overline{u_{(0)} v},
\]
where \( \overline{u} := u + C_2(V) \).

We next compute the \( C_2 \)-algebras for several simple examples.

\begin{example}[{\cite{van2021chiral}}] \label{aff sl}
Let \( L_k(\mathfrak{g}) \) be the simple affine vertex algebra at level \( k \in \mathbb{N} \), where \( \mathfrak{g} \) is a simple Lie algebra. Then:
\[
R_{L_k(\mathfrak{g})} = \frac{\mathbb{C}[u^1_{(-1)} \mathbf{1}, \dots, u^n_{(-1)} \mathbf{1}]}{\langle U(\mathfrak{g}) \circ (e_\theta)_{(-1)}^{k+1} \mathbf{1} \rangle},
\]
where \( \{ u^1, \dots, u^n \} \) is a basis of \( \mathfrak{g} \), \( \theta \) is the highest root, and \( \circ \) denotes the adjoint action. In particular, for \( \mathfrak{g} = \mathfrak{sl}_2 \),
\[
R_{L_k(\mathfrak{sl}_2)} \cong \frac{\mathbb{C}[e, f, h]}{\langle f^i \circ e^{k+1} \mid i = 0, \dots, 2k+2 \rangle},
\]
where \( e, f, h \) correspond to \( e_{(-1)} \mathbf{1}, f_{(-1)} \mathbf{1}, h_{(-1)} \mathbf{1} \).
\end{example}

\begin{example}[\cite{van2021chiral}]
Let \( L_{\text{Vir}}(c_{(p,p')},0) \) be the simple Virasoro VOA, where \( c_{(p,p')} = 1 - \frac{6(p - p')^2}{pp'} \) for coprime integers \( p > p' \geq 2 \). Then
\[
R_{L_{\text{Vir}}(c_{(p,p')},0)} \cong \mathbb{C}[x]/\langle x^{\frac{(p-1)(p'-1)}{2}} \rangle,
\]
where \( x = \omega = L_{(-2)} \mathbf{1} \).
\end{example}

\begin{example}[{\cite{adamovic2005construction}}] \label{ex2.11}
The maximal submodule of \( V^{-4/3}(\mathfrak{sl}_2) \) is generated by the singular vector
\[
v_{\mathrm{sing}} = \left( e_{(-1)} L_{(-2)} + \tfrac{1}{3} e_{(-3)} - \tfrac{1}{2} h_{(-1)} e_{(-2)} + \tfrac{1}{2} h_{(-2)} e_{(-1)} \right) \mathbf{1},
\]
where
\[
L_{(-2)} \mathbf{1} = \tfrac{3}{4} \left( e_{(-1)} f_{(-1)} + f_{(-1)} e_{(-1)} + \tfrac{1}{2} h_{(-1)}^2 \right)
\]
is the conformal vector. Then
\[
R_{L_{-4/3}(\mathfrak{sl}_2)} = \frac{\mathbb{C}[x, y, z]}{\langle x(xy - z^2),\, y(xy - z^2),\, z(xy - z^2) \rangle}.
\]
\end{example}

\section{Quasi-lisse vertex superalgebras}

In this section, we define quasi-lisse vertex superalgebras and discuss their $g$-twisted modules.

\subsection{Definitions}

We begin by recalling the notion of a Poisson module.

\begin{definition}
Let $A$ be a Poisson algebra. An $A$-module $M$ is a module over the associative algebra $A$ equipped with a bilinear map
\[
\{\cdot, \cdot\}: A \times M \rightarrow M,
\]
satisfying, for all $x, y \in A$ and $m \in M$,
\begin{itemize}
    \item $\{x,y\}m = x(y m) - y(x m),$
    \item $\{x, y m\} = \{x, y\} m + y \{x, m\},$
    \item $\{x y, m\} = x \{y, m\} + y \{x, m\}.$
\end{itemize}
\end{definition}

If $M_1$ and $M_2$ are $A$-modules, then $M_1 \otimes M_2$ is also an $A$-module, with Poisson bracket defined by
\[
\{a, m_1 \otimes m_2\} = \{a, m_1\} \otimes m_2 + m_1 \otimes \{a, m_2\},
\]
for $a \in A$, $m_i \in M_i$.

Let $X$ be an affine Poisson variety, i.e., $\mathcal{O}(X)$ is a Poisson algebra. For any $f \in \mathcal{O}(X)$, one defines the Hamiltonian vector field $\mathcal{E}_f := \mathrm{ad}(f) = \{f, \cdot\}$. This gives rise to a map:
\[
\alpha: \mathcal{O}(X) \rightarrow \mathrm{Vect}(X), \quad \alpha(f) = \mathcal{E}_f,
\]
where $\mathrm{Vect}(X)$ denotes the space of vector fields on $X$.

\begin{definition}
A maximal locally closed connected subvariety $Z \subset X$ is called a \emph{symplectic leaf} if $\alpha_x: \mathcal{O}(X) \rightarrow T_x X$ is surjective for all $x \in Z$.
\end{definition}

We now recall a fundamental result of Etingof–Schedler \cite{etingof2010poisson}.
\begin{theorem}[\cite{etingof2010poisson}]\label{leaves}
Let \( X \) be an affine Poisson variety, and let \( G \) be a finite group of Poisson automorphisms of \( X \). Assume that \( X \) has finitely many symplectic leaves. Then
\begin{itemize}
    \item[(1)] For any coherent sheaf \( N \) of Poisson \( \mathcal{O}_X \)-modules, the quotient \( N / \{ \mathcal{O}_X, N \} \) is finite-dimensional.
    \item[(2)] $X^G$ has finitely many symmplectic leaves. The invariant quotient $\mathcal{O}_X / \{\mathcal{O}_{X/G}, \mathcal{O}_X\}$ is finite-dimensional. In particular, 
    \[
    \left((\mathcal{O}_X)^{G}\right)/\left\{(\mathcal{O}_X)^{G}, (\mathcal{O}_X)^{G}\right\}
    \]
    is finite-dimensional.\end{itemize}
Moreover, let \( A \) be a nonnegatively filtered associative algebra such that \( \operatorname{gr}(A) \) is a finitely generated module over its center \( Z \), and assume that \( \operatorname{Specm}(Z) \) has finitely many symplectic leaves. Then
\begin{itemize}
    \item[(3)] The algebra \( A \) has finitely many irreducible finite-dimensional representations.
\end{itemize}
\end{theorem}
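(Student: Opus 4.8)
My plan is to follow the $\mathcal{D}$-module strategy of Etingof--Schedler: establish a single finiteness mechanism for Poisson coinvariants in part (1), and then deduce the equivariant statement (2) and the associative-algebra statement (3) from it by essentially formal arguments. For part (1), the idea is to attach to the coherent Poisson sheaf $N$ a right $\mathcal{D}$-module $M(N)$ ---working after a closed embedding $X \hookrightarrow V$ into a smooth affine variety, so that $\mathcal{D}_V$ is available--- generated by $N$ together with the Hamiltonian vector fields $\mathcal{E}_f = \{f, \cdot\}$ along $X$, arranged so that the de Rham pushforward $\pi_{*}M(N)$ along $\pi : X \to \mathrm{pt}$ recovers the Poisson coinvariants $N/\{\mathcal{O}_X, N\}$ in its cohomology. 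First I would verify this identification by unwinding the pushforward complex, which reduces finite-dimensionality of $N/\{\mathcal{O}_X, N\}$ to holonomicity of $M(N)$, since holonomic $\mathcal{D}$-modules have finite-dimensional de Rham cohomology. The crucial step is to bound the characteristic variety of $M(N)$: because $\alpha_x : \mathcal{O}(X) \to T_x X$ is surjective precisely on the symplectic leaves, the relations imposed by the Hamiltonian fields force the singular support into the union $\bigcup_{L} \overline{T^{*}_{L} X}$ of the closures of the conormals to the leaves $L$. As $X$ has only finitely many leaves, this is a finite union of conical Lagrangians, hence itself Lagrangian, so $M(N)$ is holonomic and (1) follows.

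For part (2), I would work with the quotient Poisson variety $\operatorname{Specm}((\mathcal{O}_X)^G)$, whose symplectic leaves are the images of the leaves of $X$ under the quotient map; since $G$ permutes the finitely many leaves of $X$, there are again only finitely many, giving the first assertion. To finite-dimensionalize the invariant quotient, I would view $\mathcal{O}_X$ as a coherent Poisson module over $(\mathcal{O}_X)^G$ ---coherent because $\mathcal{O}_X$ is a finite $(\mathcal{O}_X)^G$-module by finiteness of $G$--- and apply part (1) on the base $\operatorname{Specm}((\mathcal{O}_X)^G)$ to get that $\mathcal{O}_X/\{(\mathcal{O}_X)^G, \mathcal{O}_X\}$ is finite-dimensional. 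Finally, averaging over $G$ via the Reynolds operator realizes $(\mathcal{O}_X)^G/\{(\mathcal{O}_X)^G, (\mathcal{O}_X)^G\}$ as a direct summand of the preceding quotient, so it too is finite-dimensional.

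For part (3), I would first show that $A/[A,A]$ is finite-dimensional. Filtering $A/[A,A]$ by the given filtration, the principal symbol of a commutator is the Poisson bracket of the symbols, so $\operatorname{gr}(A/[A,A])$ is a quotient of $\operatorname{gr}(A)/\{\operatorname{gr}(A), \operatorname{gr}(A)\}$, which in turn is a quotient of $\operatorname{gr}(A)/\{Z, \operatorname{gr}(A)\}$ since $\{Z, \operatorname{gr}(A)\} \subseteq \{\operatorname{gr}(A), \operatorname{gr}(A)\}$. Regarding $\operatorname{gr}(A)$ as a coherent Poisson module on $\operatorname{Specm}(Z)$ (coherent by the finite-generation hypothesis), part (1) makes $\operatorname{gr}(A)/\{Z, \operatorname{gr}(A)\}$ finite-dimensional, whence $A/[A,A]$ is finite-dimensional. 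To conclude, every finite-dimensional irreducible representation $E$ has a character $\chi_E = \operatorname{tr} \circ \rho_E$ which vanishes on $[A,A]$ and hence factors through $A/[A,A]$; the characters of pairwise non-isomorphic finite-dimensional irreducibles are linearly independent (by the Jacobson density theorem, $A$ surjects onto any finite product $\prod_i \operatorname{End}(E_i)$ of the corresponding matrix algebras). Linear independence inside the finite-dimensional space $(A/[A,A])^{*}$ then bounds the number of isomorphism classes by $\dim_{\mathbb{C}} A/[A,A]$.

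The main obstacle is the holonomicity claim in part (1): constructing $M(N)$ so that its pushforward genuinely computes $N/\{\mathcal{O}_X, N\}$, and then pinning its characteristic variety down to the union of conormals of the leaves. Once that single finiteness input is secured, the equivariant descent in (2) and the character-counting argument in (3) are comparatively routine.
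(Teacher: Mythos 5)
The paper does not prove this statement; it is quoted directly from Etingof--Schedler \cite{etingof2010poisson} as an external input, so there is no internal proof to compare against. Your proposal is a correct reconstruction of the original Etingof--Schedler argument: the $\mathcal{D}$-module $M(N)$ whose de Rham pushforward computes the Poisson coinvariants, holonomicity via confinement of the characteristic variety to the union of conormals of the finitely many leaves, equivariant descent by treating $\mathcal{O}_X$ as a coherent Poisson module over $(\mathcal{O}_X)^G$ plus the Reynolds operator, and the bound on irreducibles via linear independence of characters on the finite-dimensional space $A/[A,A]$. The only cosmetic caveat is in part (3): when $\operatorname{gr}(A)$ is noncommutative the intermediate object $\{\operatorname{gr}(A),\operatorname{gr}(A)\}$ is not literally a Poisson bracket, so it is cleaner to argue directly that $\operatorname{gr}([A,A])\supseteq\{Z,\operatorname{gr}(A)\}$ and apply part (1) to $\operatorname{gr}(A)$ as a coherent Poisson module over $Z$, which is what your argument in substance does.
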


\begin{example}
\begin{itemize}
    \item[\cite{chriss1997representation}] The algebra $\mathcal{O}(M)$ of regular functions on a symplectic manifold $M$ has a canonical Poisson structure. Each connected component of $M$ is a single symplectic leaf. Examples include $\mathbb{C}^{2n}$, cotangent bundles of smooth manifolds, and coadjoint orbits in $\mathfrak{g}^*$.
    \item[\cite{etingof2018poisson}] If a normal variety $X$ admits a symplectic resolution $\rho: \tilde{X} \rightarrow X$ (i.e., $\tilde{X}$ has a global nondegenerate closed 2-form), then $X$ has finitely many symplectic leaves. Examples include Nakajima quiver varieties, nilpotent cones, Slodowy slices, Kleinian singularities, and $\mathbb{C}^2/S_n$.
    \item (Conjectural) For vertex algebras arising from 4d SCFTs, the associated variety is expected to have finitely many symplectic leaves. This includes, for example, genus-zero Moore–Tachikawa symplectic varieties.
\end{itemize}
\end{example}

\begin{definition}[\cite{arakawa2012remark, arakawa2018quasi}]
A finitely strongly generated vertex (super)algebra $V$ is said to be:
\begin{itemize}
    \item \emph{lisse} if $\dim \mathrm{Spec}(R_V) = 0$;
    \item \emph{quasi-lisse} if the reduced Poisson variety $(\tilde{X}_V)_{\mathrm{red}}$ has finitely many symplectic leaves. % and $V_{\overline{1}}$ is finitely generated over $V_{\overline{0}}$.
\end{itemize}
\end{definition}

\begin{example}[\cite{gorelik2007simplicity}]
The affine vertex superalgebra $L_k(\mathfrak{g})$ is lisse if and only if $\mathfrak{g}$ is a simple Lie algebra or a Lie superalgebra of type $B(0,n)$ with $n, k \in \mathbb{Z}_{+}$.
\end{example}

We now explain how Theorem~\ref{leaves} specializes to the setting of vertex operator (super)algebras. In particular, we formulate a corresponding statement for Zhu's \( C_2 \)-algebra \( R_V \) and its modules $\mathcal{M}$.
\begin{corollary}\label{leaves'}
Let $V$ be a quasi-lisse vertex superalgebra,% i.e., $\mathrm{Specm}(R_V)$ has finitely many symplectic leaves, 
and let $G$ be a finite group of automorphisms of $R_V$. Suppose $\mathcal{M}$ is a finitely generated $R_V$-module. Then
\begin{itemize}
    \item $\dim(\mathcal{M} / \{R_V, \mathcal{M}\}) < \infty$,
    \item $\mathrm{Specm}((R_V)^G)$ has finitely many symplectic leaves, and 
    \[
    \dim(R_V / \{(R_V)^G, R_V\}) < \infty.
    \]
\end{itemize}
\end{corollary}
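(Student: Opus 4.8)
The plan is to read the corollary off as the specialization of Theorem~\ref{leaves} (Etingof--Schedler) to the affine Poisson scheme $X = \mathrm{Spec}(R_V)$. Since $V$ is finitely strongly generated, $R_V$ is a finitely generated supercommutative Poisson superalgebra, hence Noetherian; consequently a finitely generated Poisson $R_V$-module $\mathcal{M}$ sheafifies to a coherent sheaf $N$ of Poisson $\mathcal{O}_X$-modules, the invariant subalgebra $(R_V)^G = \mathcal{O}_{X/G}$ is the coordinate ring of the quotient $X/G = \mathrm{Specm}((R_V)^G)$, and the quasi-lisse hypothesis says precisely that the reduced variety $(\tilde{X}_V)_{\mathrm{red}} = \mathrm{Spec}((R_V)_{\mathrm{red}})$ has finitely many symplectic leaves. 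With this dictionary in place, the first bullet is Theorem~\ref{leaves}(1) and the finite-dimensionality assertion in the second bullet is Theorem~\ref{leaves}(2).

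For the first bullet I would observe that $\Gamma(X, N/\{\mathcal{O}_X, N\}) = \mathcal{M}/\{R_V, \mathcal{M}\}$, so that Theorem~\ref{leaves}(1) immediately gives the claimed finite-dimensionality. For the assertion that $\mathrm{Specm}((R_V)^G)$ has finitely many symplectic leaves, I would argue that the finite group $G$ permutes the finitely many symplectic leaves of $X$ and that the quotient morphism $\pi\colon X \to X/G$ is finite; the images $\pi(\mathcal{L})$ of the leaves $\mathcal{L}$ of $X$ are then exactly the symplectic leaves of the induced Poisson structure on $X/G$, so there are only finitely many. The remaining statement $\dim(R_V/\{(R_V)^G, R_V\}) < \infty$ is Theorem~\ref{leaves}(2) applied with $\mathcal{O}_{X/G} = (R_V)^G$.

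The one genuine issue---the main obstacle---is that Theorem~\ref{leaves} is phrased for ordinary affine Poisson varieties, whereas $R_V$ is a \emph{possibly non-reduced} Poisson \emph{super}algebra and the symplectic-leaf hypothesis concerns only its reduced variety. I would bridge this gap as follows. In characteristic zero the nilradical $\mathfrak{n} = \sqrt{0}$ of $R_V$ is a Poisson ideal (from $a^k = 0$ one checks inductively that $\{b,a\}$ is again nilpotent), and $\mathfrak{n}$ contains every odd element, since odd elements square to zero in a supercommutative ring. Hence $R_{\mathrm{red}} := R_V/\mathfrak{n}$ is an \emph{ordinary}, reduced, finitely generated Poisson algebra with $\mathrm{Spec}(R_{\mathrm{red}}) = (\tilde{X}_V)_{\mathrm{red}}$, to which Etingof--Schedler applies directly.

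To transfer finiteness back to $R_V$ itself I would exploit that $\mathfrak{n}$ is nilpotent (as $R_V$ is Noetherian), so the $\mathfrak{n}$-adic filtration $\mathcal{M} \supseteq \mathfrak{n}\mathcal{M} \supseteq \mathfrak{n}^2\mathcal{M} \supseteq \cdots$ (and the analogous filtration of $R_V$ by the $G$-stable Poisson ideals $\mathfrak{n}^i$) is finite, and each graded piece $\mathfrak{n}^i\mathcal{M}/\mathfrak{n}^{i+1}\mathcal{M}$ is a finitely generated Poisson $R_{\mathrm{red}}$-module. Applying the reduced case to each graded piece and reassembling along the finite filtration yields finite-dimensionality of $\mathcal{M}/\{R_V,\mathcal{M}\}$, and likewise of $R_V/\{(R_V)^G, R_V\}$, with $G$-equivariance preserved because $G$ fixes $\mathfrak{n}$. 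The only point needing care is the bookkeeping lemma that a module carrying a finite filtration whose associated graded pieces have finite-dimensional zeroth Poisson homology (relative to $R_{\mathrm{red}}$, resp. $(R_{\mathrm{red}})^G$) itself has finite-dimensional Poisson homology; this is routine right-exactness of the quotient functor applied stepwise up the filtration.
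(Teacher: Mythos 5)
Your overall strategy (specializing Theorem~\ref{leaves} to $X=\mathrm{Spec}(R_V)$) is the right one, but the bridge you build from the possibly non-reduced superalgebra $R_V$ to an honest affine Poisson variety has a genuine gap. You pass to $R_{\mathrm{red}}=R_V/\mathfrak{n}$ and claim that each graded piece $\mathfrak{n}^i\mathcal{M}/\mathfrak{n}^{i+1}\mathcal{M}$ of the $\mathfrak{n}$-adic filtration is a finitely generated Poisson $R_{\mathrm{red}}$-module. That step fails: the fact that $\mathfrak{n}$ is a Poisson ideal gives $\{R_V,\mathfrak{n}^i\mathcal{M}\}\subseteq\mathfrak{n}^i\mathcal{M}$, but it does not give $\{\mathfrak{n},\mathfrak{n}^i\mathcal{M}\}\subseteq\mathfrak{n}^{i+1}\mathcal{M}$; already for $i=0$ there is no reason for $\{a,m\}$ with $a\in\mathfrak{n}$, $m\in\mathcal{M}$ to lie in $\mathfrak{n}\mathcal{M}$. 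Hence the bracket does not descend to $R_{\mathrm{red}}$ on the graded pieces, and Theorem~\ref{leaves}(1) cannot be fed these pieces over $R_{\mathrm{red}}$ as you propose, so the "reassembly along the filtration" has nothing correct to reassemble. (A smaller inaccuracy: the images of the symplectic leaves of $X$ under $X\to X/G$ are not in general the symplectic leaves of $X/G$ --- a single leaf can break into several pieces according to stabilizer type, as for $\mathbb{C}^2/(\mathbb{Z}/2)$ --- but since you only need finiteness and that is anyway part of the cited Theorem~\ref{leaves}(2), this does not affect the conclusion.)

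The paper's own proof avoids all of this by a simpler reduction that never touches the nilradical: it restricts to the even subalgebra $(R_V)_{\overline{0}}$. Since $R_V$ is finitely generated and $(R_V)_{\overline{1}}$ is a finite $(R_V)_{\overline{0}}$-module, $\mathcal{M}$ is finitely generated over $(R_V)_{\overline{0}}$, and Theorem~\ref{leaves}(1), applied to $\mathrm{Specm}((R_V)_{\overline{0}})$ (whose underlying reduced variety is $(\tilde{X}_V)_{\mathrm{red}}$, so the leaf hypothesis is exactly quasi-lisseness), gives $\dim\bigl(\mathcal{M}/\{(R_V)_{\overline{0}},\mathcal{M}\}\bigr)<\infty$; the trivial inclusion $\{(R_V)_{\overline{0}},\mathcal{M}\}\subseteq\{R_V,\mathcal{M}\}$ then finishes the first bullet, since enlarging the subspace you quotient by can only decrease the dimension. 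The second bullet is handled identically, using that $R_V$ is finite over $((R_V)^G)_{\overline{0}}$ because $G$ is finite. The non-reducedness is absorbed into the statement of Theorem~\ref{leaves} as the paper uses it (its hypothesis concerns only $\mathrm{Specm}$), so no filtration is needed. The cleanest repair of your argument is to drop the reduction mod $\mathfrak{n}$ entirely and use this monotonicity trick with respect to the even part instead.
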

\begin{proof}
Since \( V \) is quasi-lisse, \( \mathrm{Specm}((R_V)_{\overline{0}}) \) has finitely many symplectic leaves. Moreover, \( V \) is strongly finitely generated, so \( R_V \) is finitely generated over \( \mathbb{C} \), and hence Noetherian. Thus, coherent sheaves on \( \mathrm{Specm}((R_V)_{\overline{0}}) \) correspond to finitely generated \((R_V)_{\overline{0}}\)-modules.

By assumption, \( \mathcal{M} \) is finitely generated over \( R_V \). Since \( R_V \) is Noetherian and \((R_V)_{\overline{1}}\) is finitely generated over \((R_V)_{\overline{0}}\), it follows that \( \mathcal{M} \) is finitely generated over \((R_V)_{\overline{0}}\). Applying Theorem~\ref{leaves}, we obtain
\[
\dim(\mathcal{M} / \{(R_V)_{\overline{0}}, \mathcal{M}\}) < \infty.
\]

Since \( G \) is finite, \( R_V \) is finitely generated over \((R_V)^G\), and hence over \(((R_V)^G)_{\overline{0}}\). Applying Theorem~\ref{leaves} again yields
\[
\dim(R_V / \{((R_V)^G)_{\overline{0}}, R_V\}) < \infty.
\]
In particular,
\[
\dim(\mathcal{M} / \{R_V, \mathcal{M}\}) < \infty, \quad \dim(R_V / \{(R_V)^G, R_V\}) < \infty.
\]
\end{proof}

\begin{remark}\label{ESZhu}
In the following, we shall also apply Theorem~\ref{leaves} (3) in the vertex superalgebra setting. Specifically, we consider the associative algebra \( A_g(V) \) equipped with a non-negative filtration. Under the assumption that \( V \) is quasi-lisse, it follows from Lemma~\ref{epi} that \( \operatorname{Specm}(\operatorname{gr} A_g(V)) \) has finitely many symplectic leaves. Consequently, by Theorem~\ref{leaves} (3), the algebra \( A_g(V) \) has finitely many irreducible finite-dimensional representations.
\end{remark}

\subsection{Twisted modules}
Following~\cite{dong2006twisted}, we define an automorphism \( g \) of a vertex operator superalgebra \( V \), and the notion of an ordinary \( g \)-twisted \( V \)-module.

\begin{definition}
An automorphism \( g \) of \( V \) is a linear automorphism preserving the conformal vector \( \omega \) and satisfying the compatibility condition
\[
g Y(v,z) g^{-1} = Y(gv, z), \quad \text{for all } v \in V.
\]
\end{definition}

We denote the automorphism group of \( V \) by \( \mathrm{Aut}\,V \). Let \( \sigma \) be the canonical parity automorphism defined by \( \sigma|_{V_{\overline{i}}} = (-1)^i \, \mathrm{id}_V \). Let \( T \) and \( T' \) denote the orders of \( g \) and \( g\sigma \), respectively. Then we have the eigenspace decompositions:
\begin{align*}
    V &= \bigoplus_{r \in \mathbb{Z}/T'\mathbb{Z}} V^{r*}, \quad \text{where } V^{r*} = \left\{ v \in V \;\middle|\; g\sigma v = e^{-2\pi i r/T'} v \right\}, \\
    V &= \bigoplus_{r \in \mathbb{Z}/T\mathbb{Z}} V^r, \quad \text{where } V^r = \left\{ v \in V \;\middle|\; gv = e^{-2\pi i r/T} v \right\}.
\end{align*}

\begin{definition}[Ordinary twisted module]\label{4.7}
Let \( T \) be the order of \( g \). A (weak) \( g \)-twisted \( V \)-module is a complex vector space \( M \) equipped with a linear map
\[
V \to \mathrm{End}(M)[[z^{1/T}, z^{-1/T}]], \quad v \mapsto Y_M(v,z) = \sum_{n \in \mathbb{Q}} v_{(n)} z^{-n-1},
\]
such that
\begin{itemize}
    \item For all \( v \in V \), \( w \in M \), we have \( v_{(n)} w = 0 \) for sufficiently large \( n \).
    \item \( Y_M(\mathbf{1}, z) = \mathrm{id}_M \).
    \item For \( v \in V^r \) with \( 0 \leq r < T \), we have
    \[
    Y_M(v, z) = \sum_{n \in \frac{r}{T} + \mathbb{Z}} v_{(n)} z^{-n-1}.
    \]
    \item (\textbf{Twisted Jacobi identity}) For \( u \in V^r \), the following identity holds:
\begin{align*}
    & z_0^{-1} \delta\left( \frac{z_1 - z_2}{z_0} \right) Y_M(u, z_1) Y_M(v, z_2) \\
    &\quad - (-1)^{|u||v|} z_0^{-1} \delta\left( \frac{z_2 - z_1}{-z_0} \right) Y_M(v, z_2) Y_M(u, z_1) \\
    &= z_2^{-1} \left( \frac{z_1 - z_0}{z_2} \right)^{-r/T} 
    \delta\left( \frac{z_1 - z_0}{z_2} \right) 
    Y_M(Y(u, z_0)v, z_2).
\end{align*}
    \item (\textbf{Grading condition}) \( M \) admits a decomposition
    \[
    M = \bigoplus_{\lambda \in \mathbb{C}} M_\lambda, \quad \text{where } M_\lambda = \{ w \in M \mid L_{(0)} w = \lambda w \},
    \]
    such that each \( M_\lambda \) is finite-dimensional, and for fixed \( \lambda \), \( M_{\lambda + \frac{n}{T'}} = 0 \) for \( n \ll 0 \).
\end{itemize}
\end{definition}

\begin{remark}\label{4.8}
If the grading condition above is dropped, one refers to \( M \) as a weak \( g \)-twisted module. If \( M \) carries a \( \tfrac{1}{T'} \mathbb{N} \)-grading,
\[
M = \bigoplus_{n \in \tfrac{1}{T'} \mathbb{N}} M_{(h+n)} := \bigoplus_{n \in \tfrac{1}{T'} \mathbb{N}} M(n),
\]
and satisfies
\[
v_{(m)} M(n) \subset M(n + \mathrm{wt}(v) - m - 1),
\]
for homogeneous \( v \in V \), then \( M \) is called an admissible \( g \)-twisted module. 
\end{remark}

Finally, set
\[
Y_M(\omega, z) = \sum_{n \in \mathbb{Z}} L_{(n)} z^{-n-2}.
\]
Then, for all \( v \in V \),
\[
Y_M(L_{(-1)} v, z) = \frac{d}{dz} Y_M(v, z),
\]
and the operators \( L_{(n)} \) satisfy the Virasoro algebra relations with central charge \( c \) (see~\cite{dong1995regularity}).

Let $M$ be a weak $V$-module (as in Definitions~\ref{4.7} and~\ref{4.8}). Define
\begin{align}
&C_1(M) = \mathrm{span}_{\mathbb{C}} \{ \omega_{(0)} m,\, a_{(-1)} m \mid a \in \bigoplus_{\Delta>0} V_\Delta,\, m \in M \}, \\
&C_2(M) = \mathrm{span}_{\mathbb{C}} \{ a_{(-2)} m \mid a \in V,\, m \in M \}.
\end{align}

We say $M$ is $C_1$-cofinite (resp. $C_2$-cofinite) if $\dim(M / C_1(M)) < \infty$ (resp. $\dim(M / C_2(M)) < \infty$). Define $R_M := M / C_2(M)$, which naturally inherits a Poisson $R_V$-module structure:
\begin{align*}
\bar{a} \cdot \bar{m} = \overline{a_{(-1)} m}, \quad \{ \bar{a}, \bar{m} \} = \overline{a_{(0)} m},
\end{align*}
for $a \in V$, $m \in M$. We say that $M$ is \emph{strongly generated over $V$} if $R_M$ is finitely generated as an $R_V$-module.

\subsection{Ordinary twisted modules}

Let $V = \bigoplus_{n \in \frac{1}{2}\mathbb{Z}} V_{(n)}$ be a vertex superalgebra, where
\[
V_{\overline{0}} = \bigoplus_{n \in \mathbb{Z}} V_{(n)}, \quad V_{\overline{1}} = \bigoplus_{n \in \mathbb{Z} + \frac{1}{2}} V_{(n)}.
\]
Following~\cite{dong2006twisted}, we define twisted Zhu algebras as follows. For homogeneous $a, b \in V$, define
\begin{align}
\label{zhu1}
a *_g b &= 
\begin{cases}
\sum_{i \in \mathbb{N}} \binom{\mathrm{wt}(a)}{i} a_{(i-1)} b, & \text{if } a, b \in V^{0*}, \\
0, & \text{if } a \text{ or } b \in V^{r*},\ r \neq 0,
\end{cases} \\
\label{zhu2}
a \circ_g b &= 
\begin{cases}
\sum_{i \in \mathbb{N}} \binom{\mathrm{wt}(a)}{i} a_{(i-2)} b, & \text{if } a \in V^{0*}, \\
\sum_{i \in \mathbb{N}} \binom{\mathrm{wt}(a) - 1 + \frac{r}{T'}}{i} a_{(i-1)} b, & \text{if } a \in V^{r*},\ r \neq 0.
\end{cases}
\end{align}

Let $O_g(V)$ denote the linear span of elements of the form $a \circ_g b$ in $V$. The twisted Zhu algebra $A_g(V)$ is defined as the quotient space $V / O_g(V)$, with multiplication induced from $*_g$. According to~\cite{de2006finite}, there exists a filtration $\{\overline{F}_k(A_g(V))\}$ defined by
\[
\overline{F}_k(A_g(V)) := \left( \bigoplus_{i \in \frac{1}{2} \mathbb{Z},\, i \leq k} V_{(i)} + O_g(V) \right) / O_g(V).
\]
Its associated graded algebra is
\[
\mathrm{gr}^{\overline{F}}(A_g(V)) := \bigoplus_{k \geq 0} \overline{F}_k(A_g(V)) / \overline{F}_{k-1}(A_g(V)),
\]
which is a commutative algebra. Define $\overline{F}_k(V) := \bigoplus_{i \in \frac{1}{2} \mathbb{Z},\, i \leq k} V_{(i)}$.

\begin{lemma}\label{epi}
There exists a surjective Poisson algebra homomorphism
\[
\mathcal{G}: (R_V)^{\sigma g} \twoheadrightarrow \mathrm{gr}^{\overline{F}}(A_g(V)),
\]
given by $\mathcal{G}(a + (C_2(V))_{(p)}) = a + O_g(V) + \overline{F}_{p-1}(V)$ for $a \in V^{\sigma g}$.
\end{lemma}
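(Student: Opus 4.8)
The plan is to define $\mathcal{G}$ on each weight-graded piece and to check, in order, well-definedness, compatibility with the commutative product, compatibility with the Poisson bracket, and surjectivity. I work throughout with $V^{0*}=\{v\mid \sigma g v=v\}$ and use that taking $\langle\sigma g\rangle$-invariants is exact over $\mathbb{C}$, so that a homogeneous weight-$p$ element of $(R_V)^{\sigma g}$ is represented by some $a\in V^{0*}_{(p)}$, uniquely modulo $(C_2(V)\cap V^{0*})_{(p)}$. Two leading-term identities drive everything: for $a\in V^{0*}$ the top-weight component of $a\circ_g b$ is $a_{(-2)}b$, and that of $a*_g b$ is $a_{(-1)}b$, with every remaining summand of strictly lower weight.

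First, well-definedness of $\mathcal{G}$ requires that $(C_2(V)\cap V^{0*})_{(p)}\subseteq O_g(V)+\overline{F}_{p-1}(V)$. Decomposing into $\sigma g$-eigencomponents reduces this to a single generator $u_{(-2)}v$ with $u\in V^{r*}$ and $v\in V^{-r*}$. When $r=0$ the first identity above gives $u_{(-2)}v\equiv u\circ_g v\bmod\overline{F}_{p-1}(V)$. When $r\neq0$, I pass to $Tu=L_{(-1)}u\in V^{r*}$: since $(Tu)_{(-1)}=u_{(-2)}$, the $r\neq0$ branch of $\circ_g$ yields $(Tu)\circ_g v\equiv(Tu)_{(-1)}v=u_{(-2)}v\bmod\overline{F}_{p-1}(V)$. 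In both cases $u_{(-2)}v\in O_g(V)+\overline{F}_{p-1}(V)$, so $\mathcal{G}$ is independent of the representative.

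That $\mathcal{G}$ is a homomorphism of commutative algebras is immediate from the second identity: for homogeneous $a,b\in V^{0*}$ of weights $p$ and $q$, the product of the classes of $a$ and $b$ in $\mathrm{gr}^{\overline{F}}(A_g(V))$ is the class of $a*_g b$, whose leading term $a_{(-1)}b$ equals $\mathcal{G}(\overline{a_{(-1)}b})=\mathcal{G}(\bar a\cdot\bar b)$. For the Poisson bracket I use that $\mathrm{gr}^{\overline{F}}(A_g(V))$ is commutative, so the supercommutator $a*_g b-(-1)^{p(a)p(b)}b*_g a$ lands in $\overline{F}_{p+q-1}(V)$ modulo $O_g(V)$, and its class there is by definition $\{\mathcal{G}(\bar a),\mathcal{G}(\bar b)\}$. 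Expanding the two $*_g$ products and eliminating the common term $a_{(-1)}b$ through skew-symmetry of $Y$, the resulting $T$-derivative corrections either lie in $O_g(V)$ or drop below weight $p+q-1$, leaving $a_{(0)}b$ as the surviving top term; this is $\mathcal{G}(\overline{a_{(0)}b})=\mathcal{G}(\{\bar a,\bar b\})$. I expect this last computation---the bookkeeping of the Borcherds and skew-symmetry identities modulo $O_g(V)$ and lower filtration in the twisted normalization---to be the main obstacle, being the twisted counterpart of Zhu's original bracket calculation.

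Finally, surjectivity rests on the observation that $V^{r*}\subseteq O_g(V)$ whenever $r\neq0$: for $v\in V^{r*}$ the creation property annihilates every term of $v\circ_g\mathbf{1}=\sum_{i\geq0}\binom{\mathrm{wt}(v)-1+r/T'}{i}v_{(i-1)}\mathbf{1}$ except the $i=0$ term $v_{(-1)}\mathbf{1}=v$, so $v=v\circ_g\mathbf{1}\in O_g(V)$. Consequently every class in $A_g(V)$, hence in $\mathrm{gr}^{\overline{F}}(A_g(V))$, admits a representative in $V^{0*}$: a homogeneous class of degree $p$ represented by $a\in V_{(p)}$ satisfies $a-\pi^{0*}(a)\in\bigoplus_{r\neq0}V^{r*}\subseteq O_g(V)$, where $\pi^{0*}$ is the weight-preserving projection onto $V^{0*}$, so its class equals $\mathcal{G}(\overline{\pi^{0*}(a)})$. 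This shows $\mathcal{G}$ is onto and completes the construction.
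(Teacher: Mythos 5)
Your proof is correct and follows essentially the same route as the paper: well-definedness via the leading-term comparison of $a_{(-2)}b$ with $a\circ_g b$, compatibility of product and bracket via the leading terms $a_{(-1)}b$ and $a_{(0)}b$ of $a*_g b$ and the supercommutator, and surjectivity from $V^{r*}\subset O_g(V)$ for $r\neq 0$. You supply two details the paper leaves implicit --- the $L_{(-1)}u$ trick handling $u_{(-2)}v$ when $u\in V^{r*}$, $r\neq 0$, $v\in V^{-r*}$, and the explicit verification $v=v\circ_g\mathbf{1}$ --- both of which are correct.
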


\begin{proof}
To show that $\mathcal{G}$ is well-defined, it suffices to prove that
\[
(C_2(V))_{(p)} \subset O_g(V) + \overline{F}_{p-1}(V).
\]
Each element in $(C_2(V))_{(p)}$ is a linear combination of elements of the form $a_{(-2)} b$ with homogeneous $a, b \in V$ and $\mathrm{wt}(a_{(-2)} b) = p$. For such elements, we have $\mathrm{wt}(a_{(i-2)} b) \leq p - 1$ for all $i \in \mathbb{N}$. Hence
\[
a_{(-2)} b \equiv 
\begin{cases}
a \circ_g b \mod \overline{F}_{p-1}(V), & \text{if } a \in V^{0*}, \\
0 \mod \overline{F}_{p-1}(V) + O_g(V), & \text{if } a \in V^{r*},\, r \neq 0.
\end{cases}
\]
Thus, $\phi$ is well-defined.

To see that $\mathcal{G}$ is a Poisson algebra homomorphism, observe that for $a, b \in V^{\sigma g}$ of conformal weights $p$ and $q$, and for any representatives $u \in a + (C_2(V))_{(p)}$, $v \in b + (C_2(V))_{(q)}$, we have
\begin{align*}
u_{(-1)} v &\in a_{(-1)} b + (C_2(V))_{(p+q)}, \\
u_{(0)} v &\in a_{(0)} b + (C_2(V))_{(p+q-1)}.
\end{align*}
Meanwhile, using the definition of $*_g$, one sees
\begin{align*}
u *_g v &\in a_{(-1)} b + O_g(V) + \overline{F}_{p+q-1}(V), \\
u *_g v - v *_g u &\in a_{(0)} b + O_g(V) + \overline{F}_{p+q-2}(V).
\end{align*}
This shows that $\mathcal{G}$ respects both the multiplication and the Poisson bracket. Surjectivity follows from the fact that $V^{r*} \subset O_g(V)$ when $r \neq 0$.
\end{proof}

\begin{theorem}\label{finitelymanyordinary}
Let $V$ be a quasi-lisse vertex operator superalgebra. Then $V$ admits only finitely many isomorphism classes of simple ordinary $g$-twisted modules.
\end{theorem}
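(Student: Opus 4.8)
The plan is to transfer the statement to a finiteness property of the twisted Zhu algebra $A_g(V)$ and then invoke Theorem~\ref{leaves}(3), exactly along the lines announced in Remark~\ref{ESZhu}. First I would record the twisted analogue of Zhu's correspondence from~\cite{dong2006twisted}: sending an ordinary $g$-twisted module $M$ to its lowest-weight (top) space $M(0)$ induces a bijection between isomorphism classes of simple ordinary $g$-twisted $V$-modules and isomorphism classes of simple $A_g(V)$-modules, and under this correspondence the ordinary modules are precisely those whose top space $M(0)$ is finite-dimensional. Indeed, the grading condition of Definition~\ref{4.7} forces each weight space, hence $M(0)$, to be finite-dimensional. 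Thus it suffices to prove that $A_g(V)$ has only finitely many irreducible finite-dimensional representations.

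Next I would verify the hypotheses of Theorem~\ref{leaves}(3) for $A:=A_g(V)$, filtered by the nonnegative filtration $\{\overline{F}_k(A_g(V))\}$ recalled above. Since $\operatorname{gr}^{\overline{F}}(A_g(V))$ is commutative, it equals its own center and is trivially finitely generated as a module over it, so the only condition left to check is that $\operatorname{Specm}(\operatorname{gr}^{\overline{F}}(A_g(V)))$ has finitely many symplectic leaves.

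For this geometric input, observe that $\sigma g$ has finite order $T'$, so $G:=\langle \sigma g\rangle$ is a finite group acting by Poisson automorphisms on $X_V=\operatorname{Specm}(R_V)$. Quasi-lisseness gives finitely many symplectic leaves on $X_V$, and Theorem~\ref{leaves}(2) then yields finitely many symplectic leaves on the quotient $\operatorname{Specm}((R_V)^{\sigma g})$. Lemma~\ref{epi} supplies a surjection of Poisson algebras $(R_V)^{\sigma g}\twoheadrightarrow \operatorname{gr}^{\overline{F}}(A_g(V))$, which realizes $\operatorname{Specm}(\operatorname{gr}^{\overline{F}}(A_g(V)))$ as a closed Poisson subvariety; since a closed Poisson subvariety of a variety with finitely many symplectic leaves again has finitely many leaves, the hypothesis is met. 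Theorem~\ref{leaves}(3) then gives the finiteness of the set of irreducible finite-dimensional $A_g(V)$-modules, which combined with the first paragraph proves the theorem.

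The step I expect to be the main obstacle is the geometric one in the third paragraph. One must handle reduced structures carefully, since quasi-lisseness is stated for $(\tilde X_V)_{\mathrm{red}}$ while the Zhu data live on $R_V$, and one must justify rigorously that performing the finite quotient by $G=\langle\sigma g\rangle$ and then restricting to the closed Poisson subvariety cut out by $\ker\mathcal{G}$ both preserve the finiteness of the symplectic-leaf stratification. A secondary point needing care is the exact form of the module correspondence in the twisted super setting: matching simplicity of $M$ with simplicity of $M(0)$, and confirming that finite-dimensionality of $M(0)$ is precisely the condition selecting the ordinary modules among the admissible ones.
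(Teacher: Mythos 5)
Your proposal is correct and follows essentially the same route as the paper: Lemma~\ref{epi} realizes $\mathrm{Specm}(\mathrm{gr}^{\overline{F}}(A_g(V)))$ inside $(X_V)_{\mathrm{red}}^{\langle\sigma g\rangle}$, Corollary~\ref{leaves'} gives finitely many symplectic leaves there, Theorem~\ref{leaves}(3) (as in Remark~\ref{ESZhu}) yields finitely many finite-dimensional simple $A_g(V)$-modules, and the twisted Zhu correspondence of Dong--Li--Mason finishes the argument. The extra care you flag about reduced structures and the passage to the closed Poisson subvariety is reasonable but is exactly the step the paper also takes for granted.
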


\begin{proof}
By Lemma~\ref{epi}, we have
\[
\mathrm{Specm}(\mathrm{gr}^{\overline{F}}(A_g(V))) \subset \mathrm{Specm}((R_V)^{\sigma g}) = (X_V)_{\mathrm{red}}^{\langle \sigma g \rangle}.
\]
Since $V$ is quasi-lisse, and by Corollary \ref{leaves'}, the right-hand side has finitely many symplectic leaves. Then, by Theorem~\ref{leaves}, together with the explanation in Remark~\ref{ESZhu}, the twisted Zhu algebra \( A_g(V) \) has finitely many finite-dimensional simple modules. The conclusion follows from~\cite[Theorem 7.2]{dong1995twisted}.
\end{proof}

\begin{example}[\cite{LLY2023spectral}]\label{twistedzhualgebra}
The $e^{\frac{1}{2} \pi i h_{(0)}}$-twisted Zhu algebra of $L_{-\frac{4}{3}}(\mathfrak{sl}_2)$ is
\[
\mathbb{C}[x] / \langle x(x - \tfrac{2}{3})(x + \tfrac{2}{3}) \rangle.
\]
There are three simple twisted modules, with Dynkin labels (i.e., eigenvalues of $h_{(0)}$ on the highest weight vectors) given by $0$, $\pm \frac{2}{3}$.
\end{example}

\section{Genus-zero correlation functions}

\subsection{Intertwining operators among weak \texorpdfstring{$g$}{Lg}-twisted modules}
Let  $(W_{1},Y_{1})$ be a weak module of vertex operator superalgebra $V$, and $(W_{j},Y_{j}),$ $j=2,3$ be $g$-twisted $V$-modules, where the order of $g$ is $T$.  Assume that $W_i$, $i=1,2,3$ are $\mathbb{Z}_{2}$-graded such that $v_{(n)}w\in (W_i)_{|v|+|w|}$ for $v\in V_{|v|}$ and $w\in (W_i)_{|w|}$.

\begin{definition}
An intertwining operator of type $\binom{W_{3}}{W_{1}\;W_{2}}$ is a linear map
\begin{align*}
    \mathcal{Y}(\cdot,x)\cdot :W_{1}\otimes W_2 \rightarrow W_3\{x\}
\end{align*}
such that for $w_{1}\in W_{1}$, $w_{2}\in W_{2}$,
\begin{align}
    (w_{1})_{(n)}w_{2}=0\quad \text{for $n$ sufficiently large},
\end{align}
where $\mathcal{Y}(w_1,x)=\sum_{n\in \mathbb{Q}}(w_1)_{(n)}x^{-n-1};$
the Jacobi identity
\begin{align}\label{iof1}
\begin{split}
  &  x_0^{-1}\delta\left(\frac{x_1-x_2}{x_0}\right)Y_{3}(u,x_{1})\mathcal{Y}(w_1,x_2) \\
  &  \quad -(-1)^{|u||w_1|}x_0^{-1}\delta\left(\frac{-x_2+x_1}{x_0}\right)\mathcal{Y}(w_1,x_2)Y_{2}(u,x_1) \\
  &= x_2^{-1}\delta\left(\frac{x_1-x_0}{x_2}\right)\left(\frac{x_1-x_0}{x_2}\right)^{-\frac{k}{T}}\mathcal{Y}(Y_{1}(u,x_0)w_1,x_2)
\end{split}
\end{align}
holds for $u\in V^{k}$;
\begin{align}
    \frac{d}{dx}\mathcal{Y}(w_{1},x)=\mathcal{Y}(L_{(-1)}w_1,x),
\end{align}
for $w_1\in W_1$.
\end{definition}

Taking $\text{Res}_{x_{0}}$ of (\ref{iof1}), one obtains the commutator formula
\begin{align}\label{commutator1}
    \begin{split}
        &Y_{3}(u,x_{1})\mathcal{Y}(w_1,x_2) - (-1)^{|u||w_1|}\mathcal{Y}(w_1,x_2)Y_{2}(u,x_1) \\
        &\quad = \text{Res}_{x_{0}} x_1^{-1}\delta\left(\frac{x_2+x_0}{x_1}\right)\left(\frac{x_2+x_0}{x_1}\right)^{\frac{k}{T}} \mathcal{Y}(Y_{1}(u,x_0)w_1,x_2)
    \end{split}
\end{align}

Taking $\text{Res}_{x_{1}}$ of (\ref{iof1}), one obtains the associator formula
\begin{align}\label{associator}
    \begin{split}
        &(x_2+x_0)^{\frac{k}{T}}\mathcal{Y}(Y_1(u,x_0)w_1,x_2) - Y_3^\circ(u,x_0+x_2)\mathcal{Y}(w_1,x_2) \\
        &\quad = -\text{Res}_{x_{1}}(-1)^{|u||w_1|}x_0^{-1}\delta\left(\frac{-x_2+x_1}{x_0}\right)\mathcal{Y}(w_1,x_2)Y_2^\circ(u,x_1)
    \end{split}
\end{align}
where $Y^\circ_i(u,x)=x^{\frac{k}{T}}Y_i(u,x)\in \text{End}M[[ x,x^{-1} ]].$

By using a similar argument as in \cite[Theorem 2.4]{huang2010generalized}, one has the following. 

\begin{theorem}
The twisted Jacobi identity (\ref{iof1}) is equivalent to the following property: for any $u\in V$, $w_1\in W_1$, $w_2\in W_2$ and $w'\in (W_3)'$, there exists a multivalued analytic function of the form
\[
    f(z_1,z_2)=\sum_{r,s=N_1}^{N_2}a_{rs}z_{1}^{r/k}z_2^{s/k}(z_1-z_2)^{-N}
\]
such that the series
\begin{align*}
    & \langle w', Y_3(u,z_1)\mathcal{Y}(w_1,z_2)w_2 \rangle,\\
    & (-1)^{|u||w_1|} \langle w', \mathcal{Y}(w_1,z_2)Y_2(u,z_{1})w_2\rangle,\\
    & \langle w',\mathcal{Y}(Y_1(u,z_1-z_2)w_1,z_2)w_2\rangle
\end{align*}
are absolutely convergent in the regions $|z_1|>|z_2|>0$, $|z_2|>|z_1|>0$, $|z_2|>|z_1-z_2|>0$, respectively, and converge to the branch
\begin{align*}
    \sum_{r,s=N_1}^{N_2}a_{rs}e^{(r/k)\log(z_1)}e^{(s/k)\log(z_2)}(z_1-z_2)^{-N}\end{align*}of $f(z_1,z_2)$, where $\log z_i=\log |z|+i\operatorname{arg}z_i$, $i=1,2$ and $0\leq   \operatorname{arg}z_i<2\pi.$
\end{theorem}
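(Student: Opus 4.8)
The plan is to follow the strategy of Huang~\cite[Theorem~2.4]{huang2010generalized}, establishing the two implications of the equivalence separately, with the twisting contributing the fractional powers $z_i^{r/k}$ and the branch factor $\left(\frac{z_1-z_0}{z_2}\right)^{-k/T}$ appearing in~(\ref{iof1}). First I would dispose of convergence. Although $W_1$ is only a weak module, the operators $Y_3(u,z_1)$ and $\mathcal{Y}(w_1,z_2)$ act on the ordinary $g$-twisted modules $W_2$ and $W_3$, which are $L_{(0)}$-graded with finite-dimensional homogeneous subspaces and grading bounded below. Inserting the projections $P_n$ onto $L_{(0)}$-eigenspaces, each of the three matrix coefficients becomes a sum over $n$ of finite sums, and the standard growth estimates of Frenkel--Huang--Lepowsky (cf.~\cite{huang2005differential1}) give absolute convergence in the regions $|z_1|>|z_2|>0$, $|z_2|>|z_1|>0$, and $|z_2|>|z_1-z_2|>0$, respectively. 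The lower-truncation conditions on $Y_i$ and $\mathcal{Y}$, together with the $\mathbb{Q}$-grading of the twisted modules, force only finitely many exponents $r/k,s/k$ to appear at a fixed pole order, which is precisely what pins down the asserted shape of $f$.

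For the implication from the Jacobi identity to duality, I would work from the commutator formula~(\ref{commutator1}) and the associator formula~(\ref{associator}), which~(\ref{iof1}) already yields upon taking $\Res_{x_0}$ and $\Res_{x_1}$. Pairing the $\delta$-function identity against $\langle w',\,\cdot\,\,w_2\rangle$ and applying the twisted analogue of the Frenkel--Lepowsky--Meurman lemma relating a Jacobi-type $\delta$-function identity to the expansions $\iota_{z_1,z_2}$ and $\iota_{z_2,z_1}$ of a single function, I would show that the product matrix coefficient analytically continues, along a path carrying $z_1$ around $z_2$, to the opposite-product coefficient (commutativity), and that its continuation into $|z_2|>|z_1-z_2|>0$ coincides with the iterate coefficient (associativity). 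The three series then converge to the designated branches of a single multivalued function $f(z_1,z_2)=\sum_{r,s=N_1}^{N_2}a_{rs}z_1^{r/k}z_2^{s/k}(z_1-z_2)^{-N}$, with branches fixed by the prescription $0\le\operatorname{arg}z_i<2\pi$.

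For the converse, I would recover the commutator and associator formulas from the analytic-continuation statements by taking formal residues against the branches of $f$, and then invoke the twisted form of the equivalence ``rationality, commutativity, and associativity $\Leftrightarrow$ the Jacobi identity'' (as in~\cite{huang2010generalized}) to reconstruct~(\ref{iof1}). The essential point is that the fractional-power factor $\left(\frac{z_1-z_0}{z_2}\right)^{-k/T}$ in the twisted Jacobi identity is exactly what encodes the monodromy of $f$ at $z_1=z_2$, so the reconstruction of~(\ref{iof1}) is forced once commutativity and associativity are matched with the correct branches.

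The main obstacle will be the bookkeeping of branches and monodromy introduced by the twisting. In the untwisted setting $f$ is a genuine rational function expanded in Laurent series with integer exponents, so the three expansions live on one sheet; here the fractional exponents $r/k,s/k$ make $f$ multivalued, and the continuations between the three regions cross branch cuts. The delicate step in both directions is to verify that the factor $\left(\frac{z_1-z_0}{z_2}\right)^{-k/T}$ reproduces precisely this monodromy, and in particular that the prescribed branch $\sum_{r,s=N_1}^{N_2}a_{rs}e^{(r/k)\log z_1}e^{(s/k)\log z_2}(z_1-z_2)^{-N}$ with $0\le\operatorname{arg}z_i<2\pi$ matches the formal expansions dictated by the $\delta$-functions. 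Establishing this compatibility consistently is the technical heart of the argument.
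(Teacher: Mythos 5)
Your proposal follows exactly the route the paper intends: the paper gives no proof of this statement beyond the remark that it follows ``by a similar argument as in \cite[Theorem 2.4]{huang2010generalized},'' and your outline is precisely the adaptation of Huang's rationality--commutativity--associativity argument to the twisted setting, with the branch/monodromy bookkeeping you identify as the technical heart. The proposal is correct and takes essentially the same approach as the paper.
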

\noindent This property is called the duality property.

\begin{conjecture}\label{duality}
 Let $V$ be a quasi-lisse vertex superalgebra. Let $W_1$, $W_2$ be finitely strongly generated $V$-modules, and $ W_0$, $W_3$ be an ordinary $g$-twisted modules. Let $W_4$, $W_6$ be weak $g$-twisted modules, and $W_5$ be a weak $V$-module. Let $\Y_1,\Y_2,\Y_3,\Y_4,\Y_5,\Y_6$ be intertwining operators of types $\binom{W_0'}{W_1\;W_4},\binom{W_4}{W_2\;W_3},\binom{W_5}{W_1\;W_2},\binom{W_0'}{W_5\;W_3},\binom{W_0'}{W_2\;W_6},\binom{W_6}{W_1\;W_3}$, respectively.  For $w_1\in W_1$, $w_2\in W_2$, $w_3\in W_3$, and $w_0\in W'_0$, there exists a maximally extended multivalued analytic function $f(z_1,z_2;w_1,w_2,w_3,w_4)$ defined on $\{(z_1,z_2)\mid z_i\neq 0,\; z_i\neq z_j,\; i\neq j\}$ such that the series
 \begin{align*}
 &\langle w_0,\Y(w_1,z_1)\Y(w_2,z_2)w_3\rangle,\\
 &\langle w_0,\Y_4(\Y_3(w_1,z_1-z_2)w_2,z_2)w_3\rangle,\\
 &(-1)^{|w_1||w_2|}\langle w_0,\Y_5(w_2,z_2)\Y_6(w_1,z_1)w_3\rangle	
 \end{align*}
are absolutely convergent in the respective regions $|z_1|>|z_2|>0$, $|z_2|>|z_1-z_2|>0$, and $|z_2|>|z_1|>0$ to some branch of $f(z_1,z_2;w_1,w_2,w_3,w_4)$. 
\end{conjecture}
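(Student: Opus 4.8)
The plan is to transport the filtration-and-Fuchsian-equations machinery of Zhu, Huang and Arakawa--Kawasetsu, which already yields the untwisted statement in Theorem~\ref{differentialintertwining}, into the $g$-twisted setting. First I would fix $w_0\in W_0'$, $w_1\in W_1$, $w_2\in W_2$, $w_3\in W_3$ and collect all the relevant matrix coefficients of products and iterates of the intertwining operators $\Y_1,\dots,\Y_6$ into a single space $\T$ of formal series in the fractional variables $z_1^{1/T}, z_2^{1/T}, (z_1-z_2)^{1/T}$ with complex exponents, recorded by a linear map
\[
\Upsilon : W_0' \otimes W_1 \otimes W_2 \otimes W_3 \longrightarrow \T .
\]
The twisted Jacobi identity~(\ref{iof1}), through its commutator residue~(\ref{commutator1}) and associator residue~(\ref{associator}), produces relations lying in $\ker(\Upsilon)$, and the first task is to harvest enough of these to control $\T/\ker(\Upsilon)$.

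The decisive finiteness input comes from quasi-lisseness. Since $W_1$ and $W_2$ are finitely strongly generated, $R_{W_1}$ and $R_{W_2}$ are finitely generated $R_V$-modules, hence so is the full tensor product $R_{W_1}\otimes_{R_V}\cdots\otimes_{R_V}R_{W_3}$; applying Corollary~\ref{leaves'} with the finite group $\langle g\rangle$ (with the parity twist $\sigma g$ in the super case) then gives the twisted analogue of the finiteness condition of Theorem~\ref{differentialintertwining},
\[
\dim\!\left(R_{W_1}\otimes_{R_V}\cdots\otimes_{R_V}R_{W_3}\big/\{(R_V)^{g},\,R_{W_1}\otimes_{R_V}\cdots\otimes_{R_V}R_{W_3}\}\right)<\infty .
\]
Equipping $\T/\ker(\Upsilon)$ with the conformal-weight filtration and passing to associated graded objects, I would build a surjection
\[
\Theta : R_{W_1}\otimes_{R_V}\cdots\otimes_{R_V}R_{W_3}\big/\{(R_V)^{g},\,\cdots\} \twoheadrightarrow \T/\operatorname{gr}(\ker\Upsilon),
\]
whose finite-dimensional source forces the target to be finite-dimensional. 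As in the untwisted case, this makes $\T/\ker(\Upsilon)$ a finitely generated module over the Noetherian ring $\C[z_1^{\pm},z_2^{\pm},(z_1-z_2)^{-1}]$, acting through the grading shifts encoded in the twisted commutator and associator formulas.

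With finite generation established, I would invoke the $L(-1)$-derivative property $\tfrac{d}{dx}\Y(w,x)=\Y(L_{(-1)}w,x)$ to see that the successive $z_i$-derivatives of each matrix coefficient lie in this finitely generated module; writing a sufficiently high derivative as a combination of lower ones yields, for $i=1,2$, a linear ordinary differential equation with coefficients in $\C[z_1^{\pm},z_2^{\pm},(z_1-z_2)^{-1}]$. A weight estimate on the coefficients localizes the singularities at $z_1=0$, $z_2=0$, $z_1=z_2$ and $\infty$ and shows they are regular singular points, so Frobenius theory delivers absolute convergence of each of the three series in its region $|z_1|>|z_2|>0$, $|z_2|>|z_1-z_2|>0$, $|z_2|>|z_1|>0$. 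Because all three series are local solutions of one and the same Fuchsian system, they continue to branches of a single multivalued $f$ on $\{z_i\neq 0,\ z_i\neq z_j\}$, which is exactly the asserted duality.

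The hard part, and the reason this is recorded as a conjecture rather than a theorem, is the fractional-power bookkeeping forced by twisting. The matrix coefficients now lie in $W_3\{z\}$ with exponents in $\tfrac1T\Z+\C$, and the extra factors $(\tfrac{x_2+x_0}{x_1})^{k/T}$ in~(\ref{commutator1}) and~(\ref{associator}) must be expanded consistently so that the harvested relations genuinely land in $\ker(\Upsilon)$ and so that $\Theta$ remains a well-defined surjection of Poisson-type modules respecting the $(R_V)^{g}$-action rather than the full $R_V$-action; this is precisely the point where Lemma~\ref{epi} suggests the invariant subalgebra $(R_V)^{\sigma g}$ is the correct one in the super case, and pinning down the invariance is a genuine subtlety. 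A second delicate point is confirming that the differential equations retain \emph{regular} singularities at $z_1=z_2$ in the presence of fractional exponents, and that the three expansions solve a common system uniformly over all admissible choices of the weak twisted intermediate modules $W_4$, $W_5$, $W_6$ — the place where the untwisted argument does not transfer verbatim.
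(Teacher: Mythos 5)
First, a point of framing: the paper does not prove this statement at all --- it is stated and labelled as Conjecture~\ref{duality}, and the text only establishes the untwisted specialization $g=1$ (Theorem~\ref{differentialintertwining}) under the finiteness hypothesis \eqref{newfinitenss}. So there is no proof in the paper to match your argument against; what you have written is a roadmap for why the conjecture is plausible, and it does follow the same $\Upsilon$/$\J$/Noetherian-module strategy that the paper deploys in the untwisted case.

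As a proof, however, your proposal has genuine gaps, and you have in effect named them yourself without closing them. (1) The construction of a common ideal $\J\subset\ker(\Upsilon)$: in the untwisted case the paper exhibits explicit elements $\mathcal{A},\dots,\mathcal{E}$ and checks that each lies in the kernel of all three maps $\Upsilon_{\Y_1,\Y_2}$, $\Upsilon_{\Y_3,\Y_4}$, $\Upsilon_{\Y_5,\Y_6}$ simultaneously; in the twisted case the commutator and associator identities \eqref{commutator1}, \eqref{associator} carry the factors $\left(\frac{x_1-x_0}{x_2}\right)^{-k/T}$, whose expansions in the three regions differ by monodromy phases $e^{2\pi i k/T}$, so it is not established that a single set of relations lies in all three kernels, nor that the quotient is a finitely generated module over a single Noetherian ring (one must enlarge $R=\C[z_1^{\pm},z_2^{\pm},(z_1-z_2)^{-1}]$ by fractional powers and redo the weight estimates and the regularity analysis at $z_1=z_2$). (2) Well-definedness and surjectivity of $\Theta$ onto $\T/\operatorname{gr}(\ker\Upsilon)$ in the twisted setting is asserted rather than proved; moreover the finiteness of $\mathcal{M}/\{(R_V)^{g},\mathcal{M}\}$ for a module $\mathcal{M}$ is not what Corollary~\ref{leaves'} gives (it gives $\mathcal{M}/\{R_V,\mathcal{M}\}$, and the invariant-bracket statement only for $\mathcal{M}=R_V$), and the introduction itself claims the finiteness conditions only ``in the case $g=\mathrm{id}$''. (3) Even granting a Fuchsian system for each series separately, the assertion that all three expansions continue to branches of one multivalued function requires them to solve the \emph{same} system, which is exactly the unresolved point (1). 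Since these are precisely the obstructions for which the author records the statement as a conjecture, your text should be presented as a strategy, not as a proof.
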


\subsection{Convergence of genus-zero correlation functions}
In this subsection, we prove that under the assumptions of Conjecture \ref{duality} and in the untwisted case $g = 1$, the corresponding correlation functions are indeed convergent. Let $V$ be a quasi-lisse vertex operator superalgebra. Let $W_{i}$, $i=0,1,2,3$, be weak $\mathbb{Q}$-graded $V$-modules, and set $R = \mathbb{C}[z_{1}^{\pm 1}, z_{2}^{\pm 1}, (z_{1}-z_{2})^{-1}]$. Define $\T = R \otimes W_{0} \otimes W_{1} \otimes W_{2} \otimes W_{3}$, which has a natural $R$-module structure. Let $W' = \bigoplus_{n\in \mathbb{Q}} W(n)^{*}$ be the {\em contragredient module} of $W$, whose vertex operator map $Y'$ is defined by
\[
\langle Y'(u,x)w',w\rangle = \langle w', Y_{W}(e^{xL_{(1)}}(-x^{-2})^{L_{(0)}}u, x^{-1})w \rangle,
\]
for any $u \in V$, $w' \in W'$, and $w \in W$. In particular, for $u \in V$ and $n \in \mathbb{Q}$, we have the operators $u_{(n)}$ on $W$. Let $u_{(n)}^{*}$ denote the adjoint of $u_{(n)}$ acting on $W'$. Note that $\operatorname{wt} u_{(n)} = \operatorname{wt} u - n - 1$ and $\operatorname{wt} u_{(n)}^{*} = -\operatorname{wt} u + n + 1$.

Let $\mathcal{Y}_{1}$ and $\mathcal{Y}_{2}$ be intertwining operators of types $\binom{W_{0}}{W_{1}\,\,W_{5}}$ and $\binom{W_{5}}{W_{2}\,\,W_{3}}$, respectively. By (\ref{commutator1}),
\begin{align}\label{commutator}
    \begin{split}
        \mathcal{Y}_{2}(w_{2}, x_{2}) Y_{W_{3}}(u, y_{1}) w_{3} &= Y_{W_{5}}(u, y_{1}) \mathcal{Y}_{2}(w_{2}, x_{2}) w_{5} \\
        &\quad + \operatorname{Res}_{y_{2}} y_{1}^{-1} \delta\left(\frac{x_{2} + y_{2}}{y_{1}}\right) \mathcal{Y}_{2}(Y_{W_{2}}(u, y_{2}) w_{2}, x_{2}) w_{5}.
    \end{split}
\end{align}
Taking $\operatorname{Res}_{y_{1}}$ of both sides yields
\[
\mathcal{Y}_{2}(w_{2}, x_{2}) u_{(0)} w_{3} = u_{(0)} \mathcal{Y}_{2}(w_{2}, x_{2}) w_{5} + \mathcal{Y}_{2}(u_{(0)} w_{2}, x_{2}) w_{5}.
\]
Applying the commutator formula again, we get
\begin{align}\label{relatione}
    \begin{split}
        \langle w_{4}', \mathcal{Y}_{1}(w_{1}, x_{1}) \mathcal{Y}_{2}(w_{2}, x_{2}) u_{(0)} w_{3} \rangle
        &= \langle u_{(0)}^{*} w_{4}', \mathcal{Y}_{1}(w_{1}, x_{1}) \mathcal{Y}_{2}(w_{2}, x_{2}) w_{3} \rangle \\
        &\quad - \langle w_{4}', \mathcal{Y}_{1}(u_{(0)} w_{1}, x_{1}) \mathcal{Y}_{2}(w_{2}, x_{2}) w_{3} \rangle \\
        &\quad - \langle w_{4}', \mathcal{Y}_{1}(w_{1}, x_{1}) \mathcal{Y}_{2}(u_{(0)} w_{2}, x_{2}) w_{3} \rangle.
    \end{split}
\end{align}
For any $u \in \bigoplus_ {n\in \Z_+} V_{(n)}$ and $w_i \in W_i$ for $i = 0,1,2,3$, let $\J$ be the submodule of $\mathcal{T}$ generated by elements of the following forms:
{\small % reduce font size slightly to improve fit
\allowdisplaybreaks
\begin{align*}
\mathcal{A}(u, w_0, w_1, w_2, w_3) &= \sum_{k \geq 0} \binom{-1}{k} (-z_1)^k u_{(-1-k)}^{*} w_0 \otimes w_1 \otimes w_2 \otimes w_3 \\
&\quad - w_0 \otimes u_{(-1)} w_1 \otimes w_2 \otimes w_3 \\
&\quad - \sum_{k \geq 0} \binom{-1}{k} (-(z_1 - z_2))^{-1-k} \otimes w_0 \otimes w_1 \otimes u_{(k)} w_2 \otimes w_3 \\
&\quad - \sum_{k \geq 0} \binom{-1}{k} (-z_1)^{-1-k} \otimes w_0 \otimes w_1 \otimes w_2 \otimes u_{(k)} w_3,
\end{align*}

\begin{align*}
\mathcal{B}(u, w_0, w_1, w_2, w_3) &= \sum_{k \geq 0} \binom{-1}{k} (-z_2)^k u_{(-1-k)}^{*} w_0 \otimes w_1 \otimes w_2 \otimes w_3 \\
&\quad - \sum_{k \geq 0} \binom{-1}{k} (z_1 - z_2)^{-1-k} \otimes w_0 \otimes u_{(k)} w_1 \otimes w_2 \otimes w_3 \\
&\quad - w_0 \otimes w_1 \otimes u_{(-1)} w_2 \otimes w_3 \\
&\quad - \sum_{k \geq 0} \binom{-1}{k} (-z_2)^{-1-k} \otimes w_0 \otimes w_1 \otimes w_2 \otimes u_{(k)} w_3,
\end{align*}

\begin{align*}
\mathcal{C}(u, w_0, w_1, w_2, w_3) &= u_{(-1)}^{*} w_0 \otimes w_1 \otimes w_2 \otimes w_3 \\
&\quad - \sum_{k \geq 0} \binom{-1}{k} z_1^{-1-k} \otimes w_0 \otimes u_{(k)} w_1 \otimes w_2 \otimes w_3 \\
&\quad - \sum_{k \geq 0} \binom{-1}{k} z_2^{-1-k} \otimes w_0 \otimes w_1 \otimes u_{(k)} w_2 \otimes w_3 \\
&\quad - w_0 \otimes w_1 \otimes w_2 \otimes u_{(-1)} w_3,
\end{align*}

\begin{align*}
\mathcal{D}(u, w_0, w_1, w_2, w_3) &= u_{(-1)} w_0 \otimes w_1 \otimes w_2 \otimes w_3 \\
&\quad - \sum_{k \geq 0} \binom{-1}{k} z_1^{1+k} w_0 \otimes 
\big[ e^{z_1^{-1} L_{(1)}} (-z_1^2)^{L_{(0)}} u_{(k)} \\
&\hspace{12em} \cdot (-z_1^{-2})^{L_{(0)}} e^{-z_1^{-1} L_{(1)}} w_1 \big]
\otimes w_2 \otimes w_3 \\
&\quad - \sum_{k \geq 0} \binom{-1}{k} z_2^{1+k} w_0 \otimes w_1 \otimes 
\big[ e^{z_2^{-1} L_{(1)}} (-z_2^2)^{L_{(0)}} u_{(k)} \\
&\hspace{12em} \cdot (-z_2^{-2})^{L_{(0)}} e^{-z_2^{-1} L_{(1)}} w_2 \big]
\otimes w_3 \\
&\quad + w_0 \otimes w_1 \otimes w_2 \otimes u_{(-1)}^{*} w_3,
\end{align*}

\begin{align*}
\mathcal{E}(u, w_0, w_1, w_2, w_3) &= u_{(0)}^{*} w_0 \otimes w_1 \otimes w_2 \otimes w_3 \\
&\quad - w_0 \otimes u_{(0)} w_1 \otimes w_2 \otimes w_3 \\
&\quad - w_0 \otimes w_1 \otimes u_{(0)} w_2 \otimes w_3 \\
&\quad - w_0 \otimes w_1 \otimes w_2 \otimes u_{(0)} w_3.
\end{align*}
}

The weights on $W_i$ for $i=0,1,2,3$ induce a weight grading on $W_0 \otimes W_1 \otimes W_2 \otimes W_3$. Let $\mathcal{T}_{(r)}$ denote the homogeneous subspace of $\mathcal{T}$ of weight $r \in \mathbb{N}$. This defines a filtration $I$ on $\mathcal{T}$ by
\[
I^n(\T) = \bigoplus_{i \leq n} \T_{(i)}.
\]
We define the associated graded space with respect to this filtration as $\mathrm{gr}^I(\T)$. This filtration also induces a filtration on $\J$ by setting $I^n(\J) = I^n(\T) \cap \J$.

\begin{lemma}\label{quasilissecondition}
Suppose $V$ is of CFT type, self-dual, and finitely generated by weight $1$ vectors. If
\begin{align}\label{newfinitenss}
\dim \left( R_{W_0} \otimes R_{W_1} \otimes R_{W_2} \otimes R_{W_3} \big/ \{ R_V, R_{W_0} \otimes R_{W_1} \otimes R_{W_2} \otimes R_{W_3} \} \right) < \infty,
\end{align}
where the tensor product is taken over $R_V$, then the $R$-module $\T/\J$ is finitely generated.
\end{lemma}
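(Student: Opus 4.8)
The plan is to filter by weight and pass to the associated graded. Since $R$ sits in weight $0$, we have $\mathrm{gr}^{I}(\T)\cong R\otimes(W_0\otimes W_1\otimes W_2\otimes W_3)$ as a graded $R$-module, and $\mathrm{gr}^{I}(\T/\J)=\mathrm{gr}^{I}(\T)/\mathrm{gr}^{I}(\J)$ with the induced filtration. It therefore suffices to prove that $\mathrm{gr}^{I}(\T/\J)$ is finitely generated over the Noetherian ring $R$: once we know it is concentrated in finitely many weights and finitely generated, the standard filtered-to-graded lifting argument yields finite generation of $\T/\J$ itself. To this end I would construct a surjection of graded $R$-modules
\[
\Theta\colon R\otimes_{\mathbb C}\Bigl(R_{W_0}\otimes_{R_V}\cdots\otimes_{R_V}R_{W_3}\big/\{R_V,\,R_{W_0}\otimes_{R_V}\cdots\otimes_{R_V}R_{W_3}\}\Bigr)\twoheadrightarrow \mathrm{gr}^{I}(\T/\J),
\]
whose source is finitely generated over $R$ precisely because the bracketed factor is finite-dimensional by \eqref{newfinitenss} (so the source is a free $R$-module of finite rank). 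Surjectivity of $\Theta$ then forces $\mathrm{gr}^{I}(\T/\J)$ to be finitely generated and concentrated in finitely many weights.

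Constructing $\Theta$ amounts to showing that the kernel of the evident surjection $\pi\colon\mathrm{gr}^{I}(\T)\twoheadrightarrow R\otimes\bigl(R_{W_0}\otimes_{R_V}\cdots\otimes_{R_V}R_{W_3}/\{R_V,\cdots\}\bigr)$ is contained in $\mathrm{gr}^{I}(\J)$; equivalently, that each of the three families of relations defining the Poisson quotient arises as the top-weight component of an element of $\J$. The key bookkeeping observation is that, for $u$ of weight $\Delta>0$, the mode $u_{(-1)}$ raises weight by $\Delta$, while every other mode $u_{(k)}$ $(k\geq 0)$ and every adjoint mode $u^{*}_{(n)}$ raises weight strictly less. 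Consequently each of $\mathcal A,\mathcal B,\mathcal C,\mathcal D$ has a \emph{unique} top-weight term, namely $u_{(-1)}w_1,\,u_{(-1)}w_2,\,u_{(-1)}w_3$ and $u_{(-1)}w_0$ on the respective tensor factors.

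Three consequences follow. First, taking $u=L_{(-1)}a$ (which has positive weight by CFT type) and using $(L_{(-1)}a)_{(-1)}=a_{(-2)}$, the unique top terms of $\mathcal A,\mathcal B,\mathcal C,\mathcal D$ become $a_{(-2)}w_i$ on each factor; as $a$ ranges over $\bigoplus_{n\geq 1}V_{(n)}$ these span $C_2(W_i)$, so the relations collapsing each factor to $R_{W_i}$ lie in $\mathrm{gr}^{I}(\J)$. Second, for general $u$ the differences $\mathcal A-\mathcal B$, $\mathcal A+\mathcal D$, $\mathcal B-\mathcal C$, etc.\ have top terms identifying $u_{(-1)}$-multiplication across the four factors (no cancellation occurs, since the surviving summands live on distinct factors); these are exactly the relations making the product a tensor product over $R_V$, and since $V$ is generated in weight one, $R_V$ is generated as a Poisson algebra by $\overline{V_{(1)}}$, so it suffices to impose them for $u\in V_{(1)}$. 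Third, for $u\in V_{(1)}$ the element $\mathcal E$ is homogeneous of top weight and, after the first two reductions, becomes $\pm\{\bar u,\bar w_0\otimes\cdots\otimes\bar w_3\}$; as $\bar u$ runs over the generators $\overline{V_{(1)}}$ of $R_V$ these span $\{R_V,\,R_{W_0}\otimes_{R_V}\cdots\otimes_{R_V}R_{W_3}\}$. Hence $\ker\pi\subseteq\mathrm{gr}^{I}(\J)$ and $\Theta$ is well-defined and surjective.

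The main obstacle is the last step on the distinguished factor $W_0$: there $\mathcal E$ carries the \emph{adjoint} mode $u^{*}_{(0)}$ rather than $u_{(0)}$, and one must check that this agrees, up to sign, with the Poisson action $\{\bar u,\cdot\}$ on $R_{W_0}$. This is where self-duality of $V$ enters: it guarantees that the contragredient slot carries a genuine Poisson $R_V$-module structure identified with $R_{W_0}$, and a direct computation with the contragredient vertex operator $Y'(u,x)=Y(e^{xL_{(1)}}(-x^{-2})^{L_{(0)}}u,x^{-1})$ shows that for a weight-one primary $u$ one has $u'_{(0)}=-u^{*}_{(0)}$, so that $\mathcal E$ reproduces the bracket on the $W_0$-factor with the correct sign. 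A secondary point deserving care is the verification that the terms identified above are genuinely the leading terms of the corresponding elements of $\J$; this follows from the uniqueness of the top-weight term together with the fact that the combined summands occupy distinct tensor factors, so no unexpected cancellation lowers the leading degree.
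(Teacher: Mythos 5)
Your proposal is correct and follows essentially the same route as the paper: both pass to the weight filtration, read off the unique top-weight terms of $\mathcal{A},\dots,\mathcal{D}$ (substituting $L_{(-1)}u$ to reach the $C_2$ relations), use self-duality and CFT type to make weight-one vectors primary so that $\mathcal{E}$ reproduces the Poisson bracket with the correct sign, and then transport the finite-dimensionality of the Poisson quotient through the surjection $\Theta$ onto $\T/\operatorname{gr}^I(\J)$. The only differences are presentational: you package $\Theta$ as an $R$-linear map out of $R\otimes_{\mathbb{C}}(\cdot)$ and spell out the filtered-to-graded lifting, which the paper leaves implicit.
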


\begin{proof}
The equivalence classes of the elements
\[
\begin{split}
&\mathcal{A}(u, w_0, w_1, w_2, w_3),\quad \mathcal{B}(u, w_0, w_1, w_2, w_3),\\
&\mathcal{C}(u, w_0, w_1, w_2, w_3),\quad \mathcal{D}(u, w_0, w_1, w_2, w_3)
\end{split}
\]
in $\mathrm{gr}^I(\T)$ are
\begin{align*}
& -w_0 \otimes u_{(-1)} w_1 \otimes w_2 \otimes w_3, \\
& -w_0 \otimes w_1 \otimes u_{(-1)} w_2 \otimes w_3, \\
& -w_0 \otimes w_1 \otimes w_2 \otimes u_{(-1)} w_3, \\
& u_{(-1)} w_0 \otimes w_1 \otimes w_2 \otimes w_3.
\end{align*}
Replacing $u$ by $L_{(-1)} u$ shows that the following elements also lie in $\mathrm{gr}^I(\mathcal{J})$:
\begin{align*}
& w_0 \otimes u_{(-2)} w_1 \otimes w_2 \otimes w_3, \\
& w_0 \otimes w_1 \otimes u_{(-2)} w_2 \otimes w_3, \\
& w_0 \otimes w_1 \otimes w_2 \otimes u_{(-2)} w_3, \\
& u_{(-2)} w_0 \otimes w_1 \otimes w_2 \otimes w_3.
\end{align*}
Now, define a map
\[
\Theta: \left( R_{W_0} \otimes R_{W_1} \otimes R_{W_2} \otimes R_{W_3} \right) \big/ \{ R_V, R_{W_0} \otimes R_{W_1} \otimes R_{W_2} \otimes R_{W_3} \} \longrightarrow \T / \mathrm{gr}^I(\J)
\]
by $\Theta(\overline{a_0} \otimes \overline{a_1} \otimes \overline{a_2} \otimes \overline{a_3}) = \overline{a_0 \otimes a_1 \otimes a_2 \otimes a_3}$ for $a_i \in W_i$.

To show that $\Theta$ is well-defined, it suffices to prove that
\begin{align}\label{genus01}
\begin{split}
& C_2(W_0) \otimes W_1 \otimes W_2 \otimes W_3 + W_0 \otimes C_2(W_1) \otimes W_2 \otimes W_3 \\
& + W_0 \otimes W_1 \otimes C_2(W_2) \otimes W_3 + W_0 \otimes W_1 \otimes W_2 \otimes C_2(W_3)
\end{split}
\end{align}
and
\begin{align}\label{genus02}
\begin{split}
& \left\{ u_{(0)} w_0 \otimes w_1 \otimes w_2 \otimes w_3 + w_0 \otimes u_{(0)} w_1 \otimes w_2 \otimes w_3 + w_0 \otimes w_1 \otimes u_{(0)} w_2 \otimes w_3 \right. \\
& \left. + w_0 \otimes w_1 \otimes w_2 \otimes u_{(0)} w_3 \,\middle|\, u \in V_{(1)},\, w_i \in W_i \right\}
\end{split}
\end{align}
belong to $\mathrm{gr}^I(\J)$.

We have already established \eqref{genus01}. To prove \eqref{genus02}, recall that if $V$ is of CFT type and self-dual, then all weight one vectors in $V$ are primary (see \cite{li1994symmetric}). In particular, $u_{(0)}^{*} = -u_{(0)}$ for $u \in V_{(1)}$. Since the elements $\mathcal{E}(u, w_0, w_1, w_2, w_3)$ belong to $\J$, we conclude that \eqref{genus02} belongs to $\mathrm{gr}^{I}(\mathcal{J})$.

It is clear that $\Theta$ is surjective. The finite-dimensionality of the quotient in \eqref{newfinitenss} then implies that $\T / \mathrm{gr}^I(\J)$ is finitely generated as an $R$-module.
\end{proof}

We set $\sigma := \operatorname{wt} w_0 + \operatorname{wt} w_1 + \operatorname{wt} w_2 + \operatorname{wt} w_3$. For $n \in \mathbb{Z}_{+}$, let $F_n^{z_1 = z_2}(R)$ be the vector space spanned by elements of the form $f(z_1, z_2)(z_1 - z_2)^{-n}$ for $f(z_1, z_2) \in \mathbb{C}[z_1^{\pm}, z_2^{\pm}]$. For $r \in \mathbb{Q}$, let $F_r^{(z_1 = z_2)}(\T)$ be the subspace of $\T$ spanned by elements of the form $f(z_1, z_2)(z_1 - z_2)^{-n} w_0 \otimes w_1 \otimes w_2 \otimes w_3$, where $f(z_1, z_2) \in \mathbb{C}[z_1^{\pm}, z_2^{\pm}]$ and $n + \sigma \leq r$. Define $F_r^{(z_1 = z_2)}(\J) := F_r^{(z_1 = z_2)}(\T) \cap \J$.

\begin{proposition}\label{decom}
Suppose $\T/\J$ is finitely generated as an $R$-module. Then there exists $M \in \mathbb{Z}_{+}$ such that for all $r \in \mathbb{N}$,
\[
F_r^{(z_1 = z_2)}(\T) \subset F_r^{(z_1 = z_2)}(\J) + I_M(\T).
\]
\end{proposition}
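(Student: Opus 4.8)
The plan is to exploit the finite generation of $\T/\J$ as an $R$-module, where $R = \mathbb{C}[z_1^{\pm}, z_2^{\pm}, (z_1-z_2)^{-1}]$, together with the weight grading, to control the $(z_1-z_2)$-adic filtration $F_r^{(z_1=z_2)}$. First I would pick a finite generating set $\{\overline{\xi_1}, \dots, \overline{\xi_t}\}$ for $\T/\J$ over $R$, where each $\xi_j = w_0^{(j)} \otimes w_1^{(j)} \otimes w_2^{(j)} \otimes w_3^{(j)}$ is a pure tensor of homogeneous vectors (we may assume generators of this form since $\T = R \otimes W_0 \otimes W_1 \otimes W_2 \otimes W_3$ and the $W_i$ are graded). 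Let $M_0$ be the maximum of the total weights $\operatorname{wt}\xi_j$ over these finitely many generators, and set $M = M_0 + 1$ (the precise choice will emerge from the estimate below).

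The key step is a careful bookkeeping of weights under the $R$-action. Any element of $F_r^{(z_1=z_2)}(\T)$ is an $R$-combination of the generators $\xi_j$ modulo $\J$; that is, for $\eta \in F_r^{(z_1=z_2)}(\T)$ we can write $\eta = \sum_j p_j(z_1,z_2)\,\xi_j + j_0$ with $p_j \in R$ and $j_0 \in \J$. The crucial observation is how weight interacts with multiplication by $R$: multiplying a weight-$w$ tensor by a monomial $z_1^a z_2^b (z_1-z_2)^{-n}$ produces an element whose ``$(z_1=z_2)$-level'' is governed by $n + w$, exactly the quantity bounded by $r$ in the definition of $F_r^{(z_1=z_2)}$. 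I would track, term by term, the contribution of each $p_j \xi_j$ to the filtration: the Laurent monomials in $p_j$ with large positive or negative powers of $z_1, z_2$ raise or lower the weight, but the constraint that the whole sum lands in $F_r^{(z_1=z_2)}(\T)$ forces the relevant combinations either to have bounded $(z_1-z_2)^{-1}$-order relative to $r$ (hence to fall inside $F_r^{(z_1=z_2)}(\J)$ after absorbing $j_0$), or to have weight exceeding $M$ (hence to lie in $I_M(\T)$). The point is that the finitely many generators $\xi_j$ have uniformly bounded weight $\le M_0$, so any piece of $\eta$ whose weight does not exceed $M$ must be expressible using $R$-multiples that keep the $(z_1-z_2)$-order compatible with the $F_r$ filtration.

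The main obstacle I anticipate is the interaction between the two gradings that are not independent: multiplication by $z_i^{\pm 1}$ shifts the weight, while multiplication by $(z_1-z_2)^{-1}$ shifts the filtration level but also changes weight, and these are precisely the operations in $R$. The danger is a telescoping cancellation where individually high-weight terms combine to give something of low weight inside $F_r^{(z_1=z_2)}(\T)$ but which cannot be pushed into $F_r^{(z_1=z_2)}(\J)$. To handle this I would argue by descending induction on the weight, or equivalently pass to the associated graded $\operatorname{gr}^I$: since $\T/\J$ is finitely generated, $\operatorname{gr}^I(\T)/\operatorname{gr}^I(\J)$ is a finitely generated graded module over the graded ring, so it is concentrated in degrees below some bound $M$, and any class of weight $> M_0$ is already an $\operatorname{gr}^I(\J)$-relation modulo lower-weight terms. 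Lifting this statement back from $\operatorname{gr}^I$ to $\T$ and combining with the definition of $F_r^{(z_1=z_2)}$ yields the containment $F_r^{(z_1=z_2)}(\T) \subset F_r^{(z_1=z_2)}(\J) + I_M(\T)$ with $M$ chosen uniformly in $r$, which is exactly the assertion. I expect the delicate part to be verifying that $M$ can indeed be taken independent of $r$, which relies on the observation that increasing $r$ only enlarges the allowed $(z_1-z_2)^{-1}$-order, a direction already absorbed by the $R$-module structure and not requiring new generators.
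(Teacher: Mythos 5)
Your overall strategy coincides with the paper's: extract a uniform weight bound $M$ from the finite generation of $\T/\J$ over $R$, then split an element of $F_r^{(z_1=z_2)}(\T)$ into a piece of weight $\le M$ (which lands in $I_M(\T)$) and a high-weight piece to be absorbed into $\J$. Note also that the point you single out as delicate --- that $M$ can be taken independent of $r$ --- is automatic in both treatments. The genuine difficulty sits elsewhere, and your argument does not close it. When you write $\eta=\sum_j p_j\xi_j+j_0$ with $p_j\in R$ and $j_0\in\J$, the proposition requires $j_0\in F_r^{(z_1=z_2)}(\J)=F_r^{(z_1=z_2)}(\T)\cap\J$, not merely $j_0\in\J$. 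Since $j_0=\eta-\sum_jp_j\xi_j$ and the coefficients $p_j$ may carry poles at $z_1=z_2$ of arbitrarily high order, the sum $\sum_jp_j\xi_j$ lies in $I_M(\T)$ but in general not in $F_r^{(z_1=z_2)}(\T)$, so neither does $j_0$. The same problem recurs in your $\operatorname{gr}^I$ version: concentration of $\operatorname{gr}^I(\T)/\operatorname{gr}^I(\J)$ in weights $\le M$ only yields, for a homogeneous tensor $w$ of weight $\sigma>M$, a relation $w=j_1+w'$ with $j_1\in\J\cap I^{\sigma}(\T)$ and $w'$ of lower weight; the representative $j_1$ has uncontrolled pole order at $z_1=z_2$, so after multiplying by $(z_1-z_2)^{-n}f$ with $n+\sigma\le r$ the term $(z_1-z_2)^{-n}f\,j_1$ need not lie in $F_r^{(z_1=z_2)}(\T)$, hence need not lie in $F_r^{(z_1=z_2)}(\J)$. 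This is exactly the ``telescoping'' danger you name, but passing to $\operatorname{gr}^I$ does not neutralize it.

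The paper's proof avoids correction terms entirely by working with one monomial $(z_1-z_2)^{-i}g\,w_0\otimes w_1\otimes w_2\otimes w_3$ at a time and invoking the stronger assertion that every homogeneous tensor of weight exceeding $M$ lies in $\J$ itself (not merely in $\J+I_M(\T)$): then the whole monomial is an $R$-multiple of an element of $\J$, and since it already has $F$-level $\le r$ by hypothesis it lies in $\J\cap F_r^{(z_1=z_2)}(\T)=F_r^{(z_1=z_2)}(\J)$. To repair your argument you would either need that on-the-nose membership, or you would need to check that the $\operatorname{gr}^I(\J)$-classes of weight $>M$ admit representatives of $F^{(z_1=z_2)}$-level bounded by their weight and run a descending induction on the weight; the latter uses the explicit shape of the generators $\mathcal{A},\dots,\mathcal{E}$ (e.g.\ in $\mathcal{A}$ the term carrying $(z_1-z_2)^{-1-k}$ is paired with $u_{(k)}w_2$, which lowers the weight by exactly enough to keep the $F$-level fixed). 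The abstract finite-generation statement combined with a generic lifting from the associated graded is not sufficient.
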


\begin{proof}
Since $\T/\J$ is finitely generated, there exists $M > 0$ such that $\T \setminus I_M(\T) \subset \J$. If $r \leq M$, then $F_r^{(z_1 = z_2)}(\T) \subset I_M(\T)$. Now assume $r > M$ and consider
\[
(z_1 - z_2)^{-i} g(z_1, z_2) w_0 \otimes w_1 \otimes w_2 \otimes w_3 \in F_r^{(z_1 = z_2)}(T),
\]
where each $w_i$ is homogeneous and $g(z_1, z_2) \in \mathbb{C}[z_1^{\pm}, z_2^{\pm}]$. If $r - i \leq M$, then the element lies in $I_M(\T)$. Otherwise, $w_0 \otimes w_1 \otimes w_2 \otimes w_3 \in I_{r-i}(\J)$, so the whole tensor lies in $F_r^{(z_1 = z_2)}(\J)$.
\end{proof}

Let
\[
\T^{(z_1 - z_2)} := \mathbb{C}[z_1^{\pm}, z_2^{\pm}] \otimes W_0 \otimes W_1 \otimes W_2 \otimes W_3,
\]
and for $r \in \mathbb{Q}$, let $\T_{(r)}^{(z_1 - z_2)}$ denote the subspace of weight $r$. By Proposition \ref{decom}, for $w_i \in W_i$, we may write
\[
w_0 \otimes w_1 \otimes w_2 \otimes w_3 = \mathcal{W}_1 + \mathcal{W}_2
\]
where $\mathcal{W}_1 \in F_r^{(z_1 = z_2)}(\J)$ and $\mathcal{W}_2 \in I_M(\T)$. Then by the argument in \cite[Lemma 2.2]{huang2005differential1}, we obtain the following.

\begin{lemma}\label{lem9.3}
For any $s \in [0,1)$, there exists $S \in \mathbb{R}$ such that $s + S \in \mathbb{Z}_{+}$ and for all $w_i \in W_i$ with $\sigma \in s + \mathbb{Z}$, we have
\[
(z_1 - z_2)^{\sigma + S} \mathcal{W}_2 \in \T^{(z_1 = z_2)}.
\]
\end{lemma}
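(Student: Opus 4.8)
The plan is to extract from the two filtration conditions defining $\mathcal{W}_1$ and $\mathcal{W}_2$ a bound on the order of the pole of $\mathcal{W}_2$ along the diagonal $z_1=z_2$, and then to absorb that pole into a single factor $(z_1-z_2)^{\sigma+S}$ whose exponent is forced to be an integer by the hypothesis $\sigma\in s+\mathbb{Z}$.

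First I would record the memberships supplied by the decomposition preceding the statement. The tensor $w_0\otimes w_1\otimes w_2\otimes w_3$ equals $1\cdot(z_1-z_2)^{0}\,w_0\otimes w_1\otimes w_2\otimes w_3$ and hence lies in $F^{(z_1=z_2)}_\sigma(\T)$; since $\mathcal{W}_1\in F^{(z_1=z_2)}_\sigma(\J)\subseteq F^{(z_1=z_2)}_\sigma(\T)$, subtracting gives $\mathcal{W}_2\in F^{(z_1=z_2)}_\sigma(\T)$, while by construction $\mathcal{W}_2\in I_M(\T)$. Thus $\mathcal{W}_2$ lies in the intersection $F^{(z_1=z_2)}_\sigma(\T)\cap I_M(\T)$.

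Next I would exploit that the $I$-grading assigns weight $0$ to every element of $R$, so that it refines the spanning description of $F^{(z_1=z_2)}_\sigma(\T)$: an element $f(z_1,z_2)(z_1-z_2)^{-n}\xi$ with $f\in\mathbb{C}[z_1^{\pm},z_2^{\pm}]$ and $\xi$ a tensor of total weight $\sigma'$ is itself $I$-homogeneous of weight $\sigma'$. Writing $\mathcal{W}_2=\sum_j f_j(z_1,z_2)(z_1-z_2)^{-n_j}\xi_j$ with each $\xi_j$ homogeneous of weight $\sigma'_j$ and $n_j+\sigma'_j\le\sigma$, I group the summands by the weight $\sigma'_j$ and use $\mathcal{W}_2\in I_M(\T)$ to discard every group with $\sigma'_j>M$; each surviving term then satisfies both $n_j+\sigma'_j\le\sigma$ and $\sigma'_j\le M$. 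Because the modules $W_0,\dots,W_3$ are $\mathbb{Q}$-graded and bounded below, the sum $L$ of their minimal weights is finite and $\sigma'_j\ge L$, so the pole order is controlled uniformly: $n_j\le\sigma-\sigma'_j\le\sigma-L$.

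Finally I would choose $S$ depending only on $s$. Since $L$ is fixed independently of the particular $w_i$, pick $N\in\mathbb{Z}_+$ with $N\ge s-L$ and set $S:=N-s$, so that $s+S=N\in\mathbb{Z}_+$; for any $w_i$ with $\sigma\in s+\mathbb{Z}$ one then has $\sigma+S\in\mathbb{Z}$ and, for each term, $\sigma+S-n_j\ge S+\sigma'_j\ge S+L=N-s+L\ge 0$. Hence
\[
(z_1-z_2)^{\sigma+S}\mathcal{W}_2=\sum_j f_j(z_1,z_2)(z_1-z_2)^{\sigma+S-n_j}\xi_j
\]
involves only non-negative powers of $z_1-z_2$ and therefore lies in $\T^{(z_1=z_2)}=\mathbb{C}[z_1^{\pm},z_2^{\pm}]\otimes W_0\otimes W_1\otimes W_2\otimes W_3$, as claimed. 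I expect the main obstacle to be the simultaneous bookkeeping of the two filtrations in the middle step: one must ensure that a single expansion of $\mathcal{W}_2$ can be taken to respect the $F^{(z_1=z_2)}_\sigma$-bound and the $I_M$-bound at once, and it is precisely the weight-$0$ assignment to $z_1,z_2,(z_1-z_2)^{-1}$ in the $I$-grading, together with the lower-boundedness of the $W_i$, that makes the resulting pole-order estimate uniform over the coset $s+\mathbb{Z}$.
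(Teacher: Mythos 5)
Your proof is correct and is essentially the argument the paper intends: the paper offers no proof of this lemma beyond deferring to the argument of \cite[Lemma 2.2]{huang2005differential1}, and your reconstruction --- intersecting the $F^{(z_1=z_2)}_{\sigma}$- and $I_M$-bounds by exploiting that the spanning set of $F^{(z_1=z_2)}_{\sigma}(\T)$ is $I$-homogeneous (so a single expansion respects both filtrations), bounding the pole orders by $\sigma - L$, and taking $S = N - s$ to force integrality of the exponent over the coset $s+\mathbb{Z}$ --- is precisely that argument. The one point worth flagging is your appeal to lower-boundedness of the weights of $W_0\otimes\cdots\otimes W_3$ to obtain the uniform bound $\sigma'_j \ge L$: the paper's phrase ``weak $\mathbb{Q}$-graded'' does not state this explicitly, but it is implicit in the lower-truncated (quasi-finite-dimensional) gradings of Huang's setting, and the lemma genuinely fails without it, so making the hypothesis explicit is appropriate rather than a gap.
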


\begin{theorem}\label{differentialintertwining}
Let $V$ be as in Lemma \ref{quasilissecondition}, and suppose that $W_i$ for $i=0,1,2,3$ satisfy the finiteness condition \eqref{newfinitenss}. Then for any $w_i \in W_i$ $(i = 0,1,2,3)$, there exist
\[
a_k(z_1, z_2),\ b_l(z_1, z_2) \in \mathbb{C}[z_1^{\pm}, z_2^{\pm}, (z_1 - z_2)^{-1}]
\]
for $k = 1, \dots, m$ and $l = 1, \dots, n$ such that for any discretely $\mathbb{Q}$-graded $V$-modules $W_4$, $W_5$, and $W_6$, and any intertwining operators $\mathcal{Y}_1$, $\mathcal{Y}_2$, $\mathcal{Y}_3$, $\mathcal{Y}_4$, $\mathcal{Y}_5$, $\mathcal{Y}_6$ of types
\[
\binom{W_0'}{W_1\ W_4},\quad \binom{W_4}{W_2\ W_3},\quad \binom{W_5}{W_1\ W_2},\quad \binom{W_0'}{W_5\ W_3},\quad \binom{W_0'}{W_2\ W_6},\quad \binom{W_6}{W_1\ W_3},
\]
respectively, the series
\begin{align}
&\langle w_0, \mathcal{Y}_1(w_1, z_1)\mathcal{Y}_2(w_2, z_2)w_3 \rangle, \label{equn1} \\
&\langle w_0, \mathcal{Y}_4(\mathcal{Y}_3(w_1, z_1 - z_2)w_2, z_2)w_3 \rangle, \label{equn2}
\end{align}
and
\begin{align}
\langle w_0, \mathcal{Y}_5(w_2, z_2)\mathcal{Y}_6(w_1, z_1)w_3 \rangle \label{equn3}
\end{align}
satisfy the expansions of a system of differential equations with regular singular points:
\begin{align}\label{equn4}
\frac{\partial^m f}{\partial z_1^m} + a_1(z_1, z_2)\frac{\partial^{m-1} f}{\partial z_1^{m-1}} + \cdots + a_m(z_1, z_2)f = 0,
\end{align}
\begin{align}\label{equn5}
\frac{\partial^n f}{\partial z_2^n} + b_1(z_1, z_2)\frac{\partial^{n-1} f}{\partial z_2^{n-1}} + \cdots + b_n(z_1, z_2)f = 0
\end{align}
in the regions $|z_1| > |z_2| > 0$, $|z_2| > |z_1 - z_2| > 0$, and $|z_2| > |z_1| > 0$, respectively.
\end{theorem}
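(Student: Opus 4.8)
The plan is to promote the finitely generated $R$-module $\T/\J$ of Lemma~\ref{quasilissecondition} to a module carrying two commuting derivations, and then extract a holonomic system by a Noetherian stabilization argument. Concretely, I would equip $\T=R\otimes W_0\otimes W_1\otimes W_2\otimes W_3$ with the operators
\begin{align*}
\partial_{z_1}(f\otimes w_0\otimes w_1\otimes w_2\otimes w_3)&=(\partial_{z_1}f)\otimes w_0\otimes w_1\otimes w_2\otimes w_3+f\otimes w_0\otimes L_{(-1)}w_1\otimes w_2\otimes w_3,\\
\partial_{z_2}(f\otimes w_0\otimes w_1\otimes w_2\otimes w_3)&=(\partial_{z_2}f)\otimes w_0\otimes w_1\otimes w_2\otimes w_3+f\otimes w_0\otimes w_1\otimes L_{(-1)}w_2\otimes w_3,
\end{align*}
designed so that under any correlation-function map they correspond to genuine differentiation via the $L_{(-1)}$-derivative property $\tfrac{d}{dz}\mathcal{Y}(w,z)=\mathcal{Y}(L_{(-1)}w,z)$. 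The first key step is to check that $\partial_{z_1}$ and $\partial_{z_2}$ preserve $\J$, so that they descend to $\T/\J$. This reduces to the generators $\mathcal{A},\dots,\mathcal{E}$, and the mechanism is the commutator $[L_{(-1)},u_{(n)}]=-n\,u_{(n-1)}$. For example, a direct computation gives $\partial_{z_1}\mathcal{C}(u,w_0,w_1,w_2,w_3)=\mathcal{C}(u,w_0,L_{(-1)}w_1,w_2,w_3)$: the extra terms produced when $L_{(-1)}$ is commuted past $u_{(k)}$ cancel against the terms produced by differentiating the coefficients $z_1^{-1-k}$, the cancellation being exactly the identity $\binom{-1}{j+1}+\binom{-1}{j}=0$. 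The relations $\mathcal{A},\mathcal{B},\mathcal{D},\mathcal{E}$ are handled the same way, with no correction term for $\mathcal{E}$ since $[L_{(-1)},u_{(0)}]=0$.

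Once $\partial_{z_1},\partial_{z_2}$ act on the finitely generated $R$-module $\T/\J$, I would run the standard Noetherian argument. Writing $\bar w$ for the image of $w_0\otimes w_1\otimes w_2\otimes w_3$, the fact that $R$ is Noetherian and $\T/\J$ finitely generated forces the ascending chain of $R$-submodules generated by $\{\bar w,\partial_{z_1}\bar w,\dots,\partial_{z_1}^{k}\bar w\}$ to stabilize, yielding $a_1,\dots,a_m\in R$ with
\begin{align*}
\partial_{z_1}^{m}\bar w+a_1(z_1,z_2)\,\partial_{z_1}^{m-1}\bar w+\cdots+a_m(z_1,z_2)\,\bar w\in\J,
\end{align*}
and symmetrically a relation $\partial_{z_2}^{n}\bar w+b_1\partial_{z_2}^{n-1}\bar w+\cdots+b_n\bar w\in\J$ with $b_l\in R$. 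These relations hold modulo $\J$, which depends only on $w_0,w_1,w_2,w_3$ and not on any choice of $W_4,W_5,W_6$ or of intertwining operators; this is precisely what makes the coefficients $a_k,b_l$ universal.

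To pass from algebra to analysis, I would introduce, for each fixed choice of $W_4,W_5,W_6$ and $\mathcal{Y}_1,\dots,\mathcal{Y}_6$, the three correlation-function maps $\Upsilon$ sending $\bar w$ to the series \eqref{equn1}, \eqref{equn2}, and \eqref{equn3} in their respective regions. The content of the commutator formula \eqref{commutator1}, the associator formula \eqref{associator}, and the contragredient ($L_{(-1)}$-conjugation) structure is exactly that each generator $\mathcal{A},\dots,\mathcal{E}$ lies in $\ker\Upsilon$ for each expansion; here the three regions enter through the different ways of expanding the coefficients appearing in $\mathcal{A}$, $\mathcal{B}$, $\mathcal{C}$. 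Since $\Upsilon$ carries the $R$-action to multiplication by rational functions and carries $\partial_{z_i}$ to honest differentiation, applying $\Upsilon$ to the two relations above produces exactly the equations \eqref{equn4} and \eqref{equn5}, with the same $a_k,b_l$, satisfied by all three series simultaneously.

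It remains to verify that the singular points are regular, and this is where I expect the real work to lie. The only possible singularities of \eqref{equn4} in $z_1$ are at $z_1=0$, $z_1=z_2$, and $z_1=\infty$, since $a_k\in\C[z_1^{\pm},z_2^{\pm},(z_1-z_2)^{-1}]$; regularity requires that the pole order of $a_k$ at each of these points grow at most linearly in $k$, i.e.\ that $a_k$ have a pole of order at most $k$ at $z_1=0$ and at $z_1=z_2$, with a symmetric estimate at $z_1=\infty$. I would obtain these bounds from the weight filtration together with Proposition~\ref{decom} and Lemma~\ref{lem9.3}: choosing the stabilizing relation so that $\partial_{z_1}^{m}\bar w$ is reduced using generators of bounded weight, and using the decomposition $w_0\otimes w_1\otimes w_2\otimes w_3=\mathcal{W}_1+\mathcal{W}_2$ with $\mathcal{W}_1\in F_r^{(z_1=z_2)}(\J)$ and $\mathcal{W}_2\in I_M(\T)$, controls the orders of the $(z_1-z_2)^{-n}$ poles after the normalization supplied by Lemma~\ref{lem9.3}. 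The main obstacle is therefore not the existence of the differential equations, which is immediate from finite generation, but this quantitative control of the pole orders of $a_k$ and $b_l$ that is needed to guarantee regular singular points.
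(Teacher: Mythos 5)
Your proposal is correct and follows essentially the same route as the paper: the paper also reduces everything to the three expansion maps $\Upsilon$ annihilating the generators $\mathcal{A},\dots,\mathcal{E}$ of $\J$, invokes finite generation of $\T/\J$ over the Noetherian ring $R$ together with the $L_{(-1)}$-derivative property to produce the equations (citing the argument of Huang's Theorem 1.4 that you have unpacked, including the stability of $\J$ under the derivations), and establishes regularity of the singular points via Proposition~\ref{decom} and Lemma~\ref{lem9.3}. The only cosmetic caveat is that $\partial_{z_1}$ need not send each generator to the \emph{same} generator with $w_1$ replaced by $L_{(-1)}w_1$ (for $\mathcal{A}$ and $\mathcal{D}$ one gets a combination of generators), but stability of $\J$ is all that is used.
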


\begin{proof}
Let $\Delta = \operatorname{wt} w_0 - \operatorname{wt} w_1 - \operatorname{wt} w_2 - \operatorname{wt} w_3$. Consider the maps
\begin{align*}
&\Upsilon_{\mathcal{Y}_1,\mathcal{Y}_2}: \T \rightarrow z_1^{\Delta} \mathbb{C}(\{z_2 / z_1\})[z_1^{\pm}, z_2^{\pm}], \\
&\Upsilon_{\mathcal{Y}_3,\mathcal{Y}_4}: \T \rightarrow z_2^{\Delta} \mathbb{C}(\{(z_1 - z_2)/z_2\})[z_2^{\pm}, (z_1 - z_2)^{\pm}], \\
&\Upsilon_{\mathcal{Y}_5,\mathcal{Y}_6}: \T \rightarrow z_2^{\Delta} \mathbb{C}(\{z_1 / z_2\})[z_1^{\pm}, z_2^{\pm}],
\end{align*}
defined by
\begin{align*}
&\Upsilon_{\mathcal{Y}_1,\mathcal{Y}_2}(f(z_1, z_2) w_0 \otimes w_1 \otimes w_2 \otimes w_3) \\
&\quad := \iota_{|z_1| > |z_2| > 0}(f(z_1, z_2)) \langle w_0, \mathcal{Y}_1(w_1, z_1)\mathcal{Y}_2(w_2, z_2)w_3 \rangle, \\
&\Upsilon_{\mathcal{Y}_3,\mathcal{Y}_4}(f(z_1, z_2) w_0 \otimes w_1 \otimes w_2 \otimes w_3) \\
&\quad := \iota_{|z_2| > |z_1 - z_2| > 0}(f(z_1, z_2)) \langle w_0, \mathcal{Y}_4(\mathcal{Y}_3(w_1, z_1 - z_2)w_2, z_2)w_3 \rangle, \\
&\Upsilon_{\mathcal{Y}_5,\mathcal{Y}_6}(f(z_1, z_2) w_0 \otimes w_1 \otimes w_2 \otimes w_3) \\
&\quad := \iota_{|z_2| > |z_1| > 0}(f(z_1, z_2)) \langle w_0, \mathcal{Y}_5(w_2, z_2)\mathcal{Y}_6(w_1, z_1)w_3 \rangle,
\end{align*}
where the $\iota$ maps denote expansion in the respective regions:
\begin{align*}
&\iota_{|z_1| > |z_2| > 0}: R \to \mathbb{C}[[z_2 / z_1]][z_1^{\pm}, z_2^{\pm}], \\
&\iota_{|z_2| > |z_1 - z_2| > 0}: R \to \mathbb{C}[[(z_1 - z_2)/z_2]][z_2^{\pm}, (z_1 - z_2)^{\pm}], \\
&\iota_{|z_2| > |z_1| > 0}: R \to \mathbb{C}[[z_1 / z_2]][z_1^{\pm}, z_2^{\pm}].
\end{align*}

By \eqref{relatione}, we know that $\mathcal{E}(u, w_0, w_1, w_2, w_3)$ lies in the intersection of the kernels of $\Upsilon_{\mathcal{Y}_1,\mathcal{Y}_2}$, $\Upsilon_{\mathcal{Y}_3,\mathcal{Y}_4}$, and $\Upsilon_{\mathcal{Y}_5,\mathcal{Y}_6}$. As noted in \cite{huang2005differential1}, the same holds for $\mathcal{A}, \mathcal{B}, \mathcal{C}, \mathcal{D}$ as well. Thus, we obtain induced maps:
\begin{align*}
&\overline{\Upsilon_{\mathcal{Y}_1,\mathcal{Y}_2}}: T/J \rightarrow z_1^{\Delta} \mathbb{C}(\{z_2 / z_1\})[z_1^{\pm}, z_2^{\pm}], \\
&\overline{\Upsilon_{\mathcal{Y}_3,\mathcal{Y}_4}}: T/J \rightarrow z_2^{\Delta} \mathbb{C}(\{(z_1 - z_2)/z_2\})[z_2^{\pm}, (z_1 - z_2)^{\pm}], \\
&\overline{\Upsilon_{\mathcal{Y}_5,\mathcal{Y}_6}}: T/J \rightarrow z_2^{\Delta} \mathbb{C}(\{z_1 / z_2\})[z_1^{\pm}, z_2^{\pm}].
\end{align*}

Since $\T/\J$ is finitely generated over $R$, and using the $L(-1)$-derivative property of intertwining operators, it follows from the argument in \cite[Theorem 1.4]{huang2005differential1} that the series \eqref{equn1}, \eqref{equn2}, \eqref{equn3} satisfy the differential equations \eqref{equn4} and \eqref{equn5}.

Finally, by Proposition \ref{decom}, Lemma \ref{lem9.3}, and the arguments in \cite[Theorem 2.3]{huang2005differential1}, the singularities of these differential equations are regular.
\end{proof}

\begin{proposition}
Let $W_i$, $i = 0, 1, 2, 3$, be finitely strongly generated $V$-modules. Then
\[
\dim \left( R_{W_0} \otimes_{R_V} \cdots \otimes_{R_V} R_{W_3} \Big/ \{ R_V,\ R_{W_0} \otimes_{R_V} \cdots \otimes_{R_V} R_{W_3} \} \right) < \infty.
\]
\end{proposition}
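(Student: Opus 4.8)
The plan is to recognize the space in the statement as $\mathcal{M}/\{R_V,\mathcal{M}\}$, where $\mathcal{M} := R_{W_0}\otimes_{R_V}\cdots\otimes_{R_V} R_{W_3}$, and then to invoke Corollary~\ref{leaves'} (with the group $G$ taken trivial), whose only hypotheses are that $V$ be quasi-lisse --- which holds throughout this subsection --- and that $\mathcal{M}$ be a finitely generated $R_V$-module carrying a Poisson $R_V$-module structure. Thus the proof reduces to two verifications: that $\mathcal{M}$ is finitely generated over $R_V$, and that the factorwise Poisson bracket descends to the tensor product balanced over $R_V$.

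For finite generation I would first note that, since $V$ is finitely strongly generated, $R_V$ is a finitely generated commutative (super)algebra over $\C$, hence Noetherian by the Hilbert basis theorem. By hypothesis each $W_i$ is finitely strongly generated, which by definition means that $R_{W_i}$ is a finitely generated $R_V$-module. Over a Noetherian ring the tensor product of two finitely generated modules is again finitely generated, so iterating across the four factors shows that $\mathcal{M}$ is finitely generated over $R_V$.

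The substantive step, and the one I expect to require the most care, is equipping $\mathcal{M}$ with a well-defined Poisson $R_V$-module structure. Each $R_{W_i}$ is a Poisson $R_V$-module, and one sets $\{a, m_0\otimes\cdots\otimes m_3\} = \sum_i m_0\otimes\cdots\otimes\{a,m_i\}\otimes\cdots\otimes m_3$ for $a\in R_V$. Unlike the $\C$-linear tensor product of Poisson modules recalled earlier, the tensor here is balanced over $R_V$, so one must check that this bracket respects the relations $(b\cdot m_i)\otimes m_j = m_i\otimes(b\cdot m_j)$ for $b\in R_V$. Expanding $\{a, (bm_1)\otimes m_2\}$ and $\{a, m_1\otimes(bm_2)\}$ by the two Leibniz axioms of a Poisson module and moving $R_V$-scalars across the balanced tensor, both reduce to $\{a,b\}\,m_1\otimes m_2 + b\,\{a,m_1\}\otimes m_2 + b\,m_1\otimes\{a,m_2\}$, so the bracket is well defined; the three Poisson-module axioms for $\mathcal{M}$ then follow factorwise from those for the $R_{W_i}$.

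With $\mathcal{M}$ established as a finitely generated Poisson $R_V$-module, Corollary~\ref{leaves'} applies directly and yields $\dim(\mathcal{M}/\{R_V,\mathcal{M}\}) < \infty$, which is exactly the assertion. I expect no further obstruction beyond those above: the finite generation is formal, and the only genuine check is the compatibility of the Poisson bracket with the $R_V$-balancing, which the computation just sketched resolves.
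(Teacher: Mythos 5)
Your proof is correct and follows essentially the same route as the paper: finite generation of $R_{W_0}\otimes_{R_V}\cdots\otimes_{R_V}R_{W_3}$ over the Noetherian ring $R_V$, then Corollary~\ref{leaves'}. The one thing you add — verifying that the factorwise Poisson bracket descends to the tensor product balanced over $R_V$ — is a detail the paper leaves implicit, and your check of it is sound.
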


\begin{proof}
Since each $W_i$ is a finitely strongly generated $V$-module, $R_{W_i}$ is finitely generated over $R_V$. Hence the tensor product $R_{W_0} \otimes_{R_V} \cdots \otimes_{R_V} R_{W_3}$ is finitely generated over $R_V$. The result then follows from Corollary~\ref{leaves'}.
\end{proof}

\begin{remark}
The converse of this proposition is not true in general. See the counterexample in the last section.
\end{remark}

\begin{remark}
The finitely strongly generated condition for a $V$-module $M$ is weaker than Huang's $C_1$-cofiniteness condition. Indeed, by \cite{arakawa2012remark}, for $V$ of CFT type, $M$ is strongly finitely generated over $V$ if and only if it satisfies Li's $C_1$-cofiniteness condition. Since Li's $C_1(M)$ is defined as
\[
\mathrm{span}_{\mathbb{C}} \{ \omega_{(0)}m,\ a_{(-1)}m \mid a \in \bigoplus_{\Delta > 0} V_\Delta,\ m \in M \},
\]
and contains Huang's $C_1(M)$, which is
\[
\mathrm{span}_{\mathbb{C}} \{ a_{(-1)}m \mid a \in \bigoplus_{\Delta > 0} V_\Delta,\ m \in M \},
\]
Li's $C_1$-cofiniteness condition is therefore strictly weaker.
\end{remark}

\section{Genus-one correlation functions}

Let $V$ be a vertex superalgebra, $W_1$ be its weak $V$-module, and $W_2$ and $W_3$ be its weak $g$-twisted modules, where $g$ is of finite order $T$. Let $u\in V^{r}$ and $w\in W_1$ be $\mathbb{Z}_2$-homogeneous elements. We first study some basic properties of geometrically-modified twisted intertwining operators, which are discussed in Appendix A.  Let 
\begin{align*}
  \frac{1}{2\pi i}(e^{2\pi i y}-1)=\left(\exp\left(-\sum_{j\in\mathbb{Z}_{+}}A_{j}y^{j+1}\frac{\partial}{\partial y}\right)\right)y.
\end{align*}
Denote the operator $\sum_{j\in \Z_{+}}L_{(j)}$ by $L_+(A)$.  
Let $\mathcal{U}(x)=(2\pi i x)^{L_{0}}e^{-L^+(A)}$, where $x$ is any number or a formal variable that makes sense.
\begin{proposition}
Let $\mathcal{Y}$ be an intertwining operator of type $\binom{W_3}{W_1\;W_2}$. Then
\begin{align}\label{coorcha}
\mathcal{U}(1)\mathcal{Y}(w,x)(\mathcal{U}(1))^{-1}=\mathcal{Y}\left(\mathcal{U}(e^{2\pi i x})w,\, e^{2\pi i x}-1\right)
\end{align}
and
\begin{align}\label{coorcha1}
\begin{split}
\left[Y\left(\mathcal{U}(x_{1})u,\, x_{1}\right),\, 
\mathcal{Y}\left(\mathcal{U}(x_{2})w,\, x_{2}\right)\right]
=\, & 2\pi i\, \operatorname{Res}_{y}\, 
\delta\left(\frac{x_{1}}{e^{2\pi i y}x_{2}}\right)
\left(\frac{x_{1}}{e^{2\pi i y}x_{2}}\right)^{r/T} \\
& \cdot \mathcal{Y}\left(\mathcal{U}(x_{2})Y(u,y)w,\, x_{2}\right).
\end{split}
\end{align}
\end{proposition}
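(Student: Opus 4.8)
The plan is to establish (\ref{coorcha}) first, as a conformal change-of-variables (covariance) statement for intertwining operators, and then to feed it into the commutator formula (\ref{commutator1}) to obtain (\ref{coorcha1}).

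For (\ref{coorcha}), I would factor $\mathcal{U}(1) = (2\pi i)^{L_{(0)}} e^{-L^+(A)}$ and conjugate by the two factors in turn. The outer scaling is governed by the bracket $[L_{(0)}, \mathcal{Y}(w,x)] = \mathcal{Y}(L_{(0)}w,x) + x\,\tfrac{d}{dx}\mathcal{Y}(w,x)$, which exponentiates to $a^{L_{(0)}}\mathcal{Y}(w,x)a^{-L_{(0)}} = \mathcal{Y}(a^{L_{(0)}}w, ax)$ for any invertible scalar $a$. The inner factor $e^{-L^+(A)}$ is the genuine geometric modification: since the conformal vector $\omega$ lies in $V^{0}$, the brackets $[L_{(j)}, \mathcal{Y}(w,x)]$ for $j \geq 1$ coincide with the vertex-operator brackets, so the change-of-variables formula reviewed in Appendix~A extends verbatim to $\mathcal{Y}$ and gives
\[
e^{-L^+(A)}\mathcal{Y}(w,x)e^{L^+(A)} = \mathcal{Y}\bigl((g'(x))^{L_{(0)}} e^{-L^+(A)} w,\ g(x)\bigr),\qquad g(x) := \tfrac{1}{2\pi i}(e^{2\pi i x}-1).
\]
The point that the \emph{same} operator $e^{-L^+(A)}$ reappears on the right is a manifestation of the self-similarity of the exponential coordinate change (the identity $e^{2\pi i(x+w)}-1 = e^{2\pi i x}(e^{2\pi i w}-1) + (e^{2\pi i x}-1)$ shows that the Taylor expansion of $g$ at any point agrees, up to scaling and translation, with its expansion at the origin). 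Composing the two conjugations and using $2\pi i\,g(x) = e^{2\pi i x}-1$ together with $(2\pi i)^{L_{(0)}}(g'(x))^{L_{(0)}} = (2\pi i\,e^{2\pi i x})^{L_{(0)}}$ produces the new insertion operator $(2\pi i\,e^{2\pi i x})^{L_{(0)}}e^{-L^+(A)} = \mathcal{U}(e^{2\pi i x})$, which is exactly (\ref{coorcha}). Note that $\mathcal{U}(x)$ preserves each eigenspace $V^{r}$, being built from the $g$-invariant operators $L_{(0)}$ and $L_{(j)}$.

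For (\ref{coorcha1}), I would substitute $u \mapsto \mathcal{U}(x_1)u$ and $w \mapsto \mathcal{U}(x_2)w$ into (\ref{commutator1}); since $\mathcal{U}(x_1)u$ still lies in $V^{r}$, the twisting exponent $r/T$ is unchanged, and the right-hand side becomes $\operatorname{Res}_{x_0} x_1^{-1}\delta(\tfrac{x_2+x_0}{x_1})(\tfrac{x_2+x_0}{x_1})^{r/T}\mathcal{Y}(Y(\mathcal{U}(x_1)u, x_0)\mathcal{U}(x_2)w, x_2)$. The decisive move is the formal substitution $x_0 = (e^{2\pi i y}-1)x_2$ in the residue: on the support of the delta function one has $x_2 + x_0 = e^{2\pi i y}x_2$ and hence $x_1 = e^{2\pi i y}x_2$, so $\delta(\tfrac{x_2+x_0}{x_1})$ turns into the multiplicative delta $\delta(\tfrac{x_1}{e^{2\pi i y}x_2})$ (using $\delta(u)=\delta(u^{-1})$) and the twisting factor into the corresponding power of $\tfrac{x_1}{e^{2\pi i y}x_2}$, while the Jacobian $\tfrac{dx_0}{dy} = 2\pi i\,e^{2\pi i y}x_2$ combines with the prefactor $x_1^{-1}$ to produce the overall constant $2\pi i$.

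The heart of the argument is then the internal simplification of the insertion $Y(\mathcal{U}(x_1)u, x_0)\mathcal{U}(x_2)w$ on this support. Writing $\mathcal{U}(x_2) = (x_2)^{L_{(0)}}\mathcal{U}(1)$ and conjugating, the scaling covariance converts $(x_2)^{-L_{(0)}}Y(\mathcal{U}(x_1)u, x_0)(x_2)^{L_{(0)}}$ into $Y(\mathcal{U}(e^{2\pi i y})u,\ e^{2\pi i y}-1)$ — here the relations $x_1/x_2 = e^{2\pi i y}$ and $x_0/x_2 = e^{2\pi i y}-1$ are precisely what make $(x_2)^{-L_{(0)}}\mathcal{U}(x_1) = \mathcal{U}(e^{2\pi i y})$ — and then (\ref{coorcha}), applied to the module vertex operator $Y$ (a special case of an intertwining operator), identifies $\mathcal{U}(1)^{-1}Y(\mathcal{U}(e^{2\pi i y})u, e^{2\pi i y}-1)\mathcal{U}(1)$ with $Y(u,y)$. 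Thus $Y(\mathcal{U}(x_1)u, x_0)\mathcal{U}(x_2)w = \mathcal{U}(x_2)Y(u,y)w$, and assembling the pieces yields (\ref{coorcha1}). I expect the main obstacle to be the geometric-modification covariance displayed above: verifying that the non-affine change of variables $g$ reproduces precisely $\mathcal{U}(e^{2\pi i x})$, rather than an $x$-dependent family of modification operators, is the step that requires genuine care, whereas once (\ref{coorcha}) is in hand the commutator identity (\ref{coorcha1}) follows by the substitution and covariance bookkeeping just described.
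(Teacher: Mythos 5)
Your proposal is correct and follows essentially the same route as the paper: identity (\ref{coorcha}) via the exponential change of coordinates from Example~\ref{ctft} extended to intertwining operators, and then (\ref{coorcha1}) by combining the twisted commutator formula with $L_{(0)}$-conjugation, the substitution $x_0=(e^{2\pi i y}-1)x_2$ with its Jacobian $2\pi i\,e^{2\pi i y}x_2$, and an application of (\ref{coorcha}) to the module vertex operator; the paper merely performs the $\mathcal{U}(1)$-substitutions at the end rather than at the start. The only point left implicit in your write-up is the sign of the twisting exponent after the delta-function manipulation (a careful bookkeeping gives $\bigl(x_1/(e^{2\pi i y}x_2)\bigr)^{-r/T}$, matching the paper's derivation in (\ref{gotcf2}) rather than the sign displayed in the proposition statement), but this does not affect the validity of your argument.
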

\begin{proof}
The first identity follows immediately from the same argument in Example \ref{ctft}.
We prove the second identity. Recall the twisted commutator formula
\begin{align*}
    \left[Y(u,x_{1}),\, \mathcal{Y}(w,x_{2})\right]
    =\text{Res}_{x_{0}}\, x_{2}^{-1}\delta\left(\frac{x_{1}-x_{0}}{x_{2}}\right)\left(\frac{x_{1}-x_{0}}{x_{2}}\right)^{-r/T}\mathcal{Y}(Y(u,x_{0})w,x_{2}).
\end{align*}
Thus
\begin{align}\label{commm}
\begin{split}
    \left[Y\left(x_{1}^{L_{(0)}}u,x_{1}\right),\, \mathcal{Y}\left(x_{2}^{L_{(0)}}w,x_{2}\right)\right]
    &=\text{Res}_{x_{0}}\, x_{2}^{-1}\delta\left(\frac{x_{1}-x_{0}}{x_{2}}\right)\left(\frac{x_{1}-x_{0}}{x_{2}}\right)^{-r/T} \\
    &\quad\cdot \mathcal{Y}\left(Y\left(\left(\frac{x_{1}}{x_{2}}\right)^{L_{(0)}}u,\, \frac{x_{0}}{x_{2}}\right)w,\, x_{2}\right),
\end{split}
\end{align}
where we used the $L_{(0)}$-conjugation formula
\begin{align}\label{l0conjugatin}
q^{-L_{(0)}}Y(q^{L_{(0)}}u,x)q^{L_{(0)}}=Y\left(u,\, \frac{x}{q}\right).
\end{align}
Now we change the variable $x_{0}$ to $y$ as follows:
\begin{align*}
    x_{0}=\sum_{k\in \mathbb{N}} \frac{x_{2}(2\pi i y)^{k}}{k!}=(e^{2\pi i y}-1)x_{2}.
\end{align*}
Then RHS of (\ref{commm}) becomes
\begin{align}\label{gotcf1}
\begin{split}
       & \text{Res}_{y}\, \frac{\partial x_{0}}{\partial y} x_{2}^{-1} \delta\left(\frac{x_{1}-(e^{2\pi i y}-1)x_{2}}{x_{2}}\right)\left(\frac{x_{1}-(e^{2\pi i y}-1)x_{2}}{x_{2}}\right)^{-r/T} \\
       &\quad\cdot \mathcal{Y}\left(x_{2}^{L_{(0)}}Y\left(\left(\frac{x_{1}}{x_{2}}\right)^{L_{(0)}}u,\, e^{2\pi i y}-1\right)w,\, x_{2}\right) \\
       &= 2\pi i\, \text{Res}_{y}\, \delta\left(\frac{x_{1}}{e^{2\pi i y}x_{2}}\right)\left(\frac{x_{1}}{e^{2\pi i y}x_{2}}\right)^{-r/T}
       \mathcal{Y}\left(x_{2}^{L_{(0)}}Y\left(e^{2\pi i y L_{(0)}}u,\, e^{2\pi i y}-1\right)w,\, x_{2}\right) \\
       &= 2\pi i\, \text{Res}_{y}\, \delta\left(\frac{x_{1}}{e^{2\pi i y}x_{2}}\right)\left(\frac{x_{1}}{e^{2\pi i y}x_{2}}\right)^{-r/T} \\
       &\quad\cdot \mathcal{Y}\left(x_{2}^{L_{(0)}}\mathcal{U}(1)Y\left(\mathcal{U}(1)^{-1}u,\, y\right)\mathcal{U}(1)^{-1}w,\, x_{2}\right),
\end{split}
\end{align}
where the two identities ($\alpha\in \mathbb{C}$)
\begin{align*}
    & x_{0}^{-1}\delta\left(\frac{x_{1}-x_{2}}{x_{0}}\right)\left(\frac{x_{1}-x_{2}}{x_{0}}\right)^{\alpha}=x_{1}^{-1}\delta\left(\frac{x_{0}+x_{2}}{x_{1}}\right)\left(\frac{x_{0}+x_{2}}{x_{1}}\right)^{-\alpha}, \\
    & x^{-1}\delta\left(\frac{y}{x}\right)\left(\frac{y}{x}\right)^{\alpha}=y^{-1}\delta\left(\frac{x}{y}\right)\left(\frac{x}{y}\right)^{-\alpha}
\end{align*}
were used. The last equality follows from (\ref{coorcha}).

Equations (\ref{commm}) and (\ref{gotcf1}) together give
\begin{align}\label{gotcf2}
\begin{split}
\left[
Y\left(x_{1}^{L_{(0)}} u,\, x_{1} \right),\,
\mathcal{Y}\left(x_{2}^{L_{(0)}} w,\, x_{2} \right)
\right]
=\, & 2\pi i\, \operatorname{Res}_{y} \,
\delta\left( \frac{x_{1}}{e^{2\pi i y}x_{2}} \right)
\left( \frac{x_{1}}{e^{2\pi i y}x_{2}} \right)^{-r/T} \\
& \cdot \mathcal{Y}\left( 
\mathcal{U}(x_{2}) Y\left( \mathcal{U}(1)^{-1} u,\, y \right) 
\mathcal{U}(1)^{-1} w,\, x_{2} \right).
\end{split}
\end{align}
Equation (\ref{coorcha1}) follows from (\ref{gotcf2}) by substituting $\mathcal{U}(1)u$ for $u$ and $\mathcal{U}(1)w$ for $w$ and using the identity
\begin{align*}
    \mathcal{U}(xy)=x^{L_{(0)}}\mathcal{U}(y).
\end{align*}
\end{proof}

\begin{proposition}
For any $u \in V^{r}$,
\begin{align}\label{gotcf123}
    2\pi i x \frac{d}{dx} \mathcal{Y}(\mathcal{U}(x)u, x) = \mathcal{Y}(\mathcal{U}(x)L_{(-1)}u, x).
\end{align}
\end{proposition}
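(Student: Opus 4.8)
The plan is to differentiate the left-hand side of (\ref{gotcf123}) directly, keeping track of the two separate appearances of $x$, and then to reduce the whole statement to a single conjugation identity inside the Virasoro algebra. First I would use that $L^{+}(A)$ does not depend on $x$, so that only the scalar factor $(2\pi i x)^{L_{(0)}}$ in $\mathcal{U}(x)=(2\pi i x)^{L_{(0)}}e^{-L^{+}(A)}$ contributes a derivative, giving $\tfrac{d}{dx}\mathcal{U}(x)=\tfrac{1}{x}L_{(0)}\mathcal{U}(x)$. Applying the product rule to $\mathcal{Y}(\mathcal{U}(x)u,x)$ (justified rigorously by expanding $\mathcal{U}(x)u$ into its finitely many $L_{(0)}$-weight components, a finite sum since $L_{(j)}$ lowers weight and the grading of $V$ is bounded below) and using the $L_{(-1)}$-derivative property $\tfrac{d}{dx}\mathcal{Y}(w,x)=\mathcal{Y}(L_{(-1)}w,x)$ on the second slot, I obtain
\[
2\pi i\, x\,\frac{d}{dx}\mathcal{Y}(\mathcal{U}(x)u,x)=2\pi i\,\mathcal{Y}(L_{(0)}\mathcal{U}(x)u,x)+2\pi i\, x\,\mathcal{Y}(L_{(-1)}\mathcal{U}(x)u,x).
\]
Comparing with the target $\mathcal{Y}(\mathcal{U}(x)L_{(-1)}u,x)$, it then suffices to prove the state identity $\mathcal{U}(x)L_{(-1)}u=2\pi i L_{(0)}\mathcal{U}(x)u+2\pi i x\,L_{(-1)}\mathcal{U}(x)u$. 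Left-multiplying by $(2\pi i x)^{-L_{(0)}}$ and using $[L_{(0)},L_{(-1)}]=L_{(-1)}$, so that $(2\pi i x)^{-L_{(0)}}L_{(-1)}(2\pi i x)^{L_{(0)}}=(2\pi i x)^{-1}L_{(-1)}$, all explicit powers of $x$ cancel and the claim collapses to the $x$-free operator identity
\[
e^{-L^{+}(A)}L_{(-1)}e^{L^{+}(A)}=L_{(-1)}+2\pi i L_{(0)}.
\]

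This conjugation identity is the heart of the matter and the step I expect to be the main obstacle, since it is precisely where the definition of the coefficients $A_{j}$ enters. I would prove it through the realization of $L_{(n)}$ $(n\geq-1)$ as the vector fields $\ell_{n}=-y^{n+1}\partial_{y}$, under which $L^{+}(A)=\sum_{j\geq1}A_{j}L_{(j)}$ corresponds to $-\sum_{j\geq1}A_{j}y^{j+1}\partial_{y}$, the vector field whose time-one flow is by definition the substitution $\phi(y)=\tfrac{1}{2\pi i}(e^{2\pi i y}-1)$. Transporting $\ell_{-1}=-\partial_{y}$ by this flow and evaluating the resulting vector field on the coordinate function, using $\phi'(y)=e^{2\pi i y}$ and $\phi^{-1}(y)=\tfrac{1}{2\pi i}\log(1+2\pi i y)$, yields $-(1+2\pi i y)\partial_{y}=\ell_{-1}+2\pi i\ell_{0}$, i.e. $L_{(-1)}+2\pi i L_{(0)}$. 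A point worth recording is that the central extension does not interfere: among the iterated brackets $\operatorname{ad}_{L^{+}(A)}^{k}(L_{(-1)})$ the only one that could produce a cocycle is $[L_{(1)},L_{(-1)}]$, whose central coefficient $\tfrac{1^{3}-1}{12}$ vanishes, so the Witt-algebra computation transfers verbatim to the representation on $V$.

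Two things demand care rather than ingenuity. The Virasoro sign convention must be the one for which $L_{(-1)}$ raises weight, i.e. $[L_{(0)},L_{(-1)}]=L_{(-1)}$; otherwise the powers of $x$ fail to cancel in the reduction step above. And the direction of the flow, namely whether $e^{-L^{+}(A)}$ realizes $\phi$ or $\phi^{-1}$, must be fixed consistently with the defining generating function for the $A_{j}$. As an independent cross-check, the same identity can be extracted by differentiating the conjugation formula (\ref{coorcha}) of the preceding proposition with respect to its variable and matching the $L_{(0)}$- and $L_{(-1)}$-contributions; this avoids the vector-field language but then requires an injectivity argument to strip off $\mathcal{Y}(\cdot,x)$, which is why I would present the direct conjugation computation as the primary route.
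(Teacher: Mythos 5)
Your proof is correct, and its skeleton coincides with the paper's: both apply the product rule to the two occurrences of $x$, use the $L_{(-1)}$-derivative property, and thereby reduce the proposition to the single conjugation identity $\mathcal{U}(x)L_{(-1)}\mathcal{U}(x)^{-1}=2\pi i L_{(0)}+2\pi i x L_{(-1)}$, equivalently $e^{-L_{+}(A)}L_{(-1)}e^{L_{+}(A)}=L_{(-1)}+2\pi i L_{(0)}$. Where you diverge is in the proof of that identity. The paper writes $L_{(-1)}=\operatorname{Res}_{x}Y(\omega,x)$, conjugates by $\mathcal{U}(1)$ using the already-established formula \eqref{coorcha}, inserts the computation $\mathcal{U}(1)\omega=(2\pi i)^{2}\bigl(\omega-\tfrac{c}{24}\mathbf{1}\bigr)$ from \eqref{virasoro}, and performs the change of variable $y=e^{2\pi i x}-1$ inside the residue to land on $(2\pi i L_{(-1)}+2\pi i L_{(0)})\mathcal{U}(1)$; this is essentially the route you sketch only as a cross-check. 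You instead prove the identity purely Lie-algebraically, transporting $\ell_{-1}=-\partial_{y}$ under the coordinate change $y\mapsto\tfrac{1}{2\pi i}(e^{2\pi i y}-1)$ that defines the $A_{j}$, and verifying that no central term can arise because the cocycle coefficient of $[L_{(1)},L_{(-1)}]$ vanishes. Your route is self-contained at the level of the Virasoro algebra and makes the geometric meaning of the identity transparent, at the cost of having to fix the flow direction and the Witt-algebra sign convention (note the paper's displayed relation \eqref{viro1} has the sign of $(m-n)$ reversed relative to the convention forced by $\operatorname{wt}u_{(n)}=\operatorname{wt}u-n-1$, so your insistence on $[L_{(0)},L_{(-1)}]=L_{(-1)}$ is the right call); the paper's route avoids these conventions by reusing \eqref{coorcha} but requires the extra computation of $\mathcal{U}(1)\omega$ and the observation that the $\tfrac{c}{24}\mathbf{1}$ term dies under $\operatorname{Res}$. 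Both arguments are sound.
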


\begin{proof}
Using the $L_{(-1)}$-derivative property for $\mathcal{Y}$, we obtain
\begin{align}\label{newprop1}
\begin{split}
    2\pi i x \frac{d}{dx} \mathcal{Y}(\mathcal{U}(x)u, x)
    &= 2\pi i x \frac{d}{dx} \mathcal{Y}\big((2\pi i x)^{L_{(0)}} e^{-L_{+}(A)} u, x\big) \\
    &= \mathcal{Y}\big((2\pi i L_{(0)} + 2\pi i x L_{(-1)})(2\pi i x)^{L_{(0)}} e^{-L_{+}(A)} u, x\big) \\
    &= \mathcal{Y}\big(x^{L_{(0)}}(2\pi i L_{(0)} + 2\pi i x L_{(-1)}) \mathcal{U}(1) u, x\big).
\end{split}
\end{align}

Let $\omega$ be the conformal vector in $V$. Then
\begin{align}\label{newprop2}
\begin{split}
    \mathcal{U}(1)L_{(-1)} &= \text{Res}_{x} \, \mathcal{U}(1) Y(\omega, x) \\
    &= \text{Res}_{x} \, Y\big(\mathcal{U}(e^{2\pi i x}) \omega, e^{2\pi i x} - 1\big) \mathcal{U}(1) \\
    &= \text{Res}_{x} \, Y\big(e^{2\pi i x L_{(0)}} \omega, e^{2\pi i x} - 1\big) \mathcal{U}(1) \\
    &= \text{Res}_{x} \, (2\pi i)^2 Y\big(e^{2\pi i x L_{(0)}} (\omega - \tfrac{c}{24} \mathbf{1}), e^{2\pi i x} - 1\big) \mathcal{U}(1) \\
    &= \text{Res}_{x} \, (2\pi i)^2 e^{4\pi i x} Y(\omega, e^{2\pi i x} - 1) \mathcal{U}(1) \\
    &=\text{Res}_{y} \, (2\pi i + 2\pi i y) Y(\omega, y) \mathcal{U}(1) \\
    &= (2\pi i L_{(-1)} + 2\pi i L_{(0)}) \mathcal{U}(1),
\end{split}
\end{align}
where the second identity follows from (\ref{coorcha}), the fourth identity uses the identity 
\[
\mathcal{U}(1) \omega = (2\pi i)^2 \left(\omega - \tfrac{c}{24} \mathbf{1} \right),
\]
and the sixth identity follows by changing variables from $x$ to $y = e^{2\pi i x} - 1$.

Using (\ref{newprop2}), the right-hand side of (\ref{newprop1}) equals
\[
\mathcal{Y}(\mathcal{U}(x)L_{(-1)}u, x),
\]
proving the proposition.
\end{proof}

Let $W_1, W_2, W_5$ be finitely strongly generated $V$-modules, and $W_0, W_3, W_4$ be ordinary $g$-twisted modules. Let $W_0'$ be the contragredient module of $W_0$, which is a $g^{-1}$-twisted $V$-module. Let $\mathcal{Y}_1, \mathcal{Y}_2, \mathcal{Y}_3, \mathcal{Y}_4$ be intertwining operators of types
\[
\binom{W_0}{W_1 \; W_4},\quad \binom{W_4}{W_2 \; W_3},\quad \binom{W_5}{W_1 \; W_2},\quad \binom{W_0}{W_5 \; W_3},
\]
respectively.

If Conjecture \ref{duality} holds, then for any $w_0' \in W_0'$, $w_i \in W_i$ for $i = 1, 2, 3$, and when \(|z_1| > |z_2| > |z_1 - z_2| > 0\), we have the associativity
\begin{align}\label{newprop3}
    \langle w_0', \mathcal{Y}_1(w_1, z_1) \mathcal{Y}_2(w_2, z_2) w_3 \rangle 
    = \langle w_0', \mathcal{Y}_4(\mathcal{Y}_3(w_1, z_1 - z_2) w_2, z_2) w_3 \rangle.
\end{align}
In particular, (\ref{newprop3}) holds when \(g\) is the identity and \(W_0, W_1, W_2, W_3\) satisfy the finiteness condition (\ref{newfinitenss}).

Assume the associativity property holds. Let \(q_z := e^{2\pi i z}\). We also have the following.

\begin{proposition}
When \(|q_{z_1 - z_2}| > 1 > |q_{z_1 - z_2} - 1| > 0\), we have
\begin{align*}
    &\langle w_0', \mathcal{Y}_1(\mathcal{U}(q_{z_1}) w_1, q_{z_2}) \mathcal{Y}_2(\mathcal{U}(q_{z_2}) w_2, q_{z_2}) w_3 \rangle \\
    &= \langle w_0', \mathcal{Y}_4(\mathcal{U}(q_{z_2}) \mathcal{Y}_3(w_1, z_1 - z_2) w_2, q_{z_2}) w_3 \rangle.
\end{align*}
\end{proposition}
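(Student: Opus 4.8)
The plan is to obtain the stated identity as a change of variables applied to the ordinary associativity \eqref{newprop3}, followed by a single computation that rewrites the inner operator. First I would record the equivalence of regions. Writing $q_{z_1}-q_{z_2}=q_{z_2}(q_{z_1-z_2}-1)$, so that $|q_{z_1}-q_{z_2}|=|q_{z_2}|\,|q_{z_1-z_2}-1|$ and $|q_{z_1}|/|q_{z_2}|=|q_{z_1-z_2}|$, the hypothesis $|q_{z_1-z_2}|>1>|q_{z_1-z_2}-1|>0$ is exactly the condition $|q_{z_1}|>|q_{z_2}|>|q_{z_1}-q_{z_2}|>0$, i.e.\ the associativity region for operators placed at the points $q_{z_1}$ and $q_{z_2}$.

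Next I would substitute into \eqref{newprop3} the vectors $w_1\mapsto \mathcal{U}(q_{z_1})w_1$, $w_2\mapsto\mathcal{U}(q_{z_2})w_2$ (keeping $w_3$) and the positions $z_1\mapsto q_{z_1}$, $z_2\mapsto q_{z_2}$ (so that $z_1-z_2\mapsto q_{z_1}-q_{z_2}$). Since $\mathcal{U}(q_{z_i})w_i$ is a \emph{finite} linear combination of homogeneous vectors with $z_i$-dependent scalar coefficients (the operator $e^{-L^+(A)}$ truncates on a module whose weights are bounded below, while $(2\pi i\,q_{z_i})^{L_{(0)}}$ acts by scalars on weight spaces), this substitution is legitimate term by term by the linearity of $\mathcal{Y}_1,\dots,\mathcal{Y}_4$, with no convergence issue; the evaluation at $(q_{z_1},q_{z_2})$ lands in the region just identified, which selects the correct branch of the multivalued function. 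This yields
\begin{align*}
&\langle w_0',\, \mathcal{Y}_1(\mathcal{U}(q_{z_1})w_1, q_{z_1})\,\mathcal{Y}_2(\mathcal{U}(q_{z_2})w_2, q_{z_2})\,w_3\rangle \\
&\qquad = \langle w_0',\, \mathcal{Y}_4\big(\mathcal{Y}_3(\mathcal{U}(q_{z_1})w_1,\, q_{z_1}-q_{z_2})\,\mathcal{U}(q_{z_2})w_2,\, q_{z_2}\big)w_3\rangle,
\end{align*}
whose left-hand side already matches the proposition.

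The remaining task, and the heart of the argument, is the operator identity
\begin{align*}
\mathcal{Y}_3(\mathcal{U}(q_{z_1})w_1,\, q_{z_1}-q_{z_2})\,\mathcal{U}(q_{z_2})w_2 = \mathcal{U}(q_{z_2})\,\mathcal{Y}_3(w_1,\, z_1-z_2)\,w_2,
\end{align*}
which converts the substituted inner operator into the geometrically modified one on the right-hand side of the proposition. To prove it I would start from the right: factor $\mathcal{U}(q_{z_2})=q_{z_2}^{L_{(0)}}\mathcal{U}(1)$ using $\mathcal{U}(xy)=x^{L_{(0)}}\mathcal{U}(y)$, move $\mathcal{U}(1)$ through $\mathcal{Y}_3$ via the coordinate-change formula \eqref{coorcha} to produce $\mathcal{Y}_3(\mathcal{U}(q_{z_1-z_2})w_1,\, q_{z_1-z_2}-1)\,\mathcal{U}(1)w_2$, and then conjugate by $q_{z_2}^{L_{(0)}}$ using the $L_{(0)}$-scaling property $q^{L_{(0)}}\mathcal{Y}_3(w,x)q^{-L_{(0)}}=\mathcal{Y}_3(q^{L_{(0)}}w, qx)$ (the intertwining-operator analogue of \eqref{l0conjugatin}). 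The bookkeeping then closes: $q_{z_2}(q_{z_1-z_2}-1)=q_{z_1}-q_{z_2}$ fixes the position, $q_{z_2}^{L_{(0)}}\mathcal{U}(q_{z_1-z_2})=q_{z_1}^{L_{(0)}}\mathcal{U}(1)=\mathcal{U}(q_{z_1})$ fixes the first argument, and $q_{z_2}^{L_{(0)}}\mathcal{U}(1)w_2=\mathcal{U}(q_{z_2})w_2$ fixes the second. Feeding this identity into the displayed substituted associativity produces precisely the right-hand side of the proposition.

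I expect the main obstacle to be justification rather than a genuinely hard computation: carefully tracking the $L_{(0)}$-scalings and the action of $\mathcal{U}$ on all three modules attached to $\mathcal{Y}_3$, and---more substantively---justifying that the analytic identity \eqref{newprop3} may be evaluated at the $z_i$-dependent vectors $\mathcal{U}(q_{z_i})w_i$ at the numerical positions $q_{z_i}$ on the correct branch. Once the region equivalence and the finiteness of the $\mathcal{U}(q_{z_i})w_i$ expansions are in place, the substitution and the key identity combine to give the claim.
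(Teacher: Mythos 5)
Your proposal is correct and follows essentially the same route as the paper: apply the associativity \eqref{newprop3} at the points $q_{z_1},q_{z_2}$ (the region identification $|q_{z_1-z_2}|>1>|q_{z_1-z_2}-1|>0 \Leftrightarrow |q_{z_1}|>|q_{z_2}|>|q_{z_1}-q_{z_2}|>0$ is exactly the paper's closing remark), then convert the inner iterate via the factorization $\mathcal{U}(q_{z_2})=q_{z_2}^{L_{(0)}}\mathcal{U}(1)$, the conjugation formula \eqref{coorcha}, and $L_{(0)}$-conjugation --- the same three ingredients the paper chains together, merely read in the opposite direction. Your extra care about evaluating \eqref{newprop3} at the $z_i$-dependent (but finite) vectors $\mathcal{U}(q_{z_i})w_i$ is a justification the paper leaves implicit, and your placement of $\mathcal{Y}_1$ at $q_{z_1}$ silently corrects what is evidently a typo ($q_{z_2}$) in the statement.
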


\begin{proof}
By (\ref{newprop2}), (\ref{l0conjugatin}), and (\ref{coorcha}),
\begin{align*}
    &\langle w_0', \mathcal{Y}_1(\mathcal{U}(q_{z_1}) w_1, q_{z_2}) \mathcal{Y}_2(\mathcal{U}(q_{z_2}) w_2, q_{z_2}) w_3 \rangle \\
    &= \langle w_0', \mathcal{Y}_4(\mathcal{Y}_3(\mathcal{U}(q_{z_1}) w_1, q_{z_1} - q_{z_2}) \mathcal{U}(q_{z_2}) w_2, q_{z_2}) w_3 \rangle \\
    &= \langle w_0', \mathcal{Y}_4(\mathcal{Y}_3(\mathcal{U}(q_{z_1}) w_1, q_{z_1} - q_{z_2}) q_{z_2}^{L_{(0)}} \mathcal{U}(1) w_2, q_{z_2}) w_3 \rangle \\
    &= \langle w_0', \mathcal{Y}_4(q_{z_2}^{L_{(0)}} \mathcal{Y}_3(\mathcal{U}(q_{z_1 - z_2}) w_1, q_{z_1 - z_2} - 1) \mathcal{U}(1) w_2, q_{z_2}) w_3 \rangle \\
    &= \langle w_0', \mathcal{Y}_4(q_{z_2}^{L_{(0)}} \mathcal{U}(1) \mathcal{Y}_3(\mathcal{U}(w_1, z_1 - z_2) w_2, q_{z_2}) w_3 \rangle \\
    &= \langle w_0', \mathcal{Y}_4(\mathcal{U}(q_{z_2}) \mathcal{Y}_3(\mathcal{U}(w_1, z_1 - z_2) w_2, q_{z_2}) w_3 \rangle.
\end{align*}
The result follows since \( |q_{z_1}| > |q_{z_2}| > |q_{z_1} - q_{z_2}| > 0 \), or equivalently \( |q_{z_1 - z_2}| > 1 > |q_{z_1 - z_2} - 1| > 0 \).
\end{proof}
\color{black}
\subsection{Identities for formal \texorpdfstring{$q$}{Lg}-traces}
In the following, we adopt the following conventions for notation and expansions. For any function \( f \) defined on the upper half-plane, we denote by \( \tilde{f}(q) \) its \( q \)-expansion, where \( q = e^{2\pi i \tau} \). More generally, for any variable \( \bullet \in \mathbb{C} \), we write
\[
q_\bullet := e^{2\pi i \bullet}.
\]

Let \( (\theta, \phi) \) be a pair of complex numbers in the unit disk, with \( \phi = e^{2\pi i r/T} \) for some rational number \( r/T \). We define the twisted \( P \)-functions by
\begin{align}\label{pqfunction3}
  P_{1}^{\theta,\phi}(z,\tau) := 
  \begin{cases}
    2\pi i \displaystyle\sum_{n \in \mathbb{Z},\, n \neq 0} \frac{q_z^n}{1 - \theta^{-1} q_\tau^n}, & \phi = 1, \\[2mm]
    2\pi i \displaystyle\sum_{n \in r/T + \mathbb{Z}} \frac{q_z^n}{1 - \theta^{-1} q_\tau^n}, & \phi \neq 1,
  \end{cases}
\end{align}
which converge uniformly and absolutely on compact subsets of the region \( |q_\tau| < |q_z| < 1 \). For \( m \geq 2 \), define
\begin{align}\label{pqfunction7}
  P_{m}^{\theta,\phi}(z,\tau) := \frac{1}{(m-1)!} \frac{\partial^{m-1}}{\partial z^{m-1}} P_{1}^{\theta,\phi}(z,\tau).
\end{align}

In our context, we will often treat these as series in the variable \( q \), regarded as \( q = e^{2\pi i \tau} \). When \( \phi \neq 1 \), the twisted \( P \)-function admits the following \( q \)-expansion over the ring \( \mathbb{C}((q^{1/T})) \):
\begin{align}
\begin{split}
\tilde{P}_m^{\theta, \phi}(x, q) = \frac{1}{m!} \Bigg( 
&\sum_{n \geq 0} \frac{(n + r/T)^m x^{n + r/T}}{1 - \theta^{-1} q^{n + r/T}} \\
&- \sum_{n > 0} \frac{(-1)^m \theta (n - r/T)^m x^{-(n - r/T)} q^{n - r/T}}{1 - \theta q^{n - r/T}} 
\Bigg),
\end{split}
\end{align}
and when \( \phi = 1 \), we have
\begin{align}\label{pqfunction8}
\tilde{P}_m^{\theta, 1}(x, q) = (2\pi i)^m \frac{1}{m!} \left( 
\sum_{n > 0} \frac{n^m x^n}{1 - \theta^{-1} q^n} 
- \sum_{n > 0} \frac{(-1)^m \theta n^m x^{-n} q^n}{1 - \theta q^n} 
\right).
\end{align}
The relation between the twisted $P$-type functions and the twisted Weierstrass functions \( P_k\genfrac[]{0pt}{0}{\theta}{\phi}(z,\tau) \) is reviewed in Appendix~C.

In this subsection, we derive some important identities of genus-one correlation functions. These identities involve the twisted modular forms introduced in Appendices~B and~C.

Let $W_i$, $i = 1, 2, \ldots, n$, be weak $V$-modules, and let $\tilde{W}_i$, $i = 1, 2, \ldots, n$, be weak $g$-twisted $V$-modules. Let $h$ be a Cartan element of $V$. Assume that $\tilde{W}_0 = \tilde{W}_n$ is $h$-stable. We will use this assumption for the rest of this section.

\begin{proposition}\label{iffq1}
Let $g$ be of order $T$, and suppose $h_{(0)}(u) = \lambda^{-1} u$ for a $\mathbb{Z}_{2}$-homogeneous element $u \in V^{r}$. For any $\mathbb{Z}_{2}$-homogeneous elements $u_i \in W_i$, $i = 1, \ldots, n$, denote $e^{2\pi i r/T}$ by $\phi$ and $|u|(|u_1| + \cdots + |u_n|)$ by $t$. Let $\mathcal{Y}_i$, $i = 1, \ldots, n$, be intertwining operators of type $\binom{\tilde{W}_{i-1}}{W_i\; \tilde{W}_{i}}$, respectively. Let $\theta = e^{2\pi i s \lambda^{-1}}$.

Then we have
\begin{align}\label{iffq2}
\begin{split}
&\operatorname{tr}_{\tilde{W}_n}Y(\mathcal{U}(x)u,x)\mathcal{Y}_1(\mathcal{U}(x_1)u_1,x_1) \cdots \mathcal{Y}_n(\mathcal{U}(x_n)u_n,x_n) e^{2\pi i s h_{(0)}} q^{L_{(0)}} \\
&\quad= \sum_{i=1}^{n} \sum_{m \in \mathbb{N}} \tilde{P}_{m+1}^{(-1)^{t} \theta, \phi}\left( \frac{x_i}{x}, q \right) \operatorname{tr}_{\tilde{W}_n} \mathcal{Y}_1(\mathcal{U}(x_1)u_1,x_1) \cdots \\
&\quad\quad\quad\quad\quad \cdots \mathcal{Y}_{i-1}(\mathcal{U}(x_{i-1})u_{i-1},x_{i-1}) \mathcal{Y}_i(\mathcal{U}(x_i) u_{(m)} u_i, x_i) \cdots \\
&\quad\quad\quad\quad\quad \cdots \mathcal{Y}_n(\mathcal{U}(x_n)u_n,x_n) e^{2\pi i s h_{(0)}} q^{L_{(0)}} \\
&\quad\quad + \delta_{(-1)^t \theta^{-1}, 1} \operatorname{tr}_{\tilde{W}_n} o(\mathcal{U}(1) u) \mathcal{Y}_1(\mathcal{U}(x_1)u_1,x_1) \cdots \mathcal{Y}_n(\mathcal{U}(x_n)u_n,x_n) e^{2\pi i s h_{(0)}} q^{L_{(0)}}.
\end{split}
\end{align}

Moreover, when $u \in V^0$, we also have
\begin{align}\label{iffq8}
\begin{split}
\sum_{i=1}^{n} & \operatorname{tr}_{\tilde{W}_n} \mathcal{Y}_1(\mathcal{U}(x_1)u_1,x_1) \cdots \mathcal{Y}_{i-1}(\mathcal{U}(x_{i-1})u_{i-1},x_{i-1}) \\
&\quad\quad \cdot \mathcal{Y}_i(\mathcal{U}(x_i) u_{(0)} u_i, x_i) \cdots \mathcal{Y}_n(\mathcal{U}(x_n)u_n,x_n) e^{2\pi i s h_{(0)}} q^{L_{(0)}} = 0.
\end{split}
\end{align}
\end{proposition}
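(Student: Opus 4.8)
The plan is to prove (\ref{iffq2}) by the Zhu-type recursion: insert the geometrically modified field $Y(\mathcal{U}(x)u,x)$ into the trace, commute it through the product of intertwining operators, and then close the trace up using super-cyclicity. First I would apply the super-commutator formula (\ref{coorcha1}) repeatedly to move $Y(\mathcal{U}(x)u,x)$ from the far left to the far right of the string $\mathcal{Y}_1(\mathcal{U}(x_1)u_1,x_1)\cdots\mathcal{Y}_n(\mathcal{U}(x_n)u_n,x_n)$. Each passage past $\mathcal{Y}_i$ contributes a sign $(-1)^{|u|(|u_1|+\cdots+|u_{i-1}|)}$ together with a contraction term $2\pi i\operatorname{Res}_y\,\delta(\tfrac{x}{e^{2\pi iy}x_i})(\tfrac{x}{e^{2\pi iy}x_i})^{r/T}\mathcal{Y}_i(\mathcal{U}(x_i)Y(u,y)u_i,x_i)$, in which $Y(u,y)u_i=\sum_m u_{(m)}u_i\,y^{-m-1}$ produces the modes $u_{(m)}u_i$ appearing on the right of (\ref{iffq2}).

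Next I would treat the ``bulk'' term, in which $Y(\mathcal{U}(x)u,x)$ has been moved entirely to the right of the product, using cyclicity of the supertrace. The two key conjugation identities are $q^{L_{(0)}}Y(\mathcal{U}(x)u,x)q^{-L_{(0)}}=Y(\mathcal{U}(qx)u,qx)$, which follows from $\mathcal{U}(qx)=q^{L_{(0)}}\mathcal{U}(x)$ together with (\ref{l0conjugatin}), and $e^{2\pi i s h_{(0)}}Y(\mathcal{U}(x)u,x)e^{-2\pi i s h_{(0)}}=\theta\,Y(\mathcal{U}(x)u,x)$ with $\theta=e^{2\pi i s\lambda^{-1}}$; the latter uses $h_{(0)}u=\lambda^{-1}u$ and the fact that $h_{(0)}$, being the zero mode of a weight-one primary, commutes with every $L_{(j)}$ and hence with $\mathcal{U}(x)$. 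Combining these with the super-cyclicity sign $(-1)^t$, where $t=|u|(|u_1|+\cdots+|u_n|)$, rewrites the bulk term as $(-1)^t\theta$ times the same trace with $x$ replaced by $qx$. Writing $\mathcal{F}(x)$ for the left-hand side of (\ref{iffq2}), this produces a $q$-difference equation of the shape $\mathcal{F}(x)=\sum_i(\text{contraction})+(-1)^t\theta\,\mathcal{F}(qx)$.

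I would then solve this equation by iteration. Because $|q|<1$ (the standing hypothesis that $\operatorname{tr}q_s^{h_{(0)}}q^{L_{(0)}}$ is well defined guarantees the trace converges), the iterated remainder $((-1)^t\theta)^N\mathcal{F}(q^Nx)$ tends to zero except in the resonant case $(-1)^t\theta^{-1}=1$, where it survives as the zero-mode term $\delta_{(-1)^t\theta^{-1},1}\operatorname{tr}o(\mathcal{U}(1)u)(\cdots)$. Summing the geometric series $\sum_{k\geq0}((-1)^t\theta)^k(\text{contraction at }q^kx)$ collapses the $\operatorname{Res}_y$-delta contributions into denominators $\tfrac{1}{1-((-1)^t\theta)^{-1}q^{n+r/T}}$; matching these against the defining $q$-expansions in (\ref{pqfunction8}) identifies the resummation term-by-term with $\sum_m\tilde P_{m+1}^{(-1)^t\theta,\phi}(x_i/x,q)$ multiplying $\operatorname{tr}(\cdots u_{(m)}u_i\cdots)$, the twist $\phi=e^{2\pi i r/T}$ encoding the fractional modes. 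This establishes (\ref{iffq2}).

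Finally, for the ``moreover'' identity (\ref{iffq8}), where $u\in V^0$ so that $r=0$ and $\phi=1$, I would apply $\operatorname{Res}_x x^{-1}$ to (\ref{iffq2}). Since each $\tilde P_{m+1}^{(-1)^t\theta,1}(x_i/x,q)$ contains only strictly positive and strictly negative powers of $x_i/x$ and no constant term, the entire double sum drops out, leaving $\operatorname{tr}\hat o(u)(\cdots)=\delta_{(-1)^t\theta^{-1},1}\operatorname{tr}o(\mathcal{U}(1)u)(\cdots)$, where $\hat o(u):=\operatorname{Res}_x x^{-1}Y(\mathcal{U}(x)u,x)$. Commuting $\hat o(u)$ once around the trace (using the same conjugation factors) gives the companion relation $2\pi i\sum_i\operatorname{tr}(\cdots u_{(0)}u_i\cdots)=(1-\theta^{-1})\operatorname{tr}\hat o(u)(\cdots)$, and the two relations together force $\sum_i\operatorname{tr}(\cdots u_{(0)}u_i\cdots)=0$. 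I expect the main obstacle to be the third paragraph: justifying the interchange of the trace with the infinite geometric summation (absolute convergence on $|q_\tau|<|x_i/x|<1$) and verifying that the resummed denominators, together with all super-signs and the twist exponent $r/T$, reproduce the stated $q$-expansions of the twisted $P$-functions exactly, including the precise extraction of the resonant zero-mode term.
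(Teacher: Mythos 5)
Your overall strategy is the one the paper uses: generate contraction terms with the commutator formula \eqref{coorcha1}, close the trace up with the $L_{(0)}$- and $h_{(0)}$-conjugations to get a $q$-difference equation, invert the resulting difference operator to produce the twisted $P$-functions, and read off \eqref{iffq8} from the constant term in $x$. Your treatment of \eqref{iffq8} is essentially the paper's argument (the paper phrases it as ``the LHS of the difference equation has no constant term in $x$''), up to the minor point that your companion relation should carry the factor $(1-(-1)^{t}\theta^{-1})$ rather than $(1-\theta^{-1})$.

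There are, however, two concrete problems in the main computation. First, your $q$-difference equation points the wrong way. To close up the trace you must move $Y(\mathcal{U}(x)u,x)$ \emph{rightward} past $e^{2\pi i s h_{(0)}}q^{L_{(0)}}$ before applying cyclicity, and that uses the \emph{inverse} conjugations $e^{-2\pi i sh_{(0)}}Y(\mathcal{U}(x)u,x)e^{2\pi i sh_{(0)}}=\theta^{-1}Y(\mathcal{U}(x)u,x)$ and $q^{-L_{(0)}}Y(\mathcal{U}(x)u,x)q^{L_{(0)}}=Y(\mathcal{U}(x/q)u,x/q)$; the bulk term is therefore $(-1)^{t}\theta^{-1}\mathcal{F}(x/q)$, not $(-1)^{t}\theta\,\mathcal{F}(qx)$ (your version would require conjugating $e^{2\pi i sh_{(0)}}q^{L_{(0)}}$ through all the $\mathcal{Y}_i$, which rescales every $x_i$). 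This is not cosmetic: it swaps $\theta\leftrightarrow\theta^{-1}$ in the resummed denominators and would land you on $\tilde P_{m+1}^{(-1)^{t}\theta^{-1},\phi}$ instead of $\tilde P_{m+1}^{(-1)^{t}\theta,\phi}$. Second, ``iterate and sum the geometric series'' in a single direction cannot produce the answer. The operator $\bigl(1-(-1)^{t}\theta^{-1}q^{-x\partial_x}\bigr)$ must be inverted monomial by monomial on $\mathbb{C}((q^{1/T}))$: on $x^{\mu}$ with $\mu=l-r/T$ one gets $\frac{x^{\mu}}{1-(-1)^{t}\theta^{-1}q^{-\mu}}$ for $\mu\le 0$ but $\frac{-(-1)^{t}\theta\,q^{\mu}x^{\mu}}{1-(-1)^{t}\theta\,q^{\mu}}$ for $\mu>0$ (the paper's \eqref{iffq6}), i.e.\ the geometric expansion direction is chosen per monomial so the result is lower-truncated in $q$. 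A one-directional iteration yields half the terms as series in negative powers of $q$, and it is precisely this two-sided inversion that assembles into the two families of denominators in the $q$-expansion of $\tilde P_{m+1}^{(-1)^{t}\theta,\phi}$. You flag convergence as the obstacle, but the real missing ingredient is this case division, which is also where the resonant $o(\mathcal{U}(1)u)$ term must be split off \emph{before} inverting (the constant monomial is exactly the one on which the operator fails to be invertible when $(-1)^{t}\theta^{-1}=1$).
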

\begin{proof}
The LHS of (\ref{iffq2}) equals
\begin{align}\label{iffq3}
\begin{split}
&\sum_{i=1}^{n} \operatorname{tr}_{\tilde{W}_n}
\Y_1(\mathcal{U}(x_{1})u_{1},x_{1}) \cdots \Y_{i-1}(\mathcal{U}(x_{i-1})u_{i-1},x_{i-1}) \\
&\qquad \cdot \big[Y(\mathcal{U}(x)u,x),\Y_i(\mathcal{U}(x_{i})u_{i},x_{i})\big]
\Y_{i+1}(\mathcal{U}(x_{i+1})u_{i+1},x_{i+1}) \cdots \\
&\qquad \cdot \Y_n(\mathcal{U}(x_{n})u_{n},x_{n}) e^{2\pi i s h_{(0)}} q^{L_{(0)}} \\
&\quad + (-1)^{t}
\Y_1(\mathcal{U}(x_{1})u_{1},x_{1}) \cdots \Y_n(\mathcal{U}(x_{n})u_{n},x_{n}) \\
&\qquad \cdot Y(\mathcal{U}(x)u,x) e^{2\pi i s h_{(0)}} q^{L_{(0)}} \\
&=\sum_{i=1}^{n} 2\pi i \operatorname{Res}_{y}
\delta\left(\frac{x}{e^{2\pi i y} x_i}\right)
\left(\frac{x}{e^{2\pi i y} x_i}\right)^{-r/T} \cdot \\
&\qquad \cdot \operatorname{tr}_{\tilde{W}_n}
\Y_1(\mathcal{U}(x_{1})u_{1},x_{1}) \cdots
\Y_{i-1}(\mathcal{U}(x_{i-1})u_{i-1},x_{i-1}) \\
&\qquad \cdot Y_i\left(\mathcal{U}(x_i) Y(u,y) u_i, x_i \right)
\Y_{i+1}(\mathcal{U}(x_{i+1})u_{i+1},x_{i+1}) \cdots \\
&\qquad \cdot \Y_n(\mathcal{U}(x_{n})u_{n},x_{n})
e^{2\pi i s h_{(0)}} q^{L_{(0)}} \\
&\quad + (-1)^{t}
\operatorname{tr}_{\tilde{W}_n} \theta^{-1} Y\left(\mathcal{U}\left(\frac{x}{q}\right) u, \frac{x}{q} \right)
\mathcal{Y}_1(\mathcal{U}(x_1 w_1),x_1) \cdots \\
&\qquad \cdot \mathcal{Y}_n(\mathcal{U}(x_n w_n),x_n)
e^{2\pi i s h_{(0)}} q^{L_{(0)}} \\
&=\sum_{i=1}^{n} \sum_{m \geq 0}
\operatorname{Res}_{y} \frac{(2\pi i)^{m+1} y^m}{m!}
\left( \left( x_i \frac{\partial}{\partial x_i} \right)^m
\delta\left( \frac{x}{x_i} \right)
\left( \frac{x}{x_i} \right)^{-r/T} \right) \cdot \\
&\qquad \cdot \operatorname{tr}_{\tilde{W}_n}
\Y_1(\mathcal{U}(x_{1})u_{1},x_{1}) \cdots
\Y_{i-1}(\mathcal{U}(x_{i-1})u_{i-1},x_{i-1}) \\
&\qquad \cdot Y_i(\mathcal{U}(x_i) Y(u,y) u_i, x_i)
\Y_{i+1}(\mathcal{U}(x_{i+1})u_{i+1},x_{i+1}) \cdots \\
&\qquad \cdot \Y_n(\mathcal{U}(x_{n})u_{n},x_{n})
e^{2\pi i s h_{(0)}} q^{L_{(0)}} \\
&\quad + (-1)^{t} \theta^{-1} q^{-x \frac{\partial}{\partial x}} 
\operatorname{tr}_{\tilde{W}_n} Y\big( \mathcal{U}(\mathcal{U}(x),x) \cdot 
\mathcal{Y}_1(\mathcal{U}(x_1 w_1),x_1) \cdots \\
&\qquad \cdot \mathcal{Y}_n(\mathcal{U}(x_n w_n),x_n)
e^{2\pi i s h_{(0)}} q^{L_{(0)}} \big),
\end{split}
\end{align}
where we use (\ref{coorcha1}) for the first identity, and the $L_{(0)}$-conjugation property and trace property ($\operatorname{tr}(AB) = \operatorname{tr}(BA)$) for the second identity. From (\ref{iffq3}), we obtain
\begin{align}\label{iffq4}
\begin{split}
  &(1 - (-1)^t \theta^{-1} q^{-x\frac{\partial}{\partial x}}) \\
  &\quad \cdot \operatorname{tr}_{\tilde{W}_n}
    \Y_1(\mathcal{U}(x_1)u_1,x_1) \cdots \Y_n(\mathcal{U}(x_n)u_n,x_n)
    Y(\mathcal{U}(x)u,x) e^{2\pi i s h_{(0)}} q^{L_{(0)}} \\
  &=
    \operatorname{tr}_{\tilde{W}_n}
    \Res_{y} \sum_{i=1}^{n} \sum_{m \geq 0}
    \frac{(2\pi i)^{m+1} y^m}{m!}
    \left( \left( x_i \frac{\partial}{\partial x_i} \right)^m
    \left( \delta\left( \frac{x}{x_i} \right)
    \left( \frac{x}{x_i} \right)^{-r/T} \right) \right) \\
  &\quad\quad\cdot
    \operatorname{tr}_{\tilde{W}_n}
    \Y_1(\mathcal{U}(x_1)u_1,x_1) \cdots
    \Y_{i-1}(\mathcal{U}(x_{i-1})u_{i-1},x_{i-1})
    Y_i(\mathcal{U}(x_i)Y(u,y)u_i,x_i) \cdots \\
  &\quad\quad\cdot
    \Y_n(\mathcal{U}(x_n)u_n,x_n) e^{2\pi i s h_{(0)}} q^{L_{(0)}} \\
  &=
    \sum_{i=1}^n \sum_{m \in \mathbb{N}} \sum_{l \in \mathbb{Z}_+}
    \frac{(2\pi i)^{m+1}}{m!}
    \left( \left( x_i \frac{\partial}{\partial x_i} \right)^m
    \left( \frac{x^{l - r/T}}{x_i^{l - r/T}} \right) \right) \\
  &\quad\quad\cdot
    \operatorname{tr}_{\tilde{W}_n}
    \Y_1(\mathcal{U}(x_1)u_1,x_1) \cdots
    \Y_{i-1}(\mathcal{U}(x_{i-1})u_{i-1},x_{i-1})
    \Y_i(\mathcal{U}(x_i)u_{(m)}u_i,x_i) \cdots \\
  &\quad\quad\cdot
    \Y_n(\mathcal{U}(x_n)u_n,x_n) e^{2\pi i s h_{(0)}} q^{L_{(0)}} \\
  &\quad\quad
    + \sum_{i=1}^n \sum_{m \in \mathbb{N}} \sum_{l \in \mathbb{N}}
    \frac{(2\pi i)^{m+1}}{m!}
    \left( \left( x_i \frac{\partial}{\partial x_i} \right)^m
    \left( \frac{x^{-l - r/T}}{x_i^{-l - r/T}} \right) \right) \\
  &\quad\quad\cdot
    \operatorname{tr}_{\tilde{W}_n}
    \Y_1(\mathcal{U}(x_1)u_1,x_1) \cdots
    \Y_{i-1}(\mathcal{U}(x_{i-1})u_{i-1},x_{i-1})
    \Y_i(\mathcal{U}(x_i)u_{(m)}u_i,x_i) \cdots \\
  &\quad\quad\cdot
    \Y_n(\mathcal{U}(x_n)u_n,x_n) e^{2\pi i s h_{(0)}} q^{L_{(0)}}.
\end{split}
\end{align}
Since \( \left(1 - q^{-x\frac{\partial}{\partial x}}\right) \) acting on any term independent of \( x \) is zero, we obtain
\begin{align}\label{iffq5}
\begin{split}
  &\left(1 - (-1)^t \theta^{-1} q^{-x\frac{\partial}{\partial x}}\right)
    \Bigg(
      \operatorname{tr}_{\tilde{W}_n}
      \Y_1(\mathcal{U}(x_1)u_1,x_1) \cdots \Y_n(\mathcal{U}(x_n)u_n,x_n) \\
  &\quad\quad\quad\quad
      \cdot Y(\mathcal{U}(x)u,x) e^{2\pi i s h_{(0)}} q^{L_{(0)}} 
      - \delta_{(-1)^t\theta^{-1},1}
      \operatorname{tr}_{\tilde{W}_n} o(\mathcal{U}(1)u) \\
  &\quad\quad\quad\quad
      \cdot \Y_1(\mathcal{U}(x_1)u_1,x_1) \cdots \Y_n(\mathcal{U}(x_n)u_n,x_n)
      e^{2\pi i s h_{(0)}} q^{L_{(0)}}
    \Bigg) \\
  &=
    \sum_{i=1}^{n} \sum_{m \in \mathbb{N}}
    \left(
      \sum_{l \in \mathbb{Z}_+}
      \frac{(2\pi i)^{m+1}}{m!}
      \left(x_i \frac{\partial}{\partial x_i}\right)^m
      \left(\frac{x^{l - r/T}}{x_i^{l - r/T}}\right) \right. \\
  &\quad\quad\quad\quad\left.
      +
      \sum_{l \in \mathbb{N}}
      \frac{(2\pi i)^{m+1}}{m!}
      \left(x_i \frac{\partial}{\partial x_i}\right)^m
      \left(\frac{x^{-l - r/T}}{x_i^{-l - r/T}}\right)
    \right) \\
  &\quad\cdot
    \operatorname{tr}_{\tilde{W}_n}
    \Y_1(\mathcal{U}(x_1)u_1,x_1) \cdots
    \Y_{i-1}(\mathcal{U}(x_{i-1})u_{i-1},x_{i-1})
    \Y_i(\mathcal{U}(x_i)u_{(m)}u_i,x_i) \cdots \\
  &\quad\cdot
    \Y_n(\mathcal{U}(x_n)u_n,x_n)
    e^{2\pi i s h_{(0)}} q^{L_{(0)}}.
\end{split}
\end{align}
Note that when \( r = 0 \) and \( m = 0 \), the RHS contains a term
\begin{align}\label{iffq9}
\begin{split}
  2\pi i \sum_{i=1}^{n} \operatorname{tr}_{\tilde{W}_n}&
  \Y_1(\mathcal{U}(x_{1})u_{1},x_{1}) \cdots 
  \Y_{i-1}(\mathcal{U}(x_{i-1})u_{i-1},x_{i-1})
  \Y_i(\mathcal{U}(x_{i})u_{(0)}u_{i},x_{i}) \cdot\\
  &\quad\cdot 
  \Y_{i+1}(\mathcal{U}(x_{i+1})u_{i+1},x_{i+1}) \cdots 
  \Y_n(\mathcal{U}(x_{n})u_{n},x_{n})
  e^{2\pi i s h_{(0)}} q^{L_{(0)}}.
\end{split}
\end{align}
Since the LHS of \eqref{iffq5} has no constant term as a series in \( x \), \eqref{iffq9} must vanish, and we obtain \eqref{iffq8}.

In the rest of the proof, we assume that \( \phi \neq 1 \), i.e., \( r \neq 0 \).  
We next compute the inverse of \( \left(1 - (-1)^{t} \theta^{-1} q^{-x\frac{\partial}{\partial x}} \right) \) on the ring \( \mathbb{C}((q^{\frac{1}{T}})) \).  
(We work in \( \mathbb{C}((q^{\frac{1}{T}})) \) because the weights of a \( g \)-twisted module are always lower truncated.)

 First, 
\begin{align}\label{iffq6}
  \left(1 - (-1)^{t} \theta^{-1} q^{-x\frac{\partial}{\partial x}} \right)^{-1} x^{l - r/T}
  =
  \begin{cases}
    \displaystyle\frac{-(-1)^{t} \theta\, q^{l - r/T} x^{l - r/T}}{1 - (-1)^{t} \theta\, q^{l - r/T}} & \text{if } l > 0,\\[1ex]
    \displaystyle\frac{x^{l - r/T}}{1 - (-1)^{t} \theta^{-1} q^{-(l - r/T)}} & \text{if } l \leq 0.
  \end{cases}
\end{align}

Then,
\begin{align}\label{iffq7}
\begin{split}
  &\left(1 - (-1)^t \theta^{-1} q^{-x\frac{\partial}{\partial x}}\right)^{-1}
  \sum_{i=1}^{n} \sum_{m \in \mathbb{N}} \Bigg(
    \sum_{l \in \mathbb{Z}_+}
    \frac{(2\pi i)^{m+1}}{m!}
    \left(x_i \frac{\partial}{\partial x_i}\right)^m
    \left(\frac{x^{l - r/T}}{x_i^{l - r/T}}\right) \\
  &\quad +
    \sum_{l \in \mathbb{N}}
    \frac{(2\pi i)^{m+1}}{m!}
    \left(x_i \frac{\partial}{\partial x_i}\right)^m
    \left(\frac{x^{-l - r/T}}{x_i^{-l - r/T}}\right)
  \Bigg)\\
  &=
  \sum_{i=1}^{n} \sum_{m \in \mathbb{N}} \Bigg(
    \sum_{l \in \mathbb{Z}_+}
    \frac{(2\pi i)^{m+1}}{m!}
    \left(x_i \frac{\partial}{\partial x_i}\right)^m
    \left(
      \frac{-(-1)^t \theta\, q^{l - r/T}\, x^{l - r/T}}{(1 - (-1)^t \theta\, q^{l - r/T}) x_i^{l - r/T}}
    \right)\\
  &\quad +
    \sum_{l \in \mathbb{N}}
    \frac{(2\pi i)^{m+1}}{m!}
    \left(x_i \frac{\partial}{\partial x_i}\right)^m
    \left(
      \frac{x^{-l - r/T}}{(1 - (-1)^t \theta^{-1} q^{l + r/T}) x_i^{-l - r/T}}
    \right)
  \Bigg)\\
  &=
  \sum_{i=1}^{n} \sum_{m \in \mathbb{N}} \Bigg(
    \sum_{l \in \mathbb{Z}_+}
    \frac{(-1)^{m + t + 1} (2\pi i)^{m+1} (l + r/T)^m}{m!}
    \frac{\theta\, q^{l - r/T} \left(\frac{x_i}{x}\right)^{-l + r/T}}{1 - (-1)^t \theta\, q^{l - r/T}}\\
  &\quad +
    \sum_{l \in \mathbb{N}}
    \frac{(2\pi i)^{m+1} (l + r/T)^m}{m!}
    \frac{\left(\frac{x_i}{x}\right)^{l + r/T}}{1 - (-1)^t \theta^{-1} q^{l + r/T}}
  \Bigg)\\
  &= \sum_{i=1}^{n} \sum_{m \in \mathbb{N}} \tilde{P}_{m+1}^{(-1)^t \theta, \phi}\left(\frac{x_i}{x}, q\right).
\end{split}
\end{align}

We obtain \eqref{iffq2} from \eqref{iffq7} and \eqref{iffq5}.

\end{proof}

Assume that Conjecture \ref{duality} holds 
when $g$ is trivial. Let $W_i$, $\tilde{W}_i$, $i=1,\dots,n$, satisfy the assumptions of Conjecture \ref{duality} or the finiteness condition. Then for any $v_i\in W_i$, $\tilde{w}_n\in \tilde{W}_n$, and $\tilde{w}_0' \in \tilde{W}_0'$, the expression
\[
    \langle \tilde{w}_0',\, \Y_1(v_{1},z_{1})\cdots \Y_n(v_{n},z_{n})\,\tilde{w}_n\rangle
\]
is absolutely convergent when $|z_{1}| > \cdots > |z_{n}| > 0$.

\begin{theorem}\label{iffq22}
Let $W_i$, $\tilde{W}_i$, $i=1,\dots,n$, be as above. For any $0\leq j\leq n$, we have
\begin{align}\label{iffq21}
\begin{split}
   &\operatorname{tr}_{\tilde{W}_n}\Y_1(\mathcal{U}(x_{1})u_{1},x_{1})\cdots \Y_{j-1}(\mathcal{U}(x_{j-1})u_{j-1},x_{j-1})\, \Y_j(\mathcal{U}(x_{j})Y(u,y)u_{j},x_{j})\\
   &\quad \cdot \Y_{j+1}(\mathcal{U}(x_{j+1})u_{j+1},x_{j+1})\cdots \Y_n(\mathcal{U}(x_{n})u_{n},x_{n})\,e^{2\pi i s h_{(0)}}q^{L_{(0)}}\\
   &=\sum_{m\in \mathbb{N}} \tilde{P}_{m+1}^{(-1)^{t}\theta,\phi}\biggl(\frac{1}{e^{2\pi i y}},q\biggr) \operatorname{tr}_{\tilde{W}_n}\Y_1(\mathcal{U}(x_{1})u_{1},x_{1})\cdots\\
   &\quad \cdot \Y_{j-1}(\mathcal{U}(x_{j-1})u_{j-1},x_{j-1})\,\Y_j(\mathcal{U}(x_{j})u_{(m)}u_j,x_{j})\,\Y_{j+1}(\mathcal{U}(x_{j+1})u_{j+1},x_{j+1})\\
   &\quad \cdots \Y_n(\mathcal{U}(x_{n})u_{n},x_{n})\,e^{2\pi i s h_{(0)}}q^{L_{(0)}}\\
   &\quad + \sum_{i\neq j} \sum_{m\in \mathbb{N}} \tilde{P}_{m+1}^{(-1)^{t}\theta,\phi}\biggl(\frac{x_{i}}{x_{j}e^{2\pi i y}},q\biggr) \operatorname{tr}_{\tilde{W}_n} \Y_1(\mathcal{U}(x_{1})u_{1},x_{1}) \cdots\\
   &\quad \cdot \Y_{i-1}(\mathcal{U}(x_{i-1})u_{i-1},x_{i-1})\,\Y_i(\mathcal{U}(x_{i})u_{(m)}u_{i},x_{i})\,\Y_{i+1}(\mathcal{U}(x_{i+1})u_{i+1},x_{i+1})\cdots\\
   &\quad \cdot \Y_n(\mathcal{U}(x_{n})u_{n},x_{n})\,e^{2\pi i s h_{(0)}}q^{L_{(0)}}\\
   &\quad + \delta_{(-1)^t\theta^{-1},1} \operatorname{tr}_{\tilde{W}_n}o(\mathcal{U}(1)u)\, \Y_1(\mathcal{U}(x_{1})u_{1},x_{1}) \cdots \Y_n(\mathcal{U}(x_{n})u_{n},x_{n})\,e^{2\pi i s h_{(0)}}q^{L_{(0)}}.
\end{split}
\end{align}
\end{theorem}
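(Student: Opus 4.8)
The plan is to deduce the identity~\eqref{iffq21} from Proposition~\ref{iffq1} together with the associativity (duality) property. The starting point is the observation that the right-hand side of~\eqref{iffq21} is exactly the right-hand side of~\eqref{iffq2} after the substitution $x = x_j e^{2\pi i y}$. Indeed, under this substitution the argument $x_i/x$ of the twisted function $\tilde{P}_{m+1}^{(-1)^t\theta,\phi}$ becomes $x_i/(x_j e^{2\pi i y})$ for $i \neq j$ and $x_j/x = 1/e^{2\pi i y}$ for $i = j$, reproducing respectively the cross terms and the self term, while the anomalous term $\delta_{(-1)^t\theta^{-1},1}\operatorname{tr}_{\tilde{W}_n} o(\mathcal{U}(1)u)\,\Y_1\cdots\Y_n\,e^{2\pi i s h_{(0)}}q^{L_{(0)}}$ does not depend on $x$ and is carried over unchanged. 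Hence it suffices to identify the left-hand side of~\eqref{iffq21} with the front-current trace $F(x) := \operatorname{tr}_{\tilde{W}_n} Y(\mathcal{U}(x)u,x)\,\Y_1(\mathcal{U}(x_1)u_1,x_1)\cdots \Y_n(\mathcal{U}(x_n)u_n,x_n)\,e^{2\pi i s h_{(0)}}q^{L_{(0)}}$ analytically continued to $x = x_j e^{2\pi i y}$.

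This identification is supplied by the commutator formula~\eqref{coorcha1}: its delta factor $\delta\!\left(x/(e^{2\pi i y}x_j)\right)$ localizes $x = e^{2\pi i y}x_j$, and bringing the front current $Y(\mathcal{U}(x)u,x)$ to collide with the $j$-th geometrically modified operator produces precisely the iterate $\Y_j(\mathcal{U}(x_j)Y(u,y)u_j,x_j)$ in the position-$j$ slot. Concretely, I would first record Proposition~\ref{iffq1}, namely that $F(x)$ equals the right-hand side of~\eqref{iffq2}, valid as a convergent expansion in the region $|x|>|x_1|>\cdots>|x_n|$; here each trace coefficient $\operatorname{tr}_{\tilde{W}_n}[\cdots \Y_i(\mathcal{U}(x_i)u_{(m)}u_i,x_i)\cdots]$ is independent of $x$, so the entire $x$-dependence of $F$ sits in the functions $\tilde{P}_{m+1}^{(-1)^t\theta,\phi}(x_i/x,q)$, which continue as multivalued analytic functions. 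Thus $F$ continues to a punctured neighbourhood of $x = x_j e^{2\pi i y}$ for $y$ off the lattice $\{k\tau+l\}$. Invoking the associativity supplied by Conjecture~\ref{duality} (or, for trivial $g$, the convergence guaranteed by the finiteness condition) then lets me continue the series defining $F$ from the front region across $|x|=|x_j|$ and identify its value at $x = x_j e^{2\pi i y}$ with the iterate trace, i.e.\ with the left-hand side of~\eqref{iffq21}. Substituting $x = x_j e^{2\pi i y}$ into Proposition~\ref{iffq1} then yields~\eqref{iffq21}.

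The hard part will be the analytic continuation across the region boundary and the attendant self-interaction term $i=j$. Passing from $|x|>|x_j|$ to $x=x_j e^{2\pi i y}$ crosses the locus $|x|=|x_j|$, on which exactly the $i=j$ summand develops the singularity corresponding to the operator-product collision of the current with the $j$-th insertion; keeping this term finite forces $y$ to avoid the lattice so that $\tilde{P}_{m+1}^{(-1)^t\theta,\phi}(1/e^{2\pi i y},q)$ is regular, and it is here that the convergence hypothesis, rather than a purely formal manipulation, is essential. A secondary but unavoidable chore is the bookkeeping of the $\mathbb{Z}_2$-graded signs: the total parity sign $t = |u|(|u_1|+\cdots+|u_n|)$ and the twist character $\phi = e^{2\pi i r/T}$ must be propagated through the commutator formula exactly as in the proof of Proposition~\ref{iffq1}, and one must verify that commuting $Y(\mathcal{U}(x)u,x)$ past the intermediate operators $\Y_1,\dots,\Y_{j-1}$ contributes no stray terms once the continuation is carried out inside the trace.
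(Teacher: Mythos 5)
Your overall strategy is the same as the paper's: reduce \eqref{iffq21} to Proposition~\ref{iffq1} by observing that its right-hand side is the right-hand side of \eqref{iffq2} evaluated at $x = x_j e^{2\pi i y}$, and then identify the left-hand side with the analytic continuation of the front-current trace $F(x)$ to that point via the duality property. For $j=1$ this is essentially verbatim the paper's base case. Two points, however, need attention. First, the identification of the left-hand side of \eqref{iffq21} with the continuation of $F(x)$ is \emph{not} supplied by the commutator formula \eqref{coorcha1} (that formula only produces the cross terms already present in \eqref{iffq2}); it is supplied by the associativity of intertwining operators together with the conjugation identity \eqref{iffq23}, namely $\mathcal{U}(x_j)Y(u,y) = Y(\mathcal{U}(x_j e^{2\pi i y})u,\, x_j(e^{2\pi i y}-1))\,\mathcal{U}(x_j)$. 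Without this identity you cannot recognize $\mathcal{Y}_j(\mathcal{U}(x_j)Y(u,y)u_j,x_j)$ — an iterate in the exponential coordinate $y$ — as the associativity iterate of $Y(\mathcal{U}(x)u,x)\mathcal{Y}_j(\mathcal{U}(x_j)u_j,x_j)$ at $x = x_j e^{2\pi i y}$, $x-x_j = x_j(e^{2\pi i y}-1)$. This step is the reason the geometrically modified operators appear at all, and it is missing from your write-up.

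Second, for $j>1$ you defer as a ``chore'' the passage of the current past $\mathcal{Y}_1,\dots,\mathcal{Y}_{j-1}$, but this is precisely the nontrivial content that the paper organizes as an induction on $j$: each adjacent exchange is a separate application of the commutativity part of the duality property, carried out by analytic continuation along an explicit path $\gamma_k$ swapping $z_k$ and $z_{k+1}$, with the resulting half-monodromy and parity signs absorbed into new intertwining operators $\hat{\mathcal{Y}}_k,\hat{\mathcal{Y}}_{k+1}$ to which the induction hypothesis is then applied. Your single-shot continuation of $F(x)$ from $|x|>|x_1|$ all the way to a neighbourhood of $x_j$ crosses each wall $|x|=|x_i|$, $i<j$, and at each crossing the two operator orderings agree only up to the sign $(-1)^{|u||u_i|}$ and a choice of branch; these do not obviously cancel against anything in \eqref{iffq21}, so you must either track them explicitly or restructure the argument as the paper does. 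The available duality statements (Conjecture~\ref{duality} and the associativity \eqref{newprop3}) are pairwise statements, so the chaining cannot be waved away as formal bookkeeping — it is where the convergence hypotheses are actually consumed. With \eqref{iffq23} inserted and the $j>1$ continuation carried out (either by your route with careful sign tracking, or by the paper's induction), the argument closes.
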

\begin{proof}
We prove the theorem by induction on \( j \).

First, we prove the base case \( j=1 \). From \eqref{coorcha} and the \( L_{(0)} \)-conjugation property, we have
\begin{align}\label{iffq23}
\begin{split}
\mathcal{U}(x_1) Y(u, y) &= x_1^{L_{(0)}} \mathcal{U}(1) Y(u, y) \mathcal{U}(1)^{-1} \mathcal{U}(1) \\
&= x_1^{L_{(0)}} Y(\mathcal{U}(e^{2\pi i y}) u, e^{2\pi i y} - 1) \mathcal{U}(1) \\
&= Y(\mathcal{U}(x_1 e^{2\pi i y}) u, x_1(e^{2\pi i y} - 1)) \mathcal{U}(x_1).
\end{split}
\end{align}

\noindent Thus, the left-hand side of \eqref{iffq21} for \( j=1 \) becomes
\begin{align}\label{iffq24}
\begin{split}
\operatorname{tr}_{\tilde{W}_n} &\mathcal{Y}_1(\mathcal{U}(x_1) Y(u, y) u_1, x_1) \mathcal{Y}_2(\mathcal{U}(x_2) u_2, x_2) \cdots \mathcal{Y}_n(\mathcal{U}(x_n) u_n, x_n) e^{2\pi i s h_{(0)}} q^{L_{(0)}} \\
&= \operatorname{tr}_{\tilde{W}_n} \mathcal{Y}_1(Y(\mathcal{U}(x_1 e^{2\pi i y}) u, x_1(e^{2\pi i y} - 1)) \mathcal{U}(x_1) u_1, x_1) \cdots \\
&\qquad \cdots \mathcal{Y}_n(\mathcal{U}(x_n) u_n, x_n) e^{2\pi i s h_{(0)}} q^{L_{(0)}}.
\end{split}
\end{align}

\noindent Now let \( y = z \), \( x_1 = z_1 \in \mathbb{C} \) such that \( |z_1 q_z| > |z_1| > |z_1(q_z - 1)| > 0 \), where \( z_1^n := e^{n \log z_1} \) with \( 0 \leq \arg z_1 < 2\pi \). By \eqref{newprop3},
\begin{align}\label{iffq24b}
\begin{split}
\operatorname{tr}_{\tilde{W}_n} &\mathcal{Y}_1(Y(\mathcal{U}(z_1 q_z) u, z_1(q_z - 1)) \mathcal{U}(z_1) u_1, z_1) \cdots \\
&\quad \cdot \mathcal{Y}_n(\mathcal{U}(x_n) u_n, x_n) e^{2\pi i s h_{(0)}} q^{L_{(0)}} \\
&= \operatorname{tr}_{\tilde{W}_n} Y(\mathcal{U}(z_1 q_z) u, z_1 q_z) \mathcal{Y}_1(\mathcal{U}(z_1) u_1, z_1) \cdots \mathcal{Y}_n(\mathcal{U}(x_n) u_n, x_n) e^{2\pi i s h_{(0)}} q^{L_{(0)}}.
\end{split}
\end{align}

\noindent Replacing \( z_1 \) with the formal variable \( x_1 \), and applying \eqref{iffq2}, we obtain
\begin{align}\label{iffq25}
\begin{split}
\operatorname{tr}_{\tilde{W}_n} &\,
Y(\mathcal{U}(x_1 q_z) u, x_1 q_z)\,
\mathcal{Y}_1(\mathcal{U}(x_1) u_1, x_1) \cdots
\mathcal{Y}_n(\mathcal{U}(x_n) u_n, x_n)\,
e^{2\pi i s h_{(0)}} q^{L_{(0)}} \\
&= \sum_{i=1}^n \sum_{m \in \mathbb{N}}
\tilde{P}_{m+1}^{(-1)^t \theta, \phi}
\left( \frac{x_i}{x_1 q_z}, q \right) \\
&\quad \cdot \operatorname{tr}_{\tilde{W}_n}\,
\mathcal{Y}_1(\mathcal{U}(x_1) u_1, x_1) \cdots
\mathcal{Y}_i(\mathcal{U}(x_i) u_{(m)} u_i, x_i) \cdots \\
&\quad\quad \cdots \mathcal{Y}_n(\mathcal{U}(x_n) u_n, x_n)\,
e^{2\pi i s h_{(0)}} q^{L_{(0)}} \\
&\quad + \delta_{(-1)^t \theta^{-1}, 1} \cdot
\operatorname{tr}_{\tilde{W}_n}\,
o(\mathcal{U}(1) u)\,
\mathcal{Y}_1(\mathcal{U}(x_1) u_1, x_1) \cdots \\
&\quad\quad \cdots \mathcal{Y}_n(\mathcal{U}(x_n) u_n, x_n)\,
e^{2\pi i s h_{(0)}} q^{L_{(0)}}.
\end{split}
\end{align}

\noindent Hence \eqref{iffq21} holds when \( j=1 \).

Now assume \eqref{iffq21} holds for \( j=k < n \). We prove it for \( j = k+1 \). Fix \( z_i^0 \in \mathbb{C} \) such that \( |z_1^0| > \cdots > |z_n^0| > 0 \), and define the path
\[
\gamma_k(t) = \left( z_1^0, \ldots, e^{(1-t)\log z_k^0 + t \log z_{k+1}^0}, e^{t \log z_k^0 + (1 - t) \log z_{k+1}^0}, \ldots, z_n^0 \right)
\]
in
\[
M^n = \left\{ (z_1, \ldots, z_n) \in \mathbb{C}^n \,\middle|\, z_i \neq 0,\, z_i \neq z_j \text{ for } i \neq j \right\}.
\]
Any branch of a multivalued analytic function on the simply connected region \( |z_1| > \cdots > |z_n| \), \( 0 \leq \arg z_i < 2\pi \), extends uniquely along \( \gamma_k \) to the region:
\begin{align}\label{iffq27}
\begin{split}
&|z_1| > \cdots > |z_{k-1}| > |z_{k+1}| > |z_k| > |z_{k+2}| > \cdots > |z_n| > 0, \\
&0 \leq \arg z_i < 2\pi, \quad i = 1, \ldots, n.
\end{split}
\end{align}
By the duality property of intertwining operators, the analytic continuation along \( \gamma_k \) of
\[
\langle \tilde{w}_0', \mathcal{Y}_1(u_1, z_1) \cdots \mathcal{Y}_k(u_k, z_k) \mathcal{Y}_{k+1}(u_{k+1}, z_{k+1}) \cdots \mathcal{Y}_n(u_n, z_n) \tilde{w}_n \rangle
\]
coincides with
\[
\langle \tilde{w}_0', \mathcal{Y}_1(u_1, z_1) \cdots \hat{\mathcal{Y}}_{k+1}(u_{k+1}, z_{k+1}) \hat{\mathcal{Y}}_k(u_k, z_k) \cdots {\mathcal{Y}}_n(u_n, z_n) \tilde{w}_n \rangle,
\] for certain intertwining operator $\hat{\mathcal{Y}}_k$ and $\hat{\mathcal{Y}}_{k+1}$. 
Now consider
\begin{align}\label{iffq28}
\begin{split}
\operatorname{tr}_{\tilde{W}_n} &\mathcal{Y}_1(u_1, x_1) \cdots \mathcal{Y}_k(\mathcal{U}(x_k) u_k, x_k) \mathcal{Y}_{k+1}(\mathcal{U}(x_{k+1}) Y(u, y) u_{k+1}, x_{k+1}) \cdots \\
&\cdots \mathcal{Y}_n(\mathcal{U}(x_n) u_n, x_n) e^{2\pi i s h_{(0)}} q^{L_{(0)}}.
\end{split}
\end{align}
Substituting \( x_1 = z_1, \ldots, x_k = z_{k+1}, x_{k+1} = z_k, \ldots, x_n = z_n \), we land in the region \eqref{iffq27}. The analytic extension of these coefficients back along \( \gamma_k^{-1} \), combined with the induction hypothesis, shows that \eqref{iffq21} holds for \( j = k+1 \). We are done.

\end{proof}

Let $\mathbb{G}_{|z_{1}|>\cdots>|z_{n}|>0}$ be the space of all multivalued analytic functions in $z_{1},...,z_{n}$ defined on the region $|z_{1}|>\cdots>|z_{n}|>0$ with preferred branches in the simply-connected region $|z_{1}|>\cdots>|z_{n}|>0$, $0\leq \operatorname{arg}z_{i}<2\pi,$ $i=1,...,n$.
Let $W_i$ $\tilde{W}_i$, $h$ be as in Proposition \ref{iffq22}, for any $v_i\in W_i$, we see that
\begin{align*}
     \operatorname{tr}_{\tilde{W}_n}\Y_1(\mathcal{U}(q_{z_{1}})v_{1},q_{z_{1}})\cdots \Y_n(\mathcal{U}(q_{z_{n}})v_{n},q_{z_{n}})e^{2\pi i s h_{(0)}}q^{L_{(0)}}
\end{align*}
is absolutely convergent when $|q_{z_{1}}|>\cdots>|q_{z_{n}}|>0$.

\begin{theorem}\label{iffq29}
For any integer $j$ satisfying $1 \leq j \leq n$ and any $l \in \mathbb{Z}_{+}$, in $\mathbb{G}_{|q_{z_{1}}|>\cdots>|q_{z_{n}}|>0}$, we have when $\phi = 1$ and $(-1)^{t}\theta \neq 1$,
\begin{align}\label{iffq210}
\begin{split}
  &\operatorname{tr}_{\tilde{W}_n} \Y_1(\mathcal{U}(q_{z_1})u_1, q_{z_1}) \cdots \Y_{j-1}(\mathcal{U}(q_{z_{j-1}})u_{j-1}, q_{z_{j-1}}) \\
  &\quad \cdot \Y_j(\mathcal{U}(q_{z_j})u_{(-l)}u_j, q_{z_j}) \cdot \Y_{j+1}(\mathcal{U}(q_{z_{j+1}})u_{j+1}, q_{z_{j+1}}) \cdots \\
  &\quad \cdot \Y_n(\mathcal{U}(q_{z_n})u_n, q_{z_n})\, e^{2\pi i s h_{(0)}} q^{L_{(0)}} \\
  &= \sum_{k \geq l} \left( (-1)^{l-1} \binom{k-1}{k-l} \tilde{G}_k 
     \genfrac[]{0pt}{0}{(-1)^t\theta}{\phi}(q) 
     - \delta_{k,l} \cdot \frac{2\pi i}{1 - \theta^{-1}} \right) \\
  &\quad \cdot \operatorname{tr}_{\tilde{W}_n} \Y_1(\mathcal{U}(q_{z_1})u_1, q_{z_1}) \cdots 
     \Y_{j-1}(\mathcal{U}(q_{z_{j-1}})u_{j-1}, q_{z_{j-1}}) \\
  &\quad \cdot \Y_j(\mathcal{U}(q_{z_j})u_{(k-l)}u_j, q_{z_j}) \cdots 
     \Y_n(\mathcal{U}(q_{z_n})u_n, q_{z_n})\, e^{2\pi i s h_{(0)}} q^{L_{(0)}} \\
  &\quad + \sum_{i \neq j} \sum_{m \in \mathbb{N}} 
     \left( (-1)^{m+1} \binom{m+l-1}{l-1} 
     \tilde{P}_{m+l} \genfrac[]{0pt}{0}{(-1)^t\theta}{\phi}(z_i - z_j, q) 
     - \delta_{0,m} \cdot \frac{2\pi i}{1 - \theta^{-1}} \right) \\
  &\quad \cdot \operatorname{tr}_{\tilde{W}_n} 
     \Y_1(\mathcal{U}(q_{z_1})u_1, q_{z_1}) \cdots 
     \Y_{i-1}(\mathcal{U}(q_{z_{i-1}})u_{i-1}, q_{z_{i-1}}) \\
  &\quad \cdot \Y_i(\mathcal{U}(q_{z_i})u_{(m)}u_i, q_{z_i}) \cdots 
     \Y_n(\mathcal{U}(q_{z_n})u_n, q_{z_n})\, e^{2\pi i s h_{(0)}} q^{L_{(0)}} \\
  &\quad + \delta_{l,1} \delta_{(-1)^t\theta^{-1},1} 
     \operatorname{tr}_{\tilde{W}_n} o(\mathcal{U}(1)u) 
     \Y_1(\mathcal{U}(q_{z_1})u_1, q_{z_1}) \cdots \\
  &\quad \cdot \Y_n(\mathcal{U}(q_{z_n})u_n, q_{z_n})\, 
     e^{2\pi i s h_{(0)}} q^{L_{(0)}}.
\end{split}
\end{align}
When $\phi \neq 1$ or $(-1)^t \theta = 1$, the LHS of \eqref{iffq210} remains the same and one omits the terms $\delta_{k,l} \cdot 2\pi i \cdot \frac{1}{1 - \theta^{-1}}$ and $\delta_{0,m} \cdot 2\pi i \cdot \frac{1}{1 - \theta^{-1}}$ on the RHS.
\end{theorem}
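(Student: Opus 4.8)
The plan is to obtain \eqref{iffq210} as a coefficient-extraction consequence of the master identity \eqref{iffq21} in Theorem~\ref{iffq22}, with no further induction required. The starting observation is that, writing $Y(u,y)u_j=\sum_{n\in\mathbb{Z}}(u_{(n)}u_j)\,y^{-n-1}$, the vector $u_{(-l)}u_j$ is exactly the coefficient of $y^{l-1}$, i.e. $u_{(-l)}u_j=\operatorname{Res}_y\,y^{-l}\,Y(u,y)u_j$. Hence, after substituting $x_i=q_{z_i}$ in \eqref{iffq21}, the left-hand side of \eqref{iffq210} is $\operatorname{Res}_y\,y^{-l}$ applied to the left-hand side of \eqref{iffq21}. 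Since every trace factor on the right-hand side of \eqref{iffq21} is independent of $y$, the whole computation reduces to reading off the coefficient of $y^{l-1}$ in the three $y$-dependent prefactors: the diagonal kernels $\tilde{P}_{m+1}^{(-1)^t\theta,\phi}(e^{-2\pi iy},q)$, the off-diagonal kernels $\tilde{P}_{m+1}^{(-1)^t\theta,\phi}(q_{z_i-z_j}e^{-2\pi iy},q)$, and the $y$-independent $\delta_{(-1)^t\theta^{-1},1}$-term. First I would record that the relevant trace is absolutely convergent in $\mathbb{G}_{|q_{z_1}|>\cdots>|q_{z_n}|>0}$ (from the convergence discussion preceding the statement), so that \eqref{iffq21} may legitimately be read as an equality of $y$-series with analytic coefficients and the extraction is justified.

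For the off-diagonal prefactors I would Taylor expand $\tilde{P}_{m+1}^{(-1)^t\theta,\phi}(q_{z_i-z_j-y},q)=P_{m+1}^{(-1)^t\theta,\phi}(z_i-z_j-y,\tau)$ about $y=0$; this is regular because $z_i\neq z_j$. Using the derivative recursion \eqref{pqfunction7}, the coefficient of $y^{l-1}$ is $(-1)^{l-1}\binom{m+l-1}{l-1}P_{m+l}^{(-1)^t\theta,\phi}(z_i-z_j,\tau)$. Passing from the $P$-functions of \eqref{pqfunction3} to the bracketed twisted Weierstrass functions of Appendix~C, whose normalization differs by a sign depending on the index, this becomes precisely $(-1)^{m+1}\binom{m+l-1}{l-1}\tilde{P}_{m+l}\genfrac[]{0pt}{0}{(-1)^t\theta}{\phi}(z_i-z_j,q)$. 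For the diagonal prefactor I would instead expand $P_{m+1}^{(-1)^t\theta,\phi}(-y,\tau)$ around its pole at $y=0$, whose regular Laurent coefficients are the twisted Eisenstein series $\tilde{G}_k\genfrac[]{0pt}{0}{(-1)^t\theta}{\phi}$. Matching the power $y^{l-1}$ forces $k=m+l$, producing the factor $(-1)^{l-1}\binom{k-1}{k-l}$ with $u_{(m)}u_j=u_{(k-l)}u_j$ in the stated Eisenstein term, while the $y$-independent $\delta$-term of \eqref{iffq21} contributes only to $y^0$, i.e. only when $l=1$, yielding the final line $\delta_{l,1}\delta_{(-1)^t\theta^{-1},1}\,o(\mathcal{U}(1)u)(\cdots)$.

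The correction constants $\frac{2\pi i}{1-\theta^{-1}}$, together with the case distinction $\phi=1$ and $(-1)^t\theta\neq1$, are the genuinely delicate point, and I expect them to be the main obstacle. They arise because for $\phi=1$ the defining sum \eqref{pqfunction3} omits the $n=0$ mode, whose value is exactly $\frac{2\pi i}{1-\theta^{-1}}$; equivalently, the inverse operator in \eqref{iffq6} treats the boundary index $l=0$ differently only when $r=0$. Tracking where this omitted $n=0$ term reappears upon converting the $P$-functions to the bracketed Weierstrass and Eisenstein series of Appendix~C produces exactly the corrections $-\delta_{k,l}\frac{2\pi i}{1-\theta^{-1}}$ (diagonal) and $-\delta_{0,m}\frac{2\pi i}{1-\theta^{-1}}$ (off-diagonal), as well as their absence when $\phi\neq1$ (no omitted mode) or when $(-1)^t\theta=1$ (the geometric series of \eqref{iffq6} degenerates and must be treated separately by hand). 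Careful bookkeeping of these boundary terms, combined with the sign and normalization dictionary of Appendix~C, is what elevates the argument above a purely routine coefficient extraction.
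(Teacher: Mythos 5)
Your proposal is correct and follows essentially the same route as the paper: substitute $x_i=q_{z_i}$ into the master identity \eqref{iffq21}, extract the coefficient of $y^{l-1}$ (the paper's intermediate step \eqref{iffq211} with expansion variable $z$ in place of $y$), Taylor-expand the off-diagonal kernels via \eqref{pqfunction7}/\eqref{iffq211taylor}, Laurent-expand the diagonal kernel via \eqref{pqfunction4} to produce the twisted Eisenstein series, and convert normalizations via \eqref{pqfunction71}, which is exactly where the $\frac{2\pi i}{1-\theta^{-1}}$ corrections and the case split on $\phi$ and $(-1)^t\theta$ enter. No genuinely different ideas or gaps to report.
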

\begin{proof}
As a $q$-series, the coefficients of $P_{m}^{\theta, \phi}\left(\frac{q_{z_i}}{q_{z_j} q_z}, q\right)$ for $m > 0$ are convergent in the region $\left| \frac{q_{z_i}}{q_{z_j} q_z} \right| < 1$. Let $z, z_1, \ldots, z_n \in \mathbb{C}$ satisfy $|q_{z_1}| > \cdots > |q_{z_n}| > 0$ and $\left| \frac{q_{z_i}}{q_{z_j} q_z} \right| < 1$ for $1 \leq i,j \leq n$. 
We substitute $z, q_{z_1}, \ldots, q_{z_n}$ for $y, x_1, \ldots, x_n$ in \eqref{iffq21}. Then both sides of \eqref{iffq21} become $q$-series with analytic functions in $z, z_1, \ldots, z_n$. Additionally, by using \eqref{pqfunction7}, we get
\begin{align}\label{iffq211}
\begin{split}
  &\operatorname{tr}_{\tilde{W}_n} \Y_1(\mathcal{U}(q_{z_1})u_1, q_{z_1}) \cdots \Y_{j-1}(\mathcal{U}(q_{z_{j-1}})u_{j-1}, q_{z_{j-1}}) \cdot \Y_j(\mathcal{U}(q_{z_j})Y(u, z)u_j, q_{z_j}) \cdot \\
  &\quad\cdot \Y_{j+1}(\mathcal{U}(q_{z_{j+1}})u_{j+1}, q_{z_{j+1}}) \cdots \Y_n(\mathcal{U}(q_{z_n})u_n, q_{z_n}) e^{2\pi i s h_{(0)}} q^{L_{(0)}} \\
  &= \sum_{m \in \mathbb{N}} \tilde{P}_{m+1}^{(-1)^t\theta, \phi}(-z, q) \cdot \operatorname{tr}_{\tilde{W}_n} \Y_1(\mathcal{U}(q_{z_1})u_1, q_{z_1}) \cdots \Y_{j-1}(\mathcal{U}(q_{z_{j-1}})u_{j-1}, q_{z_{j-1}}) \\
  &\quad\cdot \Y_j(\mathcal{U}(q_{z_j})u_{(m)}u_j, q_{z_j}) \cdots \Y_n(\mathcal{U}(q_{z_n})u_n, q_{z_n}) e^{2\pi i s h_{(0)}} q^{L_{(0)}} \\
  &\quad + \sum_{i \neq j} \sum_{m \in \mathbb{N}} \tilde{P}_{m+1}^{(-1)^t\theta, \phi}(z_i - z_j - z, q) \cdot \operatorname{tr}_{\tilde{W}_n} \Y_1(\mathcal{U}(q_{z_1})u_1, q_{z_1}) \cdots \\
  &\quad\cdot \Y_{i-1}(\mathcal{U}(q_{z_{i-1}})u_{i-1}, q_{z_{i-1}}) \Y_i(\mathcal{U}(q_{z_i})u_{(m)}u_i, q_{z_i}) \cdots \Y_n(\mathcal{U}(q_{z_n})u_n, q_{z_n}) e^{2\pi i s h_{(0)}} q^{L_{(0)}} \\
  &\quad + \delta_{(-1)^t\theta^{-1}, 1} \cdot \operatorname{tr}_{\tilde{W}_n} o(\mathcal{U}(1)u) \Y_1(\mathcal{U}(q_{z_1})u_1, q_{z_1}) \cdots \Y_n(\mathcal{U}(q_{z_n})u_n, q_{z_n}) e^{2\pi i s h_{(0)}} q^{L_{(0)}}.
\end{split}
\end{align}
By \eqref{pqfunction6}, the Taylor expansion of the twisted Weirstrass function $\tilde{P}_{m+1}\genfrac[]{0pt}{0}{\theta}{\phi}(z_i - z_j - z; q)$ as a function in $z$ at $0$ is
\begin{align}\label{iffq211taylor}
  \sum_{n=0}^{\infty} \binom{m+n}{m} \tilde{P}_{m+n+1}\genfrac[]{0pt}{0}{\theta}{\phi}(z_i - z_j, q) z^n.
\end{align}

\noindent Then, by \eqref{pqfunction4}, \eqref{iffq211taylor}, \eqref{pqfunction71}, and taking the coefficient of $z^{l-1}$ on both sides of \eqref{iffq211}, we obtain \eqref{iffq210}.
\end{proof}

\begin{corollary}
If $u$ is an even element fixed by $g$, and $\tilde{W}_n$ is an ordinary module, then for $l \in \mathbb{Z}_{+}$, we have
\begin{align}\label{iffq212}
\begin{split}
   &\operatorname{tr}_{\tilde{W}_n} \Y_1(\mathcal{U}(q_{z_1})u_1, q_{z_1}) \cdots \Y_{j-1}(\mathcal{U}(q_{z_{j-1}})u_{j-1}, q_{z_{j-1}}) \cdot \Y_j(\mathcal{U}(q_{z_j})u_{(-l)}u_j, q_{z_j}) \cdot \\
   &\quad\cdot \Y_{j+1}(\mathcal{U}(q_{z_{j+1}})u_{j+1}, q_{z_{j+1}}) \cdots \Y_n(\mathcal{U}(q_{z_n})u_n, q_{z_n}) e^{2\pi i s h_{(0)}} q^{L_{(0)}} \\
   &= \sum_{\substack{2k \geq l \\ k \geq 2}} (-1)^{l-1} \binom{2k - 1}{2k - l} \tilde{G}_{2k}(q) \cdot \operatorname{tr}_{\tilde{W}_n} \Y_1(\mathcal{U}(q_{z_1})u_1, q_{z_1}) \cdots \\
   &\quad\cdot \Y_{j-1}(\mathcal{U}(q_{z_{j-1}})u_{j-1}, q_{z_{j-1}}) \cdot \Y_j(\mathcal{U}(q_{z_j})u_{(2k - l)}u_j, q_{z_j}) \cdots \\
   &\quad\cdot \Y_n(\mathcal{U}(q_{z_n})u_n, q_{z_n}) e^{2\pi i s h_{(0)}} q^{L_{(0)}} \\
   &\quad+ \sum_{i \neq j} \sum_{m \in \mathbb{N}} (-1)^{m+1} \binom{m + l - 1}{l - 1} \tilde{\wp}_{m+l}(z_i - z_j, q) \cdot \\
   &\quad\cdot \operatorname{tr}_{\tilde{W}_n} \Y_1(\mathcal{U}(q_{z_1})u_1, q_{z_1}) \cdots \Y_{i-1}(\mathcal{U}(q_{z_{i-1}})u_{i-1}, q_{z_{i-1}}) \cdot \Y_i(\mathcal{U}(q_{z_i})u_{(m)}u_i, q_{z_i}) \cdot \\
   &\quad\cdot \Y_{i+1}(\mathcal{U}(q_{z_{i+1}})u_{i+1}, q_{z_{i+1}}) \cdots \Y_n(\mathcal{U}(q_{z_n})u_n, q_{z_n}) e^{2\pi i s h_{(0)}} q^{L_{(0)}} \\
   &\quad+ \delta_{l,1} \tilde{G}_2(q) \sum_{i=1}^{n} \operatorname{tr}_{\tilde{W}_n} \Y_1(\mathcal{U}(q_{z_1})u_1, q_{z_1}) \cdots \\
   &\quad\cdot \Y_{i-1}(\mathcal{U}(q_{z_{i-1}})u_{i-1}, q_{z_{i-1}}) \cdot \Y_i(\mathcal{U}(q_{z_i})(u_{(0)} + z_i u_{(1)})u_i, q_{z_i}) \cdots \\
   &\quad\cdot \Y_n(\mathcal{U}(q_{z_n})u_n, q_{z_n}) e^{2\pi i s h_{(0)}} q^{L_{(0)}} \\
   &\quad+ \delta_{l,1} \operatorname{tr}_{\tilde{W}_n} o(\mathcal{U}(1)u) \Y_1(\mathcal{U}(q_{z_1})u_1, q_{z_1}) \cdots \Y_n(\mathcal{U}(q_{z_n})u_n, q_{z_n}) e^{2\pi i s h_{(0)}} q^{L_{(0)}}.
\end{split}
\end{align}
\end{corollary}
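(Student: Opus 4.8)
The plan is to deduce \eqref{iffq212} directly from Theorem~\ref{iffq29} by specializing the twist parameters and then rewriting the twisted modular forms as their classical counterparts. Since $u$ is even we have $t = |u|(|u_1|+\cdots+|u_n|) = 0$, so $(-1)^t = 1$; since $u$ is fixed by $g$ we have $r \equiv 0 \pmod T$, hence $\phi = e^{2\pi i r/T} = 1$; and the ordinary-module hypothesis puts us at the untwisted value $\theta = 1$, so that $(-1)^t\theta = 1$. We are therefore in the second regime of Theorem~\ref{iffq29}, in which the singular terms $\delta_{k,l}\cdot 2\pi i/(1-\theta^{-1})$ and $\delta_{0,m}\cdot 2\pi i/(1-\theta^{-1})$ are omitted. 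First I would substitute $\theta=1$, $\phi=1$, $t=0$ into \eqref{iffq210}, leaving on the right-hand side the first sum with coefficients $(-1)^{l-1}\binom{k-1}{k-l}\tilde{G}_k\genfrac[]{0pt}{0}{1}{1}(q)$, the double sum over $i\neq j$ with $\tilde{P}_{m+l}\genfrac[]{0pt}{0}{1}{1}(z_i-z_j,q)$, and, since $\delta_{(-1)^t\theta^{-1},1}=1$, the term $\delta_{l,1}\operatorname{tr}_{\tilde{W}_n}o(\mathcal{U}(1)u)(\cdots)$, which is exactly the last term of \eqref{iffq212}.

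Next I would invoke the Appendix~C relations between the functions at $\genfrac[]{0pt}{0}{1}{1}$ and the classical ones. The facts I need are: $\tilde{G}_k\genfrac[]{0pt}{0}{1}{1}(q)=\tilde{G}_k(q)$, which vanishes for odd $k$, so the first sum collapses to even indices $k=2k'$; $\tilde{G}_2(q)$ is only quasimodular and must be separated off; and $\tilde{P}_p\genfrac[]{0pt}{0}{1}{1}(z,q)=\tilde{\wp}_p(z,q)$ for $p\geq 3$, while for $p=2$ and $p=1$ the two differ by quasimodular corrections proportional to $\tilde{G}_2(q)$, a constant when $p=2$ and a term linear in $z$ when $p=1$. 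After this substitution the part of the first sum with $k'\geq 2$ reproduces the first sum of \eqref{iffq212}, and the $\tilde{P}\to\tilde{\wp}$ replacements with $m+l\geq 3$ reproduce its Weierstrass double sum.

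The heart of the argument, and the step I expect to be the main obstacle, is the bookkeeping of the leftover $\tilde{G}_2(q)$ contributions. These come from three sources: the excluded weight-two ($k'=1$) term of the first sum, the constant correction in $\tilde{P}_2\genfrac[]{0pt}{0}{1}{1}\to\tilde{\wp}_2$ inside the double sum, and the $z$-linear correction in $\tilde{P}_1\genfrac[]{0pt}{0}{1}{1}\to\tilde{\wp}_1$. I would evaluate their coefficients using the binomials $\binom{k-1}{k-l}$ and $\binom{m+l-1}{l-1}$ and then collect the coefficients of $\operatorname{tr}_{\tilde{W}_n}(\cdots u_{(0)}u_i\cdots)$ and $\operatorname{tr}_{\tilde{W}_n}(\cdots u_{(1)}u_i\cdots)$. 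The crucial input is that, because $u\in V^0$, the identity \eqref{iffq8} gives $\sum_{i=1}^n\operatorname{tr}_{\tilde{W}_n}(\cdots u_{(0)}u_i\cdots)=0$. For $l\geq 2$ every surviving $\tilde{G}_2$-coefficient reduces to such a total $u_{(0)}$-sum and hence vanishes, which is precisely why the explicit $\tilde{G}_2$ term in \eqref{iffq212} carries the factor $\delta_{l,1}$. For $l=1$ the same collection, after using \eqref{iffq8} to remove the spurious $z_j$-pieces and the $z$-linear correction of $\tilde{P}_1\genfrac[]{0pt}{0}{1}{1}$ to supply the $z_i$-weighting of the $u_{(1)}$ terms, assembles into $\tilde{G}_2(q)\sum_{i=1}^n\operatorname{tr}_{\tilde{W}_n}(\cdots(u_{(0)}+z_iu_{(1)})u_i\cdots)$. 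The delicate point is matching the exact normalizations in the Appendix~C expansions, especially the constant in the $z$-linear term of $\tilde{P}_1\genfrac[]{0pt}{0}{1}{1}$, so that the $u_{(1)}$ coefficient is exactly $z_i$ and the stray $z_j$-contributions cancel; once these constants are pinned down, the remainder is a finite, if tedious, verification.
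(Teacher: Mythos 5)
Your proposal is correct and follows essentially the same route as the paper: specialize Theorem~\ref{iffq29} to $t=0$, $\theta=\phi=1$, then convert $\tilde{G}_k\genfrac[]{0pt}{0}{1}{1}$ and $\tilde{P}_{m+l}\genfrac[]{0pt}{0}{1}{1}$ to the classical Eisenstein and Weierstrass functions via the Appendix~C identities (\eqref{pqfunction3} and Remark~\ref{weirstrasswp}). The paper's own proof is far terser and leaves the $\tilde{G}_2$-bookkeeping and the role of \eqref{iffq8} implicit, so your more explicit tracking of those corrections is a faithful elaboration rather than a different argument.
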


\begin{proof}
In this case, we have $t = 0$ and $\theta = \phi = 1$. By \eqref{pqfunction3} and Theorem~\ref{iffq29}, we obtain
\begin{align}\label{iffq2121}
\begin{split}
   &\operatorname{tr}_{\tilde{W}_n} \Y_1(\mathcal{U}(q_{z_1})u_1, q_{z_1}) \cdots \Y_{j-1}(\mathcal{U}(q_{z_{j-1}})u_{j-1}, q_{z_{j-1}}) \cdot \Y_j(\mathcal{U}(q_{z_j})u_{(-l)}u_j, q_{z_j}) \cdot \\
   &\quad\cdot \Y_{j+1}(\mathcal{U}(q_{z_{j+1}})u_{j+1}, q_{z_{j+1}}) \cdots \Y_n(\mathcal{U}(q_{z_n})u_n, q_{z_n}) e^{2\pi i s h_{(0)}} q^{L_{(0)}} \\
   &= \pi i \delta_{l,1} \operatorname{tr}_{\tilde{W}_n} \Y_1(\mathcal{U}(q_{z_1})u_1, q_{z_1}) \cdots \Y_{j-1}(\mathcal{U}(q_{z_{j-1}})u_{j-1}, q_{z_{j-1}}) \cdot \\
   &\quad\cdot \Y_j(\mathcal{U}(q_{z_j})u_{(0)}u_j, q_{z_j}) \cdots \Y_n(\mathcal{U}(q_{z_n})u_n, q_{z_n}) e^{2\pi i s h_{(0)}} q^{L_{(0)}} \\
   &\quad+ \sum_{\substack{2k \geq l \\ k \geq 2}} (-1)^{l-1} \binom{2k - 1}{2k - l} \tilde{G}_{2k}(q) \cdot \operatorname{tr}_{\tilde{W}_n} \Y_1(\mathcal{U}(q_{z_1})u_1, q_{z_1}) \cdots \\
   &\quad\cdot \Y_{j-1}(\mathcal{U}(q_{z_{j-1}})u_{j-1}, q_{z_{j-1}}) \cdot \Y_j(\mathcal{U}(q_{z_j})u_{(2k - l)}u_j, q_{z_j}) \cdots \\
   &\quad\cdot \Y_n(\mathcal{U}(q_{z_n})u_n, q_{z_n}) e^{2\pi i s h_{(0)}} q^{L_{(0)}} \\
   &\quad+ \sum_{i \neq j} \sum_{m \in \mathbb{N}} (-1)^{m+1} \binom{m + l - 1}{l - 1} \tilde{P}_{m+l}^{1,1}(z_i - z_j, q) \cdot \\
   &\quad\cdot \operatorname{tr}_{\tilde{W}_n} \Y_1(\mathcal{U}(q_{z_1})u_1, q_{z_1}) \cdots \Y_{i-1}(\mathcal{U}(q_{z_{i-1}})u_{i-1}, q_{z_{i-1}}) \cdot \Y_i(\mathcal{U}(q_{z_i})u_{(m)}u_i, q_{z_i}) \cdots \\
   &\quad\cdot \Y_n(\mathcal{U}(q_{z_n})u_n, q_{z_n}) e^{2\pi i s h_{(0)}} q^{L_{(0)}} \\
   &\quad+ \delta_{l,1} \operatorname{tr}_{\tilde{W}_n} o(\mathcal{U}(1)u) \Y_1(\mathcal{U}(q_{z_1})u_1, q_{z_1}) \cdots \Y_n(\mathcal{U}(q_{z_n})u_n, q_{z_n}) e^{2\pi i s h_{(0)}} q^{L_{(0)}}.
\end{split}
\end{align}
The result follows from \eqref{weirstrasswp} and Theorem~\ref{iffq29}.
\end{proof}

\begin{remark}
Identity (\ref{iffq212}) recovers Theorem 2.4 in \cite{huang2005differential}.
\end{remark}
\begin{corollary}
For any integers $j, l$ satisfying $1 \leq j, l \leq n$ and any $l \in \mathbb{Z}_{+}$, in $\mathbb{G}_{|q_{z_1}| > \cdots > |q_{z_n}| > 0}$, we have, when $\phi = 1$ and $(-1)^t \theta \neq 1$,
\begin{equation}\label{iffq214}
\begin{aligned}
  &\operatorname{tr}_{\tilde{W}_n} \Y_1(\mathcal{U}(q_{z_1})u_1, q_{z_1}) \cdots \Y_{j-1}(\mathcal{U}(q_{z_{j-1}})u_{j-1}, q_{z_{j-1}}) \cdot \Y_j(\mathcal{U}(q_{z_j})u_{(-1)}u_j, q_{z_j}) \cdot \\
  &\quad\cdot \Y_{j+1}(\mathcal{U}(q_{z_{j+1}})u_{j+1}, q_{z_{j+1}}) \cdots \Y_n(\mathcal{U}(q_{z_n})u_n, q_{z_n}) e^{2\pi i s h_{(0)}} q^{L_{(0)}} \\
  &\quad- \sum_{k \in \mathbb{Z}_+} \left( \tilde{G}_k\genfrac[]{0pt}{0}{(-1)^t \theta}{\phi}(q) - \delta_{k,1} 2\pi i \frac{1}{1 - \theta^{-1}} \right) \cdot \\
  &\quad\cdot \operatorname{tr}_{\tilde{W}_n} \Y_1(\mathcal{U}(q_{z_1})u_1, q_{z_1}) \cdots \Y_{j-1}(\mathcal{U}(q_{z_{j-1}})u_{j-1}, q_{z_{j-1}}) \cdot \\
  &\quad\cdot \Y_j(\mathcal{U}(q_{z_j})u_{(n-1)}u_j, q_{z_j}) \cdots \Y_n(\mathcal{U}(q_{z_n})u_n, q_{z_n}) e^{2\pi i s h_{(0)}} q^{L_{(0)}} \\
  &\quad- \sum_{i \neq j} \sum_{m \in \mathbb{N}} \left( (-1)^{m+1} \tilde{P}_{m+1}\genfrac[]{0pt}{0}{(-1)^t \theta}{\phi}(z_i - z_j, q) - \delta_{0,m} 2\pi i \frac{1}{1 - \theta^{-1}} \right) \cdot \\
  &\quad\cdot \operatorname{tr}_{\tilde{W}_n} \Y_1(\mathcal{U}(q_{z_1})u_1, q_{z_1}) \cdots \Y_{i-1}(\mathcal{U}(q_{z_{i-1}})u_{i-1}, q_{z_{i-1}}) \cdot \\
  &\quad\cdot \Y_i(\mathcal{U}(q_{z_i})u_{(m)}u_i, q_{z_i}) \cdots \Y_n(\mathcal{U}(q_{z_n})u_n, q_{z_n}) e^{2\pi i s h_{(0)}} q^{L_{(0)}}
\end{aligned}
\end{equation}

\vspace{-1.2em}

\[ =\]

\vspace{-1.2em}

\begin{equation*}
\begin{aligned}
  &\operatorname{tr}_{\tilde{W}_n} \Y_1(\mathcal{U}(q_{z_1})u_1, q_{z_1}) \cdots \Y_{l-1}(\mathcal{U}(q_{z_{l-1}})u_{l-1}, q_{z_{l-1}}) \cdot \Y_l(\mathcal{U}(q_{z_l})u_{(-1)}u_l, q_{z_l}) \cdot \\
  &\quad\cdot \Y_{l+1}(\mathcal{U}(q_{z_{l+1}})u_{l+1}, q_{z_{l+1}}) \cdots \Y_n(\mathcal{U}(q_{z_n})u_n, q_{z_n}) e^{2\pi i s h_{(0)}} q^{L_{(0)}} \\
  &\quad- \sum_{k \in \mathbb{Z}_+} \left( \tilde{G}_k\genfrac[]{0pt}{0}{(-1)^t \theta}{\phi}(q) - \delta_{k,1} 2\pi i \frac{1}{1 - \theta^{-1}} \right) \cdot \\
  &\quad\cdot \operatorname{tr}_{\tilde{W}_n} \Y_1(\mathcal{U}(q_{z_1})u_1, q_{z_1}) \cdots \Y_{l-1}(\mathcal{U}(q_{z_{l-1}})u_{l-1}, q_{z_{l-1}}) \cdot \\
  &\quad\cdot \Y_l(\mathcal{U}(q_{z_l})u_{(n-1)}u_l, q_{z_l}) \cdots \Y_n(\mathcal{U}(q_{z_n})u_n, q_{z_n}) e^{2\pi i s h_{(0)}} q^{L_{(0)}} \\
  &\quad- \sum_{i \neq l} \sum_{m \in \mathbb{N}} \left( (-1)^{m+1} \tilde{P}_{m+1}\genfrac[]{0pt}{0}{(-1)^t \theta}{\phi}(z_i - z_l, q) - \delta_{0,m} 2\pi i \frac{1}{1 - \theta^{-1}} \right) \cdot \\
  &\quad\cdot \operatorname{tr}_{\tilde{W}_n} \Y_1(\mathcal{U}(q_{z_1})u_1, q_{z_1}) \cdots \Y_{i-1}(\mathcal{U}(q_{z_{i-1}})u_{i-1}, q_{z_{i-1}}) \cdot \\
  &\quad\cdot \Y_i(\mathcal{U}(q_{z_i})u_{(m)}u_i, q_{z_i}) \cdots \Y_n(\mathcal{U}(q_{z_n})u_n, q_{z_n}) e^{2\pi i s h_{(0)}} q^{L_{(0)}}
\end{aligned}
\end{equation*}
When $\phi \neq 1$ or $(-1)^t \theta = 1$, everything in \eqref{iffq214} remains the same after omitting the terms $\delta_{k,1} 2\pi i \frac{1}{1 - \theta^{-1}}$ and $\delta_{0,m} 2\pi i \frac{1}{1 - \theta^{-1}}$ on both sides.
\end{corollary}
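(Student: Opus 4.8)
The plan is to read the corollary as an assertion of \emph{position independence}: writing $\Phi_j$ for the entire left-hand side of \eqref{iffq214} (the trace with $u_{(-1)}u_j$ inserted at slot $j$, minus the $\tilde G_k$-correction sum over slot $j$ and the $\tilde P_{m+1}$-correction sum over the slots $i\neq j$), the claim is precisely $\Phi_j=\Phi_l$, an identity in $\mathbb{G}_{|q_{z_1}|>\cdots>|q_{z_n}|>0}$, where all traces converge absolutely in the region $|q_{z_1}|>\cdots>|q_{z_n}|>0$. I would obtain both $\Phi_j$ and $\Phi_l$ from a single source, namely Theorem~\ref{iffq29} specialized to the mode $-l=-1$, applied once at slot $j$ and once at slot $l$.

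Setting $l=1$ in \eqref{iffq210}, the binomial coefficients collapse since $\binom{k-1}{k-1}=1$ and $\binom{m}{0}=1$, and $(-1)^{l-1}=1$, so the right-hand side of \eqref{iffq210} is exactly the two correction sums appearing in $\Phi_j$ together with the single extra term $\delta_{(-1)^t\theta^{-1},1}\,O$, where
\[
O := \operatorname{tr}_{\tilde W_n} o(\mathcal{U}(1)u)\,\Y_1(\mathcal{U}(q_{z_1})u_1,q_{z_1})\cdots \Y_n(\mathcal{U}(q_{z_n})u_n,q_{z_n})\,e^{2\pi i s h_{(0)}}q^{L_{(0)}}.
\]
Transposing the correction sums to the left therefore gives the clean relation $\Phi_j=\delta_{(-1)^t\theta^{-1},1}\,O$. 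The decisive observation is that $O$ contains no reference to $j$ whatsoever: the operator $o(\mathcal{U}(1)u)$ sits outside the product of intertwining operators, so $O$ is independent of the slot at which $u_{(-1)}$ was originally inserted. Running the identical specialization at slot $l$ yields $\Phi_l=\delta_{(-1)^t\theta^{-1},1}\,O$ with the \emph{same} $O$, and subtracting gives $\Phi_j-\Phi_l=0$, which is \eqref{iffq214}. I would emphasize that this is a genuine identity rather than a tautology: $\Phi_j$ and $\Phi_l$ are assembled from different functions, since the twisted Weierstrass arguments are $z_i-z_j$ in one case and $z_i-z_l$ in the other, yet both collapse to the single position-free quantity $\delta_{(-1)^t\theta^{-1},1}\,O$.

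The remaining work is purely the case bookkeeping dictated by the two forms of Theorem~\ref{iffq29}. When $\phi=1$ and $(-1)^t\theta\neq1$ one has $(-1)^t\theta^{-1}\neq1$, hence $\delta_{(-1)^t\theta^{-1},1}=0$ and in fact $\Phi_j=\Phi_l=0$; here the coefficients keep the $\delta_{k,1}\tfrac{2\pi i}{1-\theta^{-1}}$ and $\delta_{0,m}\tfrac{2\pi i}{1-\theta^{-1}}$ pieces, exactly as recorded in \eqref{iffq214}. When $\phi\neq1$, or when $(-1)^t\theta=1$, one uses Theorem~\ref{iffq29} in its second form: the $\tfrac{2\pi i}{1-\theta^{-1}}$ corrections are dropped on both sides — they are already absent from \eqref{iffq210}, and in the degenerate subcase $\theta=1$ they would be ill-defined — while $O$ may now survive. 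In every case the chain $\Phi_j=\delta_{(-1)^t\theta^{-1},1}\,O=\Phi_l$ is structurally identical, and the only point needing verification is that the $\delta$-corrections are omitted consistently on both the $j$ and the $l$ sides; this is immediate once $O$ has been isolated as the common, slot-independent remainder. I expect no genuine obstacle here — the entire content is the recognition that Theorem~\ref{iffq29} reduces $\Phi_j$ to a quantity free of $j$.
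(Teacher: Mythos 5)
Your proposal is correct and is essentially the paper's own argument: the paper likewise proves the identity by noting that Theorem~\ref{iffq29} with $l=1$ reduces both the $j$-side and the $l$-side to the common slot-independent quantity $\delta_{(-1)^t\theta^{-1},1}\operatorname{tr}_{\tilde{W}_n} o(\mathcal{U}(1)u)\,\mathcal{Y}_1(\mathcal{U}(q_{z_1})u_1,q_{z_1})\cdots\mathcal{Y}_n(\mathcal{U}(q_{z_n})u_n,q_{z_n})e^{2\pi i s h_{(0)}}q^{L_{(0)}}$. Your additional case bookkeeping (including the observation that $(-1)^t\theta\neq 1$ is equivalent to $(-1)^t\theta^{-1}\neq 1$, so that $\Phi_j=\Phi_l=0$ in that case) is consistent with the statement and adds nothing that conflicts with the paper's proof.
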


\begin{proof}
By Theorem~\ref{iffq29}, both the left-hand side and right-hand side of \eqref{iffq214} equal
\[
\delta_{(-1)^t \theta^{-1}, 1} \cdot \operatorname{tr}_{\tilde{W}_n} o(\mathcal{U}(1)u) \Y_1(\mathcal{U}(q_{z_1})u_1, q_{z_1}) \cdots \Y_n(\mathcal{U}(q_{z_n})u_n, q_{z_n}) e^{2\pi i s h_{(0)}} q^{L_{(0)}},
\]
which proves the result.
\end{proof}

\subsection{The ring \texorpdfstring{$R'$}{Lg}}\label{r'introduction}
Let \( G_{2k}(\tau) \) for \( k \in \mathbb{Z}_{+} \) denote the classical \emph{Eisenstein series}. Denote by \( \wp_1(z,\tau) \) and \( \wp_2(z,\tau) \) the Weierstrass zeta and \(\wp\)-functions, respectively. More generally, for \( m \geq 1 \), we define the higher Weierstrass functions recursively by
\begin{align}
    \wp_{m+1}(z,\tau) := -\frac{1}{m} \frac{\partial}{\partial z} \wp_m(z,\tau).
\end{align}

For each integer \( k \in \mathbb{Z}_{+} \), we write \( P_k\genfrac[]{0pt}{0}{\theta}{\phi}(z,\tau) \) for the \emph{twisted Weierstrass function} on \( \mathbb{C} \times \mathbb{H} \), and for each \( n \in \mathbb{Z}_{+} \), we denote by \( G_n\genfrac[]{0pt}{0}{\theta}{\phi}(\tau) \) the corresponding \emph{twisted Eisenstein series}.

Recall that we adopt the convention that for a function \( f(\tau) \), its \( q \)-expansion is denoted by \( \tilde{f}(q) \), where \( q = e^{2\pi i \tau} \). In particular, we write
\[
\tilde{P}_k\genfrac[]{0pt}{0}{\theta}{\phi}(z, q) \quad \text{and} \quad \tilde{G}_n\genfrac[]{0pt}{0}{\theta}{\phi}(q)
\]
to denote the \( q \)-expansions of the twisted Weierstrass function \( P_k\genfrac[]{0pt}{0}{\theta}{\phi}(z,\tau) \) and the twisted Eisenstein series \( G_n\genfrac[]{0pt}{0}{\theta}{\phi}(\tau) \), respectively.

In our case, since $\theta = e^{2\pi i s h_{(0)}}$, define
\[
N_s:= \mathbb{C}[\tilde{G}_{2k}(q) \mid k \geq 2] \otimes \mathbb{C}\left[\tilde{G}_k\genfrac[]{0pt}{0}{\theta}{\phi}(q) \mid k \geq 0\right].
\]

\noindent Let
\begin{align*}
R' := N_s \otimes \mathbb{C}\big[ &\tilde{P}_1\genfrac[]{0pt}{0}{\theta}{\phi}(z_i - z_j; q),\ 
\tilde{P}_2\genfrac[]{0pt}{0}{\theta}{\phi}(z_i - z_j; q), \\
&\tilde{\wp}_2(z_i - z_j; q),\ 
\tilde{\wp}_3(z_i - z_j; q) \big]_{i, j = 1,\dots,n,\ i \neq j}.
\end{align*}
\begin{lemma}
For $m \geq 1$, $n \geq 2$, and $i \neq j$, we have $\tilde{P}_m\genfrac[]{0pt}{0}{\theta}{\phi}(z_i - z_j, q),\; \tilde{\wp}_n(z_i - z_j, q) \in R'$.
\end{lemma}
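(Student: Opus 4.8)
The plan is to prove that the subring
\[
S := N_s\!\left[\tilde{P}_1\genfrac[]{0pt}{0}{\theta}{\phi}(z_i-z_j;q),\ \tilde{P}_2\genfrac[]{0pt}{0}{\theta}{\phi}(z_i-z_j;q),\ \tilde{\wp}_2(z_i-z_j;q),\ \tilde{\wp}_3(z_i-z_j;q)\right]_{i\neq j}=R'
\]
is stable under every partial derivative $\partial_{z_i}$, and then to read off the lemma by iterated differentiation. Writing $z=z_i-z_j$, the defining recursion \eqref{pqfunction7} and the recursion for the higher Weierstrass functions give $\partial_z\tilde{P}_m\genfrac[]{0pt}{0}{\theta}{\phi}=m\,\tilde{P}_{m+1}\genfrac[]{0pt}{0}{\theta}{\phi}$ and $\partial_z\tilde{\wp}_n=-n\,\tilde{\wp}_{n+1}$, so that each $\tilde{P}_m$ and each $\tilde{\wp}_n$ is a constant multiple of an iterated $z$-derivative of a generator. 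Since the elements of $N_s$ depend only on $q$, they are annihilated by every $\partial_{z_i}$, so by the Leibniz rule the stability of $R'$ under $\partial_{z_i}$ reduces to checking that the $z$-derivative of each of the four function-generators lies in $R'$. Three of these are immediate, as $\partial_z\tilde{P}_1=\tilde{P}_2$ and $\partial_z\tilde{\wp}_2=-2\tilde{\wp}_3$ are generators; only the two ``seed'' memberships $\partial_z\tilde{P}_2=2\tilde{P}_3\in R'$ and $\partial_z\tilde{\wp}_3=-3\tilde{\wp}_4\in R'$ remain.

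For the untwisted seed I would invoke the classical Weierstrass equation $\wp''=6\wp^2-\tfrac12 g_2$, with $g_2$ a scalar multiple of $G_4$. Unwinding the recursion gives $\tilde{\wp}_4=\tilde{\wp}_2^{\,2}-\tfrac1{12}\tilde{g}_2$, and since $\tilde{g}_2\propto\tilde{G}_4(q)\in N_s$ (as $k=2\ge2$), this yields $\tilde{\wp}_4\in R'$. The twisted seed $\tilde{P}_3\genfrac[]{0pt}{0}{\theta}{\phi}\in R'$ is the main obstacle. I would establish it at the level of the meromorphic functions $P_k\genfrac[]{0pt}{0}{\theta}{\phi}(z,\tau)$ on $\mathbb{C}\times\mathbb{H}$ and then pass to $q$-expansions. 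Since $P_1$ has a simple pole and $P_2=\partial_z P_1$ a double pole at $z\equiv0$, with leading coefficients arranged so that an appropriate combination has at most a simple pole, and since both $P_1^2$ and $P_2$ transform under $z\mapsto z+1,\ z\mapsto z+\tau$ with the same character $(\theta,\phi)$, a twisted Liouville argument applies. When $(\theta,\phi)\neq(1,1)$ no nonzero constant is twisted-elliptic, so the space of twisted-elliptic functions with at most one simple pole is one-dimensional (spanned by $P_1$), forcing an identity
\[
\big(P_1\genfrac[]{0pt}{0}{\theta}{\phi}\big)^2+P_2\genfrac[]{0pt}{0}{\theta}{\phi}=\gamma\,P_1\genfrac[]{0pt}{0}{\theta}{\phi},
\]
with $\gamma$ independent of $z$; matching Laurent and $q$-expansions identifies $\gamma$ as a twisted Eisenstein series $\tilde{G}_k\genfrac[]{0pt}{0}{\theta}{\phi}(q)\in N_s$. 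Differentiating in $z$ and using $\partial_z\tilde{P}_1=\tilde{P}_2$, $\partial_z\tilde{P}_2=2\tilde{P}_3$ gives $\tilde{P}_3\genfrac[]{0pt}{0}{\theta}{\phi}=\tfrac12\gamma\,\tilde{P}_2-\tilde{P}_1\tilde{P}_2\in R'$. In the degenerate case $(\theta,\phi)=(1,1)$, where $P_1$ is only quasi-periodic and constants would introduce the forbidden weight-two series $\tilde{G}_2$, I would instead reduce through the $P$--$\wp$ dictionary of Appendix~C (cf.\ \eqref{weirstrasswp}), namely $\tilde{P}_m\genfrac[]{0pt}{0}{1}{1}(z;q)=\tilde{\wp}_m(z;q)+(\text{element of }N_s)$ for $m\ge2$, which expresses $\tilde{P}_3^{1,1}$ through $\tilde{\wp}_3\in R'$ while avoiding $\tilde{G}_2$.

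With both seeds in hand, $R'$ is stable under each $\partial_{z_i}$. Consequently $\tilde{P}_m\genfrac[]{0pt}{0}{\theta}{\phi}(z_i-z_j;q)=\tfrac1{(m-1)!}\partial_{z_i}^{\,m-1}\tilde{P}_1\genfrac[]{0pt}{0}{\theta}{\phi}(z_i-z_j;q)\in R'$ for all $m\ge1$, and likewise $\tilde{\wp}_n(z_i-z_j;q)$ is a constant multiple of $\partial_{z_i}^{\,n-2}\tilde{\wp}_2(z_i-z_j;q)\in R'$ for all $n\ge2$, which proves the lemma. The only nonformal input is the quadratic relation behind the twisted seed; I expect the delicate point to be verifying that its coefficients genuinely lie in $N_s$ (rather than requiring $\tilde{G}_2$ or mixed $P$--$\wp$ generators), which is precisely why the case split $(\theta,\phi)=(1,1)$ versus $(\theta,\phi)\neq(1,1)$ is needed.
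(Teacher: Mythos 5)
Your overall architecture is the same as the paper's: reduce the lemma to closure of $R'$ under $\partial_{z_i}$, supply two ``seed'' memberships $\tilde{P}_3\genfrac[]{0pt}{0}{\theta}{\phi}\in R'$ and $\tilde{\wp}_4\in R'$, obtain the former from a quadratic relation among $P_1$, $P_2$ and twisted Eisenstein series, and the latter from the Weierstrass differential equation. The paper does exactly this, establishing the quadratic relation \eqref{trr'} by a Liouville-type argument and then inducting, and handling the $\wp_n$ via the cubic first integral $(\wp_2')^2=4\wp_2^3-60G_4\wp_2-140G_6$ (your second-derivative form $\wp''=6\wp^2-\tfrac12 g_2$ is just its $z$-derivative, and $g_2\propto G_4\in N_s$, so that part is fine). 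Your treatment of the degenerate case $(\theta,\phi)=(1,1)$ via $P_m\genfrac[]{0pt}{0}{1}{1}=\wp_m$ for $m\geq 3$ is also correct, and in fact more careful than the paper on this point.

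The twisted seed, however, is where your write-up breaks, and it is the crux of the lemma. First, the identity $\bigl(P_1\genfrac[]{0pt}{0}{\theta}{\phi}\bigr)^2+P_2\genfrac[]{0pt}{0}{\theta}{\phi}=\gamma\,P_1\genfrac[]{0pt}{0}{\theta}{\phi}$ cannot hold: by \eqref{pqfunction4} both $P_1^2$ and $P_2$ have leading Laurent coefficient $+1/z^2$, so in your combination the double poles \emph{add} rather than cancel, while your right-hand side has only a simple pole. The combination that kills the double pole is $P_2-P_1^2$, and the relation the paper actually uses is $P_2-P_1^2-2G_1\genfrac[]{0pt}{0}{\theta}{\phi}P_1=\text{const}\in N_s$; differentiating \emph{that} in $z$ gives $P_3=P_1P_2+G_1\genfrac[]{0pt}{0}{\theta}{\phi}P_2\in R'$, which is the corrected form of your conclusion. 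Second, your justification is flawed even after the sign is fixed: $P_1^2$ transforms under the lattice translations with character $(\theta^2,\phi^2)$, not $(\theta,\phi)$, so $P_2-P_1^2$ is not twisted-elliptic for any single character, and the ``one-dimensional space of twisted-elliptic functions with a simple pole'' classification does not apply to it. (This is the same soft spot as the paper's one-line ``bounded entire function plus Liouville'' justification of \eqref{trr'}: the combination is holomorphic at $z=0$, but at a general lattice point $m+n\tau$ the $1/z^2$ singularities of $P_2$ and $P_1^2$ carry the factors $\theta^n\phi^m$ and $\theta^{2n}\phi^{2m}$ respectively and cancel only when $\theta^n\phi^m=1$, so neither entirety nor boundedness on all of $\mathbb{C}$ is automatic.) To make the seed rigorous you would need to verify the quadratic relation directly on the $q$-expansions \eqref{pqfunction1}, rather than by an ellipticity argument.
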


\begin{proof}
By the expansion formulas (see \eqref{pqfunction4}),
\begin{align}
\begin{split}
    P_1\genfrac[]{0pt}{0}{\theta}{\phi}(z, \tau) &= \frac{1}{z} - G_1\genfrac[]{0pt}{0}{\theta}{\phi}(\tau) - G_2\genfrac[]{0pt}{0}{\theta}{\phi}(\tau)z + \cdots, \\
    P_2\genfrac[]{0pt}{0}{\theta}{\phi}(z, \tau) &= \frac{1}{z^2} + G_2\genfrac[]{0pt}{0}{\theta}{\phi}(\tau) + 2 G_3\genfrac[]{0pt}{0}{\theta}{\phi}(\tau)z + 3 G_4\genfrac[]{0pt}{0}{\theta}{\phi}(\tau)z^2 + \cdots,
\end{split}
\end{align}
one obtains the identity
\begin{align}\label{trr'}
\begin{split}
    &P_2\genfrac[]{0pt}{0}{\theta}{\phi}(z, \tau) 
    - P_1\genfrac[]{0pt}{0}{\theta}{\phi}(z, \tau)^2 
    - 2 G_1\genfrac[]{0pt}{0}{\theta}{\phi}(\tau) P_1\genfrac[]{0pt}{0}{\theta}{\phi}(z, \tau) \\
    &= G_2\genfrac[]{0pt}{0}{\theta}{\phi}(\tau) 
    + 3 G_1\genfrac[]{0pt}{0}{\theta}{\phi}(\tau)^2 + O(z).
\end{split}
\end{align}
The left-hand side of \eqref{trr'} is a bounded entire function in $z$ by Theorem~\ref{Mason}, and by Liouville’s theorem, the $O(z)$ term must vanish.  
This implies $P_m\genfrac[]{0pt}{0}{\theta}{\phi}(z, \tau)$ can be written as a polynomial in $P_1\genfrac[]{0pt}{0}{\theta}{\phi}(z, \tau)$ and elements of $N_s$ by induction using \eqref{trr'}.

Using the identity
\[
4 \wp_2(z, \tau)^2 = 4 \wp_2(z, \tau)^3 - 60 G_4(\tau) \wp_2(z, \tau) - 140 G_6(\tau),
\]
and induction, we conclude that $\wp_n(z, \tau)$ for $n \geq 2$ can be expressed as polynomials in $G_4(\tau)$, $G_6(\tau)$, $\wp_2(z, \tau)$, and $\wp_3(z, \tau)$.

Since $\tilde{P}_n\genfrac[]{0pt}{0}{\theta}{\phi}(z, q)$ and $\tilde{\wp}_n(z, q)$ are $q$-expansions of $P_n\genfrac[]{0pt}{0}{\theta}{\phi}(z, \tau)$ and $\wp_n(z, \tau)$ respectively, it follows that
\[
\tilde{P}_m\genfrac[]{0pt}{0}{\theta}{\phi}(z_i - z_j, q),\; \tilde{\wp}_n(z_i - z_j, q) \in R'
\]
for all $m \geq 1$, $n \geq 2$, and $i \neq j$.
\end{proof}

\subsection{Partial differential equations satisfied by genus-one \texorpdfstring{$n$}{Lg} point correlation functions}
In the rest of this paper, we assume $g(V \setminus C_2(V)) \subset V \setminus C_2(V)$, which implies that
\[
(R_V)^g \cong V^g / \left(C_2(V) \cap V^g\right).
\]
Let \( \mathcal{T}' = R' \otimes W_1 \otimes \cdots \otimes W_n \) for \( n \geq 2 \). Let \( \mathcal{J}' \) denote the \( R' \)-submodule of \( \mathcal{T}' \) generated by the following elements, where the indices \( i, j, l \) vary over \( \{1, \ldots, n\} \),
\begin{itemize}
    \item when $\theta = \phi = 1$ and $t = 0$:
   \begin{align} \label{11}
\begin{split}
    & u_1 \otimes \cdots \otimes u_{(-2)} u_i \otimes \cdots \otimes u_n \\
    &\quad + \sum_{k \in \Z_+} (2k + 1)\, \tilde{G}_{2k+2}(q)\,
    u_1 \otimes \cdots \otimes u_{(2k)} u_j \otimes \cdots \otimes u_n \\
    &\quad + \sum_{i \neq j} \sum_{m \in \mathbb{N}} (-1)^{m+1} (m + 1)\,
    \tilde{\wp}_{m+2}(z_i - z_j, q)\,
    u_1 \otimes \cdots \otimes u_{(m)} u_j \otimes \cdots \otimes u_n,
\end{split}
\end{align}

    \item when $\phi = 1$:
    \begin{align} \label{22}
    \sum_{i=1}^{n} u_1 \otimes \cdots \otimes u_{(0)}u_i \otimes \cdots \otimes u_n,
    \end{align}

    \item when $\phi = 1$ and $(-1)^t \theta \neq 1$:
   \begin{align} \label{33}
\begin{split}
    & u_1 \otimes \cdots \otimes u_{(-2)} u_i \otimes \cdots \otimes u_n \\
    &\quad + \sum_{k \geq 2} \left(
        (k - 1)\, \tilde{G}_{k} \genfrac[]{0pt}{0}{(-1)^t \theta}{\phi}(q)
        - \delta_{k,2} \cdot \frac{2\pi i}{1 - \theta^{-1}}
    \right) \cdot \\
    &\qquad \cdot u_1 \otimes \cdots \otimes u_{(k - 2)} u_i \otimes \cdots \otimes u_n \\
    &\quad - \sum_{i \neq j} \sum_{m \in \mathbb{N}} \left(
        (-1)^{m+1} (m + 1)\, \tilde{P}_{m + 2}
        \genfrac[]{0pt}{0}{(-1)^t \theta}{\phi}(z_i - z_j, q)
        - \delta_{0,m} \cdot \frac{2\pi i}{1 - \theta^{-1}}
    \right) \cdot \\
    &\qquad \cdot u_1 \otimes \cdots \otimes u_{(m)} u_j \otimes \cdots \otimes u_n,
\end{split}
\end{align}

\begin{align} \label{106}
\begin{split}
    & u_1 \otimes \cdots \otimes u_{(-1)} u_j \otimes \cdots \otimes u_n \\
    &\quad - \sum_{k \in \mathbb{Z}_+} \left(
        \tilde{G}_{k} \genfrac[]{0pt}{0}{(-1)^t \theta}{\phi}(q)
        - \delta_{k,1} \cdot \frac{2\pi i}{1 - \theta^{-1}}
    \right) \cdot \\
    &\qquad \cdot u_1 \otimes \cdots \otimes u_{(k-1)} u_j \otimes \cdots \otimes u_n \\
    &\quad - \sum_{i \neq j} \sum_{m \in \mathbb{N}} \left(
        (-1)^{m+1} \tilde{P}_{m+1}
        \genfrac[]{0pt}{0}{(-1)^t \theta}{\phi}(z_i - z_j, q)
        - \delta_{0,m} \cdot \frac{2\pi i}{1 - \theta^{-1}}
    \right) \cdot \\
    &\qquad \cdot u_1 \otimes \cdots \otimes u_{(m)} u_i \otimes \cdots \otimes u_n
\end{split}
\end{align}
\begin{align*}
\begin{split}
    &= u_1 \otimes \cdots \otimes u_{(-1)} u_l \otimes \cdots \otimes u_n \\
    &\quad - \sum_{k \in \mathbb{Z}_+} \left(
        \tilde{G}_{k} \genfrac[]{0pt}{0}{(-1)^t \theta}{\phi}(q)
        - \delta_{k,1} \cdot \frac{2\pi i}{1 - \theta^{-1}}
    \right) \cdot \\
    &\qquad \cdot u_1 \otimes \cdots \otimes u_{(k-1)} u_l \otimes \cdots \otimes u_n \\
    &\quad - \sum_{i \neq j} \sum_{m \in \mathbb{N}} \left(
        (-1)^{m+1} \tilde{P}_{m+1}
        \genfrac[]{0pt}{0}{(-1)^t \theta}{\phi}(z_i - z_l, q)
        - \delta_{0,m} \cdot \frac{2\pi i}{1 - \theta^{-1}}
    \right) \cdot \\
    &\qquad \cdot u_1 \otimes \cdots \otimes u_{(m)} u_l \otimes \cdots \otimes u_n,
\end{split}
\end{align*}

    \item when $\phi \neq 1$:
   \begin{align} \label{44}
\begin{split}
    & u_1 \otimes \cdots \otimes u_{(-2)} u_i \otimes \cdots \otimes u_n \\
    &\quad + \sum_{k \geq 2} (k - 1)\,
    \tilde{G}_{k} \genfrac[]{0pt}{0}{(-1)^t \theta}{\phi}(q) \cdot \\
    &\qquad \cdot u_1 \otimes \cdots \otimes u_{(k - 2)} u_i \otimes \cdots \otimes u_n \\
    &\quad - \sum_{i \neq j} \sum_{m \in \mathbb{N}} (-1)^{m + 1} (m + 1)\,
    \tilde{P}_{m + 2} \genfrac[]{0pt}{0}{(-1)^t \theta}{\phi}(z_i - z_j, q) \cdot \\
    &\qquad \cdot u_1 \otimes \cdots \otimes u_{(m)} u_j \otimes \cdots \otimes u_n,
\end{split}
\end{align}

    \begin{align} \label{108}
    \begin{split}
        & u_1 \otimes \cdots \otimes u_{(-1)}u_j \otimes \cdots \otimes u_n \\
        & - \sum_{k \in \mathbb{Z}_+} \tilde{G}_{k} \genfrac[]{0pt}{0}{(-1)^t \theta}{\phi}(q)
        u_1 \otimes \cdots \otimes u_{(k-1)}u_j \otimes \cdots \otimes u_n \\
        & - \sum_{i \neq j} \sum_{m \in \mathbb{N}} (-1)^{m+1} \tilde{P}_{m+1} \genfrac[]{0pt}{0}{(-1)^t \theta}{\phi}(z_i - z_j, q)
        u_1 \otimes \cdots \otimes u_{(m)}u_i \otimes \cdots \otimes u_n \\
        &= u_1 \otimes \cdots \otimes u_{(-1)}u_l \otimes \cdots \otimes u_n \\
        & - \sum_{k \in \mathbb{Z}_+} \tilde{G}_{k} \genfrac[]{0pt}{0}{(-1)^t \theta}{\phi}(q)
        u_1 \otimes \cdots \otimes u_{(k-1)}u_l \otimes \cdots \otimes u_n \\
        & - \sum_{i \neq j} \sum_{m \in \mathbb{N}} (-1)^{m+1} \tilde{P}_{m+1} \genfrac[]{0pt}{0}{(-1)^t \theta}{\phi}(z_i - z_l, q)
        u_1 \otimes \cdots \otimes u_{(m)}u_l \otimes \cdots \otimes u_n.
    \end{split}
    \end{align}
\end{itemize}
According to \cite{deligne1973schemas}, $M(T, T_1)$ is a Noetherian ring. Thus, $N_s(T,T_1)$ is also Noetherian.

We say that $\tilde{G}_{2k}(q)$ for $k \geq 1$ has modular weight $2k$, and the element $\tilde{\wp}_{m}(z_{i}-z_{j};q)$ for any $m \geq 2$ has modular weight $m$. Similarly, we define the modular weights of $\tilde{P}_{k}\genfrac[]{0pt}{0}{\theta}{\phi}(z_{i}-z_{j};q)$ for $k \geq 1$ and $\tilde{G}_{m}\genfrac[]{0pt}{0}{\theta}{\phi}(q)$ for $m \geq 0$ to be $k$ and $m$, respectively.

For $m \in \mathbb{Z}_+$, let $R_{m}'$ be the subspace of elements in $R'$ of modular weight $m$. It is clear that
\[
R' = \bigoplus_{m \in \mathbb{Z_+}} R_{m}'.
\]
 The gradings on $W_i$, for $i = 1, \dots, n$, induce a grading on $\T'$. We denote by $\T'_{(r)}$ the homogeneous subspace of $\T'$ of weight $r$.
We define a filtration on $\T'$ by letting
\[
H_{r}(\T') = \bigoplus_{i \leq r} \T'_{(i)}.
\]
This induces filtrations on $\J'$ and on $\T'/\J'$:
\[
H_r(\J') = \J' \cap H_r(\T'), \quad H_r(\T'/\J') = H_r(\T') / H_r(\J').
\]
The associated graded algebra of $\T'/\J'$ with respect to the filtration $H$ is
\[
\operatorname{gr}^H(\T'/\J') = \operatorname{gr}^H(\T') / \operatorname{gr}^H(\J') \cong \T' / \operatorname{gr}^{H}(\J'),
\]
where the last isomorphism is as vector spaces.
\begin{lemma}\label{finitgene}
The $R'$-module $\T'/\J'$ is finitely generated if
\begin{align}\label{confinitenesscon}
\dim\left( R_{W_1} \otimes_{R_V} \cdots \otimes_{R_V} R_{W_n} \big/ \{(R_V)^g, R_{W_1} \otimes_{R_V} \cdots \otimes_{R_V} R_{W_n} \} \right) < \infty.
\end{align}
\end{lemma}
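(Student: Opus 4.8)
The plan is to adapt the argument of Lemma~\ref{quasilissecondition} to the genus-one setting, replacing the weight filtration $I$ and the ground ring $R$ there by the filtration $H$ and the ring $R'$ here. The generators of $\J'$ listed in \eqref{11}--\eqref{108} are exactly the trace identities produced by Theorem~\ref{iffq29} and its corollaries, so the essential point is to read off their top-weight parts in $\operatorname{gr}^H(\T')\cong \T'$. First I would note that every coefficient appearing in those generators lies in $R'$ and carries no $W_i$-weight, so the $H$-degree of each summand is governed solely by the mode acting on the tensor factors: since $u_{(m)}$ shifts the weight of a slot by $\operatorname{wt}(u)-m-1$, the total weight of a monomial $u_1\otimes\cdots\otimes u_{(m)}u_i\otimes\cdots\otimes u_n$ is $\operatorname{wt}(u)-m-1+\sum_k\operatorname{wt}(u_k)$, which increases as $m$ decreases.

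Using this, I would identify three families of leading terms. In \eqref{11}, \eqref{33} and \eqref{44} the summand with $u_{(-2)}$ acting on a single tensor factor is strictly dominant (its total weight is $\operatorname{wt}(u)+1+\sum_k\operatorname{wt}(u_k)$, while every other summand involves $u_{(m)}$ with $m\geq 0$ or $u_{(2k)}$ with $k\geq 1$, hence strictly smaller weight); thus $\operatorname{gr}^H(\J')$ contains all of $C_2(W_i)$ in each slot. In the two-sided relations \eqref{106} and \eqref{108} the two principal terms $u_{(-1)}u_j$ (in slot $j$) and $u_{(-1)}u_l$ (in slot $l$) have the \emph{same} total weight $\operatorname{wt}(u)+\sum_k\operatorname{wt}(u_k)$, independent of the slot, whereas all correction terms have strictly smaller weight; hence $\operatorname{gr}^H(\J')$ contains the balancing elements $u_1\otimes\cdots\otimes u_{(-1)}u_j\otimes\cdots - u_1\otimes\cdots\otimes u_{(-1)}u_l\otimes\cdots$ for every $u\in V$, which are precisely the relations defining $\otimes_{R_V}$. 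Finally \eqref{22} is already $H$-homogeneous and gives $\sum_i u_1\otimes\cdots\otimes u_{(0)}u_i\otimes\cdots\otimes u_n\in\operatorname{gr}^H(\J')$ for $u\in V^0=V^g$, i.e.\ the image of the Poisson bracket $\{(R_V)^g,-\}$.

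With these in hand I would define
\[
\Theta:\ \frac{R_{W_1}\otimes_{R_V}\cdots\otimes_{R_V}R_{W_n}}{\{(R_V)^g,\,R_{W_1}\otimes_{R_V}\cdots\otimes_{R_V}R_{W_n}\}}\ \longrightarrow\ \T'/\operatorname{gr}^H(\J'),\qquad \overline{a_1}\otimes\cdots\otimes\overline{a_n}\mapsto \overline{a_1\otimes\cdots\otimes a_n},
\]
and check well-definedness using exactly the three families above: the $C_2$-relations let $\Theta$ descend through each $R_{W_i}=W_i/C_2(W_i)$, the balancing relations let it descend through $\otimes_{R_V}$, and \eqref{22} lets it descend through the bracket. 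Surjectivity is immediate since the monomials $a_1\otimes\cdots\otimes a_n$ generate $\T'$ over $R'$. Because \eqref{confinitenesscon} makes the source finite-dimensional over $\mathbb{C}$, the images of a $\mathbb{C}$-basis generate $\T'/\operatorname{gr}^H(\J')\cong\operatorname{gr}^H(\T'/\J')$ over $R'$, so the associated graded module is finitely generated. To conclude I would invoke the standard fact that a filtered module whose associated graded is finitely generated over a ring acting in filtration-degree zero is itself finitely generated, provided the filtration is exhaustive and bounded below: the weights of the $W_i$ lie in a bounded-below discrete subset of $\mathbb{Q}$ and $R'$ preserves $H$, so lifting homogeneous $R'$-generators of $\operatorname{gr}^H(\T'/\J')$ and inducting on weight shows $\T'/\J'$ is finitely generated over $R'$.

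The main obstacle is the leading-term bookkeeping in the second step. One must verify that the mode shifts make the claimed summands strictly dominant, and in particular that the two principal terms of \eqref{106} and \eqref{108} are genuinely weight-balanced, so that $\operatorname{gr}^H(\J')$ captures exactly the $\otimes_{R_V}$-relations rather than something coarser; one must also confirm that as $u$ ranges over all of $V$ (hence over all $g$-eigenspaces and all the case distinctions $\phi=1$, $\phi\neq1$, $(-1)^t\theta\neq1$), the relations \eqref{11}--\eqref{108} jointly supply the full set of $C_2$-, balancing-, and bracket-relations required for $\Theta$ to be well defined and surjective.
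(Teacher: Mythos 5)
Your proposal is correct and follows essentially the same route as the paper: identify the top-weight parts of the generators \eqref{11}--\eqref{108} in $\operatorname{gr}^H$ (the $u_{(-2)}$ terms giving $C_2$-relations, the weight-balanced $u_{(-1)}$ terms giving the $\otimes_{R_V}$-relations, and \eqref{22} giving the bracket relations), then construct the surjection $\Theta'$ from the quotient in \eqref{confinitenesscon} onto $\T'/\operatorname{gr}^H(\J')$ and pass back through the filtration. Your explicit final step (lifting generators of the associated graded through the exhaustive, bounded-below filtration) is left implicit in the paper but is the intended argument.
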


\begin{proof}
Let $\gamma = \sum_{i=1}^{n} \mathrm{wt}(u_i) + \mathrm{wt}(u) + 1$. Since
\begin{align*}
    & u_1 \otimes \cdots \otimes u_{(-2)}u_i \otimes \cdots \otimes u_n \in H_\gamma(T'),\\
    & u_1 \otimes \cdots \otimes u_{(k)}u_i \otimes \cdots \otimes u_n \in H_{\gamma-1}(T') \quad \text{for } k\in \mathbb{N},
\end{align*}
by (\ref{11}), (\ref{33}), and (\ref{44}), and the definition of $\mathrm{gr}^{H}(\J')$, we have
\begin{align}\label{inclusiona}
    1 \otimes u_1 \otimes \cdots \otimes u_{j-1} \otimes u_{(-2)}u_j \otimes u_{j+1} \otimes \cdots \otimes u_n \in \mathrm{gr}^{H}(\J')
\end{align}
for $1 \leq j \leq n$.

Similarly, by (\ref{22}), (\ref{106}), and (\ref{108}), we obtain
\begin{align}\label{tensorcrucial}
    u_1 \otimes \cdots \otimes u_{(-1)}u_j\otimes \cdots \otimes u_n 
    - u_1 \otimes \cdots \otimes u_{(-1)}u_l \otimes \cdots \otimes u_n 
    \in \mathrm{gr}^{H}(\J')
\end{align}
for $1 \leq j, l \leq n$ and $u \in V$, and
\begin{align}\label{87}
    \sum_{i=1}^{n} u_1 \otimes \cdots \otimes u_{(0)}u_i \otimes \cdots \otimes u_n \in \mathrm{gr}^{H}(\J')
\end{align}
for $u \in V^g$.

We now define a linear map
\[
\Theta': R_{W_1} \otimes_{R_V} \cdots \otimes_{R_V} R_{W_n} \rightarrow \T' / \mathrm{gr}^{H}(\J')
\]
by
\[
\Theta'\left( \overline{a_1} \otimes \cdots \otimes \overline{a_n} \right) = \overline{a_1 \otimes \cdots \otimes a_n},
\]
where $a_i \in W_i$, $i = 1, \dots, n$. This map is well-defined: Suppose $\overline{a_1} \otimes \cdots \otimes \overline{a_n} = \overline{a_1'} \otimes \cdots \otimes \overline{a_n}$, where $a_1 - a_1' \in C_2(W_1)$. Then by (\ref{inclusiona}), we have
\[
\Theta'(a_1 \otimes a_2 \otimes \cdots \otimes a_n) = \Theta'(a_1' \otimes a_2 \otimes \cdots \otimes a_n).
\]
Repeating this argument inductively for all $i = 1, \dots, n$, we conclude that
\[
\Theta'\left( \overline{a_1} \otimes \cdots \otimes \overline{a_n} \right) = \Theta' \left( \overline{a_1'} \otimes \cdots \otimes \overline{a_n'} \right)
\quad \text{if } a_i - a_i' \in C_2(W_i).
\]
Moreover, identities (\ref{tensorcrucial}) and (\ref{87}) imply that $\Theta'$ descends to a map over the quotient
\[
R_{W_1} \otimes_{R_V} \cdots \otimes_{R_V} R_{W_n} \big/ \{ (R_V)^g, R_{W_1} \otimes_{R_V} \cdots \otimes_{R_V} R_{W_n} \}.
\]
It is also clear that $\Theta'$ is surjective.

Therefore, $\T'/\mathrm{gr}^{H}(\J')$ is finitely generated over $R'$ if the quotient on the right-hand side of (\ref{confinitenesscon}) is finite-dimensional. The lemma follows.
\end{proof}

For any $u_i \in W_i$, $i = 1, \dots, n$, define the element
\begin{align}
\begin{split}
    F_{\mathcal{Y}_{1}, \dots, \mathcal{Y}_{n}}(u_1, \dots, u_n; z_1, \dots, z_n; h; q)
    :=\; \\
    \operatorname{tr}_{\tilde{W}_n} \,
    \mathcal{Y}_1(\mathcal{U}(q_{z_1}) u_1, q_{z_1}) \cdots 
    \mathcal{Y}_n(\mathcal{U}(q_{z_n}) u_n, q_{z_n}) 
    e^{2\pi i s h_{(0)}} q^{L_{(0)} - \frac{c}{24}}
\end{split}
\end{align}
in $\mathbb{G}_{|z_1| > \cdots > |z_n|} \otimes \mathbb{C}[[q_s^{\frac{1}{T_1}}, q_s^{-\frac{1}{T_1}}]]((q^{\frac{1}{T}}))$.

Define a linear map
\[
\Upsilon': \T' \rightarrow \mathbb{G}_{|z_1| > \cdots > |z_n|} \otimes \mathbb{C}[[q_s^{\frac{1}{T_1}}, q_s^{-\frac{1}{T_1}}]]((q^{\frac{1}{T}}))
\]
by
\[
u_1 \otimes \cdots \otimes u_n \mapsto F_{\mathcal{Y}_{1}, \dots, \mathcal{Y}_{n}}(u_1, \dots, u_n; z_1, \dots, z_n; h; q).
\]

\noindent
By (\ref{iffq210}), $\J'$ is contained in the kernel of $\Upsilon'$; thus, one has a well-defined map
\[
\overline{\Upsilon'}: \T' / \J' \rightarrow \mathbb{G}_{|z_1| > \cdots > |z_n|} \otimes \mathbb{C}[[q_s^{\frac{1}{T_1}}, q_s^{-\frac{1}{T_1}}]]((q^{\frac{1}{T}})).
\]
Moreover, by using (\ref{gotcf123}), one obtains the following {\em $L_{(-1)}$-derivative property} for $1\leq j\leq n$:
\begin{align}
\begin{split}
    \frac{\partial}{\partial z_j}
    F_{\mathcal{Y}_{1}, \dots, \mathcal{Y}_{n}}(u_1, \dots, u_n; z_1, \dots, z_n; h; q)
    =\; & \\
    F_{\mathcal{Y}_{1}, \dots, \mathcal{Y}_{n}}(
    u_1, \dots, u_{j-1}, L_{(-1)} u_j, u_{j+1}, \dots, u_n;
    z_1, \dots, z_n; h; q).
\end{split}
\end{align}

By taking $u = \omega$ and $l = 1$ in (\ref{iffq212}), and using (\ref{virasoro}) together with the $L_{(-1)}$-derivative property, one obtains the following identity (see \cite{huang2005differential}).

\begin{lemma}\label{operator}
We have 
\begin{align}
\begin{split}
    & \left(
        (2\pi i)^2 q \frac{\partial}{\partial q}
        + \tilde{G}_2(q) \sum_{i=1}^{n} \operatorname{wt} w_i
        + \tilde{G}_2(q) \sum_{i=1}^{n} z_i \frac{\partial}{\partial z_i}
        - \sum_{i \neq j} \tilde{\wp}_1(z_i - z_j, q) \frac{\partial}{\partial z_i}
    \right) \\
    &\quad \cdot 
    F_{\mathcal{Y}_1, \dots, \mathcal{Y}_n}(u_1, \dots, u_n; z_1, \dots, z_n; h; q) \\
    &= 
    F_{\mathcal{Y}_1, \dots, \mathcal{Y}_n}(
    u_1, \dots, u_{j-1}, L_{(-2)} u_j, u_{j+1}, \dots, u_n;
    z_1, \dots, z_n; h; q) \\
    &\quad - \sum_{k \in \mathbb{Z}_+} \tilde{G}_{2k+2}(q) \cdot 
    F_{\mathcal{Y}_1, \dots, \mathcal{Y}_n}(
    u_1, \dots, u_{j-1}, L_{(2k)} u_j, u_{j+1}, \dots, u_n; 
    z_1, \dots, z_n; h; q) \\
    &\quad + \sum_{i \neq j} \sum_{m \in \mathbb{Z}_+} (-1)^m \tilde{\wp}_{m+1}(z_i - z_j, q) \cdot \\
    &\qquad \cdot F_{\mathcal{Y}_1, \dots, \mathcal{Y}_n}(
    u_1, \dots, u_{i-1}, L_{(m-1)} u_i, u_{i+1}, \dots, u_n;
    z_1, \dots, z_n; h; q).
\end{split}
\end{align}
\end{lemma}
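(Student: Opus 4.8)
The plan is to specialize the general recursion \eqref{iffq212} to the conformal vector and then translate every Virasoro-mode insertion into a differential operator in the variables $q$ and $z_i$. Concretely, I would set $u = \omega$ and $l = 1$ in \eqref{iffq212}. Using the mode identification $\omega_{(n)} = L_{(n-1)}$ coming from $Y(\omega, z) = \sum_{n} L_{(n)} z^{-n-2}$, the left-hand side becomes $F_{\mathcal{Y}_1, \dots, \mathcal{Y}_n}(\dots, \omega_{(-1)} u_j, \dots) = F_{\mathcal{Y}_1, \dots, \mathcal{Y}_n}(\dots, L_{(-2)} u_j, \dots)$, while the first sum on the right, after the shift $k \mapsto k+1$, produces $\sum_{k \in \mathbb{Z}_+} \tilde G_{2k+2}(q)\, F_{\mathcal{Y}_1, \dots, \mathcal{Y}_n}(\dots, L_{(2k)} u_j, \dots)$, and the double sum produces $\sum_{i \neq j}\sum_{m \in \mathbb{N}} (-1)^{m+1}\tilde\wp_{m+1}(z_i - z_j, q)\, F_{\mathcal{Y}_1, \dots, \mathcal{Y}_n}(\dots, L_{(m-1)} u_i, \dots)$.

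Next I would isolate the $m = 0$ term of the double sum and the $\delta_{l,1}\tilde G_2(q)$ term, since these are exactly the contributions that become first-order differential operators. By the $L_{(-1)}$-derivative property \eqref{gotcf123}, which in the geometrically modified variables reads $\partial_{z_i} F_{\mathcal{Y}_1,\dots,\mathcal{Y}_n}(\dots, u_i, \dots) = F_{\mathcal{Y}_1,\dots,\mathcal{Y}_n}(\dots, L_{(-1)} u_i, \dots)$, the $m=0$ contribution $-\sum_{i\neq j}\tilde\wp_1(z_i - z_j, q)\, F(\dots, L_{(-1)} u_i,\dots)$ turns into $-\sum_{i\neq j}\tilde\wp_1(z_i - z_j, q)\,\partial_{z_i} F$. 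The remaining $\tilde G_2(q)$ term, together with the homogeneity relation $F(\dots, L_{(0)} u_i,\dots) = (\operatorname{wt} u_i)\, F$ and the $L_{(-1)}$-derivative property, yields precisely the operators $\tilde G_2(q)\sum_i \operatorname{wt} w_i$ and $\tilde G_2(q)\sum_i z_i\partial_{z_i}$; keeping careful track of which mode the factor $z_i$ is attached to is the one place where a bookkeeping slip is easy to make.

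The substantive step is the treatment of the zero-mode term $\operatorname{tr}_{\tilde W_n} o(\mathcal{U}(1)\omega)\, \mathcal{Y}_1 \cdots \mathcal{Y}_n\, e^{2\pi i s h_{(0)}} q^{L_{(0)}}$. Here I would invoke the identity $\mathcal{U}(1)\omega = (2\pi i)^2(\omega - \tfrac{c}{24}\mathbf 1)$ from \eqref{newprop2}, so that $o(\mathcal{U}(1)\omega) = (2\pi i)^2\big(L_{(0)} - \tfrac{c}{24}\big)$. Cyclicity of the trace lets me carry the inserted $L_{(0)}$ all the way around the product until it merges with the factor $q^{L_{(0)}}$, and then $q\tfrac{\partial}{\partial q} q^{L_{(0)}} = L_{(0)} q^{L_{(0)}}$ identifies this term with $(2\pi i)^2\big(q\tfrac{\partial}{\partial q} - \tfrac{c}{24}\big)$ applied to the trace carrying $q^{L_{(0)}}$. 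The final normalization follows by multiplying the whole identity by $q^{-c/24}$ to pass from $q^{L_{(0)}}$ to $q^{L_{(0)} - c/24}$: the term produced by $q\partial_q$ acting on $q^{-c/24}$ cancels the explicit $-\tfrac{c}{24}$ contribution, leaving exactly $(2\pi i)^2 q\tfrac{\partial}{\partial q}$ in $\mathcal O$.

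Collecting the four groups of terms and moving the $L_{(-2)}$, $L_{(2k)}$ and $L_{(m-1)}$ ($m \geq 1$) traces to one side and the differential-operator terms to the other then yields the stated identity. I expect the main obstacle to be precisely this zero-mode/trace-cyclicity step together with the $c/24$ normalization, since it is where the holomorphic derivative $q\partial_q$ and the central-charge shift interact; by contrast, the Virasoro-mode reindexing and the conversion of $L_{(-1)}$- and $L_{(0)}$-insertions into $\partial_{z_i}$ and weight multiplications are routine once the mode identification $\omega_{(n)} = L_{(n-1)}$ is in place.
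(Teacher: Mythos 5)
Your proposal is correct and follows essentially the same route as the paper, which likewise obtains the lemma by setting $u=\omega$, $l=1$ in \eqref{iffq212} and invoking $\mathcal{U}(1)\omega=(2\pi i)^2(\omega-\tfrac{c}{24}\mathbf{1})$ together with the $L_{(-1)}$-derivative property; your treatment of the zero-mode term via trace cyclicity and the $q^{-c/24}$ normalization is exactly the intended argument. The only point deserving care is the one you already flag: as printed, the $\delta_{l,1}\tilde G_2(q)$ term of \eqref{iffq212} carries $(u_{(0)}+z_iu_{(1)})$, which for $u=\omega$ would give $\partial_{z_i}+z_i\operatorname{wt}u_i$ rather than the operator $\operatorname{wt}u_i+z_i\partial_{z_i}$ appearing in the lemma, so one must use the corrected pairing $(u_{(1)}+z_iu_{(0)})$ (as in Huang's original recursion) for the bookkeeping to close.
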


\noindent Let $h$ be a primary vector with respect to the conformal vector $\omega$ and fixed by $g$. Let $u = h$ and $l = 1$ in (\ref{iffq212}). 
\begin{lemma}\label{6.13}
We have
\begin{align}
\begin{split}
& q_s \frac{\partial}{\partial q_s}
F_{\mathcal{Y}_{1}, \dots, \mathcal{Y}_{n}}(
u_1, \dots, u_n; z_1, \dots, z_n; h; q) \\
&= F_{\mathcal{Y}_{1}, \dots, \mathcal{Y}_{n}}(
u_1, \dots, u_{j-1}, h_{(-1)} u_j, u_{j+1}, \dots, u_n;
z_1, \dots, z_n; h; q) \\
&\quad - \sum_{k \in \mathbb{Z}_+} (2k + 1)\, \tilde{G}_{2k+2}(q) \cdot \\
&\qquad \cdot F_{\mathcal{Y}_{1}, \dots, \mathcal{Y}_{n}}(
u_1, \dots, u_{j-1}, h_{(2k+1)} u_j, u_{j+1}, \dots, u_n;
z_1, \dots, z_n; h; q) \\
&\quad - \sum_{i \neq j} \sum_{m \in \mathbb{N}} (m + 1)\, \tilde{\wp}_{m+1}(z_i - z_j, q) \cdot \\
&\qquad \cdot F_{\mathcal{Y}_{1}, \dots, \mathcal{Y}_{n}}(
u_1, \dots, u_{i-1}, h_{(m)} u_i, u_{i+1}, \dots, u_n;
z_1, \dots, z_n; h; q) \\
&\quad - \tilde{G}_2(q) \sum_{i=1}^{n}
F_{\mathcal{Y}_{1}, \dots, \mathcal{Y}_{n}}(
u_1, \dots, u_{i-1}, (h_{(1)} + h_{(0)} z_i) u_i, u_{i+1}, \dots, u_n;
z_1, \dots, z_n; h; q).
\end{split}
\end{align}
\end{lemma}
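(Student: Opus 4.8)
The plan is to specialize the master trace identity \eqref{iffq212} to the choice $u = h$ and $l = 1$, and then to recognise the distinguished zero-mode term it produces as the fugacity derivative $q_s \partial_{q_s} F$. First I check that this specialisation is legitimate: since $h \in V^{0}$ is even we have $\phi = 1$ and $t = 0$, and since $h_{(0)} h = 0$ (because $h_{(n)} h = k\,\delta_{n,1}\mathbf{1}$) the relevant $h_{(0)}$-eigenvalue of $h$ vanishes, so $\theta = 1$ as well. Thus we are exactly in the untwisted even regime for which \eqref{iffq212} was established. With $l = 1$ the left-hand side becomes the insertion of $h_{(-1)} u_j$; the diagonal sum produces the modes $h_{(2k+1)} u_j$ weighted by Eisenstein series $\tilde{G}_{2k+2}(q)$; the off-diagonal sum produces the modes $h_{(m)} u_i$ weighted by Weierstrass-type functions; the $\tilde{G}_{2}$ term is present precisely because $l=1$; and the $o(\mathcal{U}(1)u)$ term survives because $\delta_{l,1}=1$.

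The crux of the argument is the evaluation of this last term. Because $h$ is Virasoro primary of conformal weight $1$, we have $L_{(j)} h = 0$ for all $j \geq 1$ and $L_{(0)} h = h$, so that
\[
\mathcal{U}(1) h = (2\pi i)^{L_{(0)}} e^{-L^{+}(A)} h = 2\pi i\, h ,
\]
and hence $o(\mathcal{U}(1) h) = 2\pi i\, o(h) = 2\pi i\, h_{(0)}$. Consequently the term $\delta_{l,1}\operatorname{tr}_{\tilde W_n} o(\mathcal{U}(1)h)\,\mathcal{Y}_1(\cdots)\cdots\mathcal{Y}_n(\cdots)\,e^{2\pi i s h_{(0)}} q^{L_{(0)}}$ equals $2\pi i \operatorname{tr}_{\tilde W_n} h_{(0)}\,\mathcal{Y}_1(\cdots)\cdots\mathcal{Y}_n(\cdots)\, e^{2\pi i s h_{(0)}} q^{L_{(0)}}$. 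Since $h$ is a Cartan element, $h_{(0)}$ commutes with $L_{(0)}$ and with $e^{2\pi i s h_{(0)}}$; using cyclicity of the trace (well defined by the $h$-stability hypothesis) I move $h_{(0)}$ to sit immediately in front of $e^{2\pi i s h_{(0)}}$ and then apply
\[
h_{(0)}\, e^{2\pi i s h_{(0)}} = \tfrac{1}{2\pi i}\tfrac{\partial}{\partial s}\, e^{2\pi i s h_{(0)}} = q_s \tfrac{\partial}{\partial q_s}\, e^{2\pi i s h_{(0)}} ,
\]
which (as $\mathcal{Y}_i(\mathcal{U}(q_{z_i})u_i,q_{z_i})$ and $q^{L_{(0)}-c/24}$ are independent of $s$) shows the $o$-term is exactly $2\pi i\, q_s \partial_{q_s} F$. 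This is the step that converts the algebraic insertion coming from \eqref{iffq212} into the analytic operator on the left-hand side of the lemma.

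Finally I solve the specialised identity for $q_s\partial_{q_s} F$: dividing by $2\pi i$ and transposing the remaining terms reproduces the four families appearing in the statement, in exact analogy with the Virasoro computation of Lemma~\ref{operator} (the case $u=\omega$, where the modular derivative $q\partial_q$ plays the role now played by the flavour derivative $q_s\partial_{q_s}$). The residual computational work is to rewrite the diagonal contributions and the twisted $P$-functions at $\theta=\phi=1$ in terms of $\tilde{G}_{2k+2}(q)$ and $\tilde{\wp}_{m+1}(z_i-z_j,q)$ using the Appendix~C material — the Taylor expansion \eqref{iffq211taylor} and the relation \eqref{weirstrasswp} — which is where the explicit factors $(2k+1)$ and $(m+1)$ and the $(2\pi i)$-normalisations are produced; combining the $\tilde{G}_{2}$ term with the $L_{(-1)}$-derivative property \eqref{gotcf123} yields the insertion of $(h_{(1)} + h_{(0)} z_i)u_i$. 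I expect the main obstacle to be the bookkeeping in this last conversion: correctly tracking the combinatorial factors and signs incurred in passing from the twisted functions to the classical Weierstrass functions, while keeping the trace manipulations legitimate via the $h$-stability assumption that $\operatorname{tr}_{\tilde W_n} q_s^{h_{(0)}} q^{L_{(0)}}$ is a well-defined $(q_s,q)$-series.
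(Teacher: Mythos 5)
Your proposal follows exactly the paper's route: the paper's entire proof is the single instruction ``let $u=h$ and $l=1$ in \eqref{iffq212},'' and you carry out precisely that specialization, making explicit the step the paper leaves implicit, namely that $h$ being primary of weight $1$ gives $o(\mathcal{U}(1)h)=2\pi i\,h_{(0)}$, which via cyclicity of the trace and $h_{(0)}e^{2\pi i s h_{(0)}}=q_s\partial_{q_s}e^{2\pi i s h_{(0)}}$ becomes the flavour derivative on the left-hand side. This is correct and is essentially the same argument as the paper's; your closing caveat about bookkeeping is well placed, since the numerical coefficients produced by a literal $l=1$ specialization of \eqref{iffq212} match the pattern of Lemma~\ref{operator} rather than the $(2k+1)$ and $(m+1)$ factors printed in the statement of Lemma~\ref{6.13}.
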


One defines, for any $\alpha \in \mathbb{C}$,
\begin{align}
\mathcal{O}_{j}^{h}(\alpha) 
&= (2\pi i)^2 q \frac{\partial}{\partial q}
+ \tilde{G}_{2}(q)\alpha 
+ \tilde{G}_{2}(q) \sum_{i=1}^{n} z_{i} \frac{\partial}{\partial z_{i}} 
- \sum_{i \neq j} \tilde{\wp}_{1}(z_{i} - z_{j}; q) \frac{\partial}{\partial z_{i}},
\end{align}
for $j = 1, \dots, n$, and
\begin{align}
\prod_{j=1}^{m} \mathcal{O}^{h}(\alpha_{j}) 
:= \mathcal{O}^{h}(\alpha_{1}) \cdots \mathcal{O}^{h}(\alpha_{m}).
\end{align}

\begin{theorem}\label{maint1}
 If $\text{dim}(R_{W_1}\otimes_{R_V} \cdots \otimes_{R_V} R_{W_n}/\{(R_V)^g, R_{W_1}\otimes_{R_V} \cdots \otimes_{R_V} R_{W_n}\}) < \infty$ and $\mathcal{Y}_{i}$, $i = 1, \dots, n$, are intertwining operators of type $\displaystyle\binom{\tilde{W}_{i-1}}{W_{i}\;\; \tilde{W}_{i}}$, where $\tilde{W}_0 = \tilde{W}_n$, and $h$ is a Cartan element of $V$ such that $\tilde{W}_n$ is $h$-stable, then

\begin{itemize}
    \item For any homogeneous elements $u_{i} \in W_{i}$, $i = 1, \dots, n$, there exist
\begin{align*}
a_{p,i}(z_1, \dots, z_n; q; h) &\in R'_{p}, \\
b_{p,i}(z_1, \dots, z_n; q; h) &\in R'_{2p}, \\
c_{p,i}(z_1, \dots, z_n; q; h) &\in R'_{l} \otimes (\mathbb{C}[z_1, \dots, z_n])_m \quad (l + m = p).
\end{align*}    for $p = 1, \dots, m$ and $i = 1, \dots, n$, where $(\mathbb{C}[z_1, \dots, z_n])_m$ denotes the space of degree-$m$ polynomials, such that $F_{\mathcal{Y}_{1}, \dots, \mathcal{Y}_{n}}(u_1, \dots, u_n; z_1, \dots, z_n; q; h)$ satisfies the following system of differential equations:

\begin{align}\label{equatio1}
    \frac{\partial^{m} f}{\partial z_i^{m}} + \sum_{p=1}^{m} a_{p,i}(z_1, \dots, z_n; q; h) \frac{\partial^{m-p} f}{\partial z_i^{m-p}} = 0,
\end{align}

\begin{align}\label{91'}
\begin{split}
    & \prod_{k=1}^{m} 
    \mathcal{O}^{h}_i\left( \sum_{i=1}^{n} \operatorname{wt} u_i + 2(m - k) \right) f \\
    &\quad + \sum_{p=1}^{m} b_{p,i}(z_1, \dots, z_n; q; h) \cdot 
    \prod_{k=1}^{m-p} 
    \mathcal{O}^{h}_i\left( \sum_{i=1}^{n} \operatorname{wt} u_i + 2(m - p - k) \right) f \\
    &= 0,
\end{split}
\end{align}

\begin{align}\label{98}
\begin{split}
    \left(q_s \frac{\partial}{\partial q_s}\right)^m f
    + \sum_{p=1}^{m} c_{p,i}(z_1, \dots, z_n; q; h)
    \left(q_s \frac{\partial}{\partial q_s}\right)^{m-p} f = 0,
\end{split}
\end{align}
for $i = 1, \dots, n$, in the region $1 > |q_{z_1}| > \cdots > |q_{z_n}| > |q| > 0$, $0 < |q_s| < 1$.

\item In the region
\[
\left\{(z_1, \dots, z_n, \tau, s) \,\middle|\,
1 > |q_{z_1}| > \cdots > |q_{z_n}| > |q_{\tau}| > 0,\;\; 0 < |q_s| < 1 \right\},
\]
$F_{\mathcal{Y}_{1}, \dots, \mathcal{Y}_{n}}(u_1, \dots, u_n; z_1, \dots, z_n; q_\tau; h)$ is absolutely convergent and can be analytically extended to a multivalued analytic function in the region where $\tau \in \mathbb{H}$, $|q_s| < 1$, and $z_i \neq z_j + k \tau + l$ for $i \neq j$, $k, l \in \mathbb{Z}$. We denote this extension by
\[
\overline{F}_{\mathcal{Y}_{1}, \dots, \mathcal{Y}_{n}}(u_1, \dots, u_n; z_1, \dots, z_n; h; q_\tau).
\]
\end{itemize}
\end{theorem}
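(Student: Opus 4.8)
The plan is to push the three recursion identities already in hand---the $L_{(-1)}$-derivative property, Lemma~\ref{operator}, and Lemma~\ref{6.13}---through the induced map $\overline{\Upsilon'}$, and to convert finite generation into finite-order linear differential equations. Since $R'$ is a polynomial algebra over the Noetherian ring $N_s$, it is Noetherian by the Hilbert basis theorem, and by Lemma~\ref{finitgene} the finiteness hypothesis makes $\T'/\J'$ a finitely generated $R'$-module; hence every ascending chain of $R'$-submodules stabilizes. Throughout I would use that the defining relations of $\J'$ in \eqref{11}, \eqref{33}, \eqref{44}, \eqref{106}, \eqref{108} are homogeneous for the combined grading that assigns conformal weight to the $W_i$-factors and modular weight to the generators of $R'$, so that $\operatorname{gr}^{H}(\T'/\J')$ is a graded $R'$-module; the modular weights claimed for $a_{p,i}, b_{p,i}, c_{p,i}$ will be read off from this homogeneity.

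For \eqref{equatio1} I fix $i$ and consider the $R'$-submodule of $\T'/\J'$ generated by the classes of $u_1\otimes\cdots\otimes L_{(-1)}^{k}u_i\otimes\cdots\otimes u_n$, $k\geq 0$. Noetherianity yields an $m$ for which the $m$-th class lies in the submodule generated by the lower ones; applying $\overline{\Upsilon'}$ and the $L_{(-1)}$-derivative property $\partial_{z_i}F(\dots u_i\dots)=F(\dots L_{(-1)}u_i\dots)$ turns this into \eqref{equatio1}, and since $\partial_{z_i}$ carries modular weight one the equation is modular-weight homogeneous, forcing $a_{p,i}\in R'_p$. Equations \eqref{91'} and \eqref{98} follow by the identical mechanism with $L_{(-1)}$ replaced by the operators realized algebraically in Lemma~\ref{operator} and Lemma~\ref{6.13}. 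In the first case $\mathcal{O}^h_i(\alpha)$ reproduces $F$ with $u_i$ replaced by $L_{(-2)}u_i$ up to strictly lower conformal weight with $R'$-coefficients, so $\prod_k\mathcal{O}^h_i(\sum_i\operatorname{wt}u_i+2(m-k))$ realizes $L_{(-2)}^m u_i$ modulo lower data; the weight-two character of $\mathcal{O}^h$ forces $b_{p,i}\in R'_{2p}$, and the particular arguments $\sum_i\operatorname{wt}u_i+2(m-k)$ are exactly those keeping the iterated composition homogeneous. In the second case $q_s\partial_{q_s}$ realizes $h_{(-1)}$ modulo lower terms, and the term $h_{(0)}z_i$ appearing in Lemma~\ref{6.13} is what splits the coefficient as $c_{p,i}\in R'_l\otimes(\mathbb{C}[z_1,\dots,z_n])_m$ with $l+m=p$.

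For the second bullet I would first recall that at each fixed order in $q$ the trace defining $F$ is already absolutely convergent in $z_1,\dots,z_n$ on $|q_{z_1}|>\cdots>|q_{z_n}|>0$, so $F$ is a well-defined formal $(q_s,q)$-series with coefficients in $\mathbb{G}_{|z_1|>\cdots>|z_n|}$. It is a solution of the linear systems \eqref{equatio1}, \eqref{91'}, \eqref{98}, whose coefficients lie in $R'$ and, once the $q$-series are resummed, converge absolutely on $1>|q_{z_1}|>\cdots>|q_{z_n}|>|q|>0$, $0<|q_s|<1$ with poles only along $z_i=z_j+k\tau+l$. A standard majorant estimate for linear ODEs with convergent coefficients (as in \cite{zhu1996modular,huang2005differential}) then upgrades the formal solution to an absolutely convergent one, applied successively in the $z_i$, $q$ and $q_s$ directions; the regular-singular nature of \eqref{equatio1} along the diagonals controls the local behaviour. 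Analytic continuation to $\tau\in\mathbb{H}$, $|q_s|<1$, $z_i\neq z_j+k\tau+l$ then follows by continuing $F$ along paths in the complement of the singular locus, the equations guaranteeing that the continuation remains a solution and the regular singularities producing only branching.

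I expect the convergence step, together with the modular-weight bookkeeping, to be the main obstacle. The algebraic extraction of the three families of equations is fairly mechanical once Lemmas~\ref{finitgene}, \ref{operator} and~\ref{6.13} are in place, but one must verify that the coefficients produced by the formal relations are genuinely the convergent elements of $R'$ carrying the asserted modular weights, and---most delicately---that the formal $(q_s,q)$-series solution actually converges rather than merely satisfying the equations formally. Controlling the continuation near the hyperplanes $z_i=z_j+k\tau+l$, where the regularity of the singular points must be established to bound the growth, is the crux of the argument.
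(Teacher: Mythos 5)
Your proposal is correct and follows essentially the same route as the paper: Noetherianity of $R'$ together with Lemma~\ref{finitgene} gives finite generation of the submodules built from iterates of $L_{(-1)}$, of the operator realizing Lemma~\ref{operator}, and of the operator realizing Lemma~\ref{6.13}, and applying $\overline{\Upsilon'}$ yields the three systems with the stated modular weights. The convergence and continuation argument likewise matches the paper's, which invokes the regular singularities of \eqref{91'} and \eqref{98} at $q_\tau=0$ and $q_s=0$ and the analyticity of the coefficients away from $z_i=z_j+k\tau+l$ to upgrade the formal series to a convergent, multivalued analytic solution.
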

\begin{proof}
We follow a similar argument as in \cite[Theorem 3.9]{huang2005differential}. First, let $\Pi_{i}$ ($i = 1, \dots, n$) be the $R'$-submodules of $\T'/\J'$ generated by
\begin{align*}
    \left[1 \otimes u_{1} \otimes \cdots \otimes u_{i-1} \otimes L^{k}_{(-1)} u_{i} \otimes u_{i+1} \otimes \cdots \otimes u_{n} \right]
\end{align*}
for $k \in \mathbb{N}$. Since $R'$ is Noetherian and $\T'/\J'$ is a finitely generated $R'$-module, each submodule $\Pi_i$ is also finitely generated. Therefore, there exists a sufficiently large integer $m$ such that for each $i = 1, \dots, n$, we have
\begin{align}\label{identity1}
\begin{split}
    &\left[
    1 \otimes u_1 \otimes \cdots \otimes u_{i-1} \otimes
    L^m_{(-1)} u_i \otimes u_{i+1} \otimes \cdots \otimes u_n
    \right] \\
    &\quad + \sum_{p=1}^{m} a_{p,i}(z_1, \dots, z_n; q) \cdot
    \left[
    1 \otimes u_1 \otimes \cdots \otimes u_{i-1} \otimes
    L^{m-p}_{(-1)} u_i \otimes u_{i+1} \otimes \cdots \otimes u_n
    \right] \\
    &= 0,
\end{split}
\end{align}
where each $a_{p,i}(z_{1}, \dots, z_{n}; q)$ can be chosen to have modular weight $p$. Applying $\overline{\Upsilon'}$ to both sides of (\ref{identity1}) and using the $L(-1)$-derivative property, we find that $F_{\mathcal{Y}_{1}, \dots, \mathcal{Y}_{n}}(u_1, \dots, u_n; z_1, \dots, z_n; q)$ satisfies (\ref{equatio1}).

Next, define $\mathcal{Q}_i : \T' \rightarrow \T'$ by
\begin{align*}
\mathcal{Q}_i(1 \otimes \cdots \otimes u_n)
&= 1 \otimes u_1 \otimes \cdots \otimes u_{i-1} \otimes 
L(-2) u_i \otimes u_{i+1} \otimes \cdots \otimes u_n \\
&\quad - \sum_{k \in \mathbb{Z}_+} \tilde{G}_{2k+2}(q) \cdot \\
&\qquad \cdot 1 \otimes u_1 \otimes \cdots \otimes u_{i-1} \otimes 
L(2k) u_i \otimes u_{i+1} \otimes \cdots \otimes u_n \\
&\quad + \sum_{j \neq i} \sum_{m \in \mathbb{Z}_+} (-1)^m\, \tilde{\wp}_{m+1}(z_j - z_i; q) \cdot \\
&\qquad \cdot 1 \otimes u_1 \otimes \cdots \otimes u_{i-1} \otimes 
L(m{-}1) u_i \otimes u_{i+1} \otimes \cdots \otimes u_n.
\end{align*}
For fixed $u_i \in W_i$, $i = 1, \dots, n$, define $\Lambda_i$ as the $R'$-submodule of $\T'/\J'$ generated by $[\mathcal{Q}_i^k(1 \otimes u_1 \otimes \cdots \otimes u_n)]$ for $k \in \mathbb{N}$. Since $\T'/\J'$ is a finitely generated $R'$-module and $R'$ is Noetherian, $\Lambda_i$ is also finitely generated. Hence there exist $b_{p,i}(z_1, \dots, z_n; q) \in R'$ for $p = 1, \dots, m$ and $i = 1, \dots, n$ such that in $\Lambda_i$:
\begin{align}\label{identity2}
    [\mathcal{Q}_{i}^{m}(1 \otimes u_1 \otimes \cdots \otimes u_n)]
    + \sum_{p=1}^{m} b_{p,i}(z_1, \dots, z_n; q)
    [\mathcal{Q}_{i}^{m-p}(1 \otimes u_1 \otimes \cdots \otimes u_n)] = 0,
\end{align}
where each $b_{p,i}$ has modular weight $2p$. Applying $\overline{\Upsilon'}$ to both sides of (\ref{identity2}) and using Lemma \ref{operator}, we conclude that $F_{\mathcal{Y}_1, \dots, \mathcal{Y}_n}(u_1, \dots, u_n; z_1, \dots, z_n; h; q)$ satisfies (\ref{91'}).

Note that $R' \otimes \mathbb{C}[z_1, \dots, z_n]$ is Noetherian. Then (\ref{98}) follows similarly from Lemma \ref{operator} and Lemma \ref{6.13}.

For the second part of the theorem, we note that the coefficients of
\begin{align}\label{91}
F_{\mathcal{Y}_{1}, \dots, \mathcal{Y}_{n}}(u_1, \dots, u_n; z_1, \dots, z_n; h; q_\tau)
\end{align}
as a $(q_s, q_\tau)$-series are absolutely convergent when $1 > |q_{z_1}| > \cdots > |q_{z_n}| > 0$. So for fixed $z_1, \dots, z_n$ in this region, (\ref{91}) satisfies equations (\ref{91'}) and (\ref{98}) with respect to $q_\tau$ and $q_s$. Since these differential equations have regular singularities at $q_\tau = 0$ and $q_s = 0$, and their coefficients are analytic in $z_1, \dots, z_n$, the sum of (\ref{91}) as a power series in $q_s$ and $q_\tau$ is also analytic in $z_1, \dots, z_n$.

In particular, (\ref{91}) defines an expansion of a multivariable analytic function in the region $1 > |q_{z_1}| > \cdots > |q_{z_n}| > |q_\tau| > 0$, $0 < |q_s| < 1$, and is absolutely convergent as a multiple sum series. Since the coefficients of (\ref{91'}) are analytic in $z_1, \dots, z_n, \tau, s$, and only have singularities at $z_i = z_j + k\tau + l$ for $i \neq j$, $k, l \in \mathbb{Z}$, (\ref{91}) can be analytically extended to a multivalued analytic function in the region described in the theorem.
\end{proof}

\subsection{Genus-one $1$-point functions}
Following the same notation and assumptions as in the previous section, we now consider the 1-point function. Let $W$ be a weak $V$-module, and  $\tilde{W}$ be a weak $g$-twisted $V$-module. Let $\mathcal{Y}$ be the intertwining operator of type $\binom{\tilde{W}}{W\; \tilde{W}}$.

 For this case, we have the following. 
\begin{lemma}For any $u_1\in W$, we have
\[
\frac{\partial}{\partial z}F_{\mathcal{Y}}(u_1; z; q; h) = 0.
\]
\end{lemma}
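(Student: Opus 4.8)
The plan is to reduce the statement to the vanishing identity~(\ref{iffq8}) by means of the $L_{(-1)}$-derivative property. First I would apply the $L_{(-1)}$-derivative property recorded before Lemma~\ref{operator} (itself a consequence of~(\ref{gotcf123})) in the one-point case $n=1$, which reads
\[
\frac{\partial}{\partial z} F_{\mathcal{Y}}(u_1; z; q; h) = F_{\mathcal{Y}}(L_{(-1)} u_1; z; q; h).
\]
It therefore suffices to show that $F_{\mathcal{Y}}(L_{(-1)} u_1; z; q; h)$ vanishes identically.

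To establish this, I would invoke~(\ref{iffq8}) with $u$ taken to be the conformal vector $\omega$. Because $g$ is an automorphism of $V$ it preserves $\omega$, so $\omega \in V^0$; moreover $h_{(0)}\omega = 0$, since $h$ is a weight-one primary vector, giving $[h_{(0)}, L_{(-2)}] = 0$ together with $h_{(0)}\mathbf{1} = 0$. Thus $\omega$ is an $h_{(0)}$-eigenvector with eigenvalue $\lambda^{-1} = 0$, so the hypotheses of Proposition~\ref{iffq1} are met. Specializing~(\ref{iffq8}) to $n=1$, $u=\omega$, and using $\omega_{(0)} = L_{(-1)}$, yields
\[
\operatorname{tr}_{\tilde{W}} \mathcal{Y}(\mathcal{U}(q_z) L_{(-1)} u_1, q_z)\, e^{2\pi i s h_{(0)}} q^{L_{(0)}} = 0.
\]
Multiplying by the $z$-independent factor $q^{-c/24}$ gives $F_{\mathcal{Y}}(L_{(-1)} u_1; z; q; h) = 0$, which combined with the previous display proves the lemma.

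The argument is short once the two ingredients are identified, so I do not anticipate a genuine obstacle. The one point deserving attention is the verification that $u=\omega$ satisfies the hypotheses of Proposition~\ref{iffq1}: one must confirm both the automorphism invariance $\omega \in V^0$ and that $\omega$ lies in an $h_{(0)}$-eigenspace, which here forces $\lambda^{-1}=0$ and $\theta=1$. One should also note that~(\ref{iffq8}) is exactly the $r=0$, $m=0$ component extracted in the proof of Proposition~\ref{iffq1}, and that the $-\tfrac{c}{24}$ shift in the exponent of $q$ is inert for the vanishing since it contributes only an overall $z$-independent scalar.
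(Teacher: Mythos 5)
Your proof is correct, but it takes a different route from the paper's. The paper proves the lemma by a direct two-line computation: differentiating $\mathcal{Y}(\mathcal{U}(q_z)u_1,q_z)$ in $z$ produces $2\pi i\,\mathcal{Y}\big((L_{(0)}+q_zL_{(-1)})\mathcal{U}(q_z)u_1,q_z\big)$, which by the commutator formula for the conformal vector is $2\pi i\,[L_{(0)},\mathcal{Y}(\mathcal{U}(q_z)u_1,q_z)]$, and the trace of a commutator against $e^{2\pi i s h_{(0)}}q^{L_{(0)}-c/24}$ vanishes since $L_{(0)}$ commutes with both insertions. You instead factor the argument through the $L_{(-1)}$-derivative property (from~(\ref{gotcf123})) and then kill $F_{\mathcal{Y}}(L_{(-1)}u_1;z;q;h)$ by specializing the identity~(\ref{iffq8}) of Proposition~\ref{iffq1} to $n=1$, $u=\omega$, using $\omega_{(0)}=L_{(-1)}$. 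Your verification of the hypotheses is the right thing to check and is correct: $\omega$ is $g$-fixed and even, and $h_{(0)}\omega=0$ because a weight-one primary satisfies $[L_{(m)},h_{(0)}]=0$, so $\theta=1$, $t=0$. Both arguments ultimately rest on cyclicity of the trace — in your version that mechanism is hidden inside the derivation of~(\ref{iffq8}) — so the paper's proof is the more elementary and self-contained one, while yours has the mild advantage of exhibiting the lemma as the $n=1$ instance of the general translation-invariance identity $\sum_j\partial_{z_j}F=0$ that~(\ref{iffq8}) encodes for all $n$.
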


\begin{proof}
Since
\[
\frac{\partial}{\partial z}(2\pi i e^{2\pi i z})^{L_{(0)}} = 2\pi i L_{(0)} (2\pi i e^{2\pi i z})^{L_{(0)}},
\quad
\frac{\partial}{\partial z} \mathcal{Y}(u_1, q_z) = 2\pi i q_z L_{(-1)} \mathcal{Y}(u_1, q_z),
\]
we have
\begin{align*}
\frac{\partial}{\partial z} F_{\mathcal{Y}}(u_1; z; q; h)
&= \frac{\partial}{\partial z} \operatorname{tr}_{W} \mathcal{Y}((2\pi i q_z)^{L_{(0)}} e^{-L_{+}(A)} u_1, q_z) e^{2\pi i s h_{(0)}} q^{L_{(0)} - \frac{c}{24}} \\
&= \operatorname{tr}_{W} \mathcal{Y}((2\pi i L_{(0)} + 2\pi i q_z L_{(-1)}) \mathcal{U}(q_z) u_1, q_z) e^{2\pi i s h_{(0)}} q^{L_{(0)} - \frac{c}{24}} \\
&= \operatorname{tr}_{W} [L_{(0)}, \mathcal{Y}(\mathcal{U}(q_z) u_1, q_z)] e^{2\pi i s h_{(0)}} q^{L_{(0)} - \frac{c}{24}} = 0.
\end{align*}
\end{proof}

\noindent
Recall the definition of the {\rm Serre derivation} $\partial_{\alpha}$ and the {\rm iterate Serre derivation} $\partial^m$ (\ref{serrederi}). For 1-point functions and $\alpha \in \mathbb{C}$, we have
\[
\frac{\mathcal{O}^{h}(\alpha)}{(2\pi i)^2} = \partial_\alpha,
\quad
\prod_{k=1}^{m} \frac{\mathcal{O}^{h}(\alpha + 2(m - k))}{(2\pi i)^2} = \partial^m.
\]
Moreover, in this case, $R'$ is simply the ring $N_s$.

We now describe the structure of the ideal $\J'$ for the genus-one 1-point function case. The generating relations for $\J'$ depend on the twisting parameters $\theta$ and $\phi$ as follows: for arbitrary $u\in V$, and $u_1\in W$,  
\begin{align}\label{genus1Jrelations}
\begin{cases}
u_{(-2)} u_1 + \sum_{k\in \Z_+} (2k+1)\tilde{G}_{2k+2}(q) u_{(2k)} u_1 & \text{if } \theta = \phi = 1,\, t = 0, \\
u_{(0)} u_1 & \text{if } \phi = 1, \\
u_{(-2)} u_1 + \sum_{k \geq 2} \left(
(k - 1)\tilde{G}_{k}\genfrac[]{0pt}{0}{(-1)^{t}\theta}{\phi}(q)
\right. \\
\left.\hspace{8em} -\, \delta_{k,2} \frac{2\pi i}{1 - \theta^{-1}}\right)
u_{(k-2)} u_1 & \text{if } \phi = 1,\, (-1)^t \theta \neq 1, \\
u_{(-2)} u_1 + \sum_{k \geq 2} (k - 1) \tilde{G}_k\genfrac[]{0pt}{0}{(-1)^t \theta}{\phi}(q) u_{(k-2)} u_1 & \text{if } \phi \neq 1.
\end{cases}
\end{align}
Using the same strategy as in Theorem \ref{maint1}, we can now derive the genus-one differential equation for the 1-point function.
\begin{theorem}\label{1pointfun}
Let $V$ be a vertex superalgebra, $\tilde{W}$ be its $h$-stable weak $g$-twisted module, and $W$ be its weak module %$o(g)=T$ and $o(e^{2\pi i h_{(0)}})=T_1$, 
satisfying the condition
\begin{align}\label{6.15}
\operatorname{dim}\left(R_W/\{(R_V)^g, R_W\}\right) < \infty.
\end{align}
For any homogeneous element $u_1 \in W$, there exist
\[
b_p(z; q; h) \in (N_s)_{2p}, \quad
c_p(z; q; h) \in (N_s)_l \otimes (\mathbb{C}[z])_m \quad (l + m = p),
\]
for $p = 1, \dots, m$, such that $F_{\mathcal{Y}}(u_1; z; q; h)$ satisfies the following differential equations:
\begin{align}\label{diffequ1}
\partial^m F_{\mathcal{Y}}(u_1; z; q; h) + \sum_{p=1}^{m} b_p(z; q; h) \, \partial^{m-p} F_{\mathcal{Y}}(u_1; z; q; h) = 0,
\end{align}
\begin{align}\label{diffequi}
\left(q_s \frac{\partial}{\partial q_s}\right)^{m} F_{\mathcal{Y}}(u_1; z; q; h)
+ \sum_{p=1}^{m} c_p(z; q; h) \left(q_s \frac{\partial}{\partial q_s}\right)^{m-p} F_{\mathcal{Y}}(u_1; z; q; h) = 0,
\end{align}
in the region $1 > |q_z| > |q| > 0$, $0 < |q_s| < 1$.

In particular, equation \eqref{diffequ1} can be rewritten as
\begin{align}\label{diffequ12}
\left(q^{\frac{1}{T}} \frac{d}{d q^{\frac{1}{T}}} \right)^{m} F_{\mathcal{Y}}(u_1; z; q; h)
+ \sum_{i=0}^{m-1} r_i(\tau) \left(q^{\frac{1}{T}} \frac{d}{d q^{\frac{1}{T}}} \right)^{i} F_{\mathcal{Y}}(u_1; z; q; h) = 0,
\end{align}
where each $r_i(\tau)$ belongs to the ring generated by $G_2(\tau)$ and $N_s$, and $q^{\frac{1}{T}} = e^{2\pi i \tau/T}$.
\end{theorem}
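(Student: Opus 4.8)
The plan is to specialize the machinery behind Theorem~\ref{maint1} to the one-point case $n=1$, where the ring $R'$ collapses to $N_s$ and, by the lemma immediately preceding the statement, $F_{\mathcal{Y}}(u_1;z;q;h)$ is independent of $z$; consequently there is no nontrivial $z$-ODE to produce, and only the Serre-derivative equation \eqref{diffequ1} and the $q_s$-equation \eqref{diffequi} survive. First I would set $\T'=N_s\otimes W$ and let $\J'$ be the $N_s$-submodule generated by the relations listed in \eqref{genus1Jrelations}, which are exactly the $n=1$ instances of \eqref{11}, \eqref{22}, \eqref{33}, \eqref{106}, \eqref{44}, \eqref{108} (the cross-terms indexed by $i\neq j$ and the slot-comparison relation \eqref{tensorcrucial} being vacuous when $n=1$). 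I would then define $\Upsilon'\colon\T'\to\mathbb{G}\otimes\mathbb{C}[[q_s^{1/T_1},q_s^{-1/T_1}]]((q^{1/T}))$ by $u_1\mapsto F_{\mathcal{Y}}(u_1;z;q;h)$; the trace identities of Theorem~\ref{iffq29} (equivalently the $u=\omega$ and $u=h$ specializations recorded in \eqref{iffq212}) place each generator of $\J'$ in $\ker\Upsilon'$, so $\Upsilon'$ descends to a map $\overline{\Upsilon'}\colon\T'/\J'\to\cdots$.

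The key finiteness input is that $\T'/\J'$ is finitely generated over $N_s$, which is the $n=1$ case of Lemma~\ref{finitgene}. Passing to the associated graded $\mathrm{gr}^H(\J')$ for the weight filtration $H$, the relation \eqref{inclusiona} gives $1\otimes u_{(-2)}u_1\in\mathrm{gr}^H(\J')$, so that $\Theta'(\overline{a})=\overline{1\otimes a}$ is well defined on $R_W=W/C_2(W)$, while \eqref{87} gives $1\otimes u_{(0)}u_1\in\mathrm{gr}^H(\J')$ for $u\in V^g$, so $\Theta'$ descends further to $R_W/\{(R_V)^g,R_W\}$ and is surjective onto $\T'/\mathrm{gr}^H(\J')$. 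Hypothesis \eqref{6.15} then forces $\T'/\mathrm{gr}^H(\J')$, and hence $\T'/\J'$, to be finitely generated over the Noetherian ring $N_s$.

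With finite generation in hand the two equations follow by the Noetherian-submodule argument of Theorem~\ref{maint1}. For \eqref{diffequ1}, I would form the lift $\mathcal{Q}(1\otimes u_1)=1\otimes L_{(-2)}u_1-\sum_{k\in\Z_+}\tilde{G}_{2k+2}(q)\,1\otimes L_{(2k)}u_1$ of the operator of Lemma~\ref{operator} (the $\tilde{\wp}$-terms of that lemma drop out since there is no index $i\neq j$), let $\Lambda$ be the $N_s$-submodule of $\T'/\J'$ generated by $[\mathcal{Q}^k(1\otimes u_1)]$ for $k\in\mathbb{N}$, and use Noetherianity to obtain a relation of some length $m$ whose coefficients $b_p$ can be taken homogeneous of modular weight $2p$. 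Applying $\overline{\Upsilon'}$, Lemma~\ref{operator}, and the identifications $\mathcal{O}^h(\alpha)/(2\pi i)^2=\partial_\alpha$ and $\prod_{k=1}^{m}\mathcal{O}^h(\alpha+2(m-k))/(2\pi i)^2=\partial^m$ then yields \eqref{diffequ1} with $b_p\in(N_s)_{2p}$. Equation \eqref{diffequi} is obtained identically from Lemma~\ref{6.13}, now working over the Noetherian ring $N_s\otimes\mathbb{C}[z]$; the factor $h_{(0)}z$ appearing there is precisely what forces the coefficients into $(N_s)_l\otimes(\mathbb{C}[z])_m$ with $l+m=p$. Convergence in the stated region is inherited from the absolute convergence already established for these traces.

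The main obstacle will be the reduction of \eqref{diffequ1} to the $q^{1/T}$-normal form \eqref{diffequ12}. Each application of $\partial_\alpha$ contributes a multiplication by $G_2(\tau)$, which is quasimodular and does \emph{not} lie in $N_s$, so expanding the iterated Serre derivative $\partial^m$ into the ordinary differential operator $(q^{1/T}\tfrac{d}{dq^{1/T}})^m+\cdots$ via the definition \eqref{serrederi} generates coefficients $r_i(\tau)$ that are polynomials in $G_2(\tau)$ over $N_s$ rather than elements of $N_s$ itself; verifying that these $r_i$ lie in the ring generated by $G_2$ and $N_s$, and that $q^{1/T}=0$ is a regular singular point so that the formal trace-series solution genuinely converges, is where the modular-weight bookkeeping must be done carefully. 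The finite-generation step is the other delicate point, but it is essentially a transcription of Lemma~\ref{finitgene} once one checks that the $n=1$ degenerations of the generating relations still cut out exactly the quotient by $\{(R_V)^g,R_W\}$.
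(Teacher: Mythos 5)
Your proposal is correct and follows essentially the same route as the paper, which itself proves Theorem~\ref{1pointfun} by specializing the argument of Theorem~\ref{maint1} to the one-point case: the ring $R'$ reduces to $N_s$, the ideal $\J'$ is cut out by the relations \eqref{genus1Jrelations}, finite generation over the Noetherian ring $N_s$ follows from the hypothesis via the map $\Theta'$ onto $R_W/\{(R_V)^g,R_W\}$, and the two equations come from the Noetherian-submodule argument together with Lemmas~\ref{operator} and~\ref{6.13} and the identification $\mathcal{O}^h(\alpha)/(2\pi i)^2=\partial_\alpha$. Your additional bookkeeping on the passage to \eqref{diffequ12} is consistent with the paper's treatment.
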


One calls \eqref{diffequ1} the \emph{twisted modular linear differential equation} (twisted MLDE). When $W = V$, $\mathcal{Y}$ is the vertex operator of $V$ on a $g$-twisted module $\tilde{W}$, $u_1 = \mathbf{1}$, 
%and $\phi(h) = e^{\pi i \sigma}$ — that is, 
$e^{2\pi i s h_{(0)}} = e^{\pi i \sigma}$, where $\sigma(v) = 2v$ if $v \in W_{\overline{0}}$ and $\sigma(v) = v$ if $v \in W_{\overline{1}}$, then $F_{\mathcal{Y}}(u_1; z; q_\tau; h)$ is the supercharacter of $\tilde{W}$.

\begin{corollary}\label{twistedmlde}
If $V$ is a quasi-lisse vertex superalgebra, then the supercharacter of its simple $g$-twisted module satisfies the twisted modular linear differential equation.
\end{corollary}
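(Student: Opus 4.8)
The plan is to deduce the corollary as a direct specialization of Theorem~\ref{1pointfun}, so that the only genuine content is to check that its hypotheses are automatically satisfied once $V$ is quasi-lisse, and to identify the resulting genus-one $1$-point function with the supercharacter. Concretely, I would take $W = V$, let $\mathcal{Y} = Y_{\tilde{W}}$ be the twisted module vertex operator of the simple $g$-twisted module $\tilde{W}$ (an intertwining operator of type $\binom{\tilde{W}}{V\;\tilde{W}}$), and set $u_1 = \mathbf{1}$. Since a simple $g$-twisted module of a quasi-lisse vertex superalgebra is ordinary (its $L_{(0)}$-eigenspaces are finite-dimensional, and by Theorem~\ref{finitelymanyordinary} there are only finitely many such modules), refining the $L_{(0)}$-grading by the parity operator keeps the graded pieces finite-dimensional, so $\tilde{W}$ is $h$-stable for the Cartan element implementing parity.

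The first thing to verify is the finiteness condition \eqref{6.15} in the case $W = V$, namely $\dim\bigl(R_V / \{(R_V)^g, R_V\}\bigr) < \infty$. This is where the quasi-lisse hypothesis enters, through the Etingof--Schedler input of Theorem~\ref{leaves}. Because $V$ is quasi-lisse it is finitely strongly generated, so $R_V$ is a finitely generated $\mathbb{C}$-algebra and in particular a finitely generated module over itself. The standing assumption $g(V\setminus C_2(V))\subset V\setminus C_2(V)$ ensures that $g$ descends to a Poisson automorphism of $R_V$ of finite order, so that $\langle g\rangle$ is a finite group of automorphisms of $R_V$ and $(R_V)^g$ makes sense. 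Applying Corollary~\ref{leaves'} with $\mathcal{M} = R_V$ and $G = \langle g\rangle$ then yields exactly $\dim\bigl(R_V/\{(R_V)^g, R_V\}\bigr) < \infty$, which is precisely \eqref{6.15}. Thus the hypotheses of Theorem~\ref{1pointfun} are met.

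It remains to identify $F_{\mathcal{Y}}(\mathbf{1}; z; q_\tau; h)$ with the supercharacter and read off the equation. Since $u_1 = \mathbf{1}$, the lemma preceding Theorem~\ref{1pointfun} gives $\partial_z F_{\mathcal{Y}} = 0$, so the function is independent of $z$; with the parity specialization $e^{2\pi i s h_{(0)}} = e^{\pi i \sigma}$ described in the remark before the corollary, the definition of $F$ gives
\[
F_{\mathcal{Y}}(\mathbf{1}; z; q_\tau; h) = \operatorname{tr}_{\tilde{W}} e^{\pi i \sigma}\, q^{L_{(0)} - \frac{c}{24}} = \operatorname{str}_{\tilde{W}} q^{L_{(0)} - \frac{c}{24}},
\]
the supercharacter of $\tilde{W}$. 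Theorem~\ref{1pointfun} then provides the differential equation \eqref{diffequ12}, whose coefficients $r_i(\tau)$ lie in the ring generated by $G_2(\tau)$ and $N_s$, i.e.\ a twisted modular linear differential equation; since the $q_s$-direction is frozen at the parity value, equation \eqref{diffequi} degenerates and only \eqref{diffequ12} survives. I expect the only real subtlety to be precisely this last identification: checking that the parity insertion $e^{\pi i \sigma}$ is realized as $e^{2\pi i s h_{(0)}}$ for an honest primary Cartan element $h$ (with the corresponding $h$-stability of $\tilde{W}$), so that the framework of Theorem~\ref{1pointfun} applies verbatim. Once this bookkeeping is in place, the conclusion is immediate.
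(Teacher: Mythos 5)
Your proposal is correct and follows essentially the same route as the paper: the paper's proof of this corollary is precisely the one-line observation that condition~\eqref{6.15} (with $W=V$) holds by Corollary~\ref{leaves'} for any quasi-lisse vertex superalgebra, after which Theorem~\ref{1pointfun} applied to $W=V$, $u_1=\mathbf{1}$, and the parity insertion $e^{2\pi i s h_{(0)}}=e^{\pi i \sigma}$ yields the twisted MLDE for the supercharacter. The additional bookkeeping you carry out (ordinariness and $h$-stability of $\tilde{W}$, $z$-independence via the lemma preceding Theorem~\ref{1pointfun}) is consistent with the setup the paper states just before the corollary.
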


\begin{proof}
According to Corollary \ref{leaves'}, condition \eqref{6.15} is satisfied by any quasi-lisse vertex superalgebra.
\end{proof}

\begin{remark}
In fact, we can further weaken our condition to:
\[
\operatorname{dim} \frac{R_W}{\{ (R_V)^{\langle g \rangle}, R_W \} + (R_V)^+ \cdot R_W} < \infty,
\]
where $R_V^+$ is the complement of $(R_V)^g$ in $R_V = (R_V)^g \oplus R_V^+$. This condition for $V$ (when $W = V$) to satisfy the twisted MLDE is strictly weaker than the relative cofiniteness condition in \cite{arakawa2019modularity}.
\end{remark}

\begin{example}[{\cite{beem2018vertex}}]
For $\mathcal{N} = 4$ super Yang–Mills theory with gauge algebra $\mathfrak{sl}_2$, it corresponds to the vertex operator superalgebra $L_{-9}^{N=4}$ (see Example \ref{N=2}), whose the associated variety is the nilpotent cone $\mathcal{N}$ of $\mathfrak{sl}_2$. It is quasi-lisse. Its supercharacter, $\operatorname{sch}(L_{-9}^{N=4})$, satisfies the following twisted MLDE:
\[
\left( \partial^2 - 2\frac{G_2\genfrac[]{0pt}{0}{-1}{1}(\tau)}{(2\pi i)^2} \partial - 18\frac{G_4(\tau)}{(2\pi i)^4} + 18\frac{G_4\genfrac[]{0pt}{0}{-1}{1}(\tau)}{(2\pi i)^4} \right) f(\tau) = 0.
\]
\end{example}

\begin{example}
Let $\mathfrak{sl}_2$ be spanned by $e, f, h$ with relations $[h, e] = 2e$, $[h, f] = -2f$, $[e, f] = h$. For $s \in \mathbb{Q}$, define $\mathbf{h}^s = -s \frac{h}{2}$. Define Li's operator \cite{li1996local}
\[
\Delta_s(z) = z^{\mathbf{h}^s_{(0)}} \exp\left( \sum_{n = 1}^\infty \frac{\mathbf{h}^s_{(n)}}{-n} (-z)^{-n} \right).
\]
Let $\sigma_{\mathbf{h}^s} := \exp(2\pi i \mathbf{h}^s_{(0)})$ be an inner automorphism of finite order. The following result is due to Haisheng Li \cite[Proposition 5.4]{li1996local}.

\begin{proposition}
For any $g$-twisted $L_k({\mathfrak{sl}_2})$-module $M$, define
\[
(\pi_s(M), Y_M^s(\cdot, z)) := 
(M, Y_M(\Delta_s(z)\cdot, z)).
\]
Then this is a weak $g \sigma_{\mathbf{h}^s}$-twisted module.
\end{proposition}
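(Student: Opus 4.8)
The plan is to recognize this statement as a direct instance of Li's general $\Delta$-operator deformation theorem \cite{li1996local}, so that the work reduces to checking the structural hypotheses that Li's construction imposes on the deforming element $\mathbf{h}^s$. Recall that Li's theorem applies to any $\mathbf{h} \in V$ that is (P1) primary of conformal weight one, i.e.\ $L_{(0)}\mathbf{h} = \mathbf{h}$ and $L_{(n)}\mathbf{h} = 0$ for $n \geq 1$; (P2) satisfies the abelian current relations $\mathbf{h}_{(n)}\mathbf{h} = \gamma\,\delta_{n,1}\mathbf{1}$ for all $n \geq 0$ and some scalar $\gamma$; and (P3) has a zero mode $\mathbf{h}_{(0)}$ acting semisimply on $V$ with eigenvalues in $\tfrac{1}{N}\mathbb{Z}$ for some $N \in \mathbb{Z}_+$, so that $\sigma_{\mathbf{h}} = \exp(2\pi i\,\mathbf{h}_{(0)})$ is a finite-order automorphism of $V$. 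Under (P1)--(P3) the operator $\Delta_{\mathbf{h}}(z) = z^{\mathbf{h}_{(0)}}\exp\!\big(\sum_{n\geq 1}\tfrac{\mathbf{h}_{(n)}}{-n}(-z)^{-n}\big)$ is well defined on $V$, and $(M, Y_M(\Delta_{\mathbf{h}}(z)\cdot, z))$ is a weak $g\sigma_{\mathbf{h}}$-twisted module whenever $(M,Y_M)$ is a weak $g$-twisted module.

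The first step is to verify (P1)--(P3) for $\mathbf{h}^s = -\tfrac{s}{2}h$ in $L_k(\mathfrak{sl}_2)$. Property (P1) holds because $h_{(-1)}\mathbf{1}$ is primary of weight one for the Sugawara conformal vector, and $\mathbf{h}^s$ is a scalar multiple of it. For (P2), the affine current relations give $\mathbf{h}^s_{(0)}\mathbf{h}^s = [\mathbf{h}^s,\mathbf{h}^s]_{(-1)}\mathbf{1} = 0$, $\mathbf{h}^s_{(1)}\mathbf{h}^s = \langle \mathbf{h}^s,\mathbf{h}^s\rangle\,k\,\mathbf{1} = \tfrac{s^2}{2}k\,\mathbf{1}$ (using $\langle h,h\rangle = 2$), and $\mathbf{h}^s_{(n)}\mathbf{h}^s = 0$ for $n \geq 2$, so (P2) holds with $\gamma = \tfrac{s^2}{2}k$. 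For (P3), $\mathbf{h}^s_{(0)} = -\tfrac{s}{2}h_{(0)}$ acts semisimply since $h_{(0)}$ is the Cartan operator, and its eigenvalues on $L_k(\mathfrak{sl}_2)$ lie in $-\tfrac{s}{2}\cdot 2\mathbb{Z} = s\mathbb{Z}$; for $s \in \mathbb{Q}$ these are rational, whence $\sigma_{\mathbf{h}^s}$ has finite order. Since $\mathbf{h}^s_{(0)}$ lies in the Cartan, $\sigma_{\mathbf{h}^s}$ is inner, so that $g\sigma_{\mathbf{h}^s}$ is again a finite-order automorphism whenever $g$ commutes with $\sigma_{\mathbf{h}^s}$, as holds for the Cartan-type $g$ relevant here.

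It then remains to invoke Li's theorem, whose content is the verification of the twisted Jacobi identity for $Y_M^s(\cdot, z) = Y_M(\Delta_s(z)\cdot, z)$. The mechanism I would make explicit is the following: for a $\mathbf{h}^s_{(0)}$-eigenvector $u \in V^r$ of eigenvalue $\beta$, the factor $z^{\mathbf{h}^s_{(0)}}$ in $\Delta_s(z)$ multiplies $Y_M(u,z)$ by $z^{\beta}$, while the exponential built from the positive modes $\mathbf{h}^s_{(n)}$ ($n\geq 1$) contributes only integer shifts of the powers of $z$; hence $Y_M^s(u,z)$ carries powers $z^{-m-1}$ with $m \in (r/T - \beta) + \mathbb{Z}$. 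Since $g\sigma_{\mathbf{h}^s}$ acts on $u$ with eigenvalue $e^{-2\pi i r/T}e^{2\pi i \beta} = e^{-2\pi i (r/T - \beta)}$, these are precisely the powers prescribed for a $g\sigma_{\mathbf{h}^s}$-twisted field. Substituting Li's conjugation identity for $\Delta_s(z)$ into the $g$-twisted Jacobi identity for $(M,Y_M)$ then promotes the monodromy factor $\big(\tfrac{z_1-z_0}{z_2}\big)^{-r/T}$ to $\big(\tfrac{z_1-z_0}{z_2}\big)^{-(r/T - \beta)}$, which is exactly the factor required by the $g\sigma_{\mathbf{h}^s}$-twisted Jacobi identity.

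The main obstacle will be the bookkeeping showing that the exponential part of $\Delta_s(z)$, coming from the positive modes of $\mathbf{h}^s$, is compatible with the pole structure of the Jacobi identity, i.e.\ that the $\delta$-function and the $(z_1-z_0)$-type singular factors survive the $\Delta_s$-dressing intact. Because $\mathbf{h}^s$ is an abelian current governed by the single central scalar $\gamma = \tfrac{s^2}{2}k$ of (P2), these corrections are controlled by the formal-calculus identities already established in \cite{li1996local}, so no analytic input beyond Proposition 5.4 there is needed and the verification reduces to the exponent-matching above.
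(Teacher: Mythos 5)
Your proposal is correct and takes essentially the same route as the paper, which gives no independent proof but simply cites this as \cite[Proposition 5.4]{li1996local}; your reduction to Li's $\Delta$-operator theorem together with the verification of the primary, abelian-current, and semisimple-zero-mode hypotheses for $\mathbf{h}^s = -\tfrac{s}{2}h$ is exactly the intended justification. The only point worth making explicit is the standing assumption that $g$ fixes $\mathbf{h}^s$ (equivalently, commutes with $\sigma_{\mathbf{h}^s}$), which you correctly flag and which is implicit in the paper's setup.
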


The character of any simple $\sigma_{\mathbf{h}^s}$-twisted module of $L_k({\mathfrak{sl}_2})$ at admissible level satisfies the twisted MLDE. Let us consider the case $s = -\frac{1}{2}$ and $k = -\frac{4}{3}$. As shown in Example \ref{twistedzhualgebra}, there are three irreducible twisted modules. By calculation, we have two independent $q$-series of characters among them, and they satisfy the following twisted MLDE:
\[
\left( \partial^2 - \frac{1}{96} \Theta_{1,1}(\tau) \right) f(\tau) = 0,
\]
where $\Theta_{1,1} = 2 \vartheta_{00}(\tau) \vartheta_{10}(\tau)$, and
\[
\vartheta_{00}(\tau) := \sum_{n \in \mathbb{Z}} q^{\frac{n^2}{2}}, \quad
\vartheta_{10}(\tau) := \sum_{n \in \mathbb{Z}} q^{\frac{1}{2} (n + \frac{1}{2})^2}.
\]

\noindent Further examples of twisted MLDEs satisfied by characters of modules over quasi-lisse vertex algebras are in \cite{LLY2023spectral}. See also \cite{zheng2022surface} for more examples arising from Class $\mathcal{S}$ theories of type $A_1$.
\end{example}

\subsection{Example of affine vertex operator algebras} 
This section considers the highest weight modules and their contragredient modules for $V = L_k(\mathfrak{sl}_2)$ at an admissible level. It was shown in \cite{arakawa2015associated} that $V$ is quasi-lisse. As always, let $\{e, f, h\}$ be the standard generators of $\mathfrak{sl}_2$, and note that $R_V = S(\mathfrak{sl}_2)/I$, where $I$ is the image of the maximal submodule of $V$. We shall show that these modules satisfy the convergence property discussed above.

\begin{proposition}\label{finitenessex}
Let $W_i$ for $i = 1, \dots, m$ ($m\in \Z_+$), be highest weight $L_k(\mathfrak{sl}_2)$-modules or their contragredient modules. Then
\begin{align}\label{Fcn1}
   R_{W_1} \otimes_{R_V} \cdots \otimes_{R_V} R_{W_m} \big/ \left\{ R_V, R_{W_1} \otimes_{R_V} \cdots \otimes_{R_V} R_{W_m} \right\}
\end{align}
is finite-dimensional. 
\end{proposition}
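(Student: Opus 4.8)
The plan is to deduce the statement from the Etingof--Schedler finiteness theorem (Theorem~\ref{leaves}) and its vertex-algebraic reformulation (Corollary~\ref{leaves'}), after circumventing the fact that these cannot be applied on the nose. Recall from \cite{arakawa2015associated} that $V=L_k(\mathfrak{sl}_2)$ is quasi-lisse and that $(X_V)_{\mathrm{red}}$ is the nilpotent cone $\mathcal{N}$ of $\mathfrak{sl}_2$, so $\operatorname{Specm}(R_V)_{\mathrm{red}}=\mathcal{N}$ has exactly two symplectic leaves: the origin $\{0\}$ and the regular nilpotent orbit $\mathcal{N}\setminus\{0\}$. Set $N:=R_{W_1}\otimes_{R_V}\cdots\otimes_{R_V}R_{W_m}$. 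If every $W_i$ were finitely strongly generated, then $N$ would be finitely generated over $R_V$ and we would be done by Corollary~\ref{leaves'}. This is exactly what fails here: a highest weight module whose top $\mathfrak{sl}_2$-module is infinite-dimensional (e.g.\ the modules with Dynkin labels $\pm\tfrac{2}{3}$ for $L_{-4/3}(\mathfrak{sl}_2)$ of Example~\ref{twistedzhualgebra}) has $R_{W_i}$ generated over $R_V$ by the whole, infinite-dimensional, image of its top, since $C_2(W_i)$ meets no conformal-weight space of minimal degree. Hence $N$ need not be coherent, and Theorem~\ref{leaves}(1) does not apply directly.

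To get around this I would write $N$ as a filtered colimit $N=\varinjlim_\alpha N_\alpha$ of its finitely generated $R_V$-submodules. Since taking the Poisson quotient $M\mapsto M/\{R_V,M\}$ is a cokernel and therefore commutes with filtered colimits, one has $N/\{R_V,N\}=\varinjlim_\alpha\bigl(N_\alpha/\{R_V,N_\alpha\}\bigr)$. Each $N_\alpha$ is coherent with $\operatorname{Supp}(N_\alpha)\subseteq\operatorname{Supp}(N)=\bigcap_i\operatorname{Supp}(R_{W_i})\subseteq\mathcal{N}$, which has finitely many symplectic leaves, so $\dim\bigl(N_\alpha/\{R_V,N_\alpha\}\bigr)<\infty$ by Theorem~\ref{leaves}(1). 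Because a filtered colimit of vector spaces of dimension at most a fixed constant $C$ again has dimension at most $C$, it now suffices to bound $\dim\bigl(N_\alpha/\{R_V,N_\alpha\}\bigr)$ \emph{uniformly} in $\alpha$. Such a uniform bound should come from the two leaves separately, via the standard localization of Poisson homology: on the open leaf $\mathcal{N}\setminus\{0\}$ the Poisson-module structure makes $N$ a flat connection whose contribution is controlled by its generic rank, while the contribution of $\{0\}$ is governed by the local data of $N$ at the vertex of the cone. As a consistency check, the minimal-degree part of $N/\{R_V,N\}$ is the diagonal $\mathfrak{g}$-coinvariant space of the tensor product of the tops $\bigotimes_i M_i(0)$, which is finite-dimensional: iterating Shapiro's lemma reduces $H_0(\mathfrak{sl}_2,\bigotimes_i M_i(0))$ to a $\mathfrak{b}^{\pm}$-homology in which the relevant weight spaces are finite-dimensional.

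The main obstacle is precisely the uniform bound, and its decisive ingredient is the finiteness of the generic rank of each $R_{W_i}$ along the regular nilpotent orbit (equivalently, finite-dimensionality of the fibre $R_{W_i}\otimes_{R_V}\mathbb{C}_p$ at a regular nilpotent $p$), together with the boundedness of the local length of $N$ at the origin. For $\mathfrak{sl}_2$ both can be read off from the explicit presentation $R_V=S(\mathfrak{sl}_2)/I$ of Example~\ref{aff sl} and Example~\ref{ex2.11} and from the singular-vector relations in $I$, which cut the infinitely many top generators of $R_{W_i}$ down to finite rank once one localizes at a regular nilpotent. This is the step that genuinely uses admissibility and the low rank of $\mathfrak{sl}_2$, and it is the reason the hypotheses of the proposition are restricted to highest weight modules and their contragredients for $L_k(\mathfrak{sl}_2)$.
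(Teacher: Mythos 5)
Your proposal does not follow the paper's argument and, as written, contains genuine gaps. The paper's proof is a direct computation: writing $v=w_1\otimes\cdots\otimes w_m$ for the tensor product of highest weight vectors, it manipulates explicit brackets such as $\{f,e^sf^lh^tv\}$ and $\{e,e^{s-1}f^{l+1}h^tv\}$, uses the nilpotency of $\overline{\omega}=4ef+h^2$ in $R_V$ (so that $\overline{\omega}^{p+1}=0$ for some $p$) to show that every monomial of degree $d$ with $d-2>2p$ applied to the vectors $f^{i_1}w_1\otimes\cdots\otimes f^{i_m}w_m$ lies in $\{R_V,R_{W_1}\otimes\cdots\otimes R_{W_m}\}$, and concludes by an induction in lexicographic order together with an $h$-weight argument. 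No appeal to Etingof--Schedler is made at this point, precisely because, as you correctly observe, $N$ need not be finitely generated over $R_V$ here.

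There are two concrete gaps in your route. First, the filtered-colimit reduction is not sound: a finitely generated $R_V$-submodule $N_\alpha\subset N$ is not a Poisson submodule, so $\{R_V,N_\alpha\}\not\subset N_\alpha$ in general; the expression $N_\alpha/\{R_V,N_\alpha\}$ is not a quotient of $N_\alpha$ by a subspace of itself, and Theorem~\ref{leaves}(1) --- which requires a coherent sheaf of \emph{Poisson} modules --- does not apply to $N_\alpha$. Conversely, the Poisson submodule generated by finitely many elements need not be finitely generated as an $R_V$-module, so one cannot exhaust $N$ by coherent Poisson submodules either. Second, and as you yourself acknowledge, the decisive step is a \emph{uniform} bound on the dimensions of the quotients, and this is only gestured at (``should come from the two leaves separately,'' ``can be read off from the singular-vector relations'') without being carried out; the assertion that finite generic rank on the open leaf plus finite local length at the origin yields an effective bound on the zeroth Poisson homology is itself nontrivial and does not follow formally from Etingof--Schedler, whose proof is nonconstructive. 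The content of the paper's proof is exactly the explicit $\mathfrak{sl}_2$ computation that your sketch defers, so the proposal cannot be accepted as a proof in its current form.
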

\begin{proof}
We prove the case where $W_i$, $i=1,\ldots,m$ are highest weight modules that are infinitely strongly generated over $V$. The remaining cases can be proved similarly.

In the following, define
\[
u^n =
\begin{cases}
1 & \text{if } n=0, \\
0 & \text{if } n<0,
\end{cases}
\]
where $u = e, f,$ or $h$.

Let $w_i$, $i=1,\ldots,m$ be the highest weight vectors in $W_i$, and set $v := w_1 \otimes \cdots \otimes w_m$. 

First, for $n \in \mathbb{Z}_+$, one has
\[
\{f, f^{n-1} h v\} = 2f^n v + f^{n-1} h(f.v),
\]
where $f.v$ denotes $\{f, v\}$.

Using the relations
\begin{align}
& \{e, h f^{n-1}(f^2.v)\} = -2e f^{n-1}(f^2.v) + h^2 f^{n-2}(f^2.v) + (2k-2)h f^{n-1}(f.v), \label{for1} \\
& \{f, e f^{n-1}(f.v)\} = -h f^{n-1}(f.v) + e f^{n-1}(f^2.v), \label{for2} \\
& \{f, h^2 f^{n-2}(f.v)\} = 2h f^{n-1}(f.v) + h^2 f^{n-2}(f^2.v), \label{for3}
\end{align}
we obtain
\[
-2(\ref{for2}) + (\ref{for3}) - (\ref{for1}) = (6 - 2k) h f^{n-1}(f.v) \in \{R_V, R_{W_1} \otimes \cdots \otimes R_{W_m}\}.
\]
Moreover,
\[
\{h, h f^{n-1}(f.v)\} = (-2n + k) h f^{n-1}(f.v),
\]
so in particular,
\[
h f^{n-1}(f.v) \in \{R_V, R_{W_1} \otimes \cdots \otimes R_{W_m}\}.
\]

Now, for arbitrary $s \in \Z_+ $ and $l,t \in \mathbb{N}$, we have
\begin{align}
\{f, e^s f^l h^t v\} &= -s e^{s-1} f^l h^{t+1} v + 2t e^s f^{l+1} h^{t-1} v + e^s f^l h^t (f.v), \label{for4}
\end{align}
and a sequence of recursive relations
\begin{align}\label{for5}
\begin{split}
\{e, e^{s-1} f^{l+1} h^t v\} &= (l+1) e^{s-1} f^l h^{t+1} v - 2t e^s f^{l+1} h^{t-1} v, \\
\{e, e^s f^{l+2} h^{t-2} v\} &= (l+2) e^s f^{l+1} h^{t-1} v - 2(t-2) e^{s+1} f^{l+2} h^{t-3} v, \\
&\ \vdots \\
\{e, e^{s+n-1} f^{l+n+1} h^{t-2n} v\}
&= (l+n+1) e^{s+n-1} f^{l+n} h^{t-2n+1} v \\
&\quad - 2(t-2n) e^{s+n} f^{l+n+1} h^{t-2n-1} v.
\end{split}
\end{align}

If $t = 2n$, then from (\ref{for5}) we get
\[
e^{s-1} f^l h^{t+1} v,\ e^s f^{l+1} h^{t-1} v \in \{R_V, R_{W_1} \otimes \cdots \otimes R_{W_m}\},
\]
which further implies
\begin{align}\label{for6}
e^s f^l h^t (f.v) \in \{R_V, R_{W_1} \otimes \cdots \otimes R_{W_m}\}.
\end{align}

When $t=2n+1$, the final relation in (\ref{for5}) becomes
\[
\{e, e^{s+n-1} f^{l+n+1} h\} = (l+n+1) e^{s+n-1} f^{l+n} h^2 - 2e^{s+n} f^{l+n+1}.
\]
From \cite[Proposition 3.3]{arakawa2018quasi}, $\overline{\omega} = 4ef + h^2$ is nilpotent in $R_V$. Suppose $\overline{\omega}^{p+1} = 0$ in $R_V$, then
\[
\{e^p, \overline{\omega}^{p+1}\} = e^p (ef + \text{const} \cdot h^2),
\quad \text{and} \quad
f^p (ef + \text{const} \cdot h^2) = 0 \text{ in } R_V.
\]
Thus, for $s + l + t - 2 > 2p$, one also has (\ref{for6}).

For the case $s = 0$, we proceed similarly. When $t \geq 2$,
\[
\{f, f^l h^t v\} = 2t f^{l+1} h^{t-1} v + f^l h^t(f.v),
\]
and applying recursive relations
\begin{align}\label{for7}
\begin{split}
\{e, f^{l+2} h^{t-2} v\} &= (l+2) f^{l+1} h^{t-1} v - 2(t-2) f^{l+2} e h^{t-3} v, \\
\{e, f^{l+3} e h^{t-4} v\} &= (l+3) f^{l+2} e h^{t-3} v - 2(t-4) f^{l+3} e^2 h^{t-5} v, \\
&\ \vdots
\end{split}
\end{align}
For $t = 2n$, we get
\begin{align}\label{for8}
f^l h^t (f.v) \in \{R_V, R_{W_1} \otimes \cdots \otimes R_{W_m}\}.
\end{align}
When $t=1$, this was already shown. When $t=0$, (\ref{for8}) follows from
\[
\{f, f^l v\} = f^l(f.v).
\]

Let ${\rm mon}^d$ be any monomial of degree $d$ in $R_V$. Then, for $d - 2 > 2p$, we have
\[
{\rm mon}^d(f.v) \in \{R_V, R_{W_1} \otimes \cdots \otimes R_{W_m}\}.
\]
By induction,
\[
{\rm mon}^d(f^s.v) \in \{R_V, R_{W_1} \otimes \cdots \otimes R_{W_m}\} \quad \text{for all } s \geq 1.
\]
Therefore, using the bracket
\[
\{e, {\rm mon}^d(f^s.v)\} \in \{e, {\rm mon}^d\}(f^s.v) + \text{const} \cdot {\rm mon}^d(f^{s-1}.v),
\]
we obtain
\[
{\rm mon}^d(f^{s-1}.v) \in \{R_V, R_{W_1} \otimes \cdots \otimes R_{W_m}\}.
\]
Hence
\[
{\rm mon}^d v \in \{R_V, R_{W_1} \otimes \cdots \otimes R_{W_m}\}.
\]

Let $v_{(i_1, \ldots, i_m)} := f^{i_1} w_1 \otimes \cdots \otimes f^{i_m} w_m$. We apply the above process to all such vectors in lexicographic order (as described), and conclude
\[
{\rm mon}^d v_{(k_1, \ldots, k_m)} \in \{R_V, R_{W_1} \otimes \cdots \otimes R_{W_m}\}, \quad \text{for } d - 2 > 2p.
\]

Finally, this shows that the quotient
\[
R_{W_1} \otimes \cdots \otimes R_{W_m} / \{R_V, R_{W_1} \otimes \cdots \otimes R_{W_m}\}
\]
is spanned by vectors of the form ${\rm mon}^d v_{(k_1,\ldots,k_m)}$ with $d \leq 2p + 2$. If the eigenvalue of the operator \( \{ h, \cdot \} \) acting on \( w_1 \otimes \cdots \otimes w_m \) lies in \( \mathbb{Z} \), then only finitely many such vectors are annihilated by \( \{ h, \cdot \} \). Denote their number by $N$. Hence the dimension of the quotient is at most $N$.
\end{proof}

\appendix
\section{Coordinate transformation}
Let $V$ be a vertex operator (super)algebra and $M$ be its module. 
Let $\mathcal{O}$ be the complete topological $\mathbb{C}$-algebra $\mathbb{C}[[z]]$. A coordinate transformation of the infinitesimal disk $D=\operatorname{Spec}(\mathcal{O})$ is given by $\rho \in \operatorname{Aut}(\mathcal{O})$, which is uniquely determined by $\rho(z) \in z\mathbb{C}[[z]]$. We may write
\[
\rho(z)=\rho'(0)\cdot \exp\left(\sum_{n\in \Z_+}c_{n}z^{n+1}\partial_{z}\right)(z).
\]
Define
\[
\mathcal{U}(\rho)=\rho'(0)^{L_{(0)}}\exp\left(\sum_{n\in \Z_+} c_{n}L_{(n)}\right).
\]
Note that $\mathcal{U}(\rho(z)) \in \operatorname{End}(M^{\leq n})\otimes \mathcal{O}$, where $M^{\leq n} = \bigoplus_{i\leq n} M{(i)}$. The operator $\mathcal{U}(\rho(z))$ gives a group representation of $\operatorname{Aut}(\mathcal{O})$ on $M$.

If we change the coordinate of the small disk from $z$ to $\alpha(z)\in \mathcal{O}$, then by \cite{huang1997two}, we have the coordinate transformation formula:
\[
\mathcal{U}(\alpha(z))Y_{M}(v,z)\mathcal{U}(\alpha(z))^{-1} = Y_M((\mathcal{U}(\varrho(\alpha|\mathbf{1}))_{z})v,\alpha(z)),
\]
where $\varrho(\alpha|\mathbf{1})_{z}(t) = \alpha(z+t) - \alpha(z)$.

Analytically, we may interpret $\mathcal{U}(\varrho(\alpha|\mathbf{1})_{z})$ as a coordinate transformation $\rho$ from $\xi - z$ to $\alpha - \alpha(z)$ at a point $z$ in a small open set $U$ of a curve $C$, where $\xi$ is the standard coordinate on this open set. That is,
\[
\mathcal{U}(\varrho(\alpha|\mathbf{1})_{z}): M^{\leq n} \otimes \mathcal{O}_{U} \rightarrow M^{\leq n} \otimes \mathcal{O}_{U}.
\]

Therefore, we may write $\mathcal{U}(\varrho(\alpha|\mathbf{1})_{z}) = \mathcal{U}_{\varrho}(\alpha)\mathcal{U}_{\varrho}(\xi)$, and define $\mathcal{U}_{\varrho}(\alpha): \mathcal{M} \otimes \mathcal{O}_{U} \rightarrow M \otimes \mathcal{O}_{U}$ as a local trivialization of the sheaf with values in $\mathcal{M}$. In particular, we have $\mathcal{U}_{\varrho}(\alpha)|_{x} = \mathcal{U}(\alpha - x)$. 

Let
\[
\mathcal{U}(\varrho(\alpha|\beta)_{x}) = \mathcal{U}_{\varrho}(\alpha) \mathcal{U}_{\varrho}(\beta)^{-1}: M \otimes \mathcal{O}_U \rightarrow M \otimes \mathcal{O}_U,
\]
where $\varrho(\alpha|\beta)_{x}$ is the coordinate transformation from $\alpha - \alpha(x)$ to $\beta - \beta(x)$. Since $\varrho(\alpha|\beta)$ satisfies the cocycle condition, so does $\mathcal{U}(\varrho(\alpha|\beta))$. The sheaf $\mathcal{M}_C$ is called the sheaf of VOA if $M = V$.

We also have the following formula for the change of variable:
\[
\operatorname{Res}_{w} Y(v,w) = \operatorname{Res}_{z} Y(v,\rho(z))\frac{d}{dz}\rho(z).
\]

\begin{example}\label{ctft}
There are two local coordinate descriptions of a torus. One is via annuli $\mathbb{C} \setminus \{0\} / \{w \sim w q^{n}\}$, where $q$ is a complex number. The other is via the parallelogram $\mathbb{C} / \{m\tau + n\}$, with $\operatorname{Im}(\tau) > 0$ and $m,n \in \mathbb{Z}$. These two are related by the coordinate transformation $\varphi(x) = e^{2\pi i x} - 1$. 

We compute
\[
\varrho(\varphi|\mathbf{1})_{z}(t) = e^{2\pi i z}(e^{2\pi i t} - 1).
\]
Let
\[
\frac{1}{2\pi i}(e^{2\pi i y} - 1) = \exp\left(-\sum_{j \in \mathbb{Z}_{+}} A_{j} y^{j+1} \frac{\partial}{\partial y}\right)y.
\]
In particular, a direct calculation gives
\begin{align}\label{cdtfms}
\begin{split}
A_{1} &= -\pi, \\
A_{2} &= -\frac{\pi^{2}}{3}.
\end{split}
\end{align}
We denote the operator $\sum_{j \in \mathbb{Z}_{+}} A_{j} L_{(j)}$ on $M$ by $L_{+}(A)$. Moreover,
\[
e^{2\pi i z} - 1 = (2\pi i)^{z \frac{\partial}{\partial z}} \exp\left(-\sum_{j \in \mathbb{Z}_{+}} A_{j} z^{j+1} \frac{\partial}{\partial z}\right) z.
\]
Therefore,
\[
\mathcal{U}(e^{2\pi i z} - 1) = (2\pi i)^{L_{(0)}} e^{-L_{+}(A)}.
\]

It is clear in this case that
\[
\mathcal{U}(\varrho(\varphi|\mathbf{1})_{z}(t)) = (2\pi i e^{2\pi i z})^{L_{(0)}} e^{-L_{+}(A)}.
\]
By the coordinate transformation formula, we obtain
\begin{align}\label{ccoor2}
  &(2\pi i)^{L_{(0)}} e^{-L_{+}(A)} Y_{M}(v,z) 
  \left((2\pi i)^{L_{(0)}} e^{-L_{+}(A)}\right)^{-1} \notag\\
  &\quad = Y\left((2\pi i e^{2\pi i z})^{L_{(0)}} e^{-L_{+}(A)} v,\, e^{2\pi i z} - 1\right).
\end{align}
which is equivalent to
\begin{align}\label{cccoor1}
\begin{split}
& (2\pi i)^{L_{(0)}} e^{-L_{+}(A)}\,
Y_{M}\left( \left((2\pi i)^{L_{(0)}} e^{-L_{+}(A)}\right)^{-1} v,\, z \right) \\
&\quad \cdot \left((2\pi i)^{L_{(0)}} e^{-L_{+}(A)}\right)^{-1}
= Y\left((e^{2\pi i z})^{L_{(0)}} v,\, e^{2\pi i z} - 1\right).
\end{split}
\end{align}

The LHS of (\ref{cccoor1}) is denoted by $Y[v,z]$, first introduced by Zhu. Meanwhile, Huang introduced the geometrically-modified vertex operator, which is the modification of the LHS of (\ref{ccoor2}). More precisely, with abuse of notation, define
\[
\mathcal{U}(x) = (2\pi i x)^{L_{(0)}} e^{-L_{+}(A)} \in \operatorname{End}M\{x\},
\]
where $x$ is any number or formal expression that makes sense. Then $Y(\mathcal{U}(x)v, x)$ is the geometrically-modified vertex operator.

Both $Y[v,z]$ and the geometrically-modified vertex operators are essential tools in the study of the modularity of genus-one correlation functions.

One can compute \cite[Lemma 1.1]{huang2005differential}
\begin{align}\label{virasoro}
\begin{split}
\mathcal{U}(1)\omega 
&= (2\pi i)^{L_{(0)}} e^{-L_{+}(A)} \omega \\
&= (2\pi i)^{L_{(0)}} e^{-L_{+}(A)} L_{(-2)} \mathbf{1} \\
&= (2\pi i)^{L_{(0)}} L_{(-2)} \mathbf{1} - (2\pi i)^{L_{(0)}} A_2 L_{(2)} L_{(-2)} \mathbf{1} \\
&= (2\pi i)^2 L_{(-2)} \mathbf{1} + \frac{\pi^2 c}{6} \mathbf{1} \\
&= (2\pi i)^2 \left(\omega - \frac{c}{24} \mathbf{1}\right),
\end{split}
\end{align}
where the fourth identity uses (\ref{cdtfms}). Thus, when considering correlation functions on a parallelogram torus, one must add a factor of $q_{\tau}^{-c/24}$.
\end{example}
Now we consider the twisted case. Let $\sigma$ be an automorphism of the disk $D$ of order $N$.

Define
\begin{align*}
    \operatorname{Aut}_{N}(D) &= \left\{ t \in \mathbb{C}[[z]] \,\middle|\, \sigma t = e^{2\pi i/N}t \right\}, \\
    \operatorname{Aut}_{N}(\mathcal{O}) &= \left\{ \rho(z^{\frac{1}{N}}) = \sum_{n \in \frac{1}{N} + \mathbb{Z},\, n > 0} c_n z^n \,\middle|\, c_{\frac{1}{N}} \neq 0 \right\}.
\end{align*}

It is clear that $\operatorname{Aut}_{N}(\mathcal{O})$ acts on $\operatorname{Aut}_{N}(D)$, and that there is a homomorphism from $\operatorname{Aut}_{N}(\mathcal{O})$ to $\operatorname{Aut}(\mathcal{O})$ given by $\rho(z) \mapsto \rho(z)^N$.

For any $\rho \in \operatorname{Aut}_{N}(\mathcal{O})$, we write
\[
\rho(z^{\frac{1}{N}}) = \exp\left( \sum_{k \in \mathbb{Z},\, k \geq 0} v_k z^{k + \frac{1}{N}} \partial_{z^{\frac{1}{N}}} \right) z^{\frac{1}{N}}.
\]

Define
\[
\mathcal{U}^{\sigma}(\rho) := \exp\left( \sum_{k \in \mathbb{Z},\, k \geq 0} v_k N L_{(k)} \right).
\]
This gives a natural group representation of $\operatorname{Aut}_{N}(\mathcal{O})$ on a $g$-twisted module $M^{g}$, where $g$ has order $N$. Then, by \cite{frenkel2004twisted}, one has the coordinate transformation formula
\[
\mathcal{U}(\rho) Y_{M^{g}}(u, z) \mathcal{U}(\rho)^{-1} = Y_{M^{g}}\left( \mathcal{U}\left( \varrho(\rho|\mathbf{1})_{z} \right) u, \rho(z) \right).
\]

Let $C$ be a smooth projective curve, and let $H \subset \operatorname{Aut}(C)$ be a finite subgroup of automorphisms of order $N$, which also acts on $V$ as a conformal automorphism.

One can induce an $H$-equivariant structure on the sheaf of VOAs $\mathcal{V}_{C}$ by defining:
\[
h(p, (u, z)) := (h(p), (u, z \circ h^{-1})),
\]
where $p \in C$ with local coordinate $z$, and $u \in V$.

Let $\mathring{C}$ be the complement of the ramification points on $C$ of the quotient map $v: C \to C/H$. Then $v$ induces a principal $H$-bundle $v: \mathring{C} \to \mathring{C}/H := \mathring{X}$.

Fix a point $p \in v^{-1}(x)$ for some $x \in \mathring{X}$. The point $p$ has a cyclic stabilizer subgroup $h_p \in H$ of order $N$. The local coordinate on the small disk $D_p$ around $p$ is a special coordinate $t \in \mathbb{C}[[z]]$ satisfying $h_p(t) = e^{2\pi i/N} t$.

We can attach to $D_p$ a $h_p$-twisted $V$-module $M^{h_p}$. Then, there exists a local trivialization:
\[
\mathcal{U}^{\sigma}_{\varrho}(\rho(z^{\frac{1}{N}})): \mathcal{M}^{h_p}_{D_p} \to M^{h_p} \otimes D_p.
\]

Since $H$ acts transitively on $v^{-1}(x)$, we attach to every open neighborhood of $h(p)$ the $h h_p h^{-1}$-twisted module $h \circ M^{h_p}$.

\section{Modular forms}
In this and the next section, we review some basics of modular forms and fix the notations. For more details, readers can refer to   \cite{zhu1996modular, mason2010vertex, zagier2008elliptic}.

The {\em Eisenstein series} $G_{2k}(\tau)$ ($k \in \mathbb{Z}_+$) are defined by
\begin{align}
\begin{split}
G_{2k}(\tau) &:= \sum_{(m,n)\in \mathbb{Z}^2 \setminus \{(0,0)\}} \frac{1}{(m+n\tau)^{2k}} \\
             &= (2\pi i )^{2k} \left( -\frac{B_{2k}}{(2k)!} + \frac{2}{(2k-1)!} \sum_{n\in \mathbb{Z}_+} \frac{n^{2k-1} q^{n}}{1 - q^n} \right),
\end{split}
\end{align}
where $B_{k}$ is the $k$th Bernoulli number. Denote $\mathbb{E}_{2k}(\tau) := \frac{G_{2k}(\tau)}{(2\pi i)^{2k}}$.

For $k \geq 2$, $G_{2k}(\tau)$ is a modular form of weight $2k$ for the modular group $SL_2(\mathbb{Z})$, i.e.
\[
G_{2k}\left( \frac{a\tau + b}{c\tau + d} \right) = (c\tau + d)^{2k} G_{2k}(\tau).
\]
On the other hand, $G_2(\tau)$ is a quasi-modular form that obeys the transformation law
\[
G_2\left( \frac{a\tau + b}{c\tau + d} \right) = (c\tau + d)^2 G_2(\tau) - 2\pi i c (c\tau + d).
\]

One can renormalize $G_{2k}(\tau)$ ($k \in \mathbb{Z}_+$) so that its constant term is $1$, and call them $E_{2k}(\tau)$. The $q$-series expansion of $E_{2k}(\tau)$ is
\begin{align}
E_{2k}(\tau) = 1 - \frac{4k}{B_{2k}} \sum_{n=1}^{\infty} \frac{n^{2k-1} q^n}{1 - q^n},
\end{align}
where $q = e^{2\pi i \tau}$.

The polynomial algebra generated by $E_2(\tau)$, $E_4(\tau)$, and $E_6(\tau)$ over $\mathbb{C}$ is called the algebra of {\em quasi-modular forms}. The ring of all holomorphic modular forms on $\mathbb{H}$ is the graded algebra
\[
A = \bigoplus_{k} A_k = \mathbb{C}[E_4(\tau), E_6(\tau)],
\]
where $A_k$ is the subspace of $A$ spanned by all modular forms of weight $k$.

Given a function $f$ on the upper half-plane, we denote by $\tilde{f}(q)$ its $q$-expansion.

Given an element $f \in A_k$, the {\em Serre derivation} of weight $k$, denoted by $\partial_k$, is defined by
\[
\partial_k f := f' + k \mathbb{E}_2(\tau) f.
\]
The Serre derivation preserves holomorphy and modularity: $\partial_k f \in A_{k+2}$. The $i$th iterated Serre derivative of weight $k$ is defined by
\[
\partial_k^{i} := \partial_{k+2(i-1)} \circ \cdots \circ \partial_{k+2} \circ \partial_k, \quad \text{with } \partial_k^0 := \operatorname{id}.
\]

A {\em modular linear differential equation (MLDE)} of weight $k$ is a linear differential equation of the form
\[
\partial_k^n f + \sum_{j=0}^{n-1} g_j \, \partial_k^j f = 0,
\]
where $g_j \in A$ and the weight of $g_j$ is $2n - 2j$ for each $0 \leq j \leq n-1$. The solutions of an MLDE lie in $A$.

Since we will work with $q$-series later, we define the {\em formal Serre derivation} $\partial_k$ and the {\em formal iterated Serre derivation} $\partial^i$ of weight $k$ by
\begin{align}\label{serrederi}
\begin{split}
\partial_k f(q) &= q \frac{d}{dq} f(q) + k \tilde{\mathbb{E}}_2(q) f(q), \\
\partial^i f(q) &= \partial_{k+2i-2} \left( \partial^{i-1} f(q) \right),
\end{split}
\end{align}
where $f \in A_k$ and $\partial^0 := \operatorname{id}$.

\section{Twisted modular forms}
The main references for this section are \cite{dong2000modular, mason2008torus}. Let $(\theta,\phi)$ be a pair of points in the unit disk on the complex plane with $\phi=e^{2\pi i\lambda}$.

For each integer $k=1,2,\cdots$, one defines the {\em twisted Weierstrass functions} $P_{k}\genfrac[]{0pt}{0}{\theta}{\phi}$ on $\mathbb{C}\times \mathbb{H}$ as follows:
\begin{align}\label{pqfunction1}
  P_{k}\genfrac[]{0pt}{0}{\theta}{\phi}(z,\tau)=\frac{(-2\pi i)^{k}}{(k-1)!}\sum_{n\in \lambda+\mathbb{Z}}'\frac{n^{k-1}q_{z}^{n}}{1-\theta^{-1} q_{\tau}^{n}},
\end{align}
where the sign $\sum'$ means we omit the term $n=0$ if $(\theta,\phi)=(1,1)$. The function $P_{k}\genfrac[]{0pt}{0}{\theta}{\phi}(z,\tau)$ converges uniformly and absolutely on compact subsets of the region $|q_{\tau}|<|q_{z}|<1$. Note
\begin{align}\label{pqfunction6}
    P_{k}\genfrac[]{0pt}{0}{\theta}{\phi}(z,\tau)=\frac{(-1)^{k-1}}{(k-1)!}\frac{\partial^{k-1}}{\partial z^{k-1}}P_{1}\genfrac[]{0pt}{0}{\theta}{\phi}(z,\tau).
\end{align}

Next, we introduce the {\em twisted $P$-type function}
\begin{align}\label{pqfunction3}
  P_{1}^{\theta,\phi}(z,\tau)=\begin{cases}
               2\pi i \sum_{n\in \mathbb{Z},\, n\neq 0}\frac{q_{z}^{n}}{1-\theta^{-1} q_{\tau}^{n}}   & \phi=1,\\
               2\pi i \sum_{n\in \lambda+\mathbb{Z}}\frac{q_{z}^{n}}{1-\theta^{-1} q_{\tau}^{n}}      & \phi\neq 1.
            \end{cases}
\end{align}

\begin{remark}\label{pqfunction5}
Note that $-P_{1}\genfrac[]{0pt}{0}{\theta}{\phi}(z,\tau)= P_{1}^{\theta,\phi}(z,\tau)$ except when $\phi=1,\ \theta\neq 1$. In that case,
\begin{align}
  -P_{1}\genfrac[]{0pt}{0}{\theta}{\phi}(z,\tau)= P_{1}^{\theta,\phi}(z,\tau)+2\pi i \frac{1}{1-\theta^{-1}}.
\end{align}
\end{remark}

The {\em twisted Eisenstein series} for $n\geq 1$ is defined by
\begin{align}
    \begin{split}
       G_{n}\genfrac[]{0pt}{0}{\theta}{\phi}(\tau)=(2\pi i )^n &\left( -\frac{B_{n}(\lambda)}{n!} + \frac{1}{(n-1)!} \sum_{r\geq 0}' \frac{(r+\lambda)^{n-1}\theta^{-1}q_{\tau}^{r+\lambda}}{1-\theta^{-1}q_{\tau}^{r+\lambda}} \right. \\
       &\left. + \frac{(-1)^{n}}{(n-1)!} \sum_{r\geq 1} \frac{(r-\lambda)^{n-1}\theta q_{\tau}^{r-\lambda}}{1-\theta q_{\tau}^{r-\lambda}} \right),
    \end{split}
\end{align}
where $\sum'$ omits $r=0$ if $(\theta, \phi)=(1,1)$, and $B_n(\lambda)$ is the $n$th Bernoulli polynomial. In particular,
\begin{align}\label{pqfunction3}
G_{n}\genfrac[]{0pt}{0}{1}{1}(\tau)=\begin{cases}
              G_{2k}(\tau)   & n=2k,\ k\geq 1, \\
              \pi i\,\delta_{n,1} & n=2k-1,\ k\geq 1.
            \end{cases}
\end{align}

\begin{lemma}[\cite{mason2008torus}]\label{Mason}
Suppose that $|q_{\tau}|<|q_{z}|<1$. Then
\begin{align}\label{pqfunction4}
\begin{split}
\displaystyle P_{m}\genfrac[]{0pt}{0}{\theta}{\phi}(z,\tau)=\frac{1}{z^{m}}+(-1)^{m} \sum_{n\geq m} \binom{n-1}{m-1} G_{n}\genfrac[]{0pt}{0}{\theta}{\phi}(\tau)\, z^{n-m}.
\end{split}
\end{align}
For $(\theta,\phi)\neq(1,1)$, the function $P_{k}\genfrac[]{0pt}{0}{\theta}{\phi}(z,\tau)$ is periodic in $z$ with periods $2\pi \tau$ and $2\pi i$, with multipliers $\theta$ and $\phi$, respectively.
\end{lemma}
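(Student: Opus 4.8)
The plan is to reduce the whole statement to the case $m=1$ and then propagate it by the differentiation relation \eqref{pqfunction6}. Once the Laurent expansion
\[
P_{1}\genfrac[]{0pt}{0}{\theta}{\phi}(z,\tau)=\frac{1}{z}-\sum_{n\geq 1}G_{n}\genfrac[]{0pt}{0}{\theta}{\phi}(\tau)\,z^{n-1}
\]
is established, applying $\frac{(-1)^{m-1}}{(m-1)!}\partial_z^{m-1}$ term by term (the series converges locally uniformly in $z$ away from $z=0$, so termwise differentiation is legitimate) sends $z^{-1}$ to $(-1)^{m-1}(m-1)!\,z^{-m}$ and each $z^{n-1}$ to $\frac{(n-1)!}{(n-m)!}z^{n-m}$ for $n\geq m$; collecting the signs and factorials reproduces exactly $\frac{1}{z^{m}}+(-1)^{m}\sum_{n\geq m}\binom{n-1}{m-1}G_{n}\genfrac[]{0pt}{0}{\theta}{\phi}(\tau)z^{n-m}$. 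Thus the expansion \eqref{pqfunction4} follows from its $m=1$ instance.

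First I would expand $P_{1}$ directly from its defining $q$-series \eqref{pqfunction1}. I split the sum over $n\in\lambda+\mathbb{Z}$ into $n>0$ and $n<0$, isolating when necessary the constant $n=0$ term $\tfrac{1}{1-\theta^{-1}}$ in the case $\phi=1\neq\theta$ (it is omitted when $(\theta,\phi)=(1,1)$). For $n>0$ I expand $\frac{1}{1-\theta^{-1}q_\tau^{n}}=\sum_{j\geq 0}\theta^{-j}q_\tau^{nj}$, and for $n<0$ I rewrite $\frac{1}{1-\theta^{-1}q_\tau^{n}}=-\sum_{j\geq 1}\theta^{j}q_\tau^{-nj}$; both rearrangements converge absolutely on compact subsets of $|q_\tau|<|q_z|<1$, which justifies the interchange of summations. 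The $j=0$ block is the pure geometric series $\sum_{n>0}q_z^{n}$, whose meromorphic continuation in $z$ is governed by the Bernoulli generating function $\frac{t\,e^{\lambda t}}{e^{t}-1}=\sum_{n\geq 0}\frac{B_{n}(\lambda)}{n!}t^{n}$ with $t=2\pi i z$; after the prefactor $-2\pi i$ this block yields the pole $1/z$ together with the $-\frac{B_{n}(\lambda)}{n!}(2\pi i)^{n}$ parts of the $G_{n}$. The blocks with $j\geq 1$, after expanding $q_z^{n}=e^{2\pi i n z}$ in powers of $z$ and resumming the geometric series in $j$, contribute precisely the two $q_\tau$-dependent sums $\frac{1}{(n-1)!}\sum_{r\geq 0}'\frac{(r+\lambda)^{n-1}\theta^{-1}q_\tau^{r+\lambda}}{1-\theta^{-1}q_\tau^{r+\lambda}}$ and $\frac{(-1)^{n}}{(n-1)!}\sum_{r\geq 1}\frac{(r-\lambda)^{n-1}\theta q_\tau^{r-\lambda}}{1-\theta q_\tau^{r-\lambda}}$ in the definition of $G_{n}\genfrac[]{0pt}{0}{\theta}{\phi}(\tau)$. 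Reading off the coefficient of $z^{n-1}$ and comparing with the twisted Eisenstein series then gives the $m=1$ expansion.

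For the quasi-periodicity I would work with the defining series and track the two lattice shifts acting on $q_z^{n}=e^{2\pi i n z}$. Under the horizontal shift $z\mapsto z+1$ one has $q_z^{n}\mapsto e^{2\pi i n}q_z^{n}=\phi\,q_z^{n}$, since $n-\lambda\in\mathbb{Z}$ and $\phi=e^{2\pi i\lambda}$; hence $P_{1}\genfrac[]{0pt}{0}{\theta}{\phi}(z+1,\tau)=\phi\,P_{1}\genfrac[]{0pt}{0}{\theta}{\phi}(z,\tau)$. Under the vertical shift $z\mapsto z+\tau$ every summand picks up a factor $q_\tau^{n}$, and the identity $\frac{q_\tau^{n}}{1-\theta^{-1}q_\tau^{n}}=\theta\bigl(\frac{1}{1-\theta^{-1}q_\tau^{n}}-1\bigr)$ gives $P_{1}\genfrac[]{0pt}{0}{\theta}{\phi}(z+\tau,\tau)=\theta\,P_{1}\genfrac[]{0pt}{0}{\theta}{\phi}(z,\tau)+2\pi i\,\theta\sum_{n\in\lambda+\mathbb{Z}}q_z^{n}$. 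The anomalous term vanishes exactly when $\phi\neq 1$: writing $\sum_{r\geq 0}q_z^{\lambda+r}=\frac{q_z^{\lambda}}{1-q_z}$ and $\sum_{r\leq -1}q_z^{\lambda+r}=-\frac{q_z^{\lambda}}{1-q_z}$ (each as the meromorphic continuation of its one-sided geometric series, which is how the two-sided Eisenstein sum is understood), the two cancel. This fixes the multipliers $\theta$ and $\phi$ for $P_{1}$, and since the shifts commute with $\partial_z$ and merely multiply $P_{1}$ by a constant, \eqref{pqfunction6} carries the same multipliers to every $P_{m}$; the periods $2\pi i\tau$ and $2\pi i$ are those of the paper's normalization $w=2\pi i z$.

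The main obstacle is the bookkeeping in the $m=1$ expansion: justifying the interchange of the $n$- and $j$-summations on $|q_\tau|<|q_z|<1$, extracting the coefficient of $z^{n-1}$ from the Bernoulli block with the correct $(2\pi i)^{n}$ sign conventions, and matching all three pieces against the intricate definition of $G_{n}\genfrac[]{0pt}{0}{\theta}{\phi}(\tau)$ across the three cases $(\theta,\phi)=(1,1)$, $\phi=1\neq\theta$, and $\phi\neq 1$. Once the coefficient identification is done in one case, the others differ only in the treatment of the $n=0$ term and follow the same template.
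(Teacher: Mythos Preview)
The paper does not supply its own proof of this lemma; it is stated with a citation to \cite{mason2008torus} and used as a black box. Your argument is essentially the standard one given in that reference: reduce to $m=1$ via \eqref{pqfunction6}, expand the defining $q$-series by splitting $n>0$ and $n<0$ and unfolding the geometric series in $q_\tau$, identify the Bernoulli block as the pole plus the polynomial part of $G_n\genfrac[]{0pt}{0}{\theta}{\phi}$, and read off the remaining $q_\tau$-dependent sums. The periodicity computation is also the standard one.

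One small point worth tightening: in the $\tau$-shift step, the vanishing of the anomalous term $\sum_{n\in\lambda+\mathbb{Z}}q_z^{n}$ relies on interpreting the two one-sided geometric series $\sum_{r\geq 0}q_z^{\lambda+r}$ and $\sum_{r\leq -1}q_z^{\lambda+r}$ via their separate meromorphic continuations $\frac{q_z^{\lambda}}{1-q_z}$ and $-\frac{q_z^{\lambda}}{1-q_z}$, which converge in disjoint annuli. This is the correct way to make sense of the identity, and it is how Mason--Tuite argue, but you should be explicit that the cancellation is at the level of the analytically continued meromorphic function rather than a literal series identity. Note also that this same cancellation handles the case $\phi=1$, $\theta\neq 1$ (where the sum is over all of $\mathbb{Z}$, including $n=0$), so the quasi-periodicity indeed holds for all $(\theta,\phi)\neq(1,1)$ as stated.
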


\begin{remark}\label{weirstrasswp}
The expressions $P_{1}\genfrac[]{0pt}{0}{1}{1}(z,\tau)+zG_2(\tau)$ and $P_2\genfrac[]{0pt}{0}{1}{1}(z,\tau)-G_2(\tau)$ correspond to the Weierstrass zeta and $\wp$-functions, denoted respectively by $\wp_1$ and $\wp_2$. In general, for $m\geq 1$, define
\begin{align}
    \wp_{m+1}(z,\tau)=-\frac{1}{m} \frac{\partial}{\partial z}\wp_m(z,\tau).
\end{align}
Note that $\wp_m(z,\tau)=P_m\genfrac[]{0pt}{0}{1}{1}(z,\tau)$ for $m>2$.
\end{remark}

Define also the higher twisted $P$-type functions by
\begin{align}\label{pqfunction7}
    P_{m}^{\theta,\phi}(z,\tau):=\frac{1}{(m-1)!} \frac{\partial^{m-1}}{\partial z^{m-1}} P_{1}^{\theta,\phi}(z,\tau), \quad m\geq 2.
\end{align}

We will also consider the twisted $P$-type function as formal $q$-series over the ring $\mathbb{C}((q^{\lambda}))$. In this case, for $\phi \neq 1$,
\begin{align}
\tilde{P}_{m}^{\theta,\phi}(x,q)=\frac{1}{m!}\left(\sum_{n\geq 0}\frac{(n+\lambda)^{m}x^{n+\lambda}}{1-\theta^{-1} q^{n+\lambda}} - \sum_{n>0} \frac{(-1)^{m} \theta (n-\lambda)^{m} x^{-(n-\lambda)} q^{n-\lambda}}{1-\theta q^{n-\lambda}}\right),
\end{align}
and when $\phi=1$,
\begin{align}\label{pqfunction8}
\tilde{P}_{m}^{\theta,1}(x,q)= (2 \pi i )^{m} \frac{1}{m!} \left( \sum_{n>0} \frac{n^{m}x^{n}}{1-\theta^{-1} q^{n}} - \sum_{n>0} \frac{(-1)^{m} \theta n^{m} x^{-n} q^{n}}{1-\theta q^{n}} \right).
\end{align}

Finally, by Remark \ref{pqfunction5}, equations (\ref{pqfunction6}) and (\ref{pqfunction7}) imply
\begin{align}\label{pqfunction71}
    P_{m}^{\theta,\phi}(z,\tau)=\begin{cases}
            (-1)^{m} P_{m}\genfrac[]{0pt}{0}{\theta}{\phi}(z,\tau) - \delta_{1,m} 2\pi i\frac{1}{1-\theta^{-1}},   & \phi=1,\ \theta\neq 1,\\
            (-1)^{m} P_{m}\genfrac[]{0pt}{0}{\theta}{\phi}(z,\tau),                                             & \text{otherwise}.
    \end{cases}
\end{align}

\appendix

\bibliography{bibliography}
\bibliographystyle{alpha}

\end{document}